\numberwithin{equation}{section}
\theoremstyle{plain}
\newtheorem{theorem}{Theorem}[section] 
\newtheorem{lemma}[theorem]{Lemma} 
\newtheorem{proposition}[theorem]{Proposition} 
\newtheorem{proposition-definition}[theorem]{Proposition-Definition} 
\newtheorem{corollary}[theorem]{Corollary} 
\newtheorem{conjecture}[theorem]{Conjecture} 
\theoremstyle{definition}
\newtheorem{definition}[theorem]{Definition} 
\newtheorem{hypothesis}[theorem]{Hypothesis} 
\newtheorem{example}[theorem]{Example}
\theoremstyle{remark}
\renewcommand{\abs}[1]{\lvert #1 \rvert} 
\newcommand{\card}[1]{\#{ #1 }}
\renewcommand{\norm}[1]{\lvert #1 \rvert} 
\newcommand{\floor}[1]{\lfloor #1 \rfloor}
\newcommand{\ceil}[1]{\lceil #1 \rceil}
\newcommand{\belongs}{\subseteq}
\newcommand{\eps}{\epsilon}
\newcommand{\EE}{\mathbb{E}}
\newcommand{\defeq}{\colonequals}
\newcommand{\maps}{\colon}
\newcommand{\map}{\operatorname}
\newcommand{\mscr}{\mathscr}
\newcommand{\mcal}{\mathcal}
\newcommand{\set}[1]{\{#1\}}
\newcommand{\ts}{\tau}
\newcommand{\rad}{\operatorname{rad}}
\newcommand{\NN}{\mathbb{N}}
\newcommand{\ZZ}{\mathbb{Z}}
\newcommand{\QQ}{\mathbb{Q}}
\newcommand{\CC}{\mathbb{C}}
\newcommand{\FF}{\mathbb{F}}
\newcommand{\PP}{\mathbb{P}}
\DeclareMathOperator{\Spec}{Spec}
\DeclareMathOperator{\supp}{supp}
\DeclareMathOperator{\Hom}{Hom}
\DeclareMathOperator{\Gal}{Gal}
\DeclareMathOperator{\GL}{GL}
\DeclareMathOperator{\ord}{ord}
\DeclareMathOperator{\cha}{char}
\DeclareMathOperator{\disc}{disc}
\DeclareMathOperator{\rk}{rank}
\DeclareMathOperator{\sq}{sq}
\def\OK{\mathcal{O}}
\def\Omon{\OK^+}
\def\RR{\mathbb{R}}
\def\TT{\mathbb{T}}
\newcommand{\RcG}{\mathcal{R}_{\bm{c}}^{\map{G}}} 
\newcommand{\RcB}{\mathcal{R}_{\bm{c}}^{\map{B}}} 
\renewcommand{\hat}{\widehat}
\newcommand{\cc}{{\bm{c}}}
\DeclareMathOperator{\cub}{cub}
\DeclareMathOperator{\Char}{char}
\renewcommand{\leq}{\leqslant}
\renewcommand{\le}{\leqslant}
\renewcommand{\geq}{\geqslant}
\renewcommand{\ge}{\geqslant}
\newcommand{\ve}{\varepsilon}
\renewcommand{\eps}{\varepsilon}
\title{Sums of three cubes over a function field}
\author{Tim Browning, Jakob Glas, Victor Y. Wang}
\address{IST Austria\\
Am Campus 1\\
3400 Klosterneuburg\\
Austria}
\email{tdb@ist.ac.at, jakob.glas@ist.ac.at, victor.wang@ist.ac.at}
\subjclass[2010]{11D45 (11D25, 11G40, 11M50, 11P55, 11T55)}
\date{\today}
\begin{document}

\begin{abstract}
We use a function field version of the circle method to prove that a positive proportion of elements in $\FF_q[t]$ are representable as a sum of three cubes
of minimal degree
from $\FF_q[t]$, assuming a
suitable  form of the Ratios Conjecture and 
that $\Char(\FF_q)>3$.
The analogue of this conjecture
for quadratic Dirichlet $L$-functions
is known
for large fixed $q$,
via recent developments in homological stability.
\end{abstract}

\maketitle

\thispagestyle{empty}
\setcounter{tocdepth}{1}
{\small
\begin{multicols}{2}
\tableofcontents
\end{multicols}
}

\section{Introduction}

A folklore  conjecture in number theory states that any $k\in \ZZ$, not congruent to $\pm4$ modulo $9$, 
admits a representation 
as a sum of three cubes of integers.  The state of the art around this difficult problem is summarised 
in recent work by Wang \cite{wang2023ratios}, where a modern form of the circle method is developed to prove the conjecture for a positive proportion of integers $k$. Wang's work is highly conditional, depending on an automorphy conjecture and GRH for certain Hasse--Weil $L$-functions, as well as the Ratios Conjecture from random matrix theory and a Square-Free Sieve Conjecture.

In this paper we investigate the analogous problem over the function field $K=\FF_q(t)$,
with the aim of removing as many of the hypotheses as possible from \cite{wang2023ratios}. 
Let 
$\OK=\FF_q[t]$ denote the ring of integers of $K$ and let $k\in \OK$.
In stark contrast to the case of $\ZZ$,
when $\cha(\FF_q)\ne 3$ and $q\notin \{2,4,7,13,16\}$,
Serre and Vaserstein \cite{vaserstein1991sums}*{Lemma~1}
have explicitly constructed \emph{linear functions} $a_ik+b_i$, with $a_i\in \FF_q^\times$ and $b_i\in \FF_q$, such that
\begin{equation*}
\sum_{1\le i\le 3} (a_ik+b_i)^3 = k.
\end{equation*}
Vaserstein also gives more complicated polynomial constructions if $q\in \{7,13\}$.
In each case, the degrees of $x,y,z\in \OK$ solving $x^3+y^3+z^3 = k$
are $\ge \deg{k}$, whereas one might hope for solutions of degree
$\sim \tfrac13 \deg{k}$.
Also, \cite{vaserstein1991sums} only constructs a bounded finite number of solutions for each $k$,
and thus it remains open to produce an infinitude of solutions, or even a number of solutions tending to infinity with $\deg{k}$.

According to \cite{gallardo2013constants},
when $q=16$ there is no known  polynomial construction, 
but a direct adaptation of \cite{colliot2012groupe}*{Lemme~4.5} shows that there are no local obstructions
to the representation of  $k$ as a sum of three cubes from $\OK$ and so it is natural to conjecture that most, or perhaps all, $k\in \OK$ admit such a representation.
When $q\in \{2,4\}$, there are local obstructions at places of residue cardinality $4$, because all cubes in $\FF_4$ lie in the proper subfield $\FF_2$;
but most, or perhaps all, $k\in \OK$ may still satisfy the Hasse principle,
with a positive proportion then being representable.
If $\cha(\FF_q)=3$, then $\set{x^3+y^3+z^3: x,y,z\in \OK}$ has density $0$ in $\OK$, however, because $x^3+y^3+z^3 = (x+y+z)^3$.

To each smooth hyperplane section $c_1x_1+\dots+c_6x_6=0$ of the hypersurface 
$$x_1^3+\dots+x_6^3 = 0$$ 
in $\PP^5_K$, for $\bm{c}=(c_1,\dots,c_6)\in \OK^6$, 
we shall see in \S~\ref{SEC:ratios-plus} that we can associate a Hasse--Weil $L$-function $L(s,\bm{c})$ over $K$.
Thanks to Grothendieck  and Deligne \cites{weil2, bourbaki}, 
we know that these  $L$-functions are actually rational functions of $q^{-s}$ 
that satisfy the Grand Riemann hypothesis (GRH).
Our main result relies on mean-value statistics of the ratio $1/L(s,\bm{c})$ over boxes of vectors $\bm{c}\in \OK^6$;
we need an asymptotic formula for
\begin{equation*}
\sum_{\substack{\bm{c}\in \OK^6 \\
\deg(c_1),\dots,\deg(c_6)\le Z}}\,
\frac{1}{L(s_1,\bm{c}) L(s_2,\bm{c})},
\end{equation*}
as $Z\to \infty$.
This is given by Conjecture~\ref{CNJ:(R2o)}~(R2),
a special case of the Ratios Conjecture.
The following is our main result.

\begin{theorem}
\label{THM:pos-density}
Suppose $\cha(\FF_q) > 3$ and 
assume the Ratios Conjecture~\ref{CNJ:(R2o)}
for $L(s,\bm{c})$, as $\bm{c}\in \OK^6$ varies.
Then each of the following sets has positive lower density in $\OK$.
\begin{enumerate}
\item $\set{x^3+y^3+z^3: x,y,z\in \OK\textnormal{ monic}}$.

\item $\set{k: x^3+y^3+z^3=k\textnormal{ is soluble in $\OK$ with $\max\{\deg{x},\deg{y},\deg{z}\} \le \ceil{\frac{\deg{k}}{3}}$}}$.
\end{enumerate}
\end{theorem}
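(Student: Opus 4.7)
The plan is to adapt the positive-density strategy of \cite{wang2023ratios} from $\ZZ$ to $\Fqt$. Fix $N$ large and, for $k\in\OK$ with $\deg k=N$, let $R(k)$ count those triples $(x,y,z)$ of the relevant shape (monic for part (i); bounded degree for part (ii)) satisfying $x^3+y^3+z^3=k$. The Cauchy--Schwarz inequality
\begin{equation*}
\#\{k:\deg k=N,\,R(k)>0\} \ge \frac{\bigl(\sum_{\deg k=N}R(k)\bigr)^2}{\sum_{\deg k=N}R(k)^2}
\end{equation*}
reduces matters to matched first and second moment asymptotics of the form $\sum R(k)\sim c_1 q^N$ and $\sum R(k)^2\sim c_2 q^N$ with $c_1,c_2>0$.

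The first moment is a straightforward count of triples in the box. The second moment is the core: it counts pairs of triples with equal cube-sum, equivalently points on the Fermat cubic hypersurface $x_1^3+\cdots+x_6^3=0$ in $\PP^5_K$ lying in a product box. I would apply the function-field delta method over $\Fqt$ to produce a main term equal to the expected product of local densities and an archimedean density, plus an error expressible as a sum over $\bm c\in\OK^6$ of Kloosterman-type exponential sums weighted by an archimedean factor from the smooth cutoff. After dualising and summing over the modulus, these exponential sums repackage --- via the description of the $L$-functions $L(s,\bm c)$ attached to the hyperplane sections $\bm c\cdot\bm x=0$ in \S\ref{SEC:ratios-plus} --- as averages of $1/(L(s_1,\bm c)L(s_2,\bm c))$. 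Inserting Conjecture~\ref{CNJ:(R2o)}~(R2) then yields an extra main term and a genuine saving, and a contour shift past the poles of the $L$-functions identifies this extra main term with the Hardy--Littlewood prediction for the 6-variable Fermat problem.

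The principal obstacle is twofold. First, Conjecture~\ref{CNJ:(R2o)} controls only the smooth part of the $\bm c$-family, so the contribution of the \emph{degenerate locus} (singular hyperplane sections, or $\bm c$ on exceptional subvarieties of the Fermat cubic) must be estimated unconditionally, via geometric bounds and elementary divisor estimates over $\Fqt$. Second, matching the Ratios main term with the Hardy--Littlewood constant requires a careful local computation of the Euler factors of $L(s,\bm c)$ at the bad primes and at $\infty$, together with residue analysis using the rationality and GRH of $L(s,\bm c)$. Once these are in place, the hypothesis $\Char\Fq>3$ guarantees the absence of local cubic obstructions, forcing $c_1,c_2>0$, and the Cauchy--Schwarz bound delivers the stated positive lower density in both parts of the theorem.
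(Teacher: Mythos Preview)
Your broad strategy agrees with the paper's: reduce to Cauchy--Schwarz, interpret the second moment as the point count $N(P)$ on the Fermat cubic fourfold, apply the function-field delta method, and use the Ratios Conjecture to control the generic $\bm{c}$-contribution while handling the degenerate locus unconditionally.  However, several details of your description do not match what is actually needed or done, and one of them would lead you astray.

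First, you only need the \emph{upper bound} $N(P)\ll|P|^3$, not an asymptotic.  The first-moment lower bound is a trivial triple count, so Cauchy--Schwarz then gives positive density directly; there is no need to prove $\sum R(k)^2\sim c_2 q^N$ or to verify that $c_2>0$.  The condition $\cha(\FF_q)>3$ enters for technical reasons throughout (non-vanishing Hessian, exponential-sum identities), not to rule out local obstructions to positivity.

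Second, your account of how Conjecture~\ref{CNJ:(R2o)} is used is off.  The paper does not insert (R2) to produce an ``extra main term'' and then match it to a Hardy--Littlewood constant.  Instead (R2) is fed through a chain of reductions (Conjectures~\ref{CNJ:(R2')}, \ref{CNJ:(R2'E)}, \ref{CNJ:(R2'E')}) to yield an $\ell^{2-\eps}$ moment bound on partial sums of $S^\natural_r(\bm c)$ over good moduli, which in turn gives the raw estimate $E_1(P)\ll|P|^3$ for the contribution of $F^*(\bm c)\ne 0$.  The contour shift you mention occurs only inside the passage from (R2) to Conjecture~\ref{CNJ:(R2')}, picks up the residue of $\zeta_K(s_1+s_2)$, and is simply bounded by $\hat Z^6\hat R$; nothing is identified with Hardy--Littlewood.

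Third, the ``degenerate locus'' $F^*(\bm c)=0$ is not dispatched by elementary divisor estimates alone.  The dominant part of that locus comes from $\bm c\in\Lambda^\perp$ for each plane $L\in\Upsilon$ contained in the Fermat cubic, and the corresponding exponential sums carry a genuine bias $S^\natural_\varpi(\bm c)=|\varpi|^{1/2}+O(1)$ (Proposition~\ref{Prop: LinearSpaceBias}).  This bias, after Poisson summation, produces a main term of exact order $|P|^3$ equal to the lattice-point count on $\bigcup_{L\in\Upsilon}L$; the remaining $\bm c$ on the dual variety are then shown to be sparse.  You should plan for this analysis rather than expect it to fall out of crude bounds.
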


The set in (2) is known in Waring's problem
as the set of \emph{strict sums of three cubes},
in the sense of Carlitz \cite{gallardo2008strict}*{p.~2964}.
We have already seen that the conclusions of the theorem are false if 
$\cha(\FF_q)=3$, where the sets in (1) and (2) have density $0$.
If $\cha(\FF_q)=2$, then most of the proof of Theorem~\ref{THM:pos-density} still goes through,
but some of the ingredients would need to be modified. We shall comment further on this in 
\S~\ref{SEC:final-remarks}.

The Moments and Ratios Conjectures were originally stated for automorphic $L$-functions over $\QQ$ by Conrey, Farmer, et al.~\cites{conrey2005integral,conrey2008autocorrelation}.
Their natural extension to $\FF_q(t)$  has been explained by Andrade and Keating \cite{andrade2014conjectures}. The precise form of the Ratios Conjecture we need will be presented in \S~\ref{SEC:ratios-plus}. 
It is plausible that the homological stability framework of 
Bergstr{\"o}m,  Diaconu, Petersen and Westerland 
\cite{bergstrom2023hyperelliptic}, together with  Miller, Patzt, Petersen and Randal-Williams
\cite{MPPRW},
could eventually resolve
the particular form of the Ratios Conjecture needed in the present paper, for 
sufficiently large values of $q$.
We defer the details of this to \S~\ref{SEC:detailed-ratio-reductions},
where we shall define a \emph{$q$-restricted} form of the Ratios Conjecture, 
together with a strengthening of 
Theorem \ref{THM:pos-density}, in 
the shape of Theorem \ref{THM:first-goal}.
Still working under the Ratios Conjecture, it is likely that one can show that the set
\begin{equation*}
\set{k: x^3+y^3+z^3=k\textnormal{ is soluble in $\OK$ with $\max\{\deg{x},\deg{y},\deg{z}\} \le \tfrac{\deg{k}}{3} + A$}}
\end{equation*}
has lower density approaching $1$ as $A\to \infty$, by adapting the proof of \cite{wang2023ratios}*{Theorem 1.6}.
Moreover, one can likely show that this set has upper density $< 1$ for any fixed $A\in \RR$,
by adapting local density arguments of Diaconu from \cite{diaconu2019admissible}*{\S~1}.
We defer both tasks to  future work, in order to keep the present paper as 
clean as possible.

The Ratios Conjecture has already found application in the theory of {\em rational points} on elliptic curves over $K$.  Thus, through the agency of Katz--Sarnak \cite{katz1999zeroes}, and Conrey et al.~\cites{conrey2008autocorrelation,conrey2007applications},  it has been shown that
Goldfeld's Conjecture 
follows from  the Ratios Conjecture for a suitable family of elliptic curve $L$-functions over $K$.
This conjecture states that 
there is a density $\frac12$ 
of  curves with prescribed analytic rank $r\in \set{0,1}$, and 
therefore implies the Birch and Swinnerton-Dyer Conjecture (BSD) for a density $1$ of  curves.
Whereas $L$-function ratios are directly related to analytic ranks and BSD,
their connection to {\em integral points} and sums of three cubes in Theorem \ref{THM:pos-density} is much less direct.

\medskip
	 
Our proof of 
Theorem \ref{THM:pos-density} uses the second moment method and requires a detailed analysis of   the counting function 
\begin{equation}\label{eq:mint-tea}
N(P)=\#\left\{ \bm{x}\in \OK^6:  x_1^3+\cdots+x_6^3=0, ~|x_1|=\cdots=|x_6|= |P|\right\},
\end{equation}
for given   $P\in \OK$, 
where $|\cdot|$ denote the usual absolute value on $K$.
We shall be interested in the size of $N(P)$ as $|P|\to \infty$. 
The Batyrev--Manin conjecture suggests that an asymptotic formula of the shape 
$N(P)\sim c|P|^3$ should hold, for a suitable constant $c>0$. The best progress towards this is to be found in 
recent work of Glas and Hochfilzer 
\cite{glas2022question}, who use 
a function field version of the circle method to prove that 
\begin{equation}\label{eq:eps}
N(P)=O_\ve( |P|^{3+\ve}),
\end{equation}
for any $\ve>0$, assuming only that $\cha(\FF_q)\neq 3$.
 It is worth emphasising that this bound is completely unconditional, unlike the parallel picture over $\ZZ$, where the pioneering work of Hooley \cites{hooley1986Lfunctions,hooley_greaves_harman_huxley_1997} and Heath-Brown \cite{heath1998circle} is  conditional on GRH.
Assuming that $\cha(\FF_q)>3$, our  proof of Theorem \ref{THM:pos-density} entirely rests on our ability to remove the $\ve$ from the upper bound \eqref{eq:eps}. Ultimately we shall only manage to do so at the cost of assuming the Ratios Conjecture for the Hasse--Weil $L$-functions $L(s,\bm{c})$, as $\bm{c}\in \OK^6$ varies.

Our argument will broadly follow the path laid down by Wang \cite{wang2023ratios} over $\ZZ$, but with a variety  of differences and refinements, some of which we have chosen to highlight here. 
\begin{enumerate}
\item 
The $L$-functions $L(s,\bm c)$ carry less information over $\FF_q(t)$ than over $\QQ$, since  $q$-adic sums of local coefficients are collapsed into single global coefficients. Nonetheless, these are still the only $L$-functions that we need in our analysis,
provided we choose our weight functions carefully, as suggested in the next item of this list.

\item
In order to avoid introducing character twists as in \cite{browning2015rational}, 
which would enlarge and complicate the form of the Ratios Conjecture needed, we restrict the class of weight functions that we use. This allows us to cleanly factor out the relevant oscillatory integral using symmetry ideas of Glas--Hochfilzer  \cite{glas2022question}.

\item
The precise automorphy hypotheses and Square-Free Sieve Conjecture 
required in  \cite{wang2023ratios} 
 become unconditional over $\FF_q(t)$.
 For the former, we use Poincar\'{e} duality to carefully produce poles of exterior square $L$-functions.
 For the latter, we use an argument of Poonen 
 \cite{poonen2003squarefree} on square-free values of  multivariable polynomials.

\item
In 
Theorem \ref{THM:weird-eke} we prove a simple version of the 
Ekedahl sieve in positive characteristic for square-free moduli, which is sufficient for the applications in this paper.
(A general prime moduli version  for arbitrary global fields has been worked out in forthcoming work of 
Bhargava--Shankar--Wang  \cite{bhargava2023coregular}*{Theorem 16}.)

\item
The bias in exponential sums that was discovered in
\cite{wang2023special}*{Lemma 7.7}
was 
only worked out for diagonal cubic forms in six variables; in Proposition 
\ref{Prop: LinearSpaceBias} we extend the argument to handle arbitrary non-singular senary  cubic forms.
This requires new geometric insight on quadric bundles,
via work of Beauville \cite{BeauvilleQuadricBundles}.
\end{enumerate}

\subsection*{Acknowledgements}

Thanks are due to Dan Petersen and Peter Sarnak for discussions on homological stability,
and to Trevor Wooley for providing the reference \cite{vaserstein1991sums}.
While working on this paper the first two authors were supported by FWF grant P 36278 and the third  author was supported by the European Union's Horizon 2020 research and innovation programme under the Marie Sk\l{}odowska-Curie Grant Agreement No.~101034413.

\section{Background}

In this section we collect some basic facts about function fields.
Let $K_\infty=\FF_q((t^{-1}))$ be the field of Laurent series in $t^{-1}$.
Let $\Omon$ be the set of monic polynomials $r\in \OK$; this is analogous to the set of positive integers in $\ZZ$.
For $M\in\RR$, we shall write $\widehat{M}\coloneqq q^M$. Any $\alpha \in K_\infty\setminus\{0\}$ can be written uniquely as 
\begin{equation}\label{Eq: LaurentSeriesRepresentation}
\alpha = \sum_{i\leq M}a_it^i,\quad a_M\neq 0,
\end{equation}
for some $M\in \ZZ$ and $a_i\in\FF_q$.
Define $|\alpha|\coloneqq \widehat{M}$;
then $|\cdot|$ naturally extends to $K_\infty$ the absolute value on $K$ induced by $t^{-1}$.
Moreover, $K_\infty$ is the completion of $K$ with respect to this absolute value.
The analogue of the unit interval in $K_\infty$ is given by 
\[
\TT\coloneqq \{\alpha\in K_\infty\colon |\alpha|<1\}.
\]
Since $K_\infty$ is a local field, it can be endowed with a unique Haar measure $\dd \alpha $ such that $\int_{\TT}\dd\alpha=1$.
We shall extend the absolute value to $K^n_\infty$ by $|\bm{\alpha}|\defeq \max_{1\leq i\leq n}|\alpha_i|$ and the Haar measure by $\dd \bm{\alpha}\defeq \dd \alpha_1 \cdots \dd\alpha_n$, for $\bm{\alpha}=(\alpha_1,\dots, \alpha_n)\in K^n_\infty$.
Finally, for each prime $\varpi\in \Omon$, we have an  associated absolute value $|\cdot|_\varpi$  on $K$, given by $|a|_\varpi=q^{-v_\varpi(a)\deg\varpi}$. We denote by $K_\varpi$ the completion of $K$ with respect to this absolute value and we write $\OK_\varpi\coloneqq \{\alpha\in K_\varpi \colon |\alpha|_\varpi \le 1\}.$

\subsection*{Farey dissection}

Dirichlet's approximation theorem holds over $K$.
That is, for any $\alpha \in \TT$ and $Q \in \mathbb{N}\defeq \ZZ_{\ge 1}$,
there exist $a\in \OK$ and $r\in \Omon$
with $\gcd(a,r)=1$ and $|a|<|r|\leq \widehat{Q}$
such that $|r \alpha-a|<\widehat{Q}^{-1}$.
By the ultrametric property, this is enough to obtain  an analogue of a \emph{Farey dissection} of the unit interval
\begin{equation}\label{Eq: DirichletDissection}
\TT=\bigsqcup_{\substack{r\in \Omon \\ |r|\leq \widehat{Q}}}\,
\bigsqcup_{\substack{|a|<|r|\\\gcd(a,r)=1}}\,
\{\alpha\in\TT\colon |r \alpha-a|<\widehat{Q}^{-1}\},
\end{equation}
for any $Q\geq 1$.

\subsection*{Characters}

For $\alpha \in K_\infty$ given by~\eqref{Eq: LaurentSeriesRepresentation}, we define
\[
\psi\colon K_\infty\to\CC^\times, \quad \psi(\alpha)=e\left(\frac{\Tr_{\FF_q/\FF_p}(a_{-1})}{p}\right)
\quad\textnormal{(with $p=\cha(\FF_q)$)}
\]
and set $\psi(0)=1$, where as usual $e(x)\defeq \exp(2\pi i x)$ for $x \in \mathbb{R}$.
It is easy to see that $\psi$ is a non-trivial additive character of $K_\infty$,
and that for $x\in K_\infty$ and $N\in\ZZ_{\geq 0}$, we have
\begin{equation}\label{Eq: OrthogonalityOfCharacters}
    \int_{|\alpha|<\widehat{N}^{-1}}\psi(\alpha x)\dd\alpha=\begin{cases}\widehat{N}^{-1} &\text{if }|x|<\widehat{N},\\ 0 &\text{otherwise.}\end{cases}
\end{equation}
Note that if $x \in \OK$, then \eqref{Eq: OrthogonalityOfCharacters} implies
\[
  \int_{\mathbb{T}}\psi(\alpha x)\dd\alpha=\begin{cases} 1 &\text{if }x=0,\\ 0
  &\text{otherwise.}\end{cases}
\]

In addition, we will make frequent use of the following formula for exponential sums. If $r,a \in \OK$ are such that $r \neq 0$, then
\[
\frac{1}{\lvert r \rvert} \sum_{\lvert x \rvert < \lvert r \rvert} \psi \left( \frac{ax}{r} \right) = \begin{cases}
1 \quad &\text{if $r \mid a$,} \\
0 &\text{otherwise.}
\end{cases}
\]

\subsection*{Poisson summation}

We call a function $w \colon K_\infty^n \rightarrow \mathbb{C}$ \emph{smooth} if it is locally constant. Denote by $S(K_\infty^n)$ the space of all smooth functions $w \colon K_\infty^n \rightarrow \mathbb{C}$ with compact support. For such functions the Poisson summation formula \cite{browning2015rational}*{Lemma 2.1} holds. 
\begin{lemma}\label{lem.poisson summation}
    Let $f \in K_\infty[x_1, \hdots, x_n]$ and let $w \in S(K_\infty^n)$. Then  we have
$$
        \sum_{\bm{z} \in \OK^n} w(\bm{z}) \psi(f(\bm{z})) = \sum_{\bm{c} \in \OK^n} \int_{K_\infty^n} w(\bm{u}) \psi(f(\bm{u}) + \bm{c} \cdot \bm{u}) \dd \bm{u}.
$$
\end{lemma}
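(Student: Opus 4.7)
The plan is to reduce the statement to the classical Poisson summation formula for the self-dual lattice $\OK^n \subset K_\infty^n$. Set $W(\bm{u}) \defeq w(\bm{u}) \psi(f(\bm{u}))$. First, I verify that $W \in S(K_\infty^n)$: on the compact set $\Supp(w)$, the polynomial $f$ is uniformly continuous, and $\psi$ is locally constant on $K_\infty$ (depending only on the $t^{-1}$-coefficient of its argument), so $\psi \circ f$ is locally constant on $\Supp(w)$. Combined with the local constancy of $w$, this makes $W$ smooth with compact support. The claimed identity thus becomes
\[
\sum_{\bm{z} \in \OK^n} W(\bm{z}) = \sum_{\bm{c} \in \OK^n} \widehat{W}(\bm{c}), \qquad \widehat{W}(\bm{c}) \defeq \int_{K_\infty^n} W(\bm{u}) \psi(\bm{c} \cdot \bm{u}) \dd \bm{u}.
\]

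The key structural input is the self-duality of $\OK^n$ in $K_\infty^n$ under the pairing $(\bm{x}, \bm{y}) \mapsto \psi(\bm{x} \cdot \bm{y})$. For $\bm{x}, \bm{y} \in \OK^n$ the dot product lies in $\OK = \FF_q[t]$, whose Laurent expansion has zero coefficient of $t^{-1}$, so $\psi(\bm{x}\cdot\bm{y}) = 1$; conversely, if $\bm{y} \in K_\infty^n \setminus \OK^n$, then an appropriate monomial $\bm{x}\in\OK^n$ picks out a non-trivial contribution from some negative-degree Laurent coefficient of a component of $\bm{y}$, after scaling by an element of $\FF_q$ on which the absolute trace is non-zero modulo $p$. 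Combined with the normalisation $\int_\TT \dd\alpha = 1$, which makes $\TT$ a fundamental domain of unit covolume for $\OK \subset K_\infty$, this puts us in the exact framework of Poisson summation.

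I would then prove the Poisson formula by periodisation. Since $W$ has compact support, only finitely many translates $W(\bm{u}+\bm{z})$ with $\bm{z}\in\OK^n$ are non-zero at any given $\bm{u}$, and
\[
\Phi(\bm{u}) \defeq \sum_{\bm{z} \in \OK^n} W(\bm{u}+\bm{z})
\]
descends to a locally constant function on the compact group $K_\infty^n/\OK^n$. Its Fourier coefficients, indexed by the Pontryagin dual $\OK^n$, are exactly the $\widehat{W}(\bm{c})$, so evaluating the Fourier inversion formula at $\bm{u}=\bm{0}$ delivers the claim. Because any $W \in S(K_\infty^n)$ is a finite $\CC$-linear combination of indicator functions of cosets $\bm{u}_0 + \varpi^N \OK^n$, the inversion itself reduces by linearity to the orthogonality relation \eqref{Eq: OrthogonalityOfCharacters} already recorded in the paper.

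The main obstacle is essentially conceptual rather than technical: pinning down the self-duality of $\OK^n$ and the correct normalisation of the Haar measure. Once these are secured, the whole argument collapses to a finite discrete Fourier-analytic identity, with no convergence issues to manage — unlike in the archimedean Poisson summation over $\RR$ — because every compactly supported locally constant function on $K_\infty^n$ decomposes into finitely many elementary pieces.
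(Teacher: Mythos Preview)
Your argument is correct and is the standard one. Note that the paper does not actually prove this lemma: it simply cites \cite{browning2015rational}*{Lemma~2.1}, so there is no in-paper proof to compare against. Your reduction to classical Poisson summation for the self-dual lattice $\OK^n \subset K_\infty^n$ via periodisation is exactly the expected approach, and the verification that $W = w\,\psi(f) \in S(K_\infty^n)$ is the one point worth making explicit, which you do.
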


\subsection*{Delta method}

Given $F\in\OK[x_1,\dots,x_n]$ and a weight function $w\in S(K^n_\infty)$, we are interested in the counting function 
\begin{equation}\label{eq:green-tea}
    N_F(w,P)=\sum_{\substack{\bm{x}\in \OK^n\\ F(\bm{x})=0}}w\left(\frac{\bm{x}}{P}\right).
\end{equation}
For a parameter $Q\geq 1$ to be specified later, we deduce from~\eqref{Eq: DirichletDissection} and ~\eqref{Eq: OrthogonalityOfCharacters} that
\[
N_F(w,P)=\sum_{\substack{r\in \Omon\\ |r|\leq \widehat{Q}}}\,
\sideset{}{'}\sum_{|a|<|r|}\int_{|\theta|<|r|^{-1}\widehat{Q}^{-1}}
S(a/r+\theta)\dd \theta,
\]
where $\sum'_{|a|<|r|} $ means that we sum over $a\in\OK$ with $\gcd(a,r)=1$ only,
and where
\[
S(\alpha)=\sum_{\substack{\bm{x}\in\OK^n}}\psi(\alpha F(\bm{x}))w(\bm{x}/P)
\]
for $\alpha \in \TT$. As explained in~\cite{browning2015rational}*{\S~4}, we can use Poisson summation in Lemma~\ref{lem.poisson summation} 
to evaluate $S(\theta+a/r)$, giving 
\begin{equation}\label{Eq: DeltaMethod'}
    N_F(w,P)=|P|^n\sum_{\substack{r \in \Omon\\ |r|\leq \widehat{Q}}}|r|^{-n}\int_{|\theta|<|r|^{-1}\widehat{Q}^{-1}}\sum_{\bm{c}\in \OK^n}S_r(\bm{c})I_r(\theta,\bm{c})\dd\theta,
\end{equation}
where 
\begin{equation}\label{Eq: Definition S_r(c)}
    S_r(\bm{c})\coloneqq \sideset{}{'}\sum_{|a|<|r|}\sum_{|\bm{x}|<|r|}\psi\left(\frac{aF(\bm{x})-\bm{c}\cdot \bm{x}}{r}\right)
\end{equation}
and 
\begin{equation}\label{Eq: Definition I_r(theta,c)}
    I_r(\theta,\bm{c})\coloneqq \int_{K_\infty^n}w(\bm{x})\psi\left(\theta P^3F(\bm{x})+\frac{P \bm{c}\cdot \bm{x}}{r}\right)\dd \bm{x}.
\end{equation}
Moreover, we will also consider averages of $I_r(\theta,\bm{c})$ of the form
\begin{equation}\label{eq:2-1 to brighton}
    I_r(\bm{c})\coloneqq \int_{|\theta|<|r|^{-1}\widehat{Q}^{-1}}I_r(\theta,\bm{c})\dd\theta,
\end{equation}
in which case we may write
\begin{equation}\label{Eq: DeltaMethod}
    N_F(w,P)=|P|^n\sum_{\substack{r\in\Omon\\ |r|\leq \widehat{Q}}}|r|^{-n}\sum_{\bm{c}\in \OK^n}S_r(\bm{c})I_r(\bm{c}).
\end{equation}
Ultimately   $Q$ will be chosen so that $|P|^{3/2}\asymp \widehat{Q}$.

The expression~\eqref{Eq: DeltaMethod'} is the starting point for our work and from now on we will mostly be concerned with estimating the integrals $I_r(\theta,\bm{c})$ and the sums $S_r(\bm{c})$. To keep notation simple, it is convenient to introduce the normalised sum 
\[
S_r^\natural(\bm{c})\coloneqq |r|^{-(n+1)/2}S_r(\bm{c}).
\]
If $r=r_1r_2$, where both $r_1,r_2\in\OK$ are monic with $\gcd(r_1,r_2)=1$, the Chinese remainder theorem readily implies that 
\begin{equation}\label{Eq: Sr(c)multiplicative}
    S_r(\bm{c})=S_{r_1}(\bm{c})S_{r_2}(\bm{c}).
\end{equation}
Therefore the prime factorisation of $r$ will play an important role in our analysis.
The letter $\varpi$  will generally denote a prime in $\Omon$.

\subsection*{Notation}
Whenever the letter $F$ appears in some context from now on, we take
$$
F = x_1^3+\dots+x_n^3,
$$
typically with $n=6$. However, we will often work more generally to clarify the nature of arguments.
Let
\begin{equation}\label{EQN:6-variable-Fermat-dual-form}
F^\ast(\bm{c}) = \prod (c_1^{3/2} \pm c_2^{3/2} \pm \dots \pm c_n^{3/2})\in \OK[\bm{c}]
\end{equation}
be the (primitive) dual form associated to $F$; it has degree $3\cdot 2^{n-2}$.
Let $V\subset \PP^{n-1}$ be the hypersurface defined by $F=0$. The dual form $F^*$ defines the dual variety $V^*\subset \PP^{n-1}$ of $V$, which parameterises hyperplanes that intersect $V$ tangentially. Alternatively, $V^*$ may be described as the closure of the image under the Gauss map $V\to \PP^{n-1}$ given by $\bm{x}\mapsto \nabla F(\bm{x})$. From this description it is clear that 
\begin{equation}\label{Eq: FdividesDualFormGrad}
    F(\bm{x}) \mid F^*(\nabla F(\bm{x})), 
\end{equation}
as an identity in $K[x_1,\dots, x_n]$. 
Let
\begin{equation}
\label{EQN:define-S_0,S_1-for-Delta-vanishing-and-non-zero-loci}
    \mathcal{S}_0 = \{\bm{c}\in \OK^n: F^\ast(\bm{c})=0\},
    \quad
    \mathcal{S}_1 = \{\bm{c}\in \OK^n: F^\ast(\bm{c})\neq 0\}.
\end{equation}
For each $\bm{c}\in \mathcal{S}_1$, let
\begin{equation}
\label{EQN:define-moduli-sets-N^c,N_c}
\RcG = \{r\in \Omon: \gcd(r,F^\ast(\bm{c})) = 1\},
\quad
\RcB = \{r\in \Omon: \varpi\mid r\Rightarrow \varpi\mid F^\ast(\bm{c})\}.
\end{equation}
Given $r\in \Omon$ and non-zero $B\in \OK$, 
we will sometimes write $r\mid B^\infty$ to mean that 
 $\varpi\mid r\Rightarrow \varpi\mid B$.
 This is equivalent to $\rad(r)\mid B$, where $\rad(r)$ is the radical of $r$.

To connect $S_r(\bm{c})$ to Hasse--Weil $L$-functions, we define further quantities.
Given a finite field $k$,
let $\mcal{V}(k)$ be the set of $k$-points on the variety $F(\bm{x})=0$ in $\PP^{5}_{k}$,
let $\mcal{V}_{\bm{c}}(k)$ be the set of $k$-points on the variety $F(\bm{x})=\bm{c}\cdot\bm{x}=0$ in $\PP^{5}_{k}$,
and let
\begin{equation*}
    E_F(k) \defeq \card{\mcal{V}(k)} - \card{\PP^{4}(k)},
    \quad E_{\bm{c}}(k) \defeq \card{\mcal{V}_{\bm{c}}(k)} - \card{\PP^{3}(k)},
\end{equation*}
where $\card{\PP^d(k)} = ((\# k)^{d+1}-1)/((\# k)-1)$.
Next,  let
\begin{equation}
\label{EQN:define-normalized-point-count-errors-E_F,E_c}
    E^\natural_F(k)\defeq (\# k)^{-2} E_F(k),
    \quad E^\natural_{\bm{c}}(k)\defeq (\# k)^{-3/2} E_{\bm{c}}(k).
\end{equation}
For primes $\varpi\nmid\bm{c}$, it is known that
\begin{equation}
\label{EQN:rewrite-S_c(p)-via-E_c}
    S^\natural_\varpi(\bm{c})
    = E^\natural_{\bm{c}}(\OK/\varpi\OK)
    - \abs{\varpi}^{-1/2}E^\natural_F(\OK/\varpi\OK);
\end{equation}
see \cite{hooley2014octonary}*{Lemma~7} for a general reference over prime fields, which carries over directly to prime-power fields.
Moreover, if $\varpi\nmid F^\ast(\bm{c})$ then \cite{browning2015rational}*{Lemma 5.2} gives
\begin{equation}
\label{EQN:smooth-vanishing-S_c}
    S_{\varpi^l}(\bm{c})=0
    \quad (l\ge 2).
\end{equation}

\subsection*{Technical results}

We proceed to record some useful results that will often be appealed to during the course of our argument. 

\begin{lemma}
\label{LEM:count-B-R_c-infty-divisors}
Let $R\ge 0$ and $B\in \OK \setminus \set{0}$.
Then $$\card{\{r\in \Omon: r\mid B^\infty,\; \abs{r}=\hat R\}} \ll_\eps (\hat R \abs{B})^\eps.$$
In particular, if $\bm{c}\in \mathcal{S}_1$, then
$\card{\{r\in \RcB: \abs{r}=\hat R\}}\ll_\eps \norm{\bm{c}}^\eps \hat R^\eps$.
\end{lemma}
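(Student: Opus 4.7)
My strategy is a Rankin-style Dirichlet series argument on the multiplicative structure of divisors of $B^\infty$; the "in particular" clause will then follow by applying the first bound to $B=F^\ast(\bm c)$. There is no substantive obstacle, and the only subtlety is isolating primes of small norm so that the Euler product bound collapses into a $q$-dependent constant, which is harmless as $q$ is fixed throughout the paper.

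For the first bound, I would exploit that $r\mid B^\infty$ forces the prime factorisation of $r$ to be supported on the primes dividing $B$. For any $\sigma>0$ this gives
\begin{equation*}
\card{\{r\in \Omon: r\mid B^\infty,\; \abs{r}=\hat R\}} \;\le\; \hat R^{\sigma}\sum_{\substack{r\in \Omon\\ r\mid B^\infty}}\abs{r}^{-\sigma} \;=\; \hat R^{\sigma}\prod_{\varpi\mid B}(1-\abs{\varpi}^{-\sigma})^{-1},
\end{equation*}
reducing everything to showing the Euler product is $\ll_{q,\sigma} \abs{B}^{\sigma}$. I would split the primes $\varpi\mid B$ by whether $\abs{\varpi}^\sigma\ge 2$ or not. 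For the former, the elementary inequality $(1-\abs{\varpi}^{-\sigma})^{-1}\le \abs{\varpi}^\sigma\le \abs{\varpi}^{\sigma v_\varpi(B)}$ applies, and multiplying these factors contributes at most $\prod_{\varpi\mid B}\abs{\varpi}^{\sigma v_\varpi(B)}=\abs{B}^\sigma$. For the latter, one has $\abs{\varpi}<2^{1/\sigma}$, and since $\Omon$ contains only finitely many primes of bounded norm (once $q$ is fixed), their contribution is a constant depending only on $q$ and $\sigma$. Taking $\sigma=\eps$ yields the first bound.

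For the "in particular" clause, I apply the first bound with $B=F^\ast(\bm c)$, which is nonzero precisely because $\bm c\in\mathcal{S}_1$. Since $F^\ast\in\OK[\bm c]$ is a fixed polynomial of degree $3\cdot 2^{n-2}$, we have $\abs{F^\ast(\bm c)}\ll \norm{\bm c}^{3\cdot 2^{n-2}}$, so $\abs{F^\ast(\bm c)}^\eps\ll \norm{\bm c}^{O(\eps)}$; rescaling $\eps$ then gives the stated bound $\norm{\bm c}^\eps\hat R^\eps$.
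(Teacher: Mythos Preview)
Your proof is correct and follows essentially the same approach as the paper: both use Rankin's trick to bound the count by $\hat R^\eps\prod_{\varpi\mid B}(1-\abs{\varpi}^{-\eps})^{-1}$, and the paper then asserts this product is $\ll_\eps\abs{B}^\eps$ without further comment, whereas you spell out that step via the large-prime/small-prime split. The ``in particular'' clause is handled identically.
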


\begin{proof}
$\sum_{r\mid B^\infty} \bm{1}_{\abs{r}=\hat R}
\le \sum_{r\mid B^\infty} (\hat R/\abs{r})^\eps
= \hat R^\eps \prod_{\varpi\mid B} (1 - \abs{\varpi}^{-\eps})^{-1}
\ll_\eps \hat R^\eps \abs{B}^\eps$.
\end{proof}

\begin{lemma}
\label{LEM:N_c-small-divisor-moment-bound}
If $Z,R\in \RR$ and $A,\eps>0$, then we have
\begin{equation}
\label{INEQ:N_c-small-divisor-moment-bound}
\sum_{\bm{c}\in \mathcal{S}_1:\, \norm{\bm{c}}\le \hat Z}
\card{\set{r\in \RcB: \abs{r}\le \hat R}}^A
\ll_{A, \eps} \hat Z^n \hat R^\eps.
\end{equation}
\end{lemma}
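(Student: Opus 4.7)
The plan is to unfold the $A$-th power, swap the order of summation, and bound the result by combining Lemma~\ref{LEM:count-B-R_c-infty-divisors} with a density estimate for the zeros of $F^*$ modulo square-free moduli. Explicitly,
\[
\sum_{\substack{\bm{c}\in\mcal{S}_1\\ \norm{\bm{c}}\le\hat Z}} \card{\set{r\in\RcB:\abs{r}\le\hat R}}^A
=\sum_{\substack{r_1,\ldots,r_A\in\Omon\\ \abs{r_i}\le\hat R}}\#\bigl\{\bm{c}\in\mcal{S}_1:\norm{\bm{c}}\le\hat Z,\ \rad(r_i)\mid F^*(\bm{c})\ \forall i\bigr\},
\]
and for each tuple I set $D=\operatorname{lcm}(\rad r_1,\ldots,\rad r_A)$, a square-free polynomial with $\abs{D}\le\hat R^A$, so the inner condition simplifies to $D\mid F^*(\bm{c})$.

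I would then split into two cases. In the easy case $\hat Z\le\hat R^A$, I bound the original summand via Lemma~\ref{LEM:count-B-R_c-infty-divisors}: since $\abs{F^*(\bm{c})}\ll\hat Z^{\deg F^*}$ for $\bm{c}$ in the box, I get $\#\{r\in\RcB:\abs{r}\le\hat R\}\ll_\eta(\hat R\hat Z^{\deg F^*})^\eta$; raising to the $A$-th power and summing over the $O(\hat Z^n)$ admissible $\bm{c}$ produces $\hat Z^n(\hat R\hat Z^{\deg F^*})^{A\eta}\ll\hat Z^n\hat R^{A(1+A\deg F^*)\eta}$ after using $\hat Z\le\hat R^A$, which suffices for small enough $\eta$. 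In the main case $\hat Z>\hat R^A$, I argue via density: since $F^*$ is a fixed non-zero polynomial of bounded degree over $\OK$, for all but finitely many primes $\varpi$ a Schwartz--Zippel bound gives $\#\{\bm{c}\bmod\varpi:\varpi\mid F^*(\bm{c})\}\ll\abs{\varpi}^{n-1}$, and by CRT (absorbing the exceptional primes into a constant) for square-free $D$ the number of residues $\bm{c}\bmod D$ with $D\mid F^*(\bm{c})$ is at most $C^{\omega(D)}\abs{D}^{n-1}$ for some $C=C(F^*)$. Since $\abs{D}\le\hat R^A\le\hat Z$, partitioning $\{\bm{c}:\norm{\bm{c}}\le\hat Z\}$ by residue class mod $D$ yields
\[
\#\{\bm{c}:\norm{\bm{c}}\le\hat Z,\ D\mid F^*(\bm{c})\}\ll_\eta\hat Z^n\abs{D}^{\eta-1}.
\]

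To finish the main case, I bound the number of $A$-tuples producing a given $D$ by $(\#\{r\in\Omon:\abs{r}\le\hat R,\ r\mid D^\infty\})^A\ll_\eta(\hat R\abs{D})^{A\eta}$ via Lemma~\ref{LEM:count-B-R_c-infty-divisors}, and sum over square-free $D$ with $\abs{D}\le\hat R^A$ using $\#\{D\text{ sq-free}:\abs{D}=\hat M\}\le\hat M$: the total is bounded by $\hat Z^n\hat R^{A\eta}\sum_D\abs{D}^{(A+1)\eta-1}\ll\hat Z^n\hat R^{A(A+2)\eta}$, which becomes $\hat Z^n\hat R^\eps$ upon taking $\eta=\eps/(A(A+2))$. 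The hard part is setting up the density estimate in the main case cleanly; it hinges on $F^*$ having non-zero reduction modulo all but finitely many primes (which holds since $F^*\in\OK[\bm c]$ is a fixed non-zero polynomial), after which Schwartz--Zippel handles the rest. Note that the case split $\hat Z\lessgtr\hat R^A$ cannot be avoided: a direct application of Lemma~\ref{LEM:count-B-R_c-infty-divisors} alone loses a factor $\hat Z^{O(\eta)}$, which is intolerable when $\hat Z$ is much larger than $\hat R$, whereas the density estimate requires $\abs{D}\le\hat Z$ to be effective.
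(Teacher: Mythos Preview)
Your argument is essentially the same as the paper's: both split on $\hat Z\lessgtr\hat R^A$, handle the small-$\hat Z$ case via Lemma~\ref{LEM:count-B-R_c-infty-divisors}, and in the main case expand the $A$-th power, switch summation, and use a Lang--Weil/Schwartz--Zippel density bound modulo the square-free $\rad(r_1\cdots r_A)$ combined with Lemma~\ref{LEM:count-B-R_c-infty-divisors} for the tuple count. The one omission is that expanding $(\cdot)^A$ as a sum over $A$-tuples presupposes $A\in\NN$; the paper notes this reduction follows from H\"older's inequality, and you should add that remark.
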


\begin{proof}
If $\hat Z<\hat R^A$, use Lemma~\ref{LEM:count-B-R_c-infty-divisors}.
Now suppose $\hat Z\ge \hat R^A$.
By H\"{o}lder's inequality, we may assume $A\in \NN$.
Then $\Sigma$, the left-hand side of \eqref{INEQ:N_c-small-divisor-moment-bound}, equals
\begin{equation*}
    \sum_{\bm{c}\in \mathcal{S}_1:\, \norm{\bm{c}}\le \hat Z}
    \sum_{\substack{u_1,\dots,u_A\in \RcB\\ \abs{u_i}\le \hat R}} 1
    = \sum_{\substack{u_1,\dots,u_A\in \Omon\\ \abs{u_i}\le \hat R}}\sum_{\bm{c}\in \mathcal{S}_1:\, \norm{\bm{c}}\le \hat Z}
    \bm{1}_{\rad(u_1\cdots u_A)\mid F^\ast(\bm{c})}.
\end{equation*}
Here $\abs{u_1\cdots u_A}\le \hat R^A\le \hat Z$, so by Lang--Weil and the Chinese remainder theorem,
\begin{equation*}
\begin{split}
\Sigma &\ll_\eps \sum_{u_1,\dots,u_A\in \Omon:\, \abs{u_i}\le \hat R} \frac{\hat Z^n}{\abs{\rad(u_1\cdots u_A)}^{1-\eps}} \\
&\le \sum_{r\in \Omon:\, \abs{r}\le \hat R^A} \frac{\hat Z^n}{\abs{r}^{1-\eps}} \sum_{u_1,\dots,u_A\in \Omon:\, \abs{u_i}\le \hat R,\; u_i\mid r^\infty} 1.
\end{split}
\end{equation*}
 Lemma~\ref{LEM:count-B-R_c-infty-divisors} now yields $\Sigma \ll_{A,\eps} \sum_{r\in \Omon:\, \abs{r}\le \hat R^A} \hat Z^n \abs{r}^{\eps-1} (\hat R \abs{r})^\eps \ll_\eps \hat Z^n \hat R^{(2A+1)\eps}$.
\end{proof}

At several points later, we shall require nontrivial bounds on certain level sets.

\begin{lemma}
Let $B,\lambda\in \RR$ and $z\in \overline{K}$.
Uniformly over places $v$ of $K$, we have
\begin{equation}
\label{INEQ:interval-estimate}
    \#\set{x\in \OK: \abs{x} \le \hat B,\; \abs{x-z}_v \le \hat \lambda}
    \ll_K 1 + \hat B^{\bm{1}_{v\ne \infty}} \hat \lambda.
\end{equation}
\end{lemma}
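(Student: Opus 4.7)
The plan is to reduce to a shift-free count. If the set in \eqref{INEQ:interval-estimate} is empty the bound is trivial, so fix an element $x_0$ inside it. The translation $x \mapsto y = x - x_0$ is an injection of the set into $\OK$, and applying the ultrametric inequality to $\abs{\cdot}$ and to a fixed extension of $\abs{\cdot}_v$ to $\overline{K}$ (using $y = (x-z) - (x_0-z)$ for the latter) gives $\abs{y} \le \hat B$ and $\abs{y}_v \le \hat\lambda$. It therefore suffices to show
\[
\#\{y \in \OK : \abs{y} \le \hat B,\; \abs{y}_v \le \hat\lambda\} \ll_K 1 + \hat B^{\bm{1}_{v \ne \infty}} \hat\lambda.
\]

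In the archimedean case $v = \infty$, the $v$-adic condition becomes $\abs{y} \le \hat\lambda$, so the count equals $\#\{y \in \OK : \abs{y} \le \min(\hat B, \hat\lambda)\} \ll 1 + \hat\lambda$. In the non-archimedean case, when $v$ corresponds to a monic prime $\varpi \in \Omon$, every $y \in \OK$ satisfies $\abs{y}_\varpi \le 1$; the condition $\abs{y}_\varpi \le \hat\lambda$ is therefore vacuous if $\hat\lambda \ge 1$, giving $\ll 1 + \hat B \le 1 + \hat B \hat\lambda$. If $\hat\lambda < 1$, set $e \defeq \lceil -\lambda / \deg\varpi \rceil$, so that $\abs{\varpi^e}^{-1} = q^{-e\deg\varpi} \le q^\lambda = \hat\lambda$ by choice of $e$. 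Then $\abs{y}_\varpi \le \hat\lambda$ is equivalent to $\varpi^e \mid y$, and writing $y = \varpi^e y'$ with $y'\in\OK$ bounds the count by $\#\{y' \in \OK : \abs{y'} \le \hat B q^{-e\deg\varpi}\} \ll 1 + \hat B \hat\lambda$.

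The argument is essentially bookkeeping and contains no serious obstacle. The only subtle point is uniformity in $v$: although $\deg\varpi$ can be arbitrarily large, the upward rounding in the definition of $e$ is absorbed by the estimate $\abs{\varpi^e}^{-1} \le \hat\lambda$, and the implicit constants depend only on $q$, giving uniformity across all places.
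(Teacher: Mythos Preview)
Your proof is correct and follows essentially the same approach as the paper: both translate by a fixed element $x_0$ of the set and then, in the non-archimedean case, convert the $v$-adic condition into a divisibility $\varpi^e \mid y$ (your $e = \lceil -\lambda/\deg\varpi\rceil$ coincides with the paper's $l = -\lfloor \lambda/\deg\varpi\rfloor$). The only cosmetic difference is that you apply the ultrametric inequality to both $\abs{\cdot}$ and $\abs{\cdot}_v$ simultaneously to kill the shift, whereas the paper keeps a shift $z_0 = -x_0/\varpi^l$ in the $\abs{\cdot}$-condition and then invokes the already-proven $v=\infty$ case; your version is marginally more direct.
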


\begin{proof}
This is trivial if $v=\infty$.
If $v=\varpi\ne \infty$, we write $\Omega_z$ for the set appearing on the left hand side, which we may assume is non-empty. Fix an element $x_0\in \Omega_z$ and 
note that $\abs{x-z}_v \le \hat \lambda
\Leftrightarrow \abs{x-x_0}_v \le \hat \lambda
\Leftrightarrow \varpi^l \mid x-x_0$.
Hence we can 
write $x = x_0+y\varpi^l$ with $l \defeq -\floor{\lambda/\deg{\varpi}} \in \ZZ$ and $y\in \OK$,
to get
\begin{equation*}
\#\Omega_z
\le \#\set{y\in \OK: \abs{\varpi}^l\, \abs{y-z_0} \le \hat B},
\end{equation*}
where $z_0 \defeq -x_0/\varpi^l\in K$.
By the ``$v=\infty$ case'' of \eqref{INEQ:interval-estimate},
the desired result follows, since $\hat B/\abs{\varpi}^l = \hat B/q^{l\deg{\varpi}} = \hat B q^{(\deg{\varpi}) \floor{\lambda/\deg{\varpi}}} \le \hat B \hat \lambda$.
\end{proof}

\begin{lemma}
    Suppose $f\in K[x]$ has leading term $ax^d$ with $a\neq 0$ and $d\geq 1$.
    Let $B,\lambda\in \RR$.
    Then, uniformly over places $v$ of $K$,  we have 
    \begin{equation}
    \label{INEQ:integral-points-level-estimate}
        \#\set{x\in \OK: \abs{x} \le \hat B,\; \abs{f(x)}_v \le \hat \lambda}
        \ll_K d\, (1 + \hat B^{\bm{1}_{v\ne \infty}}\, (\hat \lambda/\abs{a}_v)^{1/d}).
    \end{equation}
\end{lemma}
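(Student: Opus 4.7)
The plan is to reduce the inequality $\abs{f(x)}_v \le \hat\lambda$ to a union of $d$ linear inequalities of the form $\abs{x-\alpha_i}_v \le (\hat\lambda/\abs{a}_v)^{1/d}$ and then apply the previous level-set estimate \eqref{INEQ:interval-estimate} to each of them.

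First, I would factor $f$ in $\overline{K}[x]$ as $f(x) = a\prod_{i=1}^{d}(x-\alpha_i)$, with $\alpha_1,\dots,\alpha_d\in \overline{K}$. Fix a place $v$ of $K$ and extend $\abs{\cdot}_v$ uniquely to $\overline{K}$ through any embedding $\overline{K}\hookrightarrow \overline{K_v}$. For any $x\in \OK$ with $\abs{f(x)}_v \le \hat\lambda$, taking absolute values in the factorisation gives $\prod_{i=1}^d \abs{x-\alpha_i}_v \le \hat\lambda/\abs{a}_v$, so by pigeonhole there exists at least one index $i$ with $\abs{x-\alpha_i}_v \le (\hat\lambda/\abs{a}_v)^{1/d}$.

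Next, I would use this to inclusion the set in question into a union of $d$ smaller sets, namely
$$\set{x\in \OK: \abs{x}\le \hat B,\; \abs{f(x)}_v\le \hat\lambda}\;\subseteq\;\bigcup_{i=1}^{d}\set{x\in \OK: \abs{x}\le \hat B,\; \abs{x-\alpha_i}_v\le (\hat\lambda/\abs{a}_v)^{1/d}}.$$
Each set on the right is of the shape appearing in \eqref{INEQ:interval-estimate}, with $z=\alpha_i\in \overline{K}$ and $\hat\lambda$ replaced by $(\hat\lambda/\abs{a}_v)^{1/d}$, so it has size $\ll_K 1+\hat B^{\bm{1}_{v\ne \infty}}(\hat\lambda/\abs{a}_v)^{1/d}$. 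Summing over the $d$ indices $i$ yields the claimed bound.

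There is no serious obstacle here: the only non-trivial ingredient is the unique extension of $\abs{\cdot}_v$ to $\overline{K}$, which is standard, together with the previous lemma. One small thing to be aware of is that when $v$ is archimedean-like (here, the place at infinity of $K$), the implied constant in \eqref{INEQ:interval-estimate} is absolute, while at finite places it depends only on the uniformiser conventions baked into $\abs{\cdot}_\varpi$; in both cases the dependence is subsumed into $\ll_K$, so the union bound preserves the shape of the estimate up to the factor of $d$.
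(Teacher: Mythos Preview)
Your proof is correct and follows essentially the same approach as the paper: factor $f$ over $\overline{K}$, use the product formula and pigeonhole to reduce to linear level sets $\abs{x-\alpha_i}_v \le (\hat\lambda/\abs{a}_v)^{1/d}$, and apply \eqref{INEQ:interval-estimate} to each. The only cosmetic difference is that the paper first replaces $f$ by $f/a$ to reduce to the monic case, whereas you keep track of $a$ throughout; the two are equivalent.
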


\begin{proof}
    We use the idea behind Lemma 1 of
    \cite{browning2006counting}.
    By replacing $f$ with $f/a$, we may assume $f$ is monic.
    Let $z_1,\dots,z_d\in \overline{K}$ be the roots of $f$.
    Then $$
    \set{x\in \overline{K}: \abs{f(x)}_v \le \hat \lambda} \belongs
    \bigcup_{1\le i\le d} \set{x\in \overline{K}: \abs{x-z_i}_v \le \hat \lambda^{1/d}}.
    $$
    Summing \eqref{INEQ:interval-estimate} over $z=z_i$ gives the desired result.
\end{proof}

Given $f\in \OK[x]$ and $r\in \Omon$, let
$N(f;r)$ be the number of solutions $x\in \OK/r\OK$ to $f(x)\equiv 0\bmod{r}$.
Similarly, for $g\in \OK[y_1,\dots,y_n]$, define
\begin{equation}\label{eq:Ngr}
    N(g;r) \defeq \#{\set{(y_1,\dots,y_n)\in (\OK/r\OK)^n: g(y_1,\dots,y_n)\equiv 0\bmod{r}}}.
\end{equation}

\begin{lemma}
Suppose $f\in \OK[x]$ has leading term $ax^d$ with $a\neq 0$ and $d\geq 1$. Then there exists $A_d>0$ such that for any $r\in \Omon$ we have
\begin{equation}
\label{INEQ:univariate-zero-density-mod-r}
    N(f;r) \le A_d^{\omega(r)} \abs{a}^{1/d} \abs{r}^{1 - d^{-1}}.
\end{equation}
Moreover, if $r$ is square-free then
\begin{equation}
\label{INEQ:univariate-zero-density-mod-square-free}
    N(f;r) \le d^{\omega(r)} \abs{\gcd(r,a)}
    \ll_{a,d,\eps} \abs{r}^\eps.
\end{equation}
\end{lemma}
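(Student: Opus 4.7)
The plan is to combine the Chinese Remainder Theorem with the level-set estimate \eqref{INEQ:integral-points-level-estimate}. Writing $r = \prod_\varpi \varpi^{e_\varpi}$, CRT gives $N(f;r) = \prod_\varpi N(f;\varpi^{e_\varpi})$, reducing both inequalities to the prime-power case.

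For \eqref{INEQ:univariate-zero-density-mod-r}, I would represent $\OK/\varpi^e\OK$ by elements of $\OK$ of degree strictly less than $e\deg\varpi$, i.e., with $|x| < |\varpi|^e$, and apply \eqref{INEQ:integral-points-level-estimate} with $v = \varpi$, with $\hat B$ just below $|\varpi|^e$, and with $\hat\lambda = |\varpi|^{-e}$ (using $\varpi^e \mid f(x) \Leftrightarrow |f(x)|_\varpi \le |\varpi|^{-e}$). This produces
\[
N(f;\varpi^e) \ll_d 1 + |\varpi|^{e(1-1/d)} |a|_\varpi^{-1/d}.
\]
Since $|a|_\varpi \le 1$ (because $a \in \OK$) and $e(1-1/d) \ge 0$, the second term dominates, giving $N(f;\varpi^e) \le A_d\, |\varpi|^{e(1-1/d)} |a|_\varpi^{-1/d}$ for a suitable $A_d$. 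Multiplying over $\varpi^{e_\varpi} \pdiv r$ yields
\[
N(f;r) \le A_d^{\omega(r)} |r|^{1-1/d} \prod_{\varpi \mid r} |a|_\varpi^{-1/d}.
\]
The product formula $\prod_v |a|_v = 1$ over all places of $K$, combined with $|a|_\varpi \le 1$ at every finite place, then forces $\prod_{\varpi \mid r}|a|_\varpi^{-1/d} \le \prod_\varpi |a|_\varpi^{-1/d} = |a|^{1/d}$, establishing \eqref{INEQ:univariate-zero-density-mod-r}.

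For the square-free statement, CRT again reduces matters to a single prime $\varpi$. When $\varpi \nmid a$, the reduction $f \bmod \varpi$ is a nonzero polynomial of degree $d$ over the residue field $\OK/\varpi$, hence has at most $d$ roots; when $\varpi \mid a$, I fall back on the trivial bound $N(f;\varpi) \le |\varpi|$. Multiplying over the primes dividing $r$ gives $N(f;r) \le d^{\omega(r)} |\gcd(r,a)|$, and the closing $|r|^\eps$ estimate follows from $|\gcd(r,a)| \le |a|$ together with the standard divisor bound $d^{\omega(r)} \ll_{d,\eps} |r|^\eps$.

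The only bookkeeping point is that \eqref{INEQ:integral-points-level-estimate} counts $x \in \OK$ subject to a $|\cdot|$-bound rather than residues modulo $\varpi^e$ directly; a benign adjustment of $\hat B$ by at most a factor of $q$ ensures each residue class is counted at most once, and this loss is absorbed into $A_d$. I do not anticipate any substantive obstacle beyond this.
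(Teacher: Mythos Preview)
Your proposal is correct and follows essentially the same route as the paper: reduce to prime powers via CRT, invoke the level-set estimate \eqref{INEQ:integral-points-level-estimate} at $v=\varpi$ with $\hat\lambda=\abs{\varpi}^{-e}$, and for square-free $r$ use that a degree-$d$ polynomial has at most $d$ roots over $\OK/\varpi$. Your use of the product formula to pass from $\prod_{\varpi\mid r}\abs{a}_\varpi^{-1/d}$ to $\abs{a}^{1/d}$ is a clean way to assemble the local bounds; the paper instead records the pointwise fact $\abs{a}_\varpi^{-1}=q^{\ord_\varpi(a)}\le\abs{a}$ and leaves the global combination implicit, but the content is the same.
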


\begin{proof}
By the Chinese remainder theorem it suffices to assume $r=\varpi^l$ is a prime power.
Then to get \eqref{INEQ:univariate-zero-density-mod-r}, we apply \eqref{INEQ:integral-points-level-estimate} with
$    (\hat B,\hat \lambda,v) = (\abs{r},\abs{r}^{-1},\varpi)$,
noting that 
$|a|_\varpi^{-1}=q^{\ord_\varpi(a)}\leq |a|$.
Moreover, if $l=1$, then \eqref{INEQ:univariate-zero-density-mod-square-free} holds because $N(f;\varpi) \le d\, \abs{\gcd(\varpi,a)}$.
\end{proof}

\begin{corollary}\label{cor:HUA}
Fix a non-constant polynomial $g\in \OK[y_1,\dots,y_n]$, where $n\ge 1$.
Then there exists $A_g>0$ such that for any $r\in \Omon$ we have 
$$
    N(g;r)\le A_g^{\omega(r)} \abs{r}^{n - (\deg{g})^{-1}}.
$$
\end{corollary}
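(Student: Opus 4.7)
The plan is to reduce Corollary~\ref{cor:HUA} to the univariate bound \eqref{INEQ:univariate-zero-density-mod-r} via a unimodular shear followed by slicing. Set $d \defeq \deg g$. By the Chinese remainder theorem, $N(g;r) = \prod_{\varpi \mid r} N(g;\varpi^{v_\varpi(r)})$, so it suffices to produce a constant $C_g$, depending only on $g$, such that $N(g;\varpi^l) \leq C_g \abs{\varpi^l}^{n-1/d}$ uniformly over prime powers $\varpi^l$; taking $A_g \defeq C_g$ then yields the stated form $A_g^{\omega(r)} \abs{r}^{n-1/d}$.

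Before slicing, I would arrange that $g$ has a pure $y_1^d$ term with nonzero constant coefficient. Let $g_d \in \OK[y_1, \ldots, y_n]$ denote the degree-$d$ homogeneous component of $g$. The polynomial $g_d(1, y_2, \ldots, y_n) \in \OK[y_2, \ldots, y_n]$ is nonzero, since its monomial coefficients coincide with the top-degree monomial coefficients of $g_d \not\equiv 0$; hence it does not vanish identically on the infinite domain $\OK^{n-1}$. Pick $\beta_2, \ldots, \beta_n \in \OK$ with $c \defeq g_d(1, \beta_2, \ldots, \beta_n) \neq 0$, and apply the shear $y_1 = y_1$, $y_i = z_i + \beta_i y_1$ for $i \geq 2$. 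This substitution has determinant $1$, so it induces a bijection on $(\OK/r\OK)^n$ for every $r \in \Omon$. The transformed polynomial $\tilde g(y_1, z_2, \ldots, z_n) \defeq g(y_1, z_2 + \beta_2 y_1, \ldots, z_n + \beta_n y_1)$ has total degree $d$, and total-degree considerations together with direct expansion identify the coefficient of $y_1^d$ in $\tilde g$ as the nonzero constant $c$ (any $z$-dependent contribution would push the total degree above $d$).

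Finally I would slice along $y_1$. For each residue $(z_2, \ldots, z_n) \in (\OK/r\OK)^{n-1}$, the polynomial $f(y_1) \defeq \tilde g(y_1, z_2, \ldots, z_n) \in \OK[y_1]$ has leading term $c y_1^d$ with $c$ a fixed nonzero constant independent of the $z_i$, so \eqref{INEQ:univariate-zero-density-mod-r} gives $N(f;r) \leq A_d^{\omega(r)} \abs{c}^{1/d} \abs{r}^{1-1/d}$. Summing over the $\abs{r}^{n-1}$ residues $(z_2, \ldots, z_n)$ and invoking the shear bijection yields $N(g;r) \leq A_d^{\omega(r)} \abs{c}^{1/d} \abs{r}^{n-1/d}$. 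Specialising to $r = \varpi^l$, for which $\omega(r) = 1$, gives the prime-power bound with $C_g \defeq A_d \abs{c}^{1/d}$, completing the reduction from the first paragraph. The only mild subtleties are ensuring unimodularity of the change of variables (automatic for the shear above, which lifts cleanly to every modulus) and existence of the $\beta_i$ (ensured by $\OK$ being infinite); neither is a serious obstacle.
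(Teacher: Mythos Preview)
Your proof is correct and follows essentially the same route as the paper's: the paper invokes Lemma~\ref{LEM:OK-linear-change-to-near-monic} to perform exactly this shear (putting $g$ into near-monic form in $y_1$), and then applies \eqref{INEQ:univariate-zero-density-mod-r} slice by slice just as you do. Your initial CRT reduction to prime powers is harmless but unnecessary, since the direct bound $N(g;r)\le A_d^{\omega(r)}\abs{c}^{1/d}\abs{r}^{n-1/d}$ you obtain already has the required shape (absorb $\abs{c}^{1/d}\ge 1$ into $A_g$ using $\omega(r)\ge 1$ for $r\ne 1$).
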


\begin{proof}
This is analogous to \cite{pierce2016representations}*{Lemma 4.10} (in which the homogeneity assumed  is unimportant).
By Lemma \ref{LEM:OK-linear-change-to-near-monic}, we can make an $\OK$-linear change of variables in order to assume that $\deg_{y_1}(g) = \deg(g)$.
Then \eqref{INEQ:univariate-zero-density-mod-r} immediately implies
the desired bound. 
\end{proof}

\begin{lemma}
\label{LEM:OK-linear-change-to-near-monic}
Let $n\ge 1$.
Let $S$ be a finite subset of $\OK[y_1,\dots,y_n] \setminus \set{0}$.
Then there exists $\bm{a}=(a_2,\dots,a_n)\in \OK^{n-1}$ such that for all $g\in S$, we have
$$
    \deg_{y_1}(\bm{a}^\ast{g})
    = \deg(\bm{a}^\ast{g})
    = \deg(g),
$$
where $\bm{a}^\ast{g}\defeq g(y_1,y_2+a_2y_1,\dots,y_n+a_ny_1)\in \OK[y_1,\dots,y_n]$.
\end{lemma}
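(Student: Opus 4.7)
The plan is to use the classical Nagata-style substitution trick. For each $g\in S$, let $G_g\in \OK[y_1,\dots,y_n]$ denote the top-degree homogeneous part of $g$, of total degree $d_g \defeq \deg(g)$. The key observation is that the substitution $\bm{a}^\ast$ given by $y_1\mapsto y_1$ and $y_i\mapsto y_i+a_iy_1$ for $i\ge 2$ sends each monomial of total degree $e$ to a polynomial of total degree $e$ (since each $y_i+a_iy_1$ is linear), so $\deg(\bm{a}^\ast g)\le d_g$ and the top-degree part of $\bm{a}^\ast g$ coincides with $\bm{a}^\ast G_g$. Setting $y_2=\dots=y_n=0$ in $\bm{a}^\ast G_g$ yields $G_g(y_1,a_2y_1,\dots,a_ny_1) = y_1^{d_g} G_g(1,a_2,\dots,a_n)$, by homogeneity. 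Hence the coefficient of $y_1^{d_g}$ in $\bm{a}^\ast g$ is $G_g(1,a_2,\dots,a_n)$, and if this is nonzero then $\deg_{y_1}(\bm{a}^\ast g)\ge d_g$, which combined with $\deg_{y_1}(\bm{a}^\ast g)\le \deg(\bm{a}^\ast g)\le d_g$ forces equality throughout.

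It therefore suffices to find $(a_2,\dots,a_n)\in \OK^{n-1}$ such that $G_g(1,a_2,\dots,a_n)\neq 0$ for every $g\in S$ simultaneously. I first claim that for each $g\in S$, the polynomial $G_g(1,y_2,\dots,y_n)\in K[y_2,\dots,y_n]$ is nonzero. Indeed, the dehomogenization map sending a homogeneous polynomial of degree $d_g$ to its specialization at $y_1=1$ is injective: distinct monomials $y_1^{e_1}\cdots y_n^{e_n}$ and $y_1^{f_1}\cdots y_n^{f_n}$ of the same total degree $d_g$ that dehomogenize to the same monomial must agree in $(e_2,\dots,e_n)$, hence in $e_1$ as well. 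Consequently
\[
\Phi(y_2,\dots,y_n)\defeq \prod_{g\in S}G_g(1,y_2,\dots,y_n)\in K[y_2,\dots,y_n]
\]
is a nonzero polynomial, since $S$ is finite and $K[y_2,\dots,y_n]$ is a domain.

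It remains to exhibit $(a_2,\dots,a_n)\in \OK^{n-1}$ at which $\Phi$ does not vanish. This is a standard consequence of the infinitude of $\OK=\FF_q[t]$: one proves by induction on $n-1$ that any nonzero polynomial in $K[y_2,\dots,y_n]$ admits a non-vanishing $\OK$-point. For the base case $n=1$ the claim is vacuous, and for $n=2$ a nonzero univariate polynomial over $K$ has finitely many roots, so some $a_2\in \OK$ avoids them. For the inductive step, view $\Phi$ as a polynomial in $y_n$ with coefficients in $K[y_2,\dots,y_{n-1}]$; at least one coefficient is nonzero, so by induction there exist $a_2,\dots,a_{n-1}\in \OK$ at which that coefficient does not vanish, and then one picks $a_n\in \OK$ avoiding the finitely many roots of $\Phi(a_2,\dots,a_{n-1},y_n)\in K[y_n]$. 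This completes the proof. No step poses a serious obstacle; the only point that requires a moment's care is the injectivity of dehomogenization, which ensures that none of the leading-form tests $G_g(1,\cdot)$ degenerates trivially.
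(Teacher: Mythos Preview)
Your proof is correct and follows essentially the same approach as the paper: both reduce to finding $(a_2,\dots,a_n)\in\OK^{n-1}$ on which the product of the dehomogenized leading forms $\prod_{g\in S} h_g(1,a_2,\dots,a_n)$ does not vanish. The paper phrases the final step as ``$\OK^{n-1}$ is Zariski dense in $\mathbb{A}^{n-1}_K$'', whereas you spell out the equivalent inductive argument explicitly; your write-up also makes the verification that $\deg_{y_1}(\bm{a}^\ast g)=d_g$ more explicit than the paper does.
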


\begin{proof}
For each $g\in S$, let $h_g\in \OK[y_1,\dots,y_n]\setminus \set{0}$ be the leading homogeneous part of $g$,
so that $\deg{h_g} = \deg{g}$ and $\deg(g-h_g) < \deg{g}$.
Then for any $g\in S$ and $\bm{a}\in R^{n-1}$ (for any $K$-algebra $R$), the desired property  is equivalent to
$$
h_g(1,a_2,\dots,a_n) \ne 0.
$$
But $h_g$ is homogeneous, so this defines a \emph{non-empty} open
subscheme of $\mathbb{A}^{n-1}_K$.
Since $S$ is finite and $\OK^{n-1}$ is Zariski dense in $\mathbb{A}^{n-1}_K$, the lemma follows.
\end{proof}

Beyond Corollary \ref{cor:HUA}, we need a strong estimate for square-free polynomials.

\begin{lemma}
\label{LEM:fixed-poly-cube-free-point-count-bound}
Fix a non-zero polynomial $g\in \OK[y_1,\dots,y_n]$, where $n\ge 1$.
Assume $g$ is square-free as an element of $K[y_1,\dots,y_n]$.
Then there exists $A_g>0$ such that for all cube-free $r\in \Omon$ we have 
\begin{equation*}
    N(g;r)\le A_g^{\omega(r)} \abs{r}^{n-1}.
\end{equation*}
\end{lemma}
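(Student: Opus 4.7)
The plan is to use multiplicativity of $N(g;\cdot)$ via the Chinese remainder theorem to reduce to prime-power moduli, and then to handle square prime-powers via Hensel lifting, exploiting the square-freeness of $g$ to control the resulting singular locus.

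\emph{Setup and square-free part.} First I apply Lemma~\ref{LEM:OK-linear-change-to-near-monic} to the finite set $\set{g,\partial_1 g,\dots,\partial_n g}$, performing an $\OK$-linear change of coordinates (which preserves $N(g;r)$) after which every non-zero element of that set has full $y_1$-degree. Since $r$ is cube-free, I write $r=r_1 r_2^2$ with $r_1,r_2\in \Omon$ square-free and coprime, so that CRT gives $N(g;r)=N(g;r_1)\,N(g;r_2^2)$. For $N(g;r_1)$ I fix $(y_2,\dots,y_n)\in(\OK/r_1)^{n-1}$ and apply \eqref{INEQ:univariate-zero-density-mod-square-free} to the univariate $y_1$-polynomial $g(\,\cdot\,,y_2,\dots,y_n)$, whose leading coefficient $a\in \OK$ is fixed. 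Summing over $(y_2,\dots,y_n)$ yields $N(g;r_1)\le A_g^{\omega(r_1)}\abs{r_1}^{n-1}$, after absorbing the finitely many primes dividing $a$ into $A_g$.

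\emph{Square part via Hensel.} For $N(g;r_2^2)$, CRT reduces matters to proving $N(g;\varpi^2)\le A_g\abs{\varpi}^{2(n-1)}$ for each prime $\varpi$. The Taylor expansion $g(\bar y+\varpi z)\equiv g(\bar y)+\varpi\,\nabla g(\bar y)\cdot z\pmod{\varpi^2}$ partitions solutions by whether $\nabla g(\bar y)\not\equiv 0\pmod\varpi$ (smooth points, contributing exactly $\abs{\varpi}^{n-1}$ lifts each) or $\nabla g(\bar y)\equiv 0\pmod\varpi$ (singular points, contributing at most $\abs{\varpi}^n$ lifts each), so
\[
N(g;\varpi^2)\le \abs{\varpi}^{n-1}\,N(g;\varpi)+\abs{\varpi}^n\,\#\mathrm{Sing}(g;\varpi),
\]
where $\mathrm{Sing}(g;\varpi)=\set{\bar y\in(\OK/\varpi)^n: g(\bar y)\equiv \nabla g(\bar y)\equiv 0\pmod\varpi}$. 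The first term is $\ll_g \abs{\varpi}^{2(n-1)}$ by the square-free case applied at the prime $\varpi$.

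\emph{Singular locus bound (main obstacle).} The main work is showing $\#\mathrm{Sing}(g;\varpi)\ll_g \abs{\varpi}^{n-2}$. Since $g$ is square-free in $K[y_1,\dots,y_n]$, factoring $g=p_1\cdots p_k$ into distinct irreducibles and inspecting $\partial_i g=\sum_j\bigl(\prod_{l\ne j}p_l\bigr)\partial_i p_j$ shows that every singular point lies either on an intersection $V(p_i)\cap V(p_j)$ with $i\ne j$ or on $\mathrm{Sing}(V(p_i))$ for some $i$; hence $V(J)\subset\mathbb{A}^n_K$ has dimension at most $n-2$, where $J=(g,\partial_1 g,\dots,\partial_n g)$. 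Prime avoidance then furnishes $f_1,f_2\in J\cap\OK[y_1,\dots,y_n]$ with $\gcd(f_1,f_2)=1$ in $K[y_1,\dots,y_n]$, and a further application of Lemma~\ref{LEM:OK-linear-change-to-near-monic} makes both $f_1,f_2$ $y_1$-regular; then $R\defeq\mathrm{Res}_{y_1}(f_1,f_2)\in\OK[y_2,\dots,y_n]$ is a fixed non-zero polynomial. For every $\varpi$ outside the finite set of primes dividing the content of $R$ or the $y_1$-leading coefficients of $f_1,f_2$, projecting to $(y_2,\dots,y_n)$ and counting $y_1$-fibres gives $\#\mathrm{Sing}(g;\varpi)\le \#V(f_1,f_2)(\OK/\varpi)\le C_g\abs{\varpi}^{n-2}$; for the finitely many bad primes, the trivial bound $\abs{\varpi}^{2n}\le M_g^2\abs{\varpi}^{2(n-1)}$ is absorbed into $A_g^{\omega(r_2)}$. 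Reassembling via $\abs{r}^{n-1}=\abs{r_1}^{n-1}\abs{r_2}^{2(n-1)}$ and $\omega(r)=\omega(r_1)+\omega(r_2)$ yields the stated bound. The trickiest bookkeeping is ensuring that a single constant $A_g$ handles the generic prime-power estimate together with the finitely many bad-prime exceptions without breaking multiplicativity.
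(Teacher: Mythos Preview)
Your argument has a genuine gap in the ``Singular locus bound'' step, and this is precisely the subtlety the paper flags with ``care is needed because $K$ is not perfect''.

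The claim that $V(J)\subset\mathbb{A}^n_K$ has dimension $\le n-2$, where $J=(g,\partial_{y_1}g,\dots,\partial_{y_n}g)$, is false over the imperfect field $K=\FF_q(t)$. An irreducible factor $p_i$ of $g$ may satisfy $\partial_{y_j}p_i=0$ for all $j$ without being a $p$th power in $K[y_1,\dots,y_n]$, because $K$ itself is not closed under $p$th roots. In that case $\mathrm{Sing}(V(p_i))=V(p_i)$ has dimension $n-1$, your prime avoidance step cannot produce coprime $f_1,f_2\in J$, and the bound $\#\mathrm{Sing}(g;\varpi)\ll_g\abs{\varpi}^{n-2}$ fails. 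A concrete counterexample is $n=1$, $g=y_1^p-t$: this polynomial is irreducible (hence square-free) in $K[y_1]$, yet $\partial_{y_1}g=0$, so $J=(g)$ and $\#\mathrm{Sing}(g;\varpi)=N(g;\varpi)=1=\abs{\varpi}^{n-1}$ for every prime $\varpi$.

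The paper's proof sidesteps this by citing Poonen \cite{poonen2003squarefree}*{\S~7}, whose key idea is to bring in the derivative $\partial_t g$ with respect to the transcendental parameter $t$. Since $g$ is square-free in $\FF_q[t,y_1,\dots,y_n]$ and $\FF_q$ \emph{is} perfect, the locus $g=\partial_t g=\partial_{y_1}g=\cdots=\partial_{y_n}g=0$ in $\mathbb{A}^{n+1}_{\FF_q}$ genuinely has codimension $\ge 2$. The connection to your Case~2 is that $\varpi^2\mid g(y_0)$ (with $y_0$ the small representative) forces $\varpi\mid\frac{d}{dt}g(y_0)$; combined with $\varpi\mid\nabla_y g(y_0)$ and the chain rule this yields $\varpi\mid(\partial_t g)(y_0)$, placing $y_0$ on the smaller locus. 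Your Hensel decomposition is correct as far as it goes, but to close the argument you must replace $\mathrm{Sing}(g;\varpi)$ by this refined set and invoke the extra equation $\partial_t g=0$.
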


\begin{proof}
Assume $r=\varpi^l$.
When $l=1$, the Lang--Weil estimate for the quasi-projective variety $g=0$ implies $N(g;\varpi) \ll_g \abs{\varpi}^{n-1}=\abs{r}^{n-1}$.
When $l=2$, care is needed because $K$ is not perfect.
Nonetheless, in \cite{poonen2003squarefree}*{final paragraph of \S~7 (in ``Proof of Theorem 3.4'')},
it is shown that $N(g;\varpi^2) \ll_g \abs{\varpi}^{2n-2} = \abs{r}^{n-1}$.
This completes the proof.
\end{proof}

Finally, we need an affine dimension growth bound available from \cite{browning2015rational}.

\begin{lemma}
[\cite{browning2015rational}*{Lemma 2.8}]
\label{LEM:affine-dimension-growth}
Let $B\geq 0$.
Any affine $K$-variety $W$ (equipped with an integral model over $\OK$)
satisfies $\#\set{\bm{y}\in W(\OK): \norm{\bm{y}}\le \hat B} \ll_W {\hat B}^{\dim{W}}$.
\end{lemma}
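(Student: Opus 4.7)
The plan is to prove this by induction on the ambient dimension $n$ in an embedding $W \belongs \mathbb{A}^n_K$, after first reducing to the case where $W$ is irreducible by summing over its finitely many irreducible components. The base case is $\dim W = n$: here $W = \mathbb{A}^n_K$ and the count is exactly $\hat B^n$.

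For the inductive step, assume $d \defeq \dim W < n$ and pick a nonzero polynomial $f \in \OK[y_1,\dots,y_n]$ vanishing on $W$ (clearing denominators from any element of the ideal of $W$). Apply Lemma~\ref{LEM:OK-linear-change-to-near-monic} with $S = \{f\}$ to produce a shear $\bm{a} \in \OK^{n-1}$ such that $\deg_{y_1}(\bm{a}^\ast f) = \deg(\bm{a}^\ast f) = \deg(f) \eqdef \delta$; viewed as a polynomial in $y_1$ with coefficients in $\OK[y_2,\dots,y_n]$, $\bm{a}^\ast f$ then has a nonzero \emph{constant} leading coefficient in $\OK$. Since the shear is lower triangular with unit diagonal, it is an $\OK$-linear bijection $\OK^n \to \OK^n$ distorting $\norm{\bm{y}}$ by at most a $W$-dependent factor, so we may freely replace $W$ and $f$ by their sheared versions.

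Now for each $(y_2,\dots,y_n) \in \OK^{n-1}$, the univariate polynomial $f(\,\cdot\,,y_2,\dots,y_n) \in \OK[y_1]$ is nonzero of degree exactly $\delta$ and thus has at most $\delta$ roots $y_1 \in \OK$. Let $W'$ be the scheme-theoretic image of $W$ under the projection $\mathbb{A}^n_{\OK} \to \mathbb{A}^{n-1}_{\OK}$ forgetting $y_1$; it inherits an $\OK$-integral model by construction, and since $f$ is monic in $y_1$ the morphism $W \to W'$ is finite, so $\dim W' = d$. Invoking the inductive hypothesis for $W' \belongs \mathbb{A}^{n-1}_K$ yields
\[
\#\{\bm{y} \in W(\OK) : \norm{\bm{y}} \le \hat B\} \le \delta \cdot \#\{\bm{y}' \in W'(\OK) : \norm{\bm{y}'} \le \hat B\} \ll_W \hat B^d,
\]
which closes the induction. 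The only subtleties are bookkeeping: confirming that the shear distorts the counting by at most a $W$-dependent constant (which is immediate from its triangular form), and that the scheme-theoretic image of the $\OK$-model of $W$ is a legitimate $\OK$-integral model of $W'$ to feed back into the inductive hypothesis.
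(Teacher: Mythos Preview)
The paper does not supply a proof; this lemma is quoted directly from \cite{browning2015rational}*{Lemma 2.8}. Your argument is a correct, self-contained proof by induction on the ambient dimension, using the shearing lemma (Lemma~\ref{LEM:OK-linear-change-to-near-monic}) to arrange that a defining polynomial has nonzero constant leading coefficient in $y_1$, so that each fibre of the coordinate projection contributes at most $\deg f$ points.

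Two minor remarks. First, after the shear the leading $y_1$-coefficient is a nonzero element of $\OK$, not necessarily a unit, so the projection $W \to \mathbb{A}^{n-1}$ is finite over $K$ rather than over $\OK$; this is enough for $\dim W' = d$, but in fact all you need for the inductive step is $\dim W' \le d$, which is immediate since $W'$ is the closure of the image of $W$. The inductive hypothesis then gives $\ll_{W'} \hat B^{\dim W'} \le \hat B^d$. Second, in the base case $W = \mathbb{A}^n_K$ the count is $\asymp \hat B^n$ rather than exactly $\hat B^n$ (it is $q^{\lfloor B \rfloor + 1}$ per coordinate), but this is of course immaterial for the bound.
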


\section{Ratios Conjecture and applications}
\label{SEC:ratios-plus}

We start generally and then specialise.
Let $K^{\textnormal{sep}}$ be the separable closure of $K$.
Fix a prime $\ell\ne \cha(\FF_q)$.
For a smooth proper $K$-variety $X$, define the $\ell$-adic cohomology
$$H^i_\ell(X) \defeq H^i(X \times_K K^{\textnormal{sep}}, \QQ_\ell).$$
This is an $\ell$-adic representation of $\Gal(K^{\textnormal{sep}}/K)$,
pure of weight $i$ by Deligne's resolution of the Weil Conjectures.
The exterior square representation $\bigwedge^2{H^i_\ell(X)}$, which will soon play an important role, is pure of weight $2i$.

In general,
for an $\ell$-adic representation $\rho\maps \Gal(K^{\textnormal{sep}}/K) \to M$, pure of weight $w\in \ZZ$, we make the following definitions.
\begin{enumerate}
\item Let $M^{I_v}$ be the inertia invariants of $M$ at $v$.
Let $\alpha^0_{M,j}(v)$ (for $1\le j\le \dim{M^{I_v}}$) be the eigenvalues of geometric Frobenius on $M^{I_v}$.
\item Let $\alpha_{M,j}(v) \defeq \alpha^0_{M,j}(v) / (\# k_v)^{w/2}$, where $k_v$ is the residue field of $\mcal{O}_v$.
\item Let $$L_v(s,M) \defeq \prod_j (1-\alpha_{M,j}(v)\abs{k_v}^{-s})^{-1}$$ be the \emph{analytically normalized} local factor at $v$.
Let $$L(s,M) \defeq \prod_{\varpi} L_\varpi(s,M)$$ (with $\varpi\ne \infty$)
and $$\Lambda(s,M) \defeq \prod_v L_v(s,M)$$ (including $v=\infty$),
so that $\Lambda(\ast) = L(\ast) L_\infty(\ast)$.
\end{enumerate}

Let $V$ and $V_{\bm{c}}$ be the $K$-varieties in $\PP^{5}_K$ defined by $F(\bm{x})=0$ and $F(\bm{x})=\bm{c}\cdot\bm{x}=0$, respectively.
Let $L(s,V) = L(s,H^4_\ell(V)/H^4_\ell(\PP^5))$,
and for $\bm{c}\in \mcal{S}_1$ let
\begin{equation*}
L(s,\bm{c}) = L(s,H^3_\ell(V_{\bm{c}})),
\quad {\textstyle L(s,\bm{c},\bigwedge^2) = L(s,\bigwedge^2{H^3_\ell(V_{\bm{c}})})}.
\end{equation*}
Let $\alpha_{V,j}(v)$, $\alpha_{\bm{c},j}(v)$, $\alpha_{\bm{c},\bigwedge^2,j}(v)$ be the corresponding normalized eigenvalues.
The local factors are independent of $\ell$, by \cite{kahn2020zeta}*{Theorem 5.46}, and the number of normalized eigenvalues in each case is $\le \binom{10}{2}=45$ by classical Betti number calculations (for smooth projective hypersurfaces over $\CC$).

For $r\in \Omon$,
define $\lambda_V(r)$, $\lambda_{\bm{c}}(r)$,
$\lambda_{\bm{c},\bigwedge^2}(r)$ to be the \emph{$r$th coefficients}
of the Euler products $L(s,V)$, $L(s,\bm{c})$, $L(s,\bm{c},\bigwedge^2)$, respectively.
More precisely, for prime $\varpi\in \Omon$ let $\lambda_\ast(\varpi^k)$ be the coefficient of $\abs{\varpi}^{-ks}$ in $L_\varpi(s,\ast)$, and extend multiplicatively to define $\lambda_\ast(r)$ for $r\in \Omon$.
(We have to carefully define $\lambda_\ast(r)$, because for any $\varpi$, $\varpi'$ the sizes $\abs{\varpi}$, $\abs{\varpi'}$ are multiplicatively dependent.)

Let $\mathsf{T} = \frac{2\pi}{\log{q}}$.
Note that $q^s$ is invariant under translation by $i\mathsf{T}$, so
\begin{equation}
\label{EQN:vertical-periodicity-of-L-functions}
L(s,\ast) = L(s+i\mathsf{T}\ZZ,\ast).
\end{equation}

Let $\zeta_K(s)
\defeq \prod_{\varpi} (1 - \abs{\varpi}^{-s})^{-1}
= \sum_{r\in \Omon} \abs{r}^{-s} = (1-q^{1-s})^{-1}$.
For later convenience we define 
\begin{equation}
\label{EQN:factor-exterior-square-L-function}
    L(s,\bm{c},2)\defeq L(s,\bm{c},{\textstyle\bigwedge^2})/\zeta_K(s),
\end{equation}
and  $L_v(s,\bm{c},2)\defeq L_v(s,\bm{c},{\textstyle\bigwedge^2})/\zeta_{K,v}(s)$.

\begin{proposition}
\label{PROP:HW2-consequences}
Let $\bm{c}\in \mcal{S}_1$.
Let $L(s)$ be one of $\zeta_K(s)$, $L(s,V)$, $L(s,\bm{c})$,
$L(s,\bm{c},\bigwedge^2)$, $L(s,\bm{c},2)$.
Let $\Lambda(s) = L(s) L_\infty(s)$ be the corresponding completed $L$-function.
\begin{enumerate}
    \item There exist unique polynomials $P_0,P_1,P_2\in 1+z\,\RR[z]$,
    where the roots of $P_i$ are complex numbers of size $q^{-i/2}$, such that
    \begin{equation*}
        \Lambda(s) = \frac{P_1(q^{-s})}{P_0(q^{-s}) P_2(q^{-s})}.
    \end{equation*}
    
    \item $\deg{P_0},\deg{P_2} \ll 1$ and $\deg{P_1} \ll 1+\log\abs{F^\ast(\bm{c})}$.
    
    \item At each place $v$, the local factor $L_v(s) = \prod_\alpha (1-\alpha q^{-s})^{-1}$ has real coefficients, and the inverse roots $\alpha$ satisfy the \emph{Ramanujan bound} $\abs{\alpha}\le 1$.
    
    \item We have $1/L(s) \ll_{\eps} \norm{\bm{c}}^\eps$ for $\Re(s)\ge \frac12+\eps$.

    \item The $\hat R^{-s}$ coefficient of $1/L(s)\in \RR[[q^{-s}]]$ is $\ll_{\eps} \norm{\bm{c}}^\eps \hat R^{1/2+\eps}$ for all $R\in \ZZ_{\ge 0}$.
\end{enumerate}
\end{proposition}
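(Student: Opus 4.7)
The plan is to realize each of the five $L$-functions as the $L$-function attached to a constructible $\ell$-adic sheaf on $\PP^1_{\FF_q}$, from which (1)--(3) follow from Grothendieck--Lefschetz combined with Deligne's purity, while (4) and (5) follow by standard complex-analytic arguments. For the principal case $L(s,\bm c)$, the sheaf is $\mcal G_{\bm c} \defeq j_\ast \mcal F_{\bm c}$, where $\mcal F_{\bm c}$ is the lisse $\ell$-adic sheaf on the smooth locus $U_{\bm c} \subset \PP^1_{\FF_q}$ whose stalks are the middle cohomology of the smooth fibres $V_{\bm c_0}$, and $j\maps U_{\bm c}\hookrightarrow \PP^1_{\FF_q}$ is the open immersion; analogous sheaves give $L(s,\bm c,\bigwedge^2)$ (via $\bigwedge^2 \mcal F_{\bm c}$), $L(s,V)$, and $\zeta_K$ (both from constant sheaves). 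Grothendieck--Lefschetz then provides
\[
\Lambda(s) = \prod_{i=0}^{2} \det\bigl(1 - q^{-s}\Frob \bigm| H^i(\PP^1_{\overline{\FF_q}}, \mcal G_{\bm c})^{\mathrm{norm}}\bigr)^{(-1)^{i+1}},
\]
where $\mathrm{norm}$ denotes the analytic twist by $q^{-w/2}$ ($w$ being the weight of $\mcal F_{\bm c}$). Since $\mcal G_{\bm c}$ is the middle extension of a pure sheaf on a curve, Deligne's purity forces $H^i(\PP^1_{\overline{\FF_q}}, \mcal G_{\bm c})$ to be pure of weight $w+i$, so after normalization the Frobenius eigenvalues on $H^i$ have absolute value $q^{i/2}$; the $i$-th factor above is then the polynomial $P_i$ of (1). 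Part (3) is now immediate: the Ramanujan bound at a place $v$ is purity applied to $\mcal F_{\bm c}^{I_v}$, and the real (in fact rational) coefficients follow from $\ell$-independence of characteristic polynomials, cited from \cite{kahn2020zeta}*{Theorem~5.46}.

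For (2), the degrees $\deg P_0$ and $\deg P_2$ are bounded by the dimensions of the Galois invariants and coinvariants of $\mcal F_{\bm c}$, hence by the rank of the underlying representation---at most $45$ as noted. The bound $\deg P_1 \ll 1 + \log\abs{F^\ast(\bm c)}$ follows from the Grothendieck--Ogg--Shafarevich Euler--Poincar\'e formula on $\PP^1_{\FF_q}$: the conductor exponent at a place $v$ is bounded by the tame drop $\mathrm{rk}\,\mcal F_{\bm c} - \dim\mcal F_{\bm c}^{I_v}$ plus the Swan conductor (both uniformly bounded locally), and the bad places are contained in the discriminant locus of $\{V_{\bm c_0}\}_{\bm c_0}$, which---up to bounded factors---is cut out by $F^\ast(\bm c)$ by the very definition of the dual form as defining tangent hyperplanes to $V$. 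For $L(s,\bm c,2)$, the Poincar\'e duality pairing $H^3(V_{\bm c}) \times H^3(V_{\bm c}) \to \QQ_\ell(-3)$ on the odd-dimensional smooth cubic threefold $V_{\bm c}$ is alternating and Galois-equivariant, producing a canonical summand $\QQ_\ell(-3) \hookrightarrow \bigwedge^2 H^3(V_{\bm c})$; this contributes exactly the $\zeta_K(s)$ factor extracted in~\eqref{EQN:factor-exterior-square-L-function}, so the quotient $L(s,\bm c,2)$ inherits (1)--(3) from $L(s,\bm c,\bigwedge^2)$.

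For (4), set $T = q^{-s}$ with $\abs{T} \leq q^{-1/2-\eps}$. By purity, every inverse root $\gamma_j$ of $P_1$ satisfies $\abs{\gamma_j} = q^{1/2}$, so $\abs{1 - \gamma_j T} \geq 1 - q^{-\eps}$; combined with the uniformly bounded contributions from $P_0$, $P_2$, and $L_\infty$, this gives $\abs{1/L(s)} \ll_\eps (1-q^{-\eps})^{-\deg P_1}$, and substituting $\deg P_1 \ll \log\norm{\bm c}$ from (2) yields the bound in (4) (with the usual sub-polynomial convention on the two implicit $\eps$'s). For (5), since $1/L(s)$ viewed as a power series in $T$ is analytic on $\abs{T} < q^{-1/2}$ by (1) and GRH, Cauchy's integral inequality on the circle $\abs{T} = q^{-1/2-\eps}$ bounds the coefficient of $T^R$ by $\sup_{\abs{T}=q^{-1/2-\eps}}\abs{1/L} \cdot q^{R(1/2+\eps)} \ll_\eps \norm{\bm c}^\eps \hat R^{1/2+\eps}$. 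The main obstacle throughout is the conductor bound in (2): it requires verifying that the bad locus of $\mcal F_{\bm c}$ (and of $\bigwedge^2\mcal F_{\bm c}$) is genuinely supported on the zeros of $F^\ast(\bm c)$ with controlled Swan conductors, which comes down to a careful geometric analysis of how $\{V_{\bm c_0}\}$ degenerates along the dual variety~$V^\ast$.
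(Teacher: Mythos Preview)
Your treatment of (1)--(3) is essentially the paper's approach: the paper cites \cite{kahn2020zeta}*{Theorem~5.58} for ``weakly polarisable'' sheaves and \cite{browning2015rational}*{\S~3.4} for the Swan conductor bound, which packages exactly the middle-extension and Grothendieck--Ogg--Shafarevich arguments you sketch. Your treatment of $L(s,\bm c,2)$ via the canonical copy of $\QQ_\ell(-3)$ inside $\bigwedge^2 H^3_\ell(V_{\bm c})$ is also what the paper does (Proposition~\ref{PROP:symplectic-pairing-implies-analytic-pole}), though note that an injection alone does not automatically give a \emph{summand}; the paper instead converts the nontrivial invariants into an honest pole of $L(s,\bm c,\bigwedge^2)$ at $s=1$ via \cite{lyons2012erratum}, which is what guarantees that the factor $(1-q^{1-s})$ genuinely sits inside $P_2$ and can be cancelled.

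There is, however, a genuine gap in your proof of (4). The trivial lower bound $\abs{1-\gamma_j T}\ge 1-q^{-\eps}$ for $\abs{T}=q^{-1/2-\eps}$ gives
\[
\frac{1}{\abs{P_1(T)}}\le (1-q^{-\eps})^{-\deg P_1}.
\]
With $\deg P_1 \ll \log\norm{\bm c}$ this is $\norm{\bm c}^{A(\eps)}$ where $A(\eps)\asymp -\log(1-q^{-\eps})\asymp \log(1/\eps)$ as $\eps\to 0$. This exponent blows up; it is not a fixed multiple of $\eps$, so the ``usual sub-polynomial convention'' does not apply and you do \emph{not} obtain $\norm{\bm c}^\eps$. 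This is the standard obstruction to proving Lindel\"of-type bounds by termwise estimation, and it is precisely why the paper invokes the Hadamard three-circle theorem (following \cite{browning2015rational}*{proof of Lemma~8.4}): one exploits that $1/\Lambda(s)$ is bounded by $O(1)$ in $\Re(s)\ge 1$ and by a fixed power of $\norm{\bm c}$ near $\Re(s)=0$ (via the functional equation), and convexity in the strip then interpolates to give exponent $O(\eps)$ at $\Re(s)=\tfrac12+\eps$. Your argument for (5) is fine once (4) is established, and is equivalent to the paper's contour integration over $\ts\in\RR/\mathsf{T}\ZZ$.
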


\begin{proof}
(1):
For $\zeta_K(s)$, $L(s,V)$, $L(s,\bm{c})$, $L(s,\bm{c},\bigwedge^2)$,
this follows directly from \cite{kahn2020zeta}*{proof of Theorem 5.58},
since the corresponding $\ell$-adic sheaves $\QQ_\ell$, $H^4_\ell(V)/H^4_\ell(\PP^5)$, $H^3_\ell(V_{\bm{c}})$, $\bigwedge^2{H^3_\ell(V_{\bm{c}})}$ are ``weakly polarisable'' (by Poincar\'{e} duality on $V$ and $V_{\bm{c}}$, which gives a symmetric pairing on $H^4_\ell(V)$ and a skew-symmetric pairing on $H^3_\ell(V_{\bm{c}})$, and thus a symmetric pairing on the tensor squares thereof).
By Proposition~\ref{PROP:symplectic-pairing-implies-analytic-pole} and the formula $\zeta_K(s) = 1/(1-q^{1-s})$, the result then follows for $L(s,\bm{c},2)$ by \eqref{EQN:factor-exterior-square-L-function}.

(2):
This is explained in \cite{browning2015rational}*{\S~3.4} using Swan conductors.

(3):
This follows from \cite{kahn2020zeta}*{Theorem 5.46}.

(4):
This follows from (1)--(3) and the Hadamard three circle theorem as in \cite{browning2015rational}*{proof of Lemma 8.4}.

(5):
Let $s=\frac12+\eps+i\ts$ and integrate $\hat R^s/L(s) \ll_{\eps} \norm{\bm{c}}^\eps \hat R^{1/2+\eps}$ over $\ts\in \RR/\mathsf{T}\ZZ$.
\end{proof}

\begin{proposition}
\label{PROP:symplectic-pairing-implies-analytic-pole}
Let $\bm{c}\in \mcal{S}_1$.
Then $L(s,\bm{c},\bigwedge^2)$ has a pole at $s=1$.
\end{proposition}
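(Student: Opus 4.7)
The plan is to produce the desired pole by extracting a trivial Tate-twist quotient of $\bigwedge^2 H^3_\ell(V_{\bm{c}})$ via Poincar\'{e} duality, and then translating this into a factorisation of the corresponding $L$-function.

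Since $\bm{c}\in \mcal{S}_1$ means $F^\ast(\bm{c})\ne 0$, the hyperplane $\bm{c}\cdot\bm{x}=0$ is transverse to the smooth cubic fourfold $V\subset \PP^5_K$, so $V_{\bm{c}}$ is a smooth projective cubic threefold over $K$ of dimension $3$. Poincar\'{e} duality on $V_{\bm{c}}$ gives a Galois-equivariant perfect cup-product pairing
$$H^3_\ell(V_{\bm{c}})\otimes H^3_\ell(V_{\bm{c}}) \longrightarrow H^6_\ell(V_{\bm{c}}) \cong \QQ_\ell(-3).$$
Because the middle degree $3$ is odd, this pairing is alternating, so it factors through the exterior square and defines a Galois-equivariant surjection
$$\phi\colon {\textstyle\bigwedge^2} H^3_\ell(V_{\bm{c}}) \twoheadrightarrow \QQ_\ell(-3).$$

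Let $N\defeq \ker\phi$, so that we have a short exact sequence $0\to N\to \bigwedge^2 H^3_\ell(V_{\bm{c}})\to \QQ_\ell(-3)\to 0$ of $\ell$-adic Galois representations. Viewing these as lisse $\ell$-adic sheaves on a non-empty open $U\subset \Spec\OK$ on which all three are unramified, the local factors multiply at every $\varpi\in U$:
$$L_\varpi(s,{\textstyle\bigwedge^2} H^3_\ell(V_{\bm{c}})) = L_\varpi(s,N)\cdot L_\varpi(s,\QQ_\ell(-3)).$$
Taking the Euler product over $\varpi\in U$ and reinstating the finitely many missing Euler factors from $(\Spec\OK)\setminus U$ — each a rational function of $q^{-s}$ that is regular and nonzero at $s=1$ by Proposition~\ref{PROP:HW2-consequences}~(1) applied to both $N$ and $\bigwedge^2 H^3_\ell(V_{\bm{c}})$ — yields that $L(s,\bm{c},\bigwedge^2)/L(s,\QQ_\ell(-3))$ is regular at $s=1$. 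Under the paper's analytic normalisation, the Frobenius eigenvalue $|k_v|^3$ of $\QQ_\ell(-3)$ normalises to $1$ at every place, so
$$L(s,\QQ_\ell(-3)) = \prod_{\varpi\ne\infty}(1-|\varpi|^{-s})^{-1} = \zeta_K(s) = \frac{1}{1-q^{1-s}},$$
which has a simple pole at $s=1$. Consequently, $L(s,\bm{c},\bigwedge^2)$ has a pole at $s=1$, as required.

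The main obstacle is really the geometric Step~1: one must verify that the cup-product pairing is genuinely alternating on the odd-dimensional threefold $V_{\bm{c}}$ and that the induced map to $\QQ_\ell(-3)$ is a surjection of Galois representations rather than merely a nonzero map into something bigger. Once that is secured, the $L$-function consequences are a routine application of the Grothendieck--Lefschetz formalism for lisse $\ell$-adic sheaves on the open curve $U\subset \Spec\OK$, combined with the boundedness of local factor degrees from Proposition~\ref{PROP:HW2-consequences}~(1) to handle the finitely many bad places.
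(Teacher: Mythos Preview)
Your starting point---Poincar\'{e} duality on the smooth cubic threefold $V_{\bm{c}}$ yields an alternating perfect pairing on $M=H^3_\ell(V_{\bm{c}})$ and hence a Galois-equivariant surjection $\phi\colon \bigwedge^2 M\twoheadrightarrow \QQ_\ell(-3)$---is correct and is exactly the first move in the paper's proof. The divergence comes afterwards, and there is a genuine gap.

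Your final implication is wrong as stated. From ``$L(s,\bm{c},\bigwedge^2)/\zeta_K(s)$ is \emph{regular} at $s=1$'' you cannot conclude that $L(s,\bm{c},\bigwedge^2)$ has a pole there: if $f/g$ is holomorphic at $s=1$ and $g$ has a simple pole, then $f$ has pole order at most $1$, possibly $0$. What you actually need is that the ratio is \emph{nonzero} at $s=1$, equivalently that $L^U(s,N)$ does not vanish at $s=1$. This is a substantive assertion about the kernel $N$: it amounts to knowing that the $H^1_c$ piece of $N$ over $\bar{U}$ has all Frobenius weights $\le 7$ (so that, after analytic normalisation, zeros sit on $\Re(s)\le \tfrac12$). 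That follows from Deligne's Weil~II purity theorem, but you never invoke it. Your appeal to Proposition~\ref{PROP:HW2-consequences}(1) ``applied to $N$'' is illegitimate---that proposition is proved only for the specific list $\zeta_K$, $L(s,V)$, $L(s,\bm{c})$, $L(s,\bm{c},\bigwedge^2)$, $L(s,\bm{c},2)$, and its proof relies on each being weakly polarisable; it says nothing about $N$. (Also, for the bad \emph{local} factors you should cite part~(3), not part~(1).)

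The paper handles this differently and avoids the issue. Rather than working with the quotient, it dualises the surjection $\phi$ and uses perfectness of the pairing ($M\cong M^\vee(-3)$) to produce an \emph{injection} $\QQ_\ell(-3)\hookrightarrow \bigwedge^2 M$. This exhibits a nonzero Galois-invariant line in $(\bigwedge^2 M)(3)$, and then a single black-box citation to Lyons' theorem (built on Lafforgue's global Langlands over function fields) gives the pole directly. That route packages exactly the nonvanishing you are missing into a known theorem, rather than trying to prove it by hand via multiplicativity of local factors.
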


\begin{proof}
Let $M = H^3_\ell(V_{\bm{c}})$.
Poincar\'{e} duality gives a perfect \emph{skew-symmetric} pairing $$\psi\maps M \wedge M \to \QQ_\ell(-3),$$ where $\QQ_\ell(-3)$ denotes the Tate motive of weight $6$.
Since $\psi$ is surjective, its dual $$\psi^\vee\maps \QQ_\ell(3) \to M^\vee \wedge M^\vee$$ is injective.
But $\psi$ is perfect, so that it induces an isomorphism $$\psi'\maps M \to \Hom(M, \QQ_\ell(-3)) = M^\vee(-3).$$
After twisting $\psi^\vee$ by $\QQ_\ell(-6)$, we thus obtain an injection $\QQ_\ell(-3) \to M \wedge M$.
Thus $(M\wedge M)(3)$ (which is pure of weight $0$) has a non-zero space of $\Gal(K^{\textnormal{sep}}/K)$-invariants.
So by \cite{lyons2012erratum}*{correction to Theorem~2.1} (which builds on \cites{lafforgue2002chtoucas,lyons2009rank}), it follows that 
the $L$-function $L(s,(M\wedge M)(3)) = L(s,\bm{c},\textstyle{\bigwedge^2})$ has a pole at $s=1$.
\end{proof}

\begin{proposition}
[Kisin]
\label{PROP:ineffective-Krasner-lemma-of-Kisin}
Fix a prime $\varpi\in \Omon$ and a tuple $\bm{b}\in \OK_\varpi^6$ with $F^\ast(\bm{b})\ne 0$.
Then there exists an integer $l\ge 0$, depending only on $\varpi$ and $\bm{b}$, such that for all tuples $\bm{a}\in \OK_\varpi^6$ with $\bm{a}\equiv \bm{b}\bmod{\varpi^{1+l}}$, we have $F^\ast(\bm{a})\ne 0$ and $L_\varpi(s,\bm{a}) = L_\varpi(s,\bm{b})$.
\end{proposition}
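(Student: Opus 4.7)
The argument has two parts. First I would handle the openness of the condition $F^\ast(\bm a) \ne 0$: since $F^\ast$ is a polynomial and therefore $\varpi$-adically continuous, for any $l_1 \geq \ord_\varpi F^\ast(\bm b)$ and any $\bm a \equiv \bm b \bmod \varpi^{1+l_1}$ the ultrametric inequality forces $|F^\ast(\bm a)|_\varpi = |F^\ast(\bm b)|_\varpi > 0$.

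For the equality of local factors, I would recast the problem in the relative setting. Form the universal smooth hyperplane section $\pi \maps \mcal V \to S$ over $\OK_\varpi$, where $S = \{F^\ast \ne 0\} \subset \mathbb A^6_{\OK_\varpi}$ and $\mcal V \subset \PP^5_S$ is cut out by $F(\bm x) = \bm c \cdot \bm x = 0$. Proper smooth base change makes $R^3 \pi_* \QQ_\ell$ a lisse $\QQ_\ell$-sheaf on $S_{K_\varpi}$, whose pullback along a $K_\varpi$-point $\bm c$ recovers $H^3_\ell(V_{\bm c, \overline K_\varpi})$ as a continuous $G_{K_\varpi}$-representation. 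Since $L_\varpi(s, \bm c)$ is determined by the Frobenius action on the inertia invariants of this representation -- equivalently, by the associated Weil--Deligne representation -- it suffices to show that the Weil--Deligne representation is locally constant in $\bm c$ with respect to the $\varpi$-adic topology on $S(\OK_\varpi)$.

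This last step is Kisin's local constancy theorem. We are in the tame regime $\ell \ne \Char(\FF_q)$ (the residue characteristic of $\varpi$): Grothendieck's local monodromy theorem forces the inertia to act quasi-unipotently through a finite quotient of the tame inertia on an open subgroup, and combining this with continuity of the section $G_{K_\varpi} \to \pi_1^{\textnormal{\'et}}(S_{K_\varpi})$ induced by $\bm c$ produces some $l_2$ with the required constancy; setting $l = \max(l_1,l_2)$ then completes the proof. The main obstacle is invoking Kisin's theorem in precisely the form needed, rather than reproving it; the $\ell \ne p$ case is substantially easier than the $p$-adic situation that constitutes the technical heart of Kisin's paper, and would likely also follow from a direct deformation argument using the boundedness of the image of inertia in $\GL(H^3_\ell)$.
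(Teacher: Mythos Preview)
Your proposal is correct and takes essentially the same approach as the paper, which simply cites case~(2) of Theorem~5.1 in Kisin's paper \cite{kisin1999local} as a black box. You have unpacked the setup (the universal smooth family over $\{F^\ast\ne 0\}$, proper smooth base change giving a lisse sheaf, reduction to local constancy of the Galois/Weil--Deligne representation) that justifies invoking Kisin's result, whereas the paper leaves all of this implicit in the one-line citation.
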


\begin{proof}
This follows directly from \cite{kisin1999local}*{case (2) of Theorem 5.1}.
\end{proof}

\begin{lemma}
\label{LEM:density-form-of-ineffective-Krasner-lemma}
For each $\bm{b}\in \OK_\varpi^6$ with $F^\ast(\bm{b})\ne 0$, let $l(\varpi,\bm{b})$ be the smallest integer $l\ge 0$ verifying Proposition~\ref{PROP:ineffective-Krasner-lemma-of-Kisin}.
Then for any integer $l\ge 0$, the set
\begin{equation}
\label{EXPR:p-adic-set-of-b-with-l(p,b)-at-most-l}
    \set{\bm{b}\in \OK_\varpi^6: F^\ast(\bm{b})\ne 0,\; l(\varpi,\bm{b})\le l}
\end{equation}
is closed under translation by $\varpi^{l+1}\OK_\varpi^6$,
and its measure tends to $1$ as $l\to \infty$.
\end{lemma}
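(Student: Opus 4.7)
The plan is to prove the two assertions separately, using only the defining property of $l(\varpi,\bm{b})$ together with a standard fact about zero loci of non-zero polynomials over local fields.

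For the translation-invariance claim, I would argue as follows. Fix $\bm{b}$ in the set \eqref{EXPR:p-adic-set-of-b-with-l(p,b)-at-most-l}, so that the conclusion of Proposition~\ref{PROP:ineffective-Krasner-lemma-of-Kisin} holds for $\bm{b}$ at level $l$. Take any $\bm{a}\in \OK_\varpi^6$ with $\bm{a}\equiv \bm{b}\bmod{\varpi^{l+1}}$; the goal is to show $l(\varpi,\bm{a})\le l$. Given an arbitrary $\bm{a}'\in\OK_\varpi^6$ with $\bm{a}'\equiv \bm{a}\bmod{\varpi^{l+1}}$, transitivity of congruence yields $\bm{a}'\equiv \bm{b}\bmod{\varpi^{l+1}}$, so the property assumed for $\bm{b}$ gives $F^\ast(\bm{a}')\ne 0$ and $L_\varpi(s,\bm{a}')=L_\varpi(s,\bm{b})$. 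Specialising to $\bm{a}'=\bm{a}$, we also obtain $F^\ast(\bm{a})\ne 0$ and $L_\varpi(s,\bm{a})=L_\varpi(s,\bm{b})$, and therefore $L_\varpi(s,\bm{a}')=L_\varpi(s,\bm{a})$ for every such $\bm{a}'$. This verifies the defining property of $l(\varpi,\bm{a})\le l$, proving that the set in \eqref{EXPR:p-adic-set-of-b-with-l(p,b)-at-most-l} is invariant under translation by $\varpi^{l+1}\OK_\varpi^6$.

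For the measure claim, denote the set in \eqref{EXPR:p-adic-set-of-b-with-l(p,b)-at-most-l} by $S_l$. The previous step shows that $S_l$ is a union of cosets of the open subgroup $\varpi^{l+1}\OK_\varpi^6$, hence $S_l$ is open and in particular measurable. By Proposition~\ref{PROP:ineffective-Krasner-lemma-of-Kisin}, every $\bm{b}\in \OK_\varpi^6$ with $F^\ast(\bm{b})\ne 0$ satisfies $l(\varpi,\bm{b})<\infty$, so
\[
\bigcup_{l\ge 0} S_l \;=\; \{\bm{b}\in \OK_\varpi^6: F^\ast(\bm{b})\ne 0\}.
\]
Moreover, the $\varpi$-adic zero locus of the non-zero polynomial $F^\ast$ inside $\OK_\varpi^6$ has Haar measure zero (a standard fact, proved by induction on the number of variables using Fubini and the fact that a non-zero univariate polynomial over $K_\varpi$ has only finitely many roots). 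Consequently the union above has measure $1$, and since $(S_l)_{l\ge 0}$ is an increasing sequence of measurable sets, continuity of measure from below gives $\mu(S_l)\to 1$ as $l\to \infty$.

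The argument is essentially formal once Proposition~\ref{PROP:ineffective-Krasner-lemma-of-Kisin} is in hand; the only mildly non-trivial ingredient is the null measure of $\{F^\ast=0\}$, which is not an obstacle but merits explicit mention.
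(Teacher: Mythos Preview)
Your proof is correct. The translation-invariance argument is essentially the paper's (the paper establishes the slightly stronger fact that $l(\varpi,\cdot)$ is locally constant, but your direct verification of $l(\varpi,\bm{a})\le l$ is all that is needed and follows the same transitivity idea).

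For the measure claim you take a genuinely different and more elementary route. The paper argues via compactness: it shows $l(\varpi,\cdot)$ is continuous, hence bounded on each compact set $\{v_\varpi(F^\ast(\bm{c}))\le A\}$, and then invokes the quantitative density bound of Corollary~\ref{cor:HUA} to see that these sets exhaust measure~$1$. Your approach bypasses this entirely by noting that the sets $S_l$ increase to $\{F^\ast\ne 0\}$ and applying continuity of measure from below, needing only the qualitative fact that $\{F^\ast=0\}$ has measure zero. Your argument is shorter and self-contained; the paper's route has the minor advantage of giving, in principle, a rate of convergence, but that is not required for the lemma.
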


\begin{proof}
We first show that $l(\varpi,\bm{b})$
is locally constant.
Let $\bm{b}, \bm{c}\in \OK_\varpi^6$ with $F^\ast(\bm{b})\ne 0$ and $\bm{c}\equiv \bm{b}\bmod{\varpi^{1+l(\varpi,\bm{b})}}$.
Then $F^\ast(\bm{c})\ne 0$ and $L_\varpi(s, \bm{c}) = L_\varpi(s, \bm{b})$, and thus $l(\varpi,\bm{c})\le l(\varpi,\bm{b})$ (since $\bm{a}\equiv \bm{c}\bmod{\varpi^{1+l(\varpi,\bm{b})}} \Rightarrow \bm{a}\equiv \bm{b}\bmod{\varpi^{1+l(\varpi,\bm{b})}}$).
But then $\bm{b}\equiv \bm{c}\bmod{\varpi^{1+l(\varpi,\bm{c})}}$, so $l(\varpi,\bm{b})\le l(\varpi,\bm{c})$, whence $l(\varpi,\bm{b}) = l(\varpi,\bm{c})$.
So if $\bm{b}$ lies in \eqref{EXPR:p-adic-set-of-b-with-l(p,b)-at-most-l} for some $l\ge 0$, then \eqref{EXPR:p-adic-set-of-b-with-l(p,b)-at-most-l} is indeed closed under translation by $\varpi^{l+1}\OK_\varpi^6$.

We now turn to measures.
Let $A\in \ZZ_{\ge 0}$.
The set $S_A = \set{\bm{c}\in \OK_\varpi^6: v_\varpi(F^\ast(\bm{c}))\le A}$ is compact.
Yet the function $\bm{b}\mapsto l(\varpi,\bm{b})$ on $S_A$ is continuous.
So \eqref{EXPR:p-adic-set-of-b-with-l(p,b)-at-most-l} contains $S_A$ for all $l\gg_A 1$, say.
But by 
Lemma \ref{cor:HUA},  the measure of $S_A$ tends to $1$ as $A\to \infty$.
\end{proof}

Let $\mu_{\bm{c}}(r)$ be the $r$th coefficient of the Euler product $L(s,\bm{c})^{-1} = \prod_\varpi L_\varpi(s,\bm{c})^{-1}$. The following result collects together some facts about  averages of 
$\mu_{\bm{c}}(r)$ over vectors $\bm{c}$.

\begin{proposition}
\label{PROP:(LocAvSp)}
Let $r_1,r_2\in \Omon$.
Let $\EE_{\bm{c}\in S}[f]$ be the average of $f$ over $S$.
The limit
$$
\bar{\mu}_{F,2}(r_1,r_2)
\defeq \lim_{Z\to \infty}
\EE_{\bm{c}\in \mathcal{S}_1:\, \norm{\bm{c}}\le \hat Z}
[\mu_{\bm{c}}(r_1)\mu_{\bm{c}}(r_2)]
$$
 exists. Moreover, 
 $\bar{\mu}_{F,2} (r_1,r_2)\bar{\mu}_{F,2}(r'_1,r'_2)
= \bar{\mu}_{F,2}(r_1r'_1,r_2r'_2)
$ 
 if $\gcd(r_1r_2,r'_1r'_2)=1$.

Now let $\varpi\in \Omon$ be a prime, and let $l,l_1,l_2\ge 0$ be integers.
Then
\begin{equation}
\label{INEQ:GRC-bound-for-averages-mu-bar}
 \bar{\mu}_{F,2}(\varpi^{l_1},\varpi^{l_2}) \ll_\eps \abs{\varpi}^{(l_1+l_2)\eps}.
\end{equation}
Furthermore, 
\begin{gather}
\bar{\mu}_{F,2}(\varpi,1) = \lambda_V(\varpi)\abs{\varpi}^{-1/2}+O(\abs{\varpi}^{-1}),
\quad
\bar{\mu}_{F,2}(\varpi^2,1) = 1+O(\abs{\varpi}^{-1}), \label{EQN:LocAv1} \\
\bar{\mu}_{F,2}(\varpi^l,1) = \bar{\mu}_{F,2}(1,\varpi^l),
\quad
\bar{\mu}_{F,2}(\varpi,\varpi) = 1+O(\abs{\varpi}^{-1}). \label{EQN:LocAv2}
\end{gather}
\end{proposition}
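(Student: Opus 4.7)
The plan is to realise $\bar{\mu}_{F,2}(r_1,r_2)$ as a product of local $\varpi$-adic integrals, and then compute those integrals via point-counting. Since $\mu_{\bm{c}}$ is multiplicative in its argument, I first study the local factors $\mu_{\bm{c}}(\varpi^k)$: by Proposition~\ref{PROP:ineffective-Krasner-lemma-of-Kisin} each such factor depends only on $\bm{c}\bmod\varpi^{1+l(\varpi,\bm{c})}$, and by Lemma~\ref{LEM:density-form-of-ineffective-Krasner-lemma} the tame set $T_\varpi^L\defeq\{\bm{b}\in\OK_\varpi^6 : F^\ast(\bm{b})\ne 0,\,l(\varpi,\bm{b})\le L\}$ is a union of cosets of $\varpi^{L+1}\OK_\varpi^6$ whose $\varpi$-adic measure tends to $1$ as $L\to\infty$. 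On $\bigcap_{\varpi\mid r_1r_2}T_\varpi^L$ the integrand $\bm{c}\mapsto\mu_{\bm{c}}(r_1)\mu_{\bm{c}}(r_2)$ is constant on cosets of $m\defeq\prod_{\varpi\mid r_1r_2}\varpi^{L+1}$. Using the Ramanujan bound $|\mu_{\bm{c}}(\varpi^k)|\ll 1$ from Proposition~\ref{PROP:HW2-consequences}(3) to control the complement, together with the equidistribution of $\{\bm{c}\in\OK^6\colon|\bm{c}|\le\hat Z\}$ in residue classes mod $m$ as $Z\to\infty$, dominated convergence (as $L\to\infty$) yields the existence of the limit, along with the factorisation
\[
\bar{\mu}_{F,2}(r_1,r_2)=\prod_{\varpi\mid r_1r_2}\int_{\OK_\varpi^6}\mu_{\bm{c}}(\varpi^{v_\varpi(r_1)})\mu_{\bm{c}}(\varpi^{v_\varpi(r_2)})\,d\bm{c}.
\]
Multiplicativity for coprime arguments is immediate from this product formula, and \eqref{INEQ:GRC-bound-for-averages-mu-bar} follows from Ramanujan.

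For the explicit evaluations I compute each local integral via point-counting. When $\varpi\nmid F^\ast(\bm{c})$, $V_{\bm{c}}$ has good reduction at $\varpi$; writing $1/L_\varpi(s,\bm{c})=\prod_j(1-\alpha_j|\varpi|^{-s})$, the Grothendieck--Lefschetz trace formula gives $\mu_{\bm{c}}(\varpi)=-\sum_j\alpha_j=E^\natural_{\bm{c}}(\OK/\varpi)$, and Newton's identity $2e_2=p_1^2-p_2$ yields $\mu_{\bm{c}}(\varpi^2)=\tfrac12(E^\natural_{\bm{c}}(\OK/\varpi)^2+E_{\bm{c}}(\OK/\varpi^2)|\varpi|^{-3})$. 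The bad set $\{\varpi\mid F^\ast(\bm{c})\}$ has $\varpi$-adic measure $O(|\varpi|^{-1})$ by Lang--Weil on the dual hypersurface, and there Ramanujan bounds the integrand by $O(1)$, contributing $O(|\varpi|^{-1})$ to each local integral. For the good-set contribution, since $\bm{c}\mapsto E^\natural_{\bm{c}}(\OK/\varpi)$ depends only on $\bm{c}\bmod\varpi$, I reduce to computing averages over $\bar{\bm{c}}\in(\OK/\varpi)^6$. Using the hyperplane-averaging identity---for $[\bm{x}]\in\PP^5(\FF_{\varpi^n})$ with minimal field of definition $\FF_{\varpi^d}$, the number of $\bar{\bm{c}}\in\FF_\varpi^6$ with $\bar{\bm{c}}\cdot\bm{x}=0$ equals $|\varpi|^{6-d}$---combined with the Weil bound $|E_F(\FF_{\varpi^n})|\ll|\varpi|^{2n}$ and elementary arithmetic with $\#\PP^k(\FF_{\varpi^n})$, I obtain
\begin{gather*}
\EE_{\bar{\bm{c}}}[E^\natural_{\bar{\bm{c}}}(\OK/\varpi)]=\lambda_V(\varpi)|\varpi|^{-1/2}+O(|\varpi|^{-5/2}), \\
\EE_{\bar{\bm{c}}}[E^\natural_{\bar{\bm{c}}}(\OK/\varpi)^2]=1+O(|\varpi|^{-1}), \\
\EE_{\bar{\bm{c}}}[E_{\bar{\bm{c}}}(\OK/\varpi^2)|\varpi|^{-3}]=1+O(|\varpi|^{-1}),
\end{gather*}
from which \eqref{EQN:LocAv1} and the second identity of \eqref{EQN:LocAv2} follow. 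The symmetry $\bar{\mu}_{F,2}(\varpi^l,1)=\bar{\mu}_{F,2}(1,\varpi^l)$ is immediate from the commutativity of the integrand.

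The technically delicate part is subtracting the bad-set contribution from the full average of $E^\natural$-type quantities to pass from the ``all $\bar{\bm{c}}$'' average to the ``good $\bar{\bm{c}}$'' average with an $O(|\varpi|^{-1})$ error: a crude Lang--Weil bound only gives $|E^\natural_{\bar{\bm{c}}}|\ll|\varpi|$ on singular $V_{\bar{\bm{c}}}$, which integrates to $O(1)$. One resolves this via Deligne's Weil~II purity bounds applied to the cohomology of singular cubic threefolds, stratifying the bad set by singularity type: the generic stratum (isolated nodes) satisfies $|E^\natural_{\bar{\bm{c}}}|=O(1)$, while deeper strata have $\varpi$-adic measure small enough to absorb weaker pointwise bounds.
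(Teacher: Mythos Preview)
Your proposal is correct and follows essentially the same approach as the paper: Kisin's local constancy plus the Ramanujan bound for existence and multiplicativity, then Grothendieck--Lefschetz and Newton's identities to reduce the prime-power evaluations to the three averages of $E^\natural_{\bm{c}}$-type quantities. The only difference is that the paper invokes \cite{wang2023dichotomous}*{Corollary~1.7} as a black box for those three averages \eqref{EQN:E_c(p)-average}--\eqref{EQN:E_c(p)^2-average}, whereas you sketch their proof directly via hyperplane-averaging and stratification of the discriminant locus---which is precisely the method of the cited paper. One notational slip: by $\OK/\varpi^2$ you mean the quadratic extension $k_{\varpi,2}$ of the residue field $\OK/\varpi$, not the non-reduced ring $\OK/\varpi^2\OK$.
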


\begin{proof}
This result is directly analogous to \cite{wang2023ratios}*{Proposition 6.1}, and  is purely local.
For the reader's convenience, we highlight the main points.
Existence and multiplicativity of the limit follows from Lemma \ref{LEM:density-form-of-ineffective-Krasner-lemma}, 
the bound $\abs{\mu_\cc(r)} \ll_\eps \abs{r}^\eps$ (which follows from 
Proposition \ref{PROP:HW2-consequences}(3)), 
and the Chinese remainder theorem.
The bound $\abs{\mu_\cc(r)} \ll_\eps \abs{r}^\eps$ implies  \eqref{INEQ:GRC-bound-for-averages-mu-bar}.
The estimates in \eqref{EQN:LocAv1} and \eqref{EQN:LocAv2} are subtler; they relate to the \emph{rank} and \emph{homogeneity type} \cite{sarnak2016families}*{\S~1} of our geometric family of $L$-functions $L(s,\bm{c})$.

Recall the definition of $E^\natural_{\bm{c}}(k)$ from \eqref{EQN:define-normalized-point-count-errors-E_F,E_c}.
The following hold uniformly over primes $\varpi\in \Omon$ (where $k_\varpi = \OK/\varpi\OK$ and $k_{\varpi,2}$ is the unique quadratic field extension of $k_\varpi$):
\begin{align}
    \EE_{\bm{c}\in k_\varpi^6}[E^\natural_{\bm{c}}(k_\varpi)
    \bm{1}_{\varpi\nmid F^\ast(\bm{c})}]
    &= \lambda_V(\varpi) \abs{\varpi}^{-1/2}
    + O(\abs{\varpi}^{-1}), \label{EQN:E_c(p)-average} \\
    \EE_{\bm{c}\in k_\varpi^6}[E^\natural_{\bm{c}}(k_{\varpi,2})
    \bm{1}_{\varpi\nmid F^\ast(\bm{c})}]
    &= 1 + O(\abs{\varpi}^{-1}), \label{EQN:E_c(p^2)-average} \\
    \EE_{\bm{c}\in k_\varpi^6}[E^\natural_{\bm{c}}(k_\varpi)^2
    \bm{1}_{\varpi\nmid F^\ast(\bm{c})}]
    &= 1 + O(\abs{\varpi}^{-1}). \label{EQN:E_c(p)^2-average}
\end{align}
This is proven for prime fields in \cite{wang2023dichotomous}*{Corollary~1.7};
the proof there carries
over directly to arbitrary finite fields of characteristic $>3$.

Finally, observe that if $\varpi\nmid F^\ast(\bm{c})$,
then $L_\varpi(s,\bm{c})^{-1} = \prod_{1\le j\le 10} (1 - \alpha_{\bm{c},j}(\varpi) \abs{\varpi}^{-s})$, so
\begin{align*}
\mu_{\bm{c}}(\varpi)
&= -\sum_{1\le j\le 10} \alpha_{\bm{c},j}(\varpi)
= E^\natural_{\bm{c}}(k_\varpi), \\
\mu_{\bm{c}}(\varpi^2)
&= \sum_{1\le i<j\le 10} \alpha_{\bm{c},i}(\varpi) \alpha_{\bm{c},j}(\varpi)
= \tfrac12(E^\natural_{\bm{c}}(k_\varpi)^2+E^\natural_{\bm{c}}(k_{\varpi,2})),
\end{align*}
because $E^\natural_{\bm{c}}(k_\varpi) = -\sum_{1\le j\le 10} \alpha_{\bm{c},j}(\varpi)$
and $E^\natural_{\bm{c}}(k_{\varpi,2}) = -\sum_{1\le j\le 10} \alpha_{\bm{c},j}(\varpi)^2$
by the Grothendieck--Lefschetz trace formula.
Thus \eqref{EQN:E_c(p)-average}--\eqref{EQN:E_c(p)^2-average} imply \eqref{EQN:LocAv1} and \eqref{EQN:LocAv2}.
\end{proof}

Informally, Proposition~\ref{PROP:(LocAvSp)} tells us
\begin{equation*}
\begin{split}
\sum_{r_1,r_2\in \Omon} \frac{\bar{\mu}_{F,2}(r_1,r_2)}{\abs{r_1}^{s_1}\abs{r_2}^{s_2}}
&\approx \prod_\varpi \Bigl[1 + \abs{\varpi}^{-s_1-s_2} + \sum_{1\le j\le 2}
(\lambda_V(\varpi)\abs{\varpi}^{-s_j-1/2} + \abs{\varpi}^{-2s_j})\Bigr] \\
&\approx \zeta_K(s_1+s_2) \prod_{1\le j\le 2}(L(s_j+\tfrac12,V) \zeta_K(2s_j)).
\end{split}
\end{equation*}
Motivated by this,
let 
\begin{equation*}
A_{F,2}(s_1,s_2)
= \frac{\zeta_K(s_1+s_2)^{-1}}{\prod_{1\le j\le 2}(\zeta_K(2s_j)L(s_j+\tfrac12,V))}
\sum_{r_1,r_2\in \Omon} \frac{\bar{\mu}_{F,2}(r_1,r_2)}{\abs{r_1}^{s_1}\abs{r_2}^{s_2}},
\end{equation*}
which by Proposition~\ref{PROP:(LocAvSp)} converges absolutely for $\Re(s_1), \Re(s_2) > \frac13$.
The expression $A_{F,2}$ appears as the ``leading constant'' in the Ratios Conjecture for
\begin{equation*}
\frac{1}{L(s_1,\bm{c})L(s_2,\bm{c})}.
\end{equation*}
Let $\sigma(Z) = \frac12+\frac1Z$.
For $\bm{c}\in \mathcal{S}_1$, let
\begin{equation*}
\Phi^{\bm{c},1}(s) = \zeta_K(2s)^{-1} L(s+\tfrac12, V)^{-1} L(s,\bm{c})^{-1}.
\end{equation*}
For convenience,
let $a_{\bm{c},1}(r)$ be the $r$th coefficient of the Euler product $\Phi^{\bm{c},1}(s)$.

The Ratios Recipe \cite{conrey2008autocorrelation}*{\S~5.1}, directly adapted to function fields as in \cite{andrade2014conjectures}, produces the following  Ratios Conjecture~(R2), even with a power-saving error term $O(\hat Z^{-\delta})$ \emph{independent of $\beta$} for $\beta\le \delta$, say.
The derivation over $K$ is exactly the same as the derivation over $\QQ$ in \cite{wang2023ratios}*{\S~6.3}.
For additional context on random matrix predictions for $L$-functions, we note that the $L$-functions $L(s,\bm{c})$ over $\bm{c}\in \mcal{S}_1$ form a \emph{geometric family} in the sense of \cite{sarnak2016families}*{pp.~534--535}.

\begin{conjecture}
[R2]
\label{CNJ:(R2o)}
There exists a constant $\beta\in [0,1]$ such that
if $$s_j=\beta+\sigma(Z)+i\ts_j,$$
then uniformly over $Z\in \NN$ and $\ts_1,\ts_2\in \RR$, we have
\begin{equation}
\label{EQN:soft-R2-goal}
\sum_{\bm{c}\in \mathcal{S}_1:\, \norm{\bm{c}}\le \hat Z}
\Phi^{\bm{c},1}(s_1) \Phi^{\bm{c},1}(s_2)
= \sum_{\bm{c}\in \mathcal{S}_1:\, \norm{\bm{c}}\le \hat Z}
(\zeta_K(s_1+s_2) + O(\hat Z^{-6\beta})) \, A_{F,2}(s_1,s_2).
\end{equation}
\end{conjecture}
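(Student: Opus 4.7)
My plan is to follow the Ratios Recipe of Conrey--Farmer--Zirnbauer, adapted to $\FF_q(t)$ as in \cite{andrade2014conjectures}. Because both $L$-functions appear in the \emph{denominator}, the usual ``swap'' terms coming from the approximate functional equation drop out, so the prediction reduces to a single diagonal contribution produced by the local averages in Proposition~\ref{PROP:(LocAvSp)}. First I would expand each factor as a Dirichlet series
\begin{equation*}
\Phi^{\bm{c},1}(s_j) = \sum_{r\in \Omon} \frac{a_{\bm{c},1}(r)}{|r|^{s_j}},
\end{equation*}
where $a_{\bm{c},1}$ is the convolution of $\mu_{\bm{c}}$ with the coefficients of $\zeta_K(2s)^{-1}$ and $L(s+\tfrac12,V)^{-1}$, and then interchange the outer sum over $\bm{c}$ with the $(r_1,r_2)$-sum.

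Next, I would apply Proposition~\ref{PROP:(LocAvSp)} to the inner $\bm{c}$-average. By multiplicativity and the locality of $L_\varpi(s,\bm{c})$ (Proposition~\ref{PROP:ineffective-Krasner-lemma-of-Kisin} and Lemma~\ref{LEM:density-form-of-ineffective-Krasner-lemma}), the normalised average $\EE_{\bm{c}}[a_{\bm{c},1}(r_1)\, a_{\bm{c},1}(r_2)]$ converges to a multiplicative limit $\bar{a}_{F,2}(r_1,r_2)$ whose local factors are computed place-by-place from \eqref{EQN:LocAv1}--\eqref{EQN:LocAv2}. The auxiliary factors $\zeta_K(2s_j)^{-1}$ and $L(s_j+\tfrac12,V)^{-1}$ built into $\Phi^{\bm{c},1}$ were inserted precisely to cancel the $\zeta_K(2s_j)\,L(s_j+\tfrac12,V)$ factors appearing in the Euler product for $\bar{\mu}_{F,2}$, so that
\begin{equation*}
\sum_{r_1,r_2\in \Omon} \frac{\bar{a}_{F,2}(r_1,r_2)}{|r_1|^{s_1}|r_2|^{s_2}} = \zeta_K(s_1+s_2)\, A_{F,2}(s_1,s_2)
\end{equation*}
by the very definition of $A_{F,2}$. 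This yields the expected main term after multiplying by $\#\{\bm{c}\in \mathcal{S}_1:|\bm{c}|\le \hat Z\}$.

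To obtain a genuine asymptotic with error $O(\hat Z^{-6\beta})$ uniformly in $\ts_1,\ts_2$, I would truncate the $(r_1,r_2)$-sum at $|r_1 r_2|\le \hat Z^A$ for a large fixed $A$, bounding the tail via the Ramanujan-type estimate in Proposition~\ref{PROP:HW2-consequences}(5) together with Lemma~\ref{LEM:N_c-small-divisor-moment-bound}, and replace the inner $\bm{c}$-sum in the truncated range by its limit value times $\#\{\bm{c}\in \mathcal{S}_1:|\bm{c}|\le \hat Z\}$, absorbing the discrepancy into the error. The hard part --- and the reason this remains a conjecture rather than a theorem --- is quantifying this last replacement: one needs a polynomial-rate equidistribution of the local traces $E^\natural_{\bm{c}}$ across the family $\{V_{\bm{c}}\}$ that is uniform in the modulus $r_1 r_2$ up to a positive power of $\hat Z$, which is substantially stronger than the single-place Lang--Weil inputs behind \eqref{EQN:E_c(p)-average}--\eqref{EQN:E_c(p)^2-average}. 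Over $\FF_q(t)$ such uniformity is expected to be accessible via homological stability \cites{bergstrom2023hyperelliptic, MPPRW} for sufficiently large $q$, which is why a $q$-restricted version of the conjecture is singled out in \S~\ref{SEC:detailed-ratio-reductions}.
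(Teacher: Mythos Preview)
The statement is a \emph{conjecture}, and the paper does not offer a proof of it. What the paper gives is a one-paragraph pointer to its heuristic origin: the Ratios Recipe of Conrey--Farmer--Zirnbauer, adapted to $\FF_q(t)$ as in \cite{andrade2014conjectures}, with the remark that the derivation over $K$ is exactly the one carried out over $\QQ$ in \cite{wang2023ratios}*{\S~6.3}. Your proposal is precisely that heuristic, spelled out in more detail than the paper does, and you correctly flag at the end that the quantitative replacement of the $\bm{c}$-average by its limit is the step that cannot currently be justified and is the reason the statement is conjectural. So your write-up is in full agreement with the paper's own treatment: both point to the Recipe, both identify the equidistribution of local traces as the obstruction, and both note the relevance of homological stability for large $q$.

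One small clarification worth making explicit: your truncation-and-tail argument using Proposition~\ref{PROP:HW2-consequences}(5) would not by itself control the tail $|r_1r_2|>\hat Z^A$ on the line $\Re(s_j)=\beta+\sigma(Z)$, since the Dirichlet series for $\Phi^{\bm{c},1}$ does not converge absolutely there. In the Recipe this is exactly the step one performs \emph{formally} (discard the tail with impunity), so it belongs on the conjectural side of the ledger along with the average replacement, not on the rigorous side. This does not affect the correctness of your account as a heuristic, but you should not present the tail bound as something you can actually prove with the cited inputs.
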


(In fact one would expect (R2) to hold with $\beta=0$,
but the approach of \cites{bergstrom2023hyperelliptic,MPPRW} suggests that a small positive value of $\beta$ might be more tractable, at least for large fixed $q$.
Also, the ``slope'' $6$ in the exponent $6\beta$ is what we need for our application to Conjecture~\ref{CNJ:(R2')} below, but arbitrarily large slope should be permissible.)

For a vertically $\mathsf{T}$-periodic function $f(s)=f(\sigma+i\ts)$, define the \emph{vertical average} $$\EE_{\Re(s)=\sigma}[f(s)]
\defeq \frac1{\mathsf{T}} \int_{\RR/\mathsf{T}\ZZ} f(\sigma+i\ts)\, \mathrm{d}\ts.$$
Note that if $f$ is holomorphic on a vertical strip $\Re(s)\in I$, then $\EE_{\Re(s)=\sigma}[f(s)]$ is independent of $\sigma\in I$, by Cauchy's integral theorem over rectangles of height $\mathsf{T}$.

\begin{conjecture}
\label{CNJ:(R2')}
Let $Z,R\in \ZZ$ with $R\le 3Z$.
If $\sigma_0 > 1/2$, then
\begin{equation}
\label{INEQ:R2'-goal}
\sum_{\bm{c}\in \mathcal{S}_1:\, \norm{\bm{c}}\le \hat Z}
\left\lvert\EE_{\Re(s)=\sigma_0}[\Phi^{\bm{c},1}(s) \cdot \hat R^s]\right\rvert^2
\ll \hat Z^6 \hat R.
\end{equation}
\end{conjecture}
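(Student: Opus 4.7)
The plan is to deduce this bound from Conjecture~\ref{CNJ:(R2o)} by coefficient extraction. First, since $\Phi^{\bm{c},1}(s)$ is a rational function of $q^{-s}$ with real power-series coefficients (Proposition~\ref{PROP:HW2-consequences}(3)) and is holomorphic on $\Re(s)>1/2$, the quantity
\[
\phi_{\bm{c}}(R)\defeq\EE_{\Re(s)=\sigma_0}[\Phi^{\bm{c},1}(s)\hat R^s]
\]
is a real number independent of $\sigma_0>1/2$, namely the coefficient of $q^{-Rs}$ in the expansion of $\Phi^{\bm{c},1}(s)$ in powers of $q^{-s}$. Hence the left-hand side of \eqref{INEQ:R2'-goal} equals $\sum_{\bm{c}}\phi_{\bm{c}}(R)^2$.

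Next I polarise. Fixing $\sigma_0=\beta+\sigma(Z)$ as in Conjecture~\ref{CNJ:(R2o)} and writing $s_j=\sigma_0+i\ts_j$, reality of the coefficients of $\Phi^{\bm{c},1}$ yields $\phi_{\bm{c}}(R)^2=\EE_{\ts_1,\ts_2}[\Phi^{\bm{c},1}(s_1)\Phi^{\bm{c},1}(s_2)\hat R^{s_1+s_2}]$. Summing over $\bm{c}\in\mathcal{S}_1$ with $\norm{\bm{c}}\le\hat Z$, substituting Conjecture~\ref{CNJ:(R2o)}, and using $\#\{\bm{c}\in\mathcal{S}_1:\norm{\bm{c}}\le\hat Z\}\ll\hat Z^6$, the goal reduces to the estimates
\[
\EE_{\ts_1,\ts_2}[\hat R^{s_1+s_2}\zeta_K(s_1+s_2)A_{F,2}(s_1,s_2)]\ll\hat R
\]
(main term) and
\[
\hat Z^{-6\beta}\cdot\EE_{\ts_1,\ts_2}[\hat R^{s_1+s_2}A_{F,2}(s_1,s_2)]\ll\hat R
\]
(error term).

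For the main term, Fourier inversion in $(\ts_1,\ts_2)$ identifies the integral with the diagonal coefficient $\tilde g^{(R,R)}\defeq\sum_{\deg r_j=R}\tilde g(r_1,r_2)$, where $\tilde g(r_1,r_2)$ are the Dirichlet coefficients of $\tilde G(s_1,s_2)\defeq\zeta_K(s_1+s_2)A_{F,2}(s_1,s_2)$. Since $\zeta_K(s_1+s_2)=\sum_{r\in\Omon}|r|^{-s_1-s_2}$ is supported on the diagonal with $(a,a)$-coefficient $\hat a$, convolution gives
\[
\tilde g^{(R,R)}=\sum_{R_1=0}^{R}\hat R_1\cdot a^{(R-R_1,R-R_1)},
\]
where $a^{(m,m)}$ is the analogous diagonal coefficient sum for $A_{F,2}$. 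By Proposition~\ref{PROP:(LocAvSp)}, $A_{F,2}(s_1,s_2)$ has a Dirichlet series absolutely convergent (hence bounded) on $\Re(s_j)=1/3+\eps$, so Fourier duality delivers $|a^{(m,m)}|\ll_\eps\hat m^{2/3+\eps}$. Summing the resulting geometric series in $R_1$ gives $|\tilde g^{(R,R)}|\ll\hat R$.

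For the error term, the trivial bound $|\hat R^{s_1+s_2}|=\hat R^{2\sigma_0}$ and boundedness of $A_{F,2}$ on $\Re(s_j)=\sigma_0$ give $|\EE_{\ts_1,\ts_2}[\hat R^{s_1+s_2}A_{F,2}]|\ll\hat R^{2\sigma_0}$. The hypothesis $R\le 3Z$ together with $\sigma_0=\beta+1/2+1/Z$ forces $\hat R^{2\sigma_0}=\hat R\cdot q^{2\beta R+2R/Z}\ll\hat R\cdot\hat Z^{6\beta}$, so the error contribution is $\hat Z^{-6\beta}\cdot\hat R\cdot\hat Z^{6\beta}\ll\hat R$, matching the main-term bound. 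The main obstacle is thus the convolution estimate $|\tilde g^{(R,R)}|\ll\hat R$, which relies essentially on the absolute convergence of the Dirichlet series for $A_{F,2}$ on $\Re(s_j)>1/3$---a manifestation of the Ratios-recipe cancellations encoded in Proposition~\ref{PROP:(LocAvSp)}; the remaining steps are elementary Fourier analysis on the strip $\Re(s)=\sigma_0$ modulo $i\mathsf{T}\ZZ$.
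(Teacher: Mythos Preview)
Your proof is correct and shares the same skeleton as the paper's: both expand the square via reality/self-duality of $\Phi^{\bm{c},1}$, apply (R2) on the line $\Re(s_j)=\beta+\sigma(Z)$, and split into a main term involving $\zeta_K(s_1+s_2)A_{F,2}$ and an error term controlled by $R\le 3Z$. The error-term treatments are essentially identical.

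The genuine difference is in the main term. The paper shifts $\Re(s_2)$ past the pole of $\zeta_K(s_1+s_2)$ at $s_2=1-s_1$, picking up a residue $\Sigma_3\ll\hat Z^6\hat R$ and a remainder $\Sigma_4\ll\hat Z^6\hat R^{1-\delta}$ on the new contour $\Re(s_2)=\tfrac12-\delta$; the bound $A_{F,2}\ll 1$ on that contour is what is used. You instead stay on the original line and extract the $(R,R)$ coefficient of $\zeta_K(s_1+s_2)A_{F,2}(s_1,s_2)$ by Fourier duality, exploiting that $\zeta_K(s_1+s_2)$ is diagonal in $(q^{-s_1},q^{-s_2})$ and bounding the $A_{F,2}$ coefficients via its absolute convergence for $\Re(s_j)>\tfrac13$. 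Your route avoids contour shifts and residue bookkeeping, at the cost of making the leading constant $A_{F,2}(s_1,1-s_1)$ less visible; the paper's route is more transparent about where the $\hat R$ actually comes from (namely the pole of $\zeta_K$). Both ultimately rest on the same analytic input from Proposition~\ref{PROP:(LocAvSp)}. One minor omission: like the paper, you should note that the case $\hat Z\ll 1$ is trivial before fixing $\sigma_0=\beta+\sigma(Z)$, since $\sigma(Z)$ is only defined for $Z\in\NN$.
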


\newcommand{\Sigzer}{\Sigma_0} 
\newcommand{\Sigone}{\Sigma_1} 
\newcommand{\Sigtwo}{\Sigma_2} 
\newcommand{\Sigthr}{\Sigma_3} 
\newcommand{\Sigfou}{\Sigma_4} 

\begin{proof}
[Proof assuming (R2)]

By \eqref{EQN:vertical-periodicity-of-L-functions} and GRH, the left-hand side of \eqref{INEQ:R2'-goal} is independent of $\sigma_0>\frac12$.
If $\hat Z\ll 1$ then \eqref{INEQ:R2'-goal} is trivial, so suppose $Z\ge 2$.
We find, after shifting contours to $\Re(s) = \beta+\sigma(Z)\in (\tfrac12, 2]$
and expanding squares using self-duality of $\Phi^{\bm{c},1}$,
the left-hand side of \eqref{INEQ:R2'-goal} equals
\begin{equation*}
\Sigzer \defeq
\sum_{\bm{c}\in \mathcal{S}_1:\, \norm{\bm{c}}\le \hat Z}
\EE_{\Re(s_1),\Re(s_2)=\beta+\sigma(Z)}[
\Phi^{\bm{c},1}(s_1) \Phi^{\bm{c},1}(s_2) \cdot \hat R^{s_1+s_2}].
\end{equation*}
After switching the order of $\bm{c}$ and $\bm{s}$ in $\Sigzer$,
and plugging in \eqref{EQN:soft-R2-goal} for each $\bm{s}$, we get
\begin{equation}
\label{EQN:decompose-Sigma_0-in-R2'-under-R2}
    \Sigzer = \Sigtwo + O(\hat Z^{6-6\beta} \Sigone^2),
\end{equation}
where $\Sigone \defeq \EE_{\Re(s)=\beta+\sigma(Z)}[\abs{ \hat R^s}]=\hat R^{\beta+\sigma(Z)}$ and
\begin{equation*}
\Sigtwo \defeq \sum_{\bm{c}\in \mathcal{S}_1:\, \norm{\bm{c}}\le \hat Z}
\EE_{\Re(s_1),\Re(s_2)=\beta+\sigma(Z)}[
\zeta_K(s_1+s_2) A_{F,2}(s_1,s_2) \cdot \hat R^{s_1+s_2}].
\end{equation*}
Since $R\le 3Z$ and $\hat{Z}^{1/Z} = q \ll 1$, we certainly have
$\Sigone\ll \hat Z^{3\beta} \hat R^{1/2}$.

Let $\delta=\frac{1}{20}$; then $\frac12-\delta\ge \frac13+\delta$.
In $\Sigtwo$ we 
shift $\Re(s_1)$  from $\beta+\sigma(Z)$ to $\frac12$,
then shift $\Re(s_2)$  from $\beta+\sigma(Z)$ to $\frac12-\delta$. This 
yields $\Sigtwo = \Sigthr+O(\Sigfou),$ where
\begin{equation*}
\Sigthr \defeq \sum_{\bm{c}\in \mathcal{S}_1:\, \norm{\bm{c}}\le \hat Z}
\EE_{\Re(s_1)=\frac12}[
\tfrac{1}{\mathsf{T}} \tfrac{2\pi}{\log{q}} A_{F,2}(s_1, 1-s_1) \cdot \hat R]
\end{equation*}
comes from the residue $(\log{q})^{-1}$ of $\zeta_K(s_1+s_2) = (1-q^{1-s_1-s_2})^{-1}$ at $s_2=1-s_1$,
and where we bound the contribution from $\Re(s_1) = \frac12$ and $\Re(s_2) = \frac12-\delta$ by
\begin{equation*}
\Sigfou \defeq \sum_{\bm{c}\in \mathcal{S}_1:\, \norm{\bm{c}}\le \hat Z}
\hat R^{1-\delta} \ll \hat Z^6 \hat R^{1-\delta}, 
\end{equation*}
on using the bounds $A_{F,2}(s_1,s_2)\ll 1$ and $\zeta_K(s_1+s_2)\ll_\delta 1$.
Next we note that 
\begin{equation*}
\begin{split}
\Sigthr &\ll \hat Z^6 \hat R, 
\end{split}
\end{equation*}
since  $A_{F,2}(s_1, 1-s_1) \ll 1$ in $\Sigthr$.
But
$$
\Sigzer \le O(\hat Z^{6-6\beta} \Sigone^2) + \Sigthr + O(\Sigfou),
$$
by \eqref{EQN:decompose-Sigma_0-in-R2'-under-R2}.
Plugging in our estimates  for $\Sigone, \Sigthr$ and $\Sigfou$, we finally get \eqref{INEQ:R2'-goal}.
\end{proof}

\begin{conjecture}
\label{CNJ:(R2'E)}
Let $Z,R\in \ZZ$ with $R\le 3Z$.
Then
\begin{equation}
\label{INEQ:R2'E-goal}
\sum_{\bm{c}\in \mathcal{S}_1:\, \norm{\bm{c}}\le \hat Z}\,
\Bigl\lvert{
\sum_{r\in \Omon} \bm{1}_{\abs{r}=\hat R} \cdot a_{\bm{c},1}(r)
}\Bigr\rvert^2
\ll \hat Z^6 \hat R.
\end{equation}
\end{conjecture}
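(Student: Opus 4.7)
The plan is to deduce Conjecture~\ref{CNJ:(R2'E)} directly from Conjecture~\ref{CNJ:(R2')} by identifying the discrete sum $\sum_{\abs{r} = \hat R} a_{\bm{c},1}(r)$ with the vertical average $\EE_{\Re(s) = \sigma_0}[\Phi^{\bm{c},1}(s) \cdot \hat R^s]$ for any $\sigma_0 > \tfrac12$; squaring and summing over $\bm{c}$ then turns \eqref{INEQ:R2'E-goal} into \eqref{INEQ:R2'-goal} verbatim.

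The first step is to verify that $\Phi^{\bm{c},1}(s)$ is holomorphic on the half-plane $\Re(s) > \tfrac12$. Applying Proposition~\ref{PROP:HW2-consequences}(1) to each of the three constituent factors: $\zeta_K(2s)^{-1} = 1-q^{1-2s}$ is a polynomial in $q^{-s}$, hence entire; while $L(s+\tfrac12,V)^{-1}$ and $L(s,\bm{c})^{-1}$ have potential poles only at the zeros of $L(\cdot,V)$ and $L(\cdot,\bm{c})$, which lie on $\Re(s)=0$ and $\Re(s)=\tfrac12$ respectively by GRH (unconditional via Grothendieck--Deligne). Next I would record the orthogonality
\[
\EE_{\Re(s)=\sigma_0}[\abs{r}^{-s}\hat R^s] = \bm{1}_{\abs{r}=\hat R},
\]
which follows from $\mathsf{T}\log q = 2\pi$ together with $\abs{r}^{-s}\hat R^s = q^{(R-\deg r)s}$. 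For $\sigma_0$ large, the Dirichlet series $\sum_r a_{\bm{c},1}(r)\abs{r}^{-s}$ converges absolutely (by the Ramanujan bound of Proposition~\ref{PROP:HW2-consequences}(3)) and equals $\Phi^{\bm{c},1}(s)$, so term-by-term integration yields
\[
\EE_{\Re(s)=\sigma_0}[\Phi^{\bm{c},1}(s) \cdot \hat R^s] = \sum_{r\in\Omon,\,\abs{r}=\hat R} a_{\bm{c},1}(r).
\]
Vertical $\mathsf{T}$-periodicity together with holomorphy on $\Re(s)>\tfrac12$ then lets Cauchy's theorem, applied to a horizontal rectangle of height $\mathsf{T}$, extend this identity to all $\sigma_0 > \tfrac12$.

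Finally, squaring the identity and summing over $\bm{c}\in\mathcal{S}_1$ with $\norm{\bm{c}}\le\hat Z$ identifies the left-hand side of \eqref{INEQ:R2'E-goal} with that of \eqref{INEQ:R2'-goal}, and Conjecture~\ref{CNJ:(R2')} supplies the bound $\ll\hat Z^6 \hat R$. I do not foresee a genuine obstacle in this argument: the entire content of (R2'E) is contained in (R2') once one recognises the Parseval-type duality between coefficient sums and vertical averages of Dirichlet series in the rational-function-of-$q^{-s}$ setting. The only point requiring any care is the contour shift in the second step, and it is delivered cleanly by the half-plane holomorphy established above; the subsidiary inputs (Ramanujan bounds and GRH) are all unconditional in the function-field context.
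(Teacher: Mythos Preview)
Your proposal is correct and follows essentially the same approach as the paper. The paper's proof consists of the single observation that $\bm{1}_{\abs{r}=\hat R} = \EE_{\Re(s)=\sigma_0}[(\hat R/\abs{r})^s]$ for $R\in\ZZ$, followed by an appeal to \eqref{INEQ:R2'-goal}; you have supplied the surrounding justifications (holomorphy on $\Re(s)>\tfrac12$, absolute convergence for large $\sigma_0$, contour shift via vertical periodicity) that the paper leaves implicit.
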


\begin{proof}
[Proof assuming Conjecture~\ref{CNJ:(R2')}]
Since $R\in \ZZ$, we may write
$$\bm{1}_{\abs{r}=\hat R} = \EE_{\Re(s)=\sigma_0}[(\hat R/\abs{r})^s]$$
for each $r\in \Omon$. Now apply  \eqref{INEQ:R2'-goal}.
\end{proof}

\section{Euler product factorisations}

Recall the definition 
\eqref{EQN:define-moduli-sets-N^c,N_c} of $\RcG$ and $\RcB$.
Given $\bm{c}\in \mathcal{S}_1$, consider the factorisation $\Phi = \Phi^{\map{G}} \Phi^{\map{B}}$, where
\begin{align}
    \Phi_\varpi(\bm{c},s)
    &\defeq \sum_{l\ge 0} \abs{\varpi}^{-ls} S^\natural_{\varpi^l}(\bm{c}),
\notag
 \\
    \Phi(\bm{c},s)
    &\defeq \prod_\varpi \Phi_\varpi(\bm{c},s)
    = \sum_{r\in \Omon} \abs{r}^{-s} S^\natural_r(\bm{c}),
    \label{EQN:define-Phi} \\
    \Phi^{\map{G}}(\bm{c},s)
    &\defeq \prod_{\varpi\nmid F^\ast(\bm{c})} \Phi_\varpi(\bm{c},s)
    = \sum_{r\in \RcG} S^\natural_r(\bm{c}) \abs{r}^{-s},
\notag \\
    \Phi^{\map{B}}(\bm{c},s)
    &\defeq \prod_{\varpi\mid F^\ast(\bm{c})} \Phi_\varpi(\bm{c},s)
    = \sum_{r\in \RcB} S^\natural_r(\bm{c}) \abs{r}^{-s}.
    \notag
\end{align}
For convenience, let $L^{\map{G}}(\ast) \defeq \prod_{\varpi\nmid F^\ast(\bm{c})} L_\varpi(\ast)$ and $L^{\map{B}}(\ast) \defeq \prod_{\varpi\mid F^\ast(\bm{c})} L_\varpi(\ast)$.

\begin{definition}
\label{DEFN:factor-Phi^G-into-Phi_1-Phi_2-Phi_3}
Let $\Phi^{\bm{c},1}(s)\defeq L(s,\bm{c})^{-1} L(\frac12+s,V)^{-1} \zeta_K(2s)^{-1}$ as before, and let
\begin{align*}
\Phi^{\bm{c},2}(s)
&\defeq L^{\map{G}}(2s,\bm{c},2)^{-1},
 \\
\Phi^{\bm{c},3}(s)
&\defeq \Phi^{\map{G}}(\bm{c},s)
L(s,\bm{c})
L(\tfrac12+s,V)
\zeta_K(2s) L^{\map{G}}(2s,\bm{c},2).
\end{align*}
For each $j\in \set{1,2,3}$,
let $a_{\bm{c},j}(r)$ be the $r$th coefficient of the Euler product $\Phi^{\bm{c},j}(s)$.
\end{definition}

Multiplying out the definitions of $\Phi^{\bm{c},j}(s)$ above, we find that
$$\prod_{1\le j\le 3} \Phi^{\bm{c},j}(s)
= \Phi^{\map{G}}(\bm{c},s).$$
Thus,  in order to use \eqref{INEQ:R2'E-goal} (which concerns $\Phi^{\bm{c},1}$) to bound $\Phi^{\map{G}}(\bm{c},s)$,
we will first need to control the factors $\Phi^{\bm{c},2}$ and $\Phi^{\bm{c},3}$ on average over $\bm{c}$.

\begin{proposition}
Let $Z,R\in \ZZ$ and $A, \eps \in \RR_{>0}$.
Then 
\begin{equation}
\label{INEQ:good-moduli-restricted-wedge2E-goal}
\sum_{\bm{c}\in \mathcal{S}_1:\, \norm{\bm{c}}\le \hat Z}\,
\Bigl\lvert{
\sum_{r\in \Omon:\, \abs{r}=\hat R} a_{\bm{c},2}(r)
}\Bigr\rvert^A
\ll_{A, \eps} \hat Z^6 \hat R^{A/4 + \eps}.
\end{equation}
\end{proposition}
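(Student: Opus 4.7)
The plan is to exploit the structural fact that $a_{\bm{c},2}(r)$ is supported on perfect squares, effectively halving the length of the inner sum, and then to apply a Perron--Mellin vertical-average formula together with the GRH-type bounds supplied by Proposition~\ref{PROP:HW2-consequences} for $L(s,\bm{c},2)$.

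First, I will note that since $\Phi^{\bm{c},2}(s)=L^{\map{G}}(2s,\bm{c},2)^{-1}$, each local Euler factor at a prime $\varpi\nmid F^\ast(\bm{c})$ is a rational function of $\abs{\varpi}^{-2s}$, while the factors at $\varpi\mid F^\ast(\bm{c})$ are trivial. Multiplicativity then forces $a_{\bm{c},2}(r)=0$ unless $r=r_0^2$ for some $r_0\in \RcG$, so the bound \eqref{INEQ:good-moduli-restricted-wedge2E-goal} is vacuous when $R$ is odd. Assuming $R=2R_0$, I set $b_{\bm{c}}(r_0)\defeq a_{\bm{c},2}(r_0^2)$; under the substitution $w=2s$ the sequence $b_{\bm{c}}(r_0)$ is the usual Dirichlet-coefficient sequence of $L^{\map{G}}(w,\bm{c},2)^{-1}$, and the inner sum then equals $M_{\bm{c}}\defeq\sum_{r_0\in \RcG,\, \abs{r_0}=\hat R_0} b_{\bm{c}}(r_0)$.

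Since $R_0\in \ZZ$, the vertical-average representation $M_{\bm{c}}=\EE_{\Re(w)=\sigma_0}[L^{\map{G}}(w,\bm{c},2)^{-1}\hat R_0^w]$ is valid for any $\sigma_0$ in the holomorphy domain of $L^{\map{G}}(\cdot,\bm{c},2)^{-1}$. The key analytic input will be that $L^{\map{G}}(w,\bm{c},2)^{-1}$ is holomorphic for $\Re(w)>\tfrac{1}{2}$: this follows from Proposition~\ref{PROP:HW2-consequences}(1) applied to $L(w,\bm{c},2)$ (the GRH-type factorisation $P_1/(P_0P_2)$) together with Proposition~\ref{PROP:symplectic-pairing-implies-analytic-pole}, which supplies the pole of $L(w,\bm{c},\bigwedge^2)$ at $w=1$ needed to cancel the pole of $\zeta_K(w)$ in the definition of $L(w,\bm{c},2)$; passage from $L$ to $L^{\map{G}}$ only inserts finitely many bad local factors at primes $\varpi\mid F^\ast(\bm{c})$, each of which is holomorphic for $\Re(w)>0$. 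Shifting the contour down to $\Re(w)=\tfrac{1}{2}+\eps$ and invoking the Lindel\"of-type bound $\abs{L^{\map{G}}(w,\bm{c},2)^{-1}}\ll_\eps \norm{\bm{c}}^\eps$ (Proposition~\ref{PROP:HW2-consequences}(4) combined with a trivial Ramanujan estimate on the bad Euler factors) gives the pointwise estimate $\abs{M_{\bm{c}}}\ll_\eps \hat R^{1/4+\eps}\norm{\bm{c}}^\eps$.

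Raising to the $A$th power and summing over $\norm{\bm{c}}\le \hat Z$ then produces $\sum_{\bm{c}\in\mcal{S}_1:\,\norm{\bm{c}}\le \hat Z}\abs{M_{\bm{c}}}^A\ll_{A,\eps}\hat Z^{6+A\eps}\hat R^{A/4+A\eps}$, giving the desired estimate up to a spurious $\hat Z^{A\eps}$ factor. The main obstacle will be removing this spurious factor to land on the clean exponent $\hat Z^6$: in regimes where $\hat R$ dominates $\hat Z$ the loss is harmless and absorbable into $\hat R^\eps$, but when $\hat R$ is small compared to $\hat Z$ one likely needs to upgrade the pointwise Lindel\"of bound to a mean-value bound of the form $\EE_{\bm{c}}[\abs{L^{\map{G}}(w,\bm{c},2)^{-1}}^{O(A)}]\ll 1$ over the family, analogous to (and, one hopes, much simpler than) the second-moment Ratios input used in the proof of Conjecture~\ref{CNJ:(R2')}.
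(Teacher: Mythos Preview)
Your pointwise GRH estimate $\abs{M_{\bm{c}}}\ll_\eps \hat R^{1/4+\eps}\norm{\bm{c}}^\eps$ is correct and matches what the paper obtains from Proposition~\ref{PROP:HW2-consequences}(5). You have also correctly identified the precise remaining obstacle: removing the spurious $\hat Z^{A\eps}$ when $Z$ is large compared to $R$. However, the resolution you speculate on (a nontrivial mean-value or moment bound over the family) is not needed, and the paper's actual mechanism is both simpler and different in kind.

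The missing idea is a \emph{periodicity/inflation} argument. After reducing to $A\in 2\NN$ by H\"older and expanding $\abs{M_{\bm{c}}}^A$ as a sum over tuples $(r_1,\dots,r_A)$ with $\abs{r_i}=\hat R$, one observes that the map $\bm{c}\mapsto a_{\bm{c},2}(r_1)\cdots a_{\bm{c},2}(r_A)$ depends only on $\bm{c}\bmod M$ where $M=r_1\cdots r_A$ (this is smooth proper base change: $a_{\bm{c},2}$ at a good prime $\varpi$ depends only on $\bm{c}\bmod \varpi$, and vanishes at bad primes). Since $\abs{M}\le \hat R^A$, once $Z\ge AR$ the box $\norm{\bm{c}}\le \hat Z$ covers each residue class modulo $M$ with exactly $(\hat Z/\abs{M})^6$ points, so the normalized sum $\widetilde\Sigma(Z,R)\defeq \hat Z^{-6}\Sigma(Z,R)$ satisfies $\widetilde\Sigma(Z,R)=\widetilde\Sigma(AR,R)$ for all $Z\ge AR$. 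At $Z=AR$ your pointwise bound gives $\widetilde\Sigma(AR,R)\ll_{A,\eps}\hat R^{A\eps}\hat R^{A/4+\eps}$, and the $\hat Z^\eps$ has been traded for $\hat R^{A\eps}$, which is acceptable. No analytic moment input is required beyond the individual GRH bound you already invoked.
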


\begin{proof}
If $\hat R\le 1$ then \eqref{INEQ:good-moduli-restricted-wedge2E-goal} is trivial, so suppose $\hat R>1$.
By H\"{o}lder's inequality, we may assume $A$ is an even integer.
Let $\Sigma(Z,R)$ denote the left-hand side of \eqref{INEQ:good-moduli-restricted-wedge2E-goal} and put $$\widetilde{\Sigma}(Z,R)\defeq \Sigma(Z,R)/\hat Z^6.$$

We claim that for $Z\ge AR$ we have
\begin{equation}
\label{EQN:large-Z-inflation-for-for-restricted-wedge2E}
    \widetilde{\Sigma}(Z,R) = \widetilde{\Sigma}(AR,R).
\end{equation}
To prove this, we 
write $\abs{\sum_{\abs{r}=\hat R} a_{\bm{c},2}(r)}^A = \sum_{\abs{r_1}=\dots=\abs{r_A}=\hat R} a_{\bm{c},2}(r_1)\cdots a_{\bm{c},2}(r_A)$, on noting that $a_{\bm{c},2}(r)\in \RR$.
If $r_1\cdots r_A = M$, then (by smooth proper base change) the function $\bm{c}\mapsto a_{\bm{c},2}(r_1)\cdots a_{\bm{c},2}(r_A)$ is constant on each fibre of the map
$$
\set{\bm{c}\in \mcal{S}_1: \norm{\bm{c}}\le \hat Z,\; (M,F^\ast(\bm{c})) = 1} \to \set{\bm{c}\in (\OK/M\OK)^6: \gcd(M,F^\ast(\bm{c})) = 1}.
$$
Each fibre has cardinality $(\hat Z/\abs{M})^6$, since $1<\abs{M}\le \hat R^A\le \hat Z$.
The claim follows.

By GRH (Proposition \ref{PROP:HW2-consequences}(5)), we have $\Sigma(Z,R) \ll_{A,\eps} \hat Z^{6+\eps} \hat R^{A/4+\eps}$ for all $\hat Z>0$.
(The exponent is $A/4$ rather than $A/2$, because the Euler product 
of $\Phi^{\bm{c},2}(s)$, as  given in 
Definition \ref{DEFN:factor-Phi^G-into-Phi_1-Phi_2-Phi_3}, 
is supported on \emph{square} moduli due to the appearance of $2s$.)
Thus 
$
\widetilde{\Sigma}(Z,R) \ll_{A,\eps} \hat R^{A/4+\eps}
$
for $Z\le AR$, and so  for all $Z$, by 
\eqref{EQN:large-Z-inflation-for-for-restricted-wedge2E}.
\end{proof}

\begin{proposition}
\label{PROP:approx-Phi-past-1/2}
For $\bm{c}\in \mcal{S}_1$, primes $\varpi\in \Omon$, and integers $l\ge 1$, we have
\begin{equation*}
a_{\bm{c},3}(\varpi)\cdot\bm{1}_{\varpi\nmid F^\ast(\bm{c})}=0,
\quad
a_{\bm{c},3}(\varpi^2)
\cdot\bm{1}_{\varpi\nmid F^\ast(\bm{c})}\ll \abs{\varpi}^{-1/2},
\quad
a_{\bm{c},3}(\varpi^l)\ll_\eps \abs{\varpi}^{l\eps}.
\end{equation*}
\end{proposition}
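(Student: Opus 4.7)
The plan is to work locally. For each prime $\varpi\in\Omon$, write $T\defeq|\varpi|^{-s}$ and expand the local Euler factor $\Phi^{\bm{c},3}_\varpi(s)$ as a power series in $T$. When $\varpi\nmid F^\ast(\bm{c})$, the vanishing \eqref{EQN:smooth-vanishing-S_c} collapses $\Phi_\varpi(\bm{c},s)$ to $1+S^\natural_\varpi(\bm{c})T$, and \eqref{EQN:factor-exterior-square-L-function} absorbs $\zeta_{K,\varpi}(2s)L_\varpi(2s,\bm{c},2)$ into $L_\varpi(2s,\bm{c},\bigwedge^2)$, so that
\begin{equation*}
\Phi^{\bm{c},3}_\varpi(s) = (1 + S^\natural_\varpi(\bm{c})T)\,L_\varpi(s,\bm{c})\,L_\varpi(\tfrac12+s,V)\,L_\varpi(2s,\bm{c},{\textstyle\bigwedge^2}).
\end{equation*}
For the first claim, $L_\varpi(2s,\bm{c},\bigwedge^2)$ is a power series in $T^2$, so the $T$-coefficient of the product is $S^\natural_\varpi(\bm{c}) + \lambda_\bm{c}(\varpi) + |\varpi|^{-1/2}\lambda_V(\varpi)$. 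Via weak Lefschetz and the Grothendieck--Lefschetz trace formula applied to $V$ and $V_{\bm{c}}$, I identify $\lambda_V(\varpi) = E^\natural_F(k_\varpi)$ and $\lambda_\bm{c}(\varpi) = -E^\natural_\bm{c}(k_\varpi)$; combining these with \eqref{EQN:rewrite-S_c(p)-via-E_c} yields the required vanishing.

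For the second claim, I would extract the $T^2$-coefficient of the same product. Let $\alpha_1,\dots,\alpha_{10}$ denote the normalized Frobenius eigenvalues of $H^3_\ell(V_\bm{c})$ at $\varpi$. Then the $T^2$-coefficient of $L_\varpi(s,\bm{c})$ is the complete symmetric function $h_2(\alpha) = \lambda_\bm{c}(\varpi^2)$, while that of $L_\varpi(2s,\bm{c},\bigwedge^2)$ is the elementary symmetric function $e_2(\alpha)=\sum_{i<j}\alpha_i\alpha_j$. The Newton identity $h_2(\alpha)+e_2(\alpha) = (\sum_i\alpha_i)^2 = \lambda_\bm{c}(\varpi)^2$ produces a telescoping cancellation of the $O(1)$ main terms in the expansion of the product; what remains is the cross term $-|\varpi|^{-1/2}\lambda_V(\varpi)\lambda_\bm{c}(\varpi)$ of size $O(|\varpi|^{-1/2})$, together with an $O(|\varpi|^{-1})$ contribution from the $T^2$-coefficient of $L_\varpi(\tfrac12+s,V)$.

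For the third claim, which must also hold when $\varpi\mid F^\ast(\bm{c})$ (in which case $\Phi^{\map{G}}$ and $L^{\map{G}}(2s,\bm{c},2)$ have trivial local factor and so $\Phi^{\bm{c},3}_\varpi(s) = L_\varpi(s,\bm{c})L_\varpi(\tfrac12+s,V)\zeta_{K,\varpi}(2s)$), I would treat $\Phi^{\bm{c},3}_\varpi(s)$ as a rational function in $T$ of bounded degree, the degree bound coming from Betti numbers (e.g.\ $\dim H^3_\ell(V_\bm{c})\le 10$). By the Ramanujan bound of Proposition~\ref{PROP:HW2-consequences}(3), all $T$-poles of each local factor lie in $|T|\ge 1$, so on the circle $|T|=|\varpi|^{-\eps}$ every factor $(1-\alpha T)^{-1}$ is bounded by $(1-q^{-\eps})^{-1}\ll_{q,\eps}1$, and the polynomial $1+S^\natural_\varpi(\bm{c})T$ is trivially $O(1)$. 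Cauchy's integral formula on this circle then yields $|a_{\bm{c},3}(\varpi^l)|\ll_{q,\eps}|\varpi|^{l\eps}$. The principal obstacle is the $T^2$-bookkeeping in the middle claim: the Newton symmetric-function identity and the Grothendieck--Lefschetz identifications have to interact precisely to annihilate the $O(1)$ terms, requiring careful tracking of signs, normalizations, and cross contributions.
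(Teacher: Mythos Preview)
Your proposal is correct and follows essentially the same approach as the paper: both compute the local Euler factor $\Phi^{\bm{c},3}_\varpi$ by expanding the product $(1+S^\natural_\varpi(\bm{c})T)\,L_\varpi(s,\bm{c})\,L_\varpi(\tfrac12+s,V)\,L_\varpi(2s,\bm{c},\bigwedge^2)$ to order $T^2$, use the Grothendieck--Lefschetz identifications $\lambda_V(\varpi)=E^\natural_F(k_\varpi)$ and $\lambda_{\bm{c}}(\varpi)=-E^\natural_{\bm{c}}(k_\varpi)$ together with \eqref{EQN:rewrite-S_c(p)-via-E_c} for the $T$-coefficient, and invoke the identity $\lambda_{\bm{c}}(\varpi)^2=\lambda_{\bm{c}}(\varpi^2)+\lambda_{\bm{c},\bigwedge^2}(\varpi)$ (your $h_2+e_2=(\sum\alpha_i)^2$) for the $T^2$-coefficient. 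Your Cauchy-integral argument for the third claim is a clean formalisation of what the paper compresses into ``follows from the Ramanujan bound''; both rest on the same input, namely $|\alpha|\le 1$ from Proposition~\ref{PROP:HW2-consequences}(3) and the bounded degree of the local factor.
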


\begin{proof}
If $\varpi\mid F^\ast(\bm{c})$, then $\Phi^{\map{G}}_\varpi(\bm{c},s) = 1$.
If $\varpi\nmid F^\ast(\bm{c})$, then \eqref{EQN:smooth-vanishing-S_c} implies
$$\Phi^{\map{G}}_\varpi(\bm{c},s)
= 1 + S^\natural_{\bm{c}}(\varpi) \abs{\varpi}^{-s}
= 1 - \lambda_{\bm{c}}(\varpi) \abs{\varpi}^{-s} - \lambda_V(\varpi) \abs{\varpi}^{-1/2-s},$$
because $S^\natural_{\bm{c}}(\varpi)
= E^\natural_{\bm{c}}(\OK/\varpi\OK)-\abs{\varpi}^{-1/2}E^\natural_F(\OK/\varpi\OK)$ by \eqref{EQN:rewrite-S_c(p)-via-E_c}
and $$\lambda_{\bm{c}}(\varpi)
= \sum_{1\le j\le 10} \alpha_{\bm{c},j}(\varpi)
= -E^\natural_{\bm{c}}(\OK/\varpi\OK),
\quad \lambda_V(\varpi)
= \sum_{1\le j\le 22} \alpha_{V,j}(\varpi)
= E^\natural_F(\OK/\varpi\OK),$$
by the Grothendieck--Lefschetz trace formula.
In either case, $a_{\bm{c},3}(\varpi^l)\ll_\eps \abs{\varpi}^{l\eps}$ follows from the Ramanujan bound $\abs{\alpha}\le 1$ in Proposition \ref{PROP:HW2-consequences}(3).

Now suppose $\varpi\nmid F^\ast(\bm{c})$.
Multiplying $\Phi^{\map{G}}_\varpi(\bm{c},s)$ by
\begin{equation*}
\begin{split}
L_\varpi(s,\bm{c})
&= 1 + \lambda_{\bm{c}}(\varpi) \abs{\varpi}^{-s}
+ \lambda_{\bm{c}}(\varpi^2) \abs{\varpi}^{-2s} + O(\abs{\varpi}^{-3s}), \\
L_\varpi(\tfrac12+s,V)
&= 1 + \lambda_V(\varpi) \abs{\varpi}^{-1/2-s} + O(\abs{\varpi}^{-1-2s}), \\
\zeta_K(2s) L_\varpi(2s,\bm{c},2)
= L_\varpi(2s,\bm{c},\textstyle{\bigwedge^2})
&= 1 + \lambda_{\bm{c},\bigwedge^2}(\varpi) \abs{\varpi}^{-2s} + O(\abs{\varpi}^{-4s}),
\end{split}
\end{equation*}
and discarding all terms of the form $O(\abs{\varpi}^{-s} \times \abs{\varpi}^{-2s})$ or $O(\abs{\varpi}^{-s} \times \abs{\varpi}^{-1/2-s})$, 
we get $$\Phi^{\bm{c},3}_\varpi(s) = 1
+ O(\abs{\varpi}^{-1/2-2s})
+ O(\abs{\varpi}^{-3s}),$$
since $\lambda_{\bm{c}}(\varpi)^2 = \lambda_{\bm{c}}(\varpi^2) + \lambda_{\bm{c},\bigwedge^2}(\varpi)$.
This completes the proof of the proposition. 
\end{proof}

\begin{corollary}
\label{COR:average-bound-on-Phi_3}
Let $Z,R\in \ZZ$ and $A, \eps \in \RR_{>0}$.
Then
\begin{equation*}
\sum_{\bm{c}\in \mathcal{S}_1:\, \norm{\bm{c}}\le \hat Z}
\biggl(\,\sum_{r\in \Omon:\, \abs{r}\le \hat R} \abs{a_{\bm{c},3}(r)}\biggr)^{\!A}
\ll_{A,\eps} \hat Z^6 \hat R^{(1/3+\eps)A}.
\end{equation*}
\end{corollary}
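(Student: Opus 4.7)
The strategy is to fix $\bm{c}\in\mcal{S}_1$, factor every modulus as $r=r_G r_B$ with $r_G\in\RcG$ and $r_B\in\RcB$, estimate the good and bad contributions separately, and only then average over $\bm{c}$ using Lemma~\ref{LEM:N_c-small-divisor-moment-bound}.

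For the good part, Proposition~\ref{PROP:approx-Phi-past-1/2} forces $a_{\bm{c},3}(\varpi)=0$ at primes $\varpi\nmid F^\ast(\bm{c})$, so only square-full $r_G$ contribute. I would decompose each such $r_G$ uniquely as $r_G=m^2 k$ with $m\in\Omon$ square-free, $k\in\Omon$ cube-full, and $\gcd(m,k)=1$. Plugging in the bounds $\abs{a_{\bm{c},3}(\varpi^2)}\ll\abs{\varpi}^{-1/2}$ and $\abs{a_{\bm{c},3}(\varpi^l)}\ll_\eps\abs{\varpi}^{l\eps}$ for $l\ge 3$ from Proposition~\ref{PROP:approx-Phi-past-1/2} yields
\[
\abs{a_{\bm{c},3}(r_G)}\ll_\eps \abs{r_G}^\eps\abs{m}^{-1/2}.
\]
Since the number of cube-full $k\in\Omon$ of norm at most $X$ is $O(X^{1/3})$ (coming from the pole at $s=1/3$ of the corresponding Euler product), summing gives
\[
\sum_{\substack{r_G\in\RcG\\ \abs{r_G}\le\hat R}}\abs{a_{\bm{c},3}(r_G)}\ll_\eps \hat R^{1/3+\eps}\sum_{m\in\Omon}\abs{m}^{-7/6}\ll_\eps \hat R^{1/3+\eps},
\]
the inner sum converging since $\sum_{m\in\Omon}\abs{m}^{-7/6}=\sum_{d\ge 0}q^{-d/6}<\infty$. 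This bound is uniform in $\bm{c}$.

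For the bad part, the universal bound $\abs{a_{\bm{c},3}(r_B)}\ll_\eps\abs{r_B}^\eps\le\hat R^\eps$ from Proposition~\ref{PROP:approx-Phi-past-1/2} gives
\[
\sum_{\substack{r_B\in\RcB\\ \abs{r_B}\le\hat R}}\abs{a_{\bm{c},3}(r_B)}\ll_\eps \hat R^\eps\cdot\#\set{r_B\in\RcB:\abs{r_B}\le\hat R}.
\]
Since $a_{\bm{c},3}$ is multiplicative on coprime moduli, I would then multiply the two factors through, raise to the $A$-th power, and finally sum over $\bm{c}$. The only factor depending on $\bm{c}$ is the bad-modulus count, controlled by Lemma~\ref{LEM:N_c-small-divisor-moment-bound}: $\sum_{\norm{\bm{c}}\le\hat Z}\#(\RcB\cap\set{\abs{r_B}\le\hat R})^A\ll_{A,\eps}\hat Z^6\hat R^\eps$. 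Combining everything and relabelling $\eps$ produces the claimed $\hat Z^6\hat R^{(1/3+\eps)A}$.

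The main technical step is getting $1/3$ rather than $1/2$ on the good part. A naive bound $\abs{a_{\bm{c},3}(r_G)}\ll_\eps\abs{r_G}^\eps$ combined with the $O(\hat R^{1/2})$ count of square-full moduli would only yield exponent $1/2$; the improvement to $1/3$ relies on extracting the $\abs{\varpi}^{-1/2}$ savings at primes of exponent exactly $2$, which forces the $m$-sum to converge absolutely and leaves only the sparser cube-full piece $k$ driving the $\hat R$-dependence. Using Lemma~\ref{LEM:N_c-small-divisor-moment-bound} (rather than the pointwise divisor bound of Lemma~\ref{LEM:count-B-R_c-infty-divisors}) for the $\bm{c}$-average is also essential, to avoid accumulating an unwanted $\hat Z^{A\eps}$ factor.
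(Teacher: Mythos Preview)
Your proposal is correct and follows essentially the same route as the paper: factor $r=r_G r_B$ via multiplicativity and positivity, bound the good part uniformly in $\bm{c}$ by $\hat R^{1/3+\eps}$ using the local estimates of Proposition~\ref{PROP:approx-Phi-past-1/2}, bound each bad summand by $\hat R^\eps$, and then invoke Lemma~\ref{LEM:N_c-small-divisor-moment-bound} for the $A$th moment of the bad-modulus count. The only cosmetic difference is that the paper handles the good part by Rankin's trick, bounding $\sum_{e\in\RcG}\abs{a_{\bm{c},3}(e)}\,\abs{e}^{-1/3-\eps}$ by the Euler product $\prod_\varpi(1+O(\abs{\varpi}^{-7/6})+O_\eps(\abs{\varpi}^{-1}))\ll_\eps 1$, whereas you carry out the equivalent computation by the explicit square-full decomposition $r_G=m^2k$ and the $O(X^{1/3})$ count of cube-full moduli; both arguments encode the same information and yield the same $\hat R^{1/3+\eps}$.
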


\begin{proof}
By multiplicativity (of $\abs{a_{\bm{c},3}}$) and positivity, we have
\begin{equation}
\label{INEQ:positive-upper-bound-factorization-for-|a_c,3|-sum}
    \sum_{r\in \Omon:\, \abs{r}\le \hat R} \abs{a_{\bm{c},3}(r)}
    \le \sum_{d\in \RcB:\, \abs{d}\le \hat R} \abs{a_{\bm{c},3}(d)}
    \sum_{e\in \RcG:\, \abs{e}\le \hat R} \abs{a_{\bm{c},3}(e)}.
\end{equation}
But by 
Proposition \ref{PROP:approx-Phi-past-1/2},  we have $a_{\bm{c},3}(d)\ll_\eps \abs{d}^\eps$ and
\begin{equation*}
\sum_{e\in \RcG:\, \abs{e}\le \hat R} \frac{\abs{a_{\bm{c},3}(e)}}{\hat R^{1/3+\eps}}
\le \sum_{e\in \RcG} \frac{\abs{a_{\bm{c},3}(e)}}{\abs{e}^{1/3+\eps}}
\le \prod_\varpi \left(1 + \frac{O(\abs{\varpi}^{-1/2})}{\abs{\varpi}^{2/3+2\eps}}
+ \frac{O_\eps(\abs{\varpi}^\eps)}{\abs{\varpi}^{1+3\eps}}\right)
\ll_\eps 1,
\end{equation*}
So by \eqref{INEQ:positive-upper-bound-factorization-for-|a_c,3|-sum}, the quantity to be estimated is
\begin{equation*}
\ll_{A,\eps} \hat R^{(1/3+2\eps) A}
\sum_{\bm{c}\in \mathcal{S}_1:\, \norm{\bm{c}}\le \hat Z}
\card{\set{d\in \RcB: \abs{d}\le \hat R}}^A.
\end{equation*}
Now the claimed bound  follows from Lemma~\ref{LEM:N_c-small-divisor-moment-bound}.
\end{proof}

\begin{conjecture}
\label{CNJ:(R2'E')}
Let $\eps\in (0,1]$.
Let $Z,R\in \ZZ$ with $R\le 3Z$.
Then
\begin{equation}
\label{INEQ:R2'E'-goal}
\sum_{\bm{c}\in \mathcal{S}_1:\, \norm{\bm{c}}\le \hat Z}\,
\Bigl\lvert{
\sum_{r\in \RcG} \bm{1}_{\abs{r}=\hat R} \cdot S^\natural_r(\bm{c})
}\Bigr\rvert^{2-\eps}
\ll_{\eps} \hat Z^6 \hat R^{(2-\eps)/2}.
\end{equation}
\end{conjecture}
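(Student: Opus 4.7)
The key idea is to exploit the Euler-product factorization
\begin{equation*}
\Phi^{\map{G}}(\bm{c},s)=\Phi^{\bm{c},1}(s)\,\Phi^{\bm{c},2}(s)\,\Phi^{\bm{c},3}(s)
\end{equation*}
from Definition~\ref{DEFN:factor-Phi^G-into-Phi_1-Phi_2-Phi_3} and feed, via H\"older's inequality, the three moment estimates at our disposal: Conjecture~\ref{CNJ:(R2'E)} for the factor $\Phi^{\bm{c},1}$ at exponent $2$, inequality~\eqref{INEQ:good-moduli-restricted-wedge2E-goal} for $\Phi^{\bm{c},2}$ at any high exponent $A$, and Corollary~\ref{COR:average-bound-on-Phi_3} for $\Phi^{\bm{c},3}$ at any high exponent $A$. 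Writing $A_j(\bm{c},R_j)\defeq\sum_{r\in\Omon,\,|r|=\hat R_j}a_{\bm{c},j}(r)$ and using the additivity of $\deg$ under multiplication in $\Omon$, the factorization yields the convolution identity
\begin{equation*}
\sum_{r\in\RcG,\,|r|=\hat R}S^\natural_r(\bm{c})=\sum_{R_1+R_2+R_3=R}A_1(\bm{c},R_1)\,A_2(\bm{c},R_2)\,A_3(\bm{c},R_3).
\end{equation*}

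I would then apply Minkowski's inequality in $\ell^{2-\eps}(\bm{c})$, reducing to a triple-by-triple bound. For the diagonal triple $(R,0,0)$ we have $A_2=A_3=1$, and the single H\"older interpolation
\begin{equation*}
\sum_{\bm{c}}|A_1(\bm{c},R)|^{2-\eps}\le\Bigl(\sum_{\bm{c}}|A_1|^2\Bigr)^{(2-\eps)/2}\bigl(\#\{\bm{c}\in\mathcal{S}_1:\norm{\bm{c}}\le\hat Z\}\bigr)^{\eps/2},
\end{equation*}
combined with Conjecture~\ref{CNJ:(R2'E)} and Lemma~\ref{LEM:affine-dimension-growth}, gives exactly $\hat Z^6\hat R^{(2-\eps)/2}$ with no $\eps$-loss.

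For off-diagonal triples with $R_2+R_3\geq 1$, I would apply H\"older with exponents $p_1=2/(2-\eps)$ and $p_2=p_3=4/\eps$ (so $1/p_1+1/p_2+1/p_3=1$), using the three moment bounds at exponents $2$, $A$, $A$ respectively, where $A\defeq 4(2-\eps)/\eps$. By construction the $\hat Z$-exponent collapses to exactly $6$, while a short computation using $R_1=R-R_2-R_3$ shows that the $\hat R$-exponent equals
\begin{equation*}
\tfrac{2-\eps}{2}R-\Bigl(\tfrac{2-\eps}{4}-\tfrac{\eps\eps_0}{4}\Bigr)R_2-\Bigl(\tfrac{2-\eps}{6}-\eps_0(2-\eps)\Bigr)R_3,
\end{equation*}
where $\eps_0>0$ is the small auxiliary parameter in the $\Phi^{\bm{c},2}$ and $\Phi^{\bm{c},3}$ moment bounds. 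Choosing $\eps_0$ sufficiently small in terms of $\eps$ (e.g.\ $\eps_0<1/12$) makes both coefficients of $R_2$ and $R_3$ strictly positive, so each off-diagonal triple contributes at most $\hat Z^6\hat R^{(2-\eps)/2}\,q^{-c(R_2+R_3)}$ with some $c=c(\eps)>0$. Taking $(2-\eps)$-th roots and summing the resulting $\ell^{2-\eps}$-norms over triples gives a geometrically convergent series which, together with the diagonal contribution, recovers $\hat Z^{6/(2-\eps)}\hat R^{1/2}$; raising back to the $(2-\eps)$-th power produces the claimed bound.

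\textbf{Main obstacle.} The genuinely hard ingredient has already been absorbed into Conjecture~\ref{CNJ:(R2'E)}, itself a consequence of the Ratios Conjecture~(R2). Beyond that the argument is essentially mechanical, and the only delicate point is that the $\eps_0$-loss from the $\Phi^{\bm{c},2}$ and $\Phi^{\bm{c},3}$ moment bounds must be strictly dominated by the geometric gain $(2-\eps)(R_2/4+R_3/6)$ generated by the convolution. This is possible since $2-\eps$ is bounded below by $1$ on the range $\eps\in(0,1]$, so any fixed choice of $\eps_0$ of size $\leq 1/12$ (or indeed any $\eps_0<\min(1/6,(2-\eps)/\eps)$) suffices.
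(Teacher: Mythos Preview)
Your proposal is correct and follows essentially the same route as the paper: the factorization $\Phi^{\map{G}}=\Phi^{\bm{c},1}\Phi^{\bm{c},2}\Phi^{\bm{c},3}$, the convolution over triples $(R_1,R_2,R_3)$, and the same H\"older split over $\bm{c}$ with exponents $(2,\,4(2-\eps)/\eps,\,4(2-\eps)/\eps)$ feeding into Conjecture~\ref{CNJ:(R2'E)}, \eqref{INEQ:good-moduli-restricted-wedge2E-goal}, and Corollary~\ref{COR:average-bound-on-Phi_3}. The only structural difference is how the sum over triples is handled: you use Minkowski in $\ell^{2-\eps}(\bm{c})$ and then sum a geometric series in $R_2+R_3$, whereas the paper first applies a weighted H\"older over the triples (with weights $\hat R_1^{\pm\eta}$) before summing over $\bm{c}$; both devices yield the same decay and the same endpoint.
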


\begin{proof}
[Proof assuming Conjecture~\ref{CNJ:(R2'E)}]

Let $\bm{c}\in \mcal{S}_1$ and $R_1,R_2,R_3\in \ZZ_{\ge 0}$.
Writing $$\Sigma^{\bm{c},j}(R_j)\defeq \sum_{r_j\in \Omon:\, \abs{r_j}=\hat R_j} a_{\bm{c},j}(r_j)$$ for brevity, we have
$$
    \sum_{r\in \RcG:\, \abs{r}=\hat R} S^\natural_r(\bm{c})
    = \sum_{R_1+R_2+R_3=R}\, \prod_{1\le j\le 3} \Sigma^{\bm{c},j}(R_j),
$$
since $\Phi^{\map{G}} = \Phi^{\bm{c},1} \Phi^{\bm{c},2} \Phi^{\bm{c},3}$.
Let $\beta\defeq 2-\eps$.
Let $$W_1(R_1)\defeq \hat R_1^\eta,
\quad W_2(R_1)\defeq \hat R_1^{-(\beta-1)\eta},$$
for some $\eta>0$ to be chosen.
Let $$\mcal{I}_1\defeq \sum_{R_1+R_2+R_3=R} W_1(R_1).$$
Using H\"{o}lder's inequality in the form $1 = \frac{\beta-1}{\beta} + \frac{1}{\beta}$
over the set $\set{R_1+R_2+R_3=R}$ (and then exponentiating by $\beta$), we obtain
\begin{equation}
\label{INEQ:first-Holder-for-R2'E'}
\Bigl\lvert{
\sum_{r\in \RcG:\, \abs{r}=\hat R} S^\natural_r(\bm{c})
}\Bigr\rvert^\beta
\le \mcal{I}_1^{\beta-1}
\sum_{R_1+R_2+R_3=R} W_2(R_1)
\prod_{1\le j\le 3} \abs{\Sigma^{\bm{c},j}(R_j)}^\beta,
\end{equation}
since $\beta\ge 1$ and $W_1^{\beta-1} W_2 = 1$.
Here $$\mcal{I}_1
\le \sum_{R_2,R_3\ge 0} \frac{\hat R^\eta}{(\hat R_2\hat R_3)^\eta}
\ll_\eta \hat R^\eta.$$
Upon summing \eqref{INEQ:first-Holder-for-R2'E'} over $\norm{\bm{c}}\le \hat Z$, we thus find that the left-hand side of \eqref{INEQ:R2'E'-goal} is
\begin{equation}
\label{INEQ:simplified-Holder-and-Fubini-bound-for-R2'E'}
\ll_\eta
\sum_{R_1+R_2+R_3=R} (\hat R/\hat R_1)^{(\beta-1)\eta}
\sum_{\bm{c}\in \mathcal{S}_1:\, \norm{\bm{c}}\le \hat Z}\,
\prod_{1\le j\le 3} \abs{\Sigma^{\bm{c},j}(R_j)}^\beta.
\end{equation}

Let $(\gamma_1, \gamma_2, \gamma_3)\defeq (2, 4\beta/\eps, 4\beta/\eps)$.
Then $\sum_{1\le j\le 3} \beta/\gamma_j = 1$, so by H\"{o}lder over $\bm{c}$, we obtain
\begin{equation}
\label{INEQ:second-Holder-for-R2'E'}
\sum_{\bm{c}\in \mathcal{S}_1:\, \norm{\bm{c}}\le \hat Z}\,
\prod_{1\le j\le 3} \abs{\Sigma^{\bm{c},j}(R_j)}^\beta
\le \prod_{1\le j\le 3} \mscr{M}_j^{\beta/\gamma_j},
\end{equation}
where  $\mscr{M}_j = \sum_{\bm{c}\in \mathcal{S}_1:\, \norm{\bm{c}}\le \hat Z} \abs{\Sigma^{\bm{c},j}(R_j)}^{\gamma_j}$.
By Conjecture~\ref{CNJ:(R2'E)} (with $\ceil{\max(Z,\frac13R_1)}$ in place of $Z$ and $R_1$ in place of $R$), we have 
$$\mscr{M}_1 \ll (\hat Z+\hat R_1^{1/3})^6 \hat 
R_1.
$$
Also, by \eqref{INEQ:good-moduli-restricted-wedge2E-goal} 
and Corollary \ref{COR:average-bound-on-Phi_3},
we have
$$\mscr{M}_2 \ll_{\gamma_2} \hat Z^6 \hat R_2^{11\gamma_2/40}
\quad \text{ and }\quad 
\mscr{M}_3 \ll_{\gamma_3} \hat Z^6 \hat R_3^{11\gamma_3/30}.
$$
Therefore, if $R_1+R_2+R_3=R$, then the right-hand side of \eqref{INEQ:second-Holder-for-R2'E'} is
\begin{equation*}
\ll_\beta \hat Z^{\sum_j 6\beta/\gamma_j} \hat R_1^{\beta/2} \hat R_2^{11\beta/40} \hat R_3^{11\beta/30}
= \hat Z^6 (\hat R^{1/2} \hat R_2^{-9/40} \hat R_3^{-2/15})^\beta.
\end{equation*}
Plugging \eqref{INEQ:second-Holder-for-R2'E'} into \eqref{INEQ:simplified-Holder-and-Fubini-bound-for-R2'E'} now bounds the left-hand side of \eqref{INEQ:R2'E'-goal} by
\begin{equation}
\label{EXPR:final-upper-bound-quantity-for-R2'E'}
\ll_{\eta,\beta}
\sum_{R_1+R_2+R_3=R} \frac{(\hat R/\hat R_1)^{(\beta-1)\eta}
\hat Z^6 \hat R^{\beta/2}}{(\hat R_2^{9/40} \hat R_3^{2/15})^\beta}.
\end{equation}
Let $\eta = \min(9/40, 2/15)$.
Then $R_1+R_2+R_3=R \Rightarrow (\hat R/\hat R_1)^\eta \le \hat R_2^{9/40} \hat R_3^{2/15}$.
Since $\beta-1\ge 0$, it follows that the quantity \eqref{EXPR:final-upper-bound-quantity-for-R2'E'} is
\begin{equation*}
\ll_\beta \sum_{R_1+R_2+R_3=R} \frac{\hat Z^6 \hat R^{\beta/2}}{\hat R_2^{9/40} \hat R_3^{2/15}}
\le \sum_{R_2,R_3\ge 0} \frac{\hat Z^6 \hat R^{\beta/2}}{\hat R_2^{9/40} \hat R_3^{2/15}}
\ll \hat Z^6 \hat R^{\beta/2}.
\end{equation*}
This implies the desired inequality \eqref{INEQ:R2'E'-goal}.
\end{proof}

\section{Integral estimates}
\label{SEC:integral-stuff}

Let $\gamma \in K_\infty$, $\bm{w}\in K_\infty^n$ and $G\in K_\infty[x_1,\dots, x_n]$. For a fixed smooth weight function $w\colon K_\infty^n\to \RR$ with compact support, we are interested in integrals of the form 
\[
J_{G,w}(\gamma, \bm{w})\coloneqq \int_{K_\infty}w(\bm{x})\psi(\gamma G(\bm{x})+\bm{w}\cdot \bm{x})\dd\bm{x},
\]
together with their averages 
\[
J^\Gamma_{G,w}(\bm{w})=\int_{|\gamma|<\widehat{\Gamma}}J_{G,w}(\gamma,\bm{w})\dd\gamma ,
\]
for $\Gamma\in \ZZ$. (In fact, the dependence of the latter integral on $\Gamma$ is mild, as we will soon see.)
With this notation we have $I_r(\theta,\bm{c})=J_{F,w}(\theta P^3,P\bm{c}/r)$ in \eqref{Eq: Definition I_r(theta,c)} and 
$I_r(\bm{c})=J_{F,w}^{\Gamma}(P\bm{c}/r)$ in 
\eqref{eq:2-1 to brighton},
where $\Gamma=-\deg(r)-Q$.

When $w$ is the indicator function of $\TT^n$, we shall write $J_G(\gamma,\bm{w})$ and $J^\Gamma_G(\bm{w})$ for the sake of brevity. Moreover, we let $H_G$ be the maximum of the absolute values of the coefficients of $G$ and refer to it as the height of $G$.

This level of generality, where $G$ is allowed to vary, will be useful when changing variables to analyse $J^\Gamma_{F,w}(\bm{w})$.
Let us recall Lemmas 2.4 and 2.7 of \cite{browning2015rational}. 
\begin{lemma}\label{Le: IntVanish}
    Let $G\in K_\infty[x_1,\dots, x_n]$ and $\bm{w}\in K_\infty^n$. Suppose $|\bm{w}|\geq 1$ and $|\bm{w}|>H_G$. Then 
    \[
    \int_{\TT^n}\psi(G(\bm{x})+\bm{w}\cdot\bm{x})\dd\bm{x}=0.
    \]
\end{lemma}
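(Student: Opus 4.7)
The plan is to reduce to a one-dimensional integral in the direction of a coordinate achieving $|\bm{w}|$, and then exploit the orthogonality of $\psi$ on a sufficiently fine open subgroup of $\TT$ on which the non-linear part of $G$ becomes ``invisible'' to $\psi$. First, pick an index $i_0$ with $|w_{i_0}| = |\bm{w}|$, and by relabelling coordinates assume $i_0 = 1$, writing $\bm{x} = (x_1, \bm{x}')$ and $\bm{w} = (w_1, \bm{w}')$. By Fubini,
$$
\int_{\TT^n}\psi(G(\bm{x}) + \bm{w}\cdot\bm{x})\, \dd\bm{x} = \int_{\TT^{n-1}} \psi(\bm{w}'\cdot\bm{x}') \Bigl(\int_\TT \psi(\phi(x_1))\, \dd x_1\Bigr) \dd\bm{x}',
$$
where $\phi(y) \defeq G(y, \bm{x}') + w_1 y \in K_\infty[y]$ depends on the fixed $\bm{x}'$. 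It suffices to show that the inner integral vanishes for every $\bm{x}' \in \TT^{n-1}$.

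Write $\phi(y) = \sum_j a_j y^j$. For $j \ne 1$, $a_j$ is the $y^j$-coefficient of $G(y, \bm{x}')$, so the non-Archimedean triangle inequality gives $|a_j| \le H_G$. For $j = 1$, we have $a_1 = w_1 + g_1(\bm{x}')$ with $|g_1(\bm{x}')| \le H_G < |w_1|$, and hence $|a_1| = |w_1|$ by the same inequality. Setting $M \defeq \log_q|w_1| \ge 0$, we get $|a_1| = q^M$, $|a_j|\le q^{M-1}$ for $j\ge 2$, and in particular $|\phi'(u)| = |a_1| = q^M$ for every $u \in \TT$.

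Partition $\TT = \bigsqcup_u (u + \mcal{B})$, where $\mcal{B} \defeq t^{-M-1}\OK_\infty = \{v \in K_\infty : |v| \le q^{-M-1}\}$. By the binomial theorem (a formal identity in any characteristic),
$$
\phi(u + v) = \phi(u) + v\,\phi'(u) + R(u, v),
$$
where $R(u, v) = \sum_{j\ge 2} a_j \sum_{k\ge 2}\binom{j}{k} u^{j-k} v^k$ involves only the non-linear coefficients of $\phi$. For $u \in \TT$ and $v \in \mcal{B}$ this gives
$$
|R(u, v)| \le q^{M-1} \cdot q^{-2(M+1)} = q^{-M-3} < q^{-1},
$$
so $\psi(R(u, v)) = 1$. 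Applying~\eqref{Eq: OrthogonalityOfCharacters} to the range $\mcal{B} = \{|v| < q^{-M}\}$ with the frequency $\phi'(u)$ of absolute value $q^M$ now yields $\int_\mcal{B} \psi(v\,\phi'(u))\, \dd v = 0$, so that
$$
\int_{u+\mcal{B}} \psi(\phi(y))\, \dd y = \psi(\phi(u))\int_\mcal{B}\psi(v\,\phi'(u))\,\dd v = 0.
$$
Summing over the coset representatives $u$ completes the proof.

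The only delicate point is the polynomial expansion in positive characteristic, but this is purely the formal binomial identity and requires no assumption on $\Char K_\infty$; all that is used is the absolute value bound on the remainder $R(u, v)$. It is precisely here that the strict inequality $|\bm{w}| > H_G$, combined with $|\bm{w}| \ge 1$, is essential: it lets us pick $\mcal{B}$ small enough that $\psi(R) \equiv 1$ on $\TT\times\mcal{B}$, yet large enough that the linear phase $v\,\phi'(u)$ still detects $\phi'(u) \ne 0$ via orthogonality.
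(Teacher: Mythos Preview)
Your proof is correct. The paper does not give its own proof of this lemma; it simply quotes it from \cite{browning2015rational}*{Lemma~2.4}. Your argument is the standard one: isolate a coordinate with maximal $|w_i|$, use the ultrametric inequality to see that on a sufficiently small ball the non-linear remainder has absolute value $<q^{-1}$ (hence is invisible to $\psi$), and then apply the orthogonality relation~\eqref{Eq: OrthogonalityOfCharacters} to the linear term. All the estimates check out, including the fact that $|\phi'(u)|=q^M$ exactly (so the ``otherwise'' case of~\eqref{Eq: OrthogonalityOfCharacters} applies).
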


\begin{lemma}\label{Le:StatPhase}
    Let 
    \[
\Omega= \{\bm{x}\in\TT^n \colon |\gamma\nabla G(\bm{x})+\bm{w}|\leq H_G\max\{1, |\gamma|^{1/2}\}\}.
     \]
     Then we have
    \[
    J_G(\gamma,\bm{w})= \int_{\Omega}\psi(\gamma G(\bm{x})+\bm{w}\cdot \bm{x})\dd\bm{x}. 
    \]
\end{lemma}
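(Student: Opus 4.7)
The plan is to run the standard ultrametric non-stationary phase argument: partition $\TT^n$ into small cubes on which a first-order Taylor expansion of $G$ is exact modulo the character $\psi$, and then apply the orthogonality relation \eqref{Eq: OrthogonalityOfCharacters} on each cube to kill all contributions whose centres lie outside $\Omega$. I may assume $H_G\ge 1$; the degenerate case $H_G<1$ (in particular $G=0$) can be handled by inspection.

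First I would choose the integer $M$ to be the smallest one satisfying $\widehat{M}> H_G\max\set{1,\abs{\gamma}^{1/2}}$. Because the values of $\abs{\cdot}$ lie in $q^{\ZZ}\cup\set{0}$, this choice precisely encodes the defining inequality of $\Omega$: for every $y\in K_\infty^n$, the condition $\abs{y}<\widehat{M}$ is equivalent to $\abs{y}\le H_G\max\set{1,\abs{\gamma}^{1/2}}$. I then partition $\TT^n=\bigsqcup_{\bm{x}_0}(\bm{x}_0+B)$, with $B\defeq\set{\bm{z}\in K_\infty^n:\abs{\bm{z}}<\widehat{-M}}$, so that
\[
J_G(\gamma,\bm{w})=\sum_{\bm{x}_0}\psi(\gamma G(\bm{x}_0)+\bm{w}\cdot\bm{x}_0)\int_B\psi\bigl(\gamma[G(\bm{x}_0+\bm{z})-G(\bm{x}_0)]+\bm{w}\cdot\bm{z}\bigr)\dd\bm{z}.
\]

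Next I would perform the polynomial Taylor identity $G(\bm{x}_0+\bm{z})-G(\bm{x}_0)=\nabla G(\bm{x}_0)\cdot\bm{z}+R(\bm{x}_0,\bm{z})$, where $R$ is a polynomial whose monomials in $\bm{z}$ all have degree $\ge 2$ and whose coefficients in $\bm{x}_0$ have height at most $H_G$; this identity holds in any characteristic since it comes from expanding $(\bm{x}_0+\bm{z})^\alpha$. By the ultrametric, $\abs{R(\bm{x}_0,\bm{z})}\le H_G\abs{\bm{z}}^2$ for $\abs{\bm{x}_0}\le 1$ and $\abs{\bm{z}}\le 1$, and a short check using the definition of $M$ gives $\abs{\gamma R(\bm{x}_0,\bm{z})}\le q^{-2}$ on each cube, so $\psi(\gamma R)=1$. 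The inner integral therefore collapses to $\int_B\psi((\gamma\nabla G(\bm{x}_0)+\bm{w})\cdot\bm{z})\dd\bm{z}$, and applying \eqref{Eq: OrthogonalityOfCharacters} coordinate-wise shows that this vanishes unless $\abs{\gamma\nabla G(\bm{x}_0)+\bm{w}}<\widehat{M}$, that is, $\bm{x}_0\in\Omega$. Finally, the estimate $\abs{\nabla G(\bm{x}_0+\bm{z})-\nabla G(\bm{x}_0)}\le H_G\abs{\bm{z}}<H_G\widehat{-M}$ together with the ultrametric shows that $\Omega$ is a union of cosets $\bm{x}_0+B$, so the surviving cosets reassemble to give $\int_\Omega\psi(\gamma G(\bm{x})+\bm{w}\cdot\bm{x})\dd\bm{x}$, as desired.

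The only (minor) obstacle is verifying that the single integer $M$ above simultaneously controls the Taylor remainder (which calls for $\widehat{M}$ of order at least $(H_G\abs{\gamma})^{1/2}$) and produces the \emph{exact} threshold $H_G\max\set{1,\abs{\gamma}^{1/2}}$ coming from $\Omega$. These two requirements are compatible precisely because $H_G\max\set{1,\abs{\gamma}^{1/2}}\ge (H_G\abs{\gamma})^{1/2}$ whenever $H_G\ge 1$, so the argument succeeds uniformly in $\gamma$ and $\bm{w}$.
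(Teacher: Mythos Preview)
Your argument is correct; this is precisely the standard ultrametric non-stationary phase computation. The paper does not prove this lemma but quotes it as \cite{browning2015rational}*{Lemma~2.7}, whose proof is essentially the one you have written out.
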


We now fix a non-singular cubic form $F\in \OK[x_1,\dots, x_n]$, which we will eventually take to be our initial cubic form $x_1^3+\cdots + x_6^3$, but is allowed to be arbitrary in this section. Before we start our  investigation, we impose the following conditions on the weight function $w$.

\begin{hypothesis}\label{hyp}
Let  $H(\bm{x})=(\frac{\partial^{i+j}F(\bm{x})}{\partial x_i\partial x_j})$ be the Hessian associated to  $F$.
\begin{enumerate}[(i)]
    \item If $\bm{x}\in\supp(w)$, then $\det H(\bm{x})\neq 0$.
    \item There exist $\bm{x_0}\in K_\infty^n$ with $\norm{\bm{x}_0} \le 1$, and an integer $L\geq 0$, such that 
    \[
    w(\bm{x})=\begin{cases}
        1 &\textnormal{if }|\bm{x}-\bm{x}_0|<\widehat{L}^{-1},\\
        0&\textnormal{else.}
    \end{cases}
    \]
\end{enumerate}
\end{hypothesis}

 If $\cha(\FF_q)\in \{2,3\}$, then $\det H(\bm{x})$ vanishes identically and so (i) is impossible in this case. Therefore, all of our results in this section have the implicit assumption that $\cha(\FF_q)>3$.
 Moreover, condition (ii) is non-restrictive, since any compactly supported smooth function $w'\colon K_\infty^n\to \RR$ can be written as 
$w'= s_1 w_1+\cdots +s_mw_m$, 
where $s_i\in \RR$ and $w_i$ are indicator functions of some compact ball in $K_\infty^n$.
Part (ii) of Hypothesis \ref{hyp} implies the crucial symmetry property
\begin{equation}
\label{EQN:weight-function-symmetry}
\textnormal{$w(\bm{x})=w((1+t^{-L}\gamma)\bm{x})$ \quad for all $\gamma\in \TT$.}
\end{equation}
In other words, $w$ is invariant under scaling by $1+t^{-L}\TT$. Throughout this section, all implied constants are allowed to depend on the choice of $w$ and in particular implicitly also on the parameter $L$ and the vector $\bm{x}_0$ in the definition of $w$.

For the purposes of Theorem \ref{THM:pos-density}, we will be focusing on  diagonal cubic forms.
In this setting we shall work with the following explicit weight function. 

\begin{definition}\label{def:our-weight}
Let $F(\bm{x})=a_1x_1^3+\cdots+a_nx_n^3$, for $a_1,\dots,a_n\in \FF_q^\times$ and let $\bm{x}_0\in (\FF_q^\times)^n$ be such that $F(\bm{x}_0)=0$. Then we define
$$
 w(\bm{x})=\begin{cases}
        1 &\text{if }|\bm{x}-\bm{x}_0|<1,\\
        0&\text{else.}
    \end{cases}
$$
\end{definition}

We check that such weight functions satisfy 
Hypothesis \ref{hyp} when $F$ is a diagonal cubic  form with coefficients in $\FF_q^\times$ and $\cha(\FF_q)>3$.
Condition  (i) can then be rephrased as $x_1\cdots x_n\neq 0$ for all $\bm{x}\in \supp(w)$, which is obvious.  Condition (ii) clearly holds with $L=0$.

Returning to the setting of general weight functions $w$ satisfying 
Hypothesis \ref{hyp}, we begin with the following estimate.
\begin{lemma}\label{Le: IntEstimateI}
For $\Gamma \in \ZZ$ and $\bm{w}\in K_\infty^n$, we have 
\[
J^\Gamma_{F,w}(\bm{w})\ll_{w,F} (1+|\bm{w}|)^{1-n/2}.
\]
Moreover, if $|\bm{w}|\gg 1$, then $J_{F,w}(\gamma, \bm{w})=0$ unless $|\gamma|\asymp |\bm{w}|$.
\end{lemma}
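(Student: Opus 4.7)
The plan is to reduce $J_{F,w}(\gamma,\bm w)$ to a standard oscillatory integral over $\TT^n$ via a rescaling adapted to the ball support of $w$, and then to combine Lemmas~\ref{Le: IntVanish} and~\ref{Le:StatPhase} with the non-degeneracy of the Hessian of $F$.

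First, using Hypothesis~\ref{hyp}(ii), I would change variables via $\bm x = \bm x_0 + t^{-L}\bm u$ with $\bm u \in \TT^n$. Since $F$ is cubic, the Taylor expansion around $\bm x_0$ terminates, so
\[
J_{F,w}(\gamma,\bm w) = \hat L^{-n}\,\psi(\gamma F(\bm x_0)+\bm w\cdot\bm x_0)\cdot J_{\tilde F}(\gamma',\bm v),
\]
where $\gamma' \defeq \gamma t^{-3L}$, $\bm v \defeq t^{-L}(\gamma\nabla F(\bm x_0)+\bm w)$ and $\tilde F(\bm u) \defeq \tfrac{t^L}{2}\bm u^T H(\bm x_0)\bm u + F(\bm u)$; note that $H_{\tilde F}$ is bounded in terms of $F$ and $w$ only.

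For the support assertion I would apply Lemma~\ref{Le:StatPhase} to $J_{\tilde F}(\gamma',\bm v)$, which localises the integrand to
\[
\Omega = \{\bm u \in \TT^n : |\gamma'\nabla\tilde F(\bm u)+\bm v|\leq H_{\tilde F}\max(1,|\gamma'|^{1/2})\}.
\]
Hypothesis~\ref{hyp}(i) forces $\bm x_0\neq \bm 0$ (since $\det H(\bm 0)=0$), and the identity $\nabla F(\bm x_0) = \tfrac12 H(\bm x_0)\bm x_0$ then gives $|\nabla F(\bm x_0)|\asymp 1$. When $|\bm w|\gg 1$, an ultrametric analysis of the two summands in $\bm v$ shows that $\Omega$ is empty whenever $|\gamma|\not\asymp |\bm w|$: when $|\gamma|\ll |\bm w|$ the summand $\bm w$ dominates and Lemma~\ref{Le: IntVanish} applies directly; when $|\gamma|\gg |\bm w|$ the summand $\gamma\nabla F(\bm x_0)$ dominates, and $|\bm v/\gamma'|$ then exceeds the image of $\nabla\tilde F$ on $\TT^n$ to within the stationary-phase tolerance $H_{\tilde F}|\gamma'|^{-1/2}$.

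For the pointwise decay I would use Hypothesis~\ref{hyp}(i) \emph{uniformly} on $\supp(w)$: the Hessian $\nabla^2\tilde F(\bm u) = t^L H(\bm x_0) + H(\bm u)$ is then non-singular for every $\bm u\in\TT^n$, with $|\det\nabla^2\tilde F(\bm u)|\gg_w \hat L^n$. Hence $\bm u\mapsto \gamma'\nabla\tilde F(\bm u)$ is a local diffeomorphism whose Jacobian has absolute value $\gg_w |\gamma'|^n\hat L^n$, and a change-of-variables bound yields $\vol(\Omega)\ll_w |\gamma'|^{-n/2}\hat L^{-n}$ for $|\gamma'|\geq 1$, with the trivial estimate $\vol(\Omega)\leq 1$ otherwise. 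Therefore $|J_{F,w}(\gamma,\bm w)|\ll_w (1+|\gamma|)^{-n/2}$. Integrating this pointwise bound over the $\gamma$-support from the previous step, which has Haar measure $\ll 1+|\bm w|$, yields $|J^\Gamma_{F,w}(\bm w)|\ll_w (1+|\bm w|)^{1-n/2}$. The main obstacle will be the vanishing in the regime $|\gamma|\gg |\bm w|$: Lemma~\ref{Le: IntVanish} alone is insufficient there, and one must combine Lemma~\ref{Le:StatPhase} with the geometry of $\nabla\tilde F$ on $\TT^n$ to rule out the stationary locus.
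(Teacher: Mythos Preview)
Your approach is essentially the same as the paper's: change variables to reduce to an integral over $\TT^n$, apply Lemmas~\ref{Le: IntVanish} and~\ref{Le:StatPhase}, and exploit Hypothesis~\ref{hyp}(i) to bound the stationary set. The paper keeps the full Taylor expansion in a single polynomial $G(\bm y)=F(t^{-L}\bm y+\bm x_0)$ (so that $H_G\ll 1$ and the frequency variable stays $t^{-L}\bm w$), whereas you peel off the linear term into $\bm v$; this is purely cosmetic, since $\gamma'\nabla\tilde F(\bm u)+\bm v = t^{-L}(\gamma\nabla F(\bm x)+\bm w)$, so your $\Omega$ is literally the same set.

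One caution on your $|\gamma|\gg|\bm w|$ step: the claim that $|\bm v/\gamma'|$ \emph{exceeds} the image of $\nabla\tilde F$ on $\TT^n$ is not true by size alone when $L=0$, since both can be $\asymp 1$. What you actually need is that $|\nabla F(\bm x)|\asymp_w 1$ for \emph{every} $\bm x\in\supp(w)$ (not just $\bm x_0$), which follows from the same Euler identity $\nabla F=\tfrac12 H\bm x$ together with Hypothesis~\ref{hyp}(i) and compactness; then $|\gamma\nabla F(\bm x)+\bm w|\asymp|\gamma|$ cannot be $\ll|\gamma|^{1/2}$. This is exactly how the paper argues. Similarly, your ``change-of-variables bound'' for $\vol(\Omega)$ is the right intuition, but the paper makes it rigorous via a difference argument: if $\bm x,\bm x+\bm x'\in\Omega$ then $|H(\bm x+\bm x'/2)\bm x'|\ll|\gamma|^{-1/2}$, whence $|\bm x'|\ll|\gamma|^{-1/2}$, so $\Omega$ lies in a single ball of that radius.
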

\begin{proof}
In this proof all implied constants are allowed to depend on $F$.
Upon making the change of variables $\bm{y}=t^L(\bm{x}-\bm{x}_0)$, it follows that
    \[
    J_{F,w}(\gamma,\bm{w})=\widehat{L}^{-n}\psi(\bm{w}\cdot\bm{x_0})J_{G}(\gamma, t^{-L}\bm{w}),
    \]
    where $G(\bm{y})=F(t^{-L}\bm{y}+\bm{x}_0)$. It is clear that $H_G\ll H_F\ll 1$. At this point we can use Lemma \ref{Le: IntVanish} to conclude that $J_{F,w}(\gamma, \bm{w})=0$ unless $|\bm{w}|\ll \max\{1, |\gamma|\}$. Furthermore, by Lemma \ref{Le:StatPhase} 
\begin{align*}
    |J_{F,w}(\gamma, \bm{w})| &\leq \widehat{L}^{-n}\text{meas}\{\bm{y}\in \TT^n\colon |\gamma \nabla G(\bm{y})+t^{-L}\bm{w}| \leq H_G\max\{1,|\gamma|^{1/2}\} \}\\
    & \leq \text{meas}\{\bm{x}\in\supp(w)\colon |\gamma \nabla F(\bm{x})+t^{-L}\bm{w}|\ll  \max\{1, |\gamma|^{1/2}\}\}.
\end{align*}
Let us denote the set whose measure we want to estimate by $\Omega$. Since $\bm{0}\not\in \supp(w)$, we must have $|\nabla F(\bm{x})|\asymp 1$ for all $\bm{x}\in \supp(w)$. If $|\bm{w}|\ll 1$, then $\Omega$ is empty unless $|\gamma|\ll1$, so that we can use the trivial estimate $|J_{F,w}(\gamma,\bm{w})|\leq 1$  to conclude that $J^\Gamma_{F,w}(\bm{w})\ll 1$ if $|\bm{w}|\ll1 $. In particular, we may assume $|\bm{w}|\gg 1$ for the rest of the proof. In this case the contribution from $|\gamma|\ll 1$ vanishes, since then $\Omega$ is empty, while for $|\gamma|\gg 1$ the defining inequality for $\Omega $ becomes
\[
|\gamma \nabla F(\bm{x})+t^{-L}\bm{w}|\ll |\gamma|^{1/2}.
\]
This can only hold if $|\gamma|\asymp |\bm{w}|$. From this we deduce the statement that $J_{F,w}(\gamma,\bm{w})=0$ unless $|\gamma|\asymp |\bm{w}|$.

It remains to give an upper bound.
This will be similar to \cite{browning2015rational}*{proof of Lemma~7.3}.
Let us now assume that $\bm{x}, \bm{x}+\bm{x}'$ are both in $\Omega$. Note that $\nabla F(\bm{x}+\bm{x}')-\nabla F (\bm{x})= H(\bm{x}+\bm{x}'/2)\bm{x}'$, so that we must then have \begin{equation}\label{Eq: hessInequ}
|H(\bm{x}+\bm{x}'/2)\bm{x}'|\ll |\gamma|^{-1/2}.
\end{equation}
As $\det H(\bm{y})\neq 0$ for all $\bm{y}\in \supp(w)$, compactness of $\supp(w)$ implies that we have $|\det H(\bm{y})|\gg 1$ for all $\bm{y}\in\supp(w)$. In addition, a straightforward computation shows that $\bm{x}+\bm{x}'/2\in \supp(w)$, so that we can multiply \eqref{Eq: hessInequ} from the left with the inverse of $H(\bm{x}+\bm{x}'/2)$, whose entries have absolute value $O(1)$, to conclude that $|\bm{x}'|\ll |\gamma|^{-1/2}\ll |\bm{w}|^{-1/2}$. Finally, from this we obtain 
\begin{align*}
J^\Gamma_{F,w}( \bm{w}) \ll \int_{|\gamma|\asymp |\bm{w}|}\text{meas}(\Omega)\dd\gamma \ll |\bm{w}|^{1-n/2}    
\end{align*}
when $|\bm{w}|\gg 1$, as desired. 
\end{proof}

The following symmetry property of $J^\Gamma_{F,w}(\bm{w})$
generalises \cite{glas2022question}*{Lemma 3.6}.

\begin{lemma}\label{LEM:integral-scale-invariance}
    Suppose $\lambda_1,\lambda_2 \in K_\infty^\times$ are such that
    $\lambda_1/\lambda_2\in 1+t^{-L}\TT$.
    Then 
    \begin{equation*}
    J^\Gamma_{F,w}(\lambda_1\bm{w})=J^\Gamma_{F,w}(\lambda_2\bm{w})
    \quad\textnormal{for any $\bm{w}\in K_\infty^n$}.
    \end{equation*}
\end{lemma}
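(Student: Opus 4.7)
The plan is to exploit the three-fold combination of (a) the scaling symmetry \eqref{EQN:weight-function-symmetry} of $w$, (b) the homogeneity of $F$ of degree $3$, and (c) the measure-preserving action of $1+t^{-L}\TT$ on the Laurent series field $K_\infty$. Set $\mu \defeq \lambda_1/\lambda_2 \in 1+t^{-L}\TT$ and $\bm{w}'\defeq \lambda_2\bm{w}$; then the claim is equivalent to $J^\Gamma_{F,w}(\mu\bm{w}') = J^\Gamma_{F,w}(\bm{w}')$.

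The first step is to verify that $\mu^{-1} \in 1+t^{-L}\TT$ as well. This is immediate from the ultrametric inequality: since $\abs{\mu}=1$, one has $\abs{\mu^{-1}-1} = \abs{1-\mu}/\abs{\mu} = \abs{1-\mu} < \hat{L}^{-1}$, so $\mu^{-1}-1 \in t^{-L}\TT$. Hence by \eqref{EQN:weight-function-symmetry}, applied with $\mu^{-1}$ in the role of $1+t^{-L}\gamma$, we have $w(\mu^{-1}\bm{y}) = w(\bm{y})$ for every $\bm{y}\in K_\infty^n$.

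The key calculation is now a two-stage change of variables. In the inner integral defining
\[
J_{F,w}(\gamma,\mu\bm{w}')=\int_{K_\infty^n} w(\bm{x})\,\psi\bigl(\gamma F(\bm{x}) + \mu\bm{w}'\cdot \bm{x}\bigr)\,\dd\bm{x},
\]
substitute $\bm{x}=\mu^{-1}\bm{y}$. The Haar measure is preserved because $\abs{\mu^{-1}}^n=1$, the weight $w(\mu^{-1}\bm{y})$ collapses to $w(\bm{y})$ by the previous paragraph, homogeneity gives $F(\mu^{-1}\bm{y})=\mu^{-3}F(\bm{y})$, and the linear term becomes $\mu\bm{w}'\cdot(\mu^{-1}\bm{y})=\bm{w}'\cdot\bm{y}$. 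Thus $J_{F,w}(\gamma,\mu\bm{w}') = J_{F,w}(\mu^{-3}\gamma,\bm{w}')$. Then in the outer integral over $\abs{\gamma}<\hat\Gamma$ substitute $\gamma'=\mu^{-3}\gamma$; again the measure and the domain are preserved (since $\abs{\mu^{-3}}=1$), giving $J^\Gamma_{F,w}(\mu\bm{w}') = J^\Gamma_{F,w}(\bm{w}')$ as desired.

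There is no real obstacle here—the whole argument is driven by the fact that $1+t^{-L}\TT$ is a multiplicative subgroup of the unit group of $K_\infty$ that acts compatibly on $w$, $F$, the Haar measure, and the ball $\{\abs{\gamma}<\hat\Gamma\}$. The only point requiring a moment's care is checking $\mu^{-1}\in 1+t^{-L}\TT$, which follows from the ultrametric inversion formula above.
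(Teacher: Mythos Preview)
Your proof is correct and follows essentially the same approach as the paper: both exploit the scaling symmetry \eqref{EQN:weight-function-symmetry} of $w$ together with the homogeneity of $F$ via the change of variables $\bm{x}\mapsto \mu^{-1}\bm{y}$ (the paper uses $\bm{y}=\lambda\bm{x}$, which is the same idea). The only difference is cosmetic: where the paper evaluates the inner $\gamma$-integral explicitly via orthogonality \eqref{Eq: OrthogonalityOfCharacters} to see that it depends only on $|\lambda|$, you instead absorb the factor $\mu^{-3}$ by a second change of variables $\gamma\mapsto\mu^{3}\gamma'$, which is slightly more direct.
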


\begin{proof}
    Let us write $\lambda=\lambda_1$ and show that for $\bm{w}$ fixed, the value of $J^\Gamma_{F,w}(\lambda\bm{w})$ only depends on the image of $\lambda$ in the quotient group $K_\infty^\times / (1+t^{-L}\TT)$. We have 
    \begin{align*}
        J^\Gamma_{F,w}(\lambda\bm{w}) &= \int_{|\gamma|<\widehat{\Gamma}}\int_{K_\infty^n}w(\bm{x})\psi(\gamma F(\bm{x})+\lambda \bm{w}\cdot\bm{x})\dd \bm{x}\dd\gamma\\
        & = |\lambda|^n \int_{K_\infty^n}w(\lambda^{-1}\bm{y})\psi(\bm{w}\cdot \bm{y})\int_{|\gamma|<\widehat{\Gamma}}\psi(\gamma \lambda^{-3}F(\bm{y}))\dd\gamma \dd\bm{y},
    \end{align*}
    where we applied the change of variables $\bm{y}=\lambda \bm{x}$. It follows from \eqref{Eq: OrthogonalityOfCharacters} that 
    \[
    \int_{|\gamma|<\widehat{\Gamma}}\psi(\gamma \lambda^{-3}F(\bm{y}))\dd\gamma = \begin{cases}
        \widehat{\Gamma} &\text{if }|F(\bm{x})|< \widehat{\Gamma}^{-1}|\lambda|^3,\\
        0&\text{else.}
    \end{cases}
    \]
    This formula clearly only depends on $|\lambda|$. 
    Moreover, 
    $w(\lambda^{-1}\bm{y})$ depends only on $\lambda\bmod{1+t^{-L}\TT}$ by \eqref{EQN:weight-function-symmetry}. Since we assume $L\geq 0$, any two elements of $K_\infty^\times$ whose image in $K_\infty^\times/ (1+t^{-L}\TT)$ coincides must have the same absolute value, from which we deduce that $J^\Gamma_{F,w}(\lambda \bm{w})$ only depends on $\lambda\bmod{1+t^{-L}\TT}$. 
\end{proof}

The above establishes most of the analogue over $K$ of \cite{wang2023ratios}*{Proposition 8.1}.
It still remains to show that $J^\Gamma_{F,w}(\bm{w})$ vanishes unless $|F^*(\bm{w})|$ is small.
Working over function fields significantly simplifies the execution of the key ideas,
because fewer dyadic ranges intervene.
We start with an auxiliary lemma.

\begin{lemma}\label{Le: InFunctDual}
    Let $R$ be a compact set such that 
$
R\subset
\{\bm{x}\in K_\infty^n\colon \det H(\bm{x})\neq 0\}$.
 Then there exists a constant $C=C(R)>0$ such that if $\bm{w}\in K_\infty^n$ and $\bm{x}\in R$ satisfy
    \[
    |\nabla F(\bm{x})-\bm{w}|<\widehat{C}^{-1},
    \]
it follows that there exists $\bm{s}
\in K_\infty^n$
with $\nabla F(\bm{s})= \bm{w}$ and 
\[
|\bm{s}-\bm{x}|< \widehat{C}|\nabla F(\bm{x})- \bm{w}|.
\]
\end{lemma}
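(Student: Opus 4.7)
The plan is to run a non-archimedean Newton iteration. Set $\bm{x}_0 = \bm{x}$ and
\[
\bm{x}_{k+1} = \bm{x}_k - H(\bm{x}_k)^{-1}(\nabla F(\bm{x}_k) - \bm{w}),
\]
and show that $\{\bm{x}_k\}$ converges in the complete space $K_\infty^n$ to the desired $\bm{s}$.

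First I would exploit compactness. Since $\det H$ is a polynomial, continuous and non-vanishing on the compact set $R$, there is a constant $C_1 \geq 0$ and a compact neighborhood $R' \supset R$ such that $|\det H(\bm{y})| \geq \widehat{C_1}^{-1}$ for all $\bm{y} \in R'$. By Cramer's rule the entries of $H(\bm{y})^{-1}$ are then bounded on $R'$, giving $|H(\bm{y})^{-1} \bm{v}| \leq \widehat{C_1} |\bm{v}|$ for all $\bm{v} \in K_\infty^n$ and all $\bm{y}\in R'$. Next, since $F$ has degree $3$, Taylor's formula gives
\[
\nabla F(\bm{y} + \bm{h}) = \nabla F(\bm{y}) + H(\bm{y}) \bm{h} + \bm{q}(\bm{y}, \bm{h}),
\]
where $\bm{q}$ is a polynomial in $(\bm{y},\bm{h})$, \emph{homogeneous of degree $2$ in $\bm{h}$}. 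Compactness of $R'$ furnishes a constant $C_2$ with $|\bm{q}(\bm{y}, \bm{h})| \leq \widehat{C_2} |\bm{h}|^2$ for $\bm{y} \in R'$ and $|\bm{h}| \leq 1$.

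Now fix $C = C(R)$ large enough that $\widehat{C_1}\widehat{C}^{-1}$ is smaller than the distance from $R$ to the complement of $R'$ and also $\widehat{2C_1 + C_2}\widehat{C}^{-1} < 1$. Assuming $\bm{x}\in R$ and $|\nabla F(\bm{x}) - \bm{w}| < \widehat{C}^{-1}$, set $a_k \defeq |\nabla F(\bm{x}_k) - \bm{w}|$. By induction on $k$ I would prove $\bm{x}_k \in R'$ and $a_k \leq \widehat{2C_1 + C_2}^{2^k - 1} a_0^{2^k}$. Indeed, granting $\bm{x}_k \in R'$, the iteration gives $|\bm{x}_{k+1} - \bm{x}_k| \leq \widehat{C_1} a_k$, while Taylor yields
\[
\nabla F(\bm{x}_{k+1}) - \bm{w} = \bm{q}(\bm{x}_k, \bm{x}_{k+1} - \bm{x}_k),
\]
so $a_{k+1} \leq \widehat{C_2}\widehat{C_1}^2 a_k^2$. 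The ultrametric inequality then yields $|\bm{x}_{k+1} - \bm{x}| \leq \max_{j \leq k} |\bm{x}_{j+1} - \bm{x}_j| \leq \widehat{C_1} a_0$, keeping $\bm{x}_{k+1} \in R'$ by our choice of $C$ and closing the induction.

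Since $a_k \to 0$ doubly exponentially, $\{\bm{x}_k\}$ is Cauchy and converges to some $\bm{s} \in K_\infty^n$ with $\nabla F(\bm{s}) = \bm{w}$ by continuity. The same ultrametric estimate gives $|\bm{s} - \bm{x}| \leq \widehat{C_1} |\nabla F(\bm{x}) - \bm{w}|$, and replacing $C$ by $\max(C, C_1 + 1)$ converts this into the required strict inequality. The only subtle point compared to a classical real-variable Newton iteration is ensuring that the iterates remain in the region where $H$ is invertible, but this comes essentially for free from the ultrametric property, since all iterates lie within distance $\widehat{C_1} a_0$ of $\bm{x}$, a quantity we control by shrinking $\widehat{C}^{-1}$.
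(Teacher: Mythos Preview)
Your proof is correct and takes a genuinely different route from the paper. The paper invokes the inverse function theorem for local fields (citing Serre) as a black box: at each $\bm{x}\in R$ it produces open sets $U\ni\bm{x}$, $U'\ni\nabla F(\bm{x})$ on which $\nabla F$ is a bijection, so that $\bm{s}$ exists whenever $\bm{w}\in U'$; it then bounds $|\bm{s}-\bm{x}|$ a posteriori via Taylor expansion and absorption of the quadratic term, and finishes with a finite cover of $R$ by such $U$'s. You instead construct $\bm{s}$ directly by Newton iteration, proving quadratic convergence and exploiting the ultrametric inequality to keep all iterates within $\widehat{C_1}a_0$ of the starting point. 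Your argument is more self-contained (no external inverse function theorem) and avoids the finite-cover step, since the constants $C_1,C_2$ are uniform over $R'$ from the outset; the paper's version is shorter on the page if one is willing to quote Serre. Both rely on the same ingredients: compactness to bound $H^{-1}$, and the fact that the Taylor remainder is quadratic in the increment because $\deg F=3$.

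One cosmetic remark: in the degenerate case $\nabla F(\bm{x})=\bm{w}$ the strict inequality $|\bm{s}-\bm{x}|<\widehat{C}\cdot 0$ cannot hold; this is a quirk of the lemma's wording (present in the paper too) rather than a flaw in your argument, and is irrelevant in the application, where $|\bm{w}|\gg 1$.
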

\begin{proof} 
Let $\bm{x}\in R$. Since $\det H(\bm{x}) \neq 0$, it follows from the inverse function theorem for local fields
\cite{serre2009lie}*{Part II, \S~{III.9}, Theorem 2}
that there exist open neighbourhoods $U, U'\subset K_\infty^n$, with $\det H(\bm{y})\neq 0$ for any $\bm{y}$ in the closure of $U$, such that $\bm{x}\in U$, $\nabla F(\bm{x})\in U'$ and $\nabla F\colon U \to U'$ is an isomorphism.
In addition, after shrinking $U'$ if necessary, we may assume that it is an open ball of radius $\hat C_0^{-1}$ around $\nabla F(\bm{x})$, for some real $C_0=C_0(\bm{x})\ge 0$.
It follows that if $\bm{w} \in U'$,  there exists $\bm{s}\in U$ such that $\nabla F(\bm{s})= \bm{w}$. Therefore, we have 
\begin{align*}
    \bm{w}= \nabla F (\bm{s})= \nabla F(\bm{s}-\bm{x}) + H(\bm{x})(\bm{s}-\bm{x})+\nabla F(\bm{x}).
\end{align*}
As $\bm{x}\in R$, we have $\det H(\bm{x})\neq 0$, and hence 
\[
\bm{s}-\bm{x}= H(\bm{x})^{-1}(\bm{w}-\nabla F(\bm{x}) + \nabla F(\bm{s}-\bm{x})).
\]
Moreover, as $R$ is compact, the entries of $H(\bm{x})^{-1}$ are $O_R(1)$. It follows that there exist constants $C_1, C_2\geq 0$ depending only on  $R$ and $F$ such that 
\[
|\bm{s}-\bm{x}|\leq \widehat{C}_1|\bm{w}-\nabla F(\bm{x})| +\widehat{C}_2 |\bm{s}-\bm{x}|^2. 
\]
Upon shrinking $U$ if necessary, we may assume that $|\bm{s}-\bm{x}|< (2\widehat{C}_2)^{-1}$, so that we obtain 
\[
|\bm{s}-\bm{x}|\leq 2\widehat{C}_1|\bm{w}-\nabla F(\bm{x})|.
\]
Since $R$ is compact, we can cover it with finitely many open sets $U$ and then take $C$ to be the maximum of the constants that arise in this process.
\end{proof}

\begin{lemma}
\label{LEM:new-vanishing-for-small-dual-form}
    Assume $L=0$ in condition (ii) on the weight function $w$.
    Let $\Gamma \in \ZZ$ and $\bm{w}\in K_\infty^n$.
    Then $J^\Gamma_{F,w}(\bm{w})=0$ unless $|F^*({\bm{w}})|\ll 1+ |\bm{w}|^{\deg F^*-1}$. 
\end{lemma}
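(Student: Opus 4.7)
The bound is trivial for $|\bm{w}|\ll 1$ since $F^*\in\OK[\bm{c}]$, so I may assume $|\bm{w}|\gg 1$. By Lemma~\ref{Le: IntEstimateI} the integrand $J_{F,w}(\gamma,\bm{w})$ vanishes unless $|\gamma|\asymp|\bm{w}|$, so in particular $\hat\Gamma\gtrsim|\bm{w}|$; Lemma~\ref{Le:StatPhase} (applied after the change of variables from the proof of Lemma~\ref{Le: IntEstimateI}, with $L=0$) further shows that any $\bm{x}\in\supp{w}$ contributing to $J_{F,w}(\gamma,\bm{w})$ satisfies $|\gamma\nabla F(\bm{x})+\bm{w}|\ll|\bm{w}|^{1/2}$. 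Lemma~\ref{Le: InFunctDual} then produces $\bm{s}=\bm{s}(\gamma)\in K_\infty^n$ with $\nabla F(\bm{s})=-\bm{w}/\gamma$ and $|\bm{s}-\bm{x}|\ll|\bm{w}|^{-1/2}$. My plan is to perform exact $\bm{x}$-stationary phase centred at $\bm{s}(\gamma)$, re-express the resulting phase as a function of $\gamma$ via the dual-form identity~\eqref{Eq: FdividesDualFormGrad}, and then apply orthogonality of $\psi$ to the $\gamma$-integral.

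For the stationary phase, I would substitute $\bm{x}=\bm{s}+\bm{y}$ in $\gamma F(\bm{x})+\bm{w}\cdot\bm{x}$; using $\nabla F(\bm{s})=-\bm{w}/\gamma$ together with Euler's identity $\bm{w}\cdot\bm{s}=-3\gamma F(\bm{s})$ the phase collapses to
\[
-2\gamma F(\bm{s}(\gamma))+\tfrac12\gamma\bm{y}^{T}H(\bm{s})\bm{y}+\gamma F(\bm{y}).
\]
On the natural scale $|\bm{y}|\asymp|\bm{w}|^{-1/2}$ the cubic term has size $\ll|\bm{w}|^{-1/2}$, hence lies in a neighbourhood of $0$ on which $\psi$ is trivial, so for $|\bm{w}|$ sufficiently large it contributes nothing. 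The remaining $\bm{y}$-integral is a non-archimedean Gauss sum in the non-degenerate quadratic form $H(\bm{s})$ (well-defined by Hypothesis~\ref{hyp}(i) together with $\Char(\FF_q)>2$), yielding an identity of the form
\[
J_{F,w}(\gamma,\bm{w})=\psi(-2\gamma F(\bm{s}(\gamma)))\cdot\mathcal{G}(\gamma),
\]
where $\mathcal{G}(\gamma)$ has size $\asymp|\bm{w}|^{-n/2}$ and is locally constant in $\gamma$ on the unit scale.

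Next, \eqref{Eq: FdividesDualFormGrad} provides $G\in\OK[\bm{x}]$ with $F^*(\nabla F(\bm{s}))=F(\bm{s})G(\bm{s})$; combined with $\nabla F(\bm{s})=-\bm{w}/\gamma$ and the homogeneity of $F^*$, this yields
\[
\gamma F(\bm{s}(\gamma))=\frac{(-1)^{\deg F^*}F^*(\bm{w})}{G(\bm{s}(\gamma))\,\gamma^{\deg F^*-1}}.
\]
Writing $\gamma=\gamma_0(1+\eta)$ with $\eta\in\TT$ and $|\gamma_0|\asymp|\bm{w}|$ in the $\gamma$-integral for $J^\Gamma_{F,w}(\bm{w})$ linearises the phase in $\eta$ to leading order, with coefficient of absolute value $\asymp|F^*(\bm{w})|\cdot|\bm{w}|^{1-\deg F^*}$ (provided $G(\bm{s}(\gamma))$ is bounded away from $0$ on $\supp{w}$). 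Orthogonality~\eqref{Eq: OrthogonalityOfCharacters} over $\eta\in\TT$ then forces the $\eta$-integral to vanish unless this coefficient is $\ll 1$, that is, unless $|F^*(\bm{w})|\ll|\bm{w}|^{\deg F^*-1}$, as claimed.

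The hardest point will be the rigorous execution of the $\bm{x}$-stationary phase: explicit evaluation of the quadratic Gauss sum, control of the cubic and higher-order Taylor remainders uniformly in $\gamma$, and verification that $\bm{s}(\gamma)$ stays in $\supp{w}$ throughout the relevant $\gamma$-range. A more conceptual subtlety is that the $\gamma$-variations of $\mathcal{G}(\gamma)$ and $G(\bm{s}(\gamma))$ must be shown to be subdominant to the $\gamma^{1-\deg F^*}$ factor, so that the $\eta$-linearisation genuinely isolates a term proportional to $F^*(\bm{w})$ rather than a competing contribution of the same size.
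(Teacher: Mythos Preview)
Your plan has a genuine gap in the $\eta$-linearization step, and it does not use the hypothesis $L=0$ in any essential way, which is a warning sign.  Writing $\Psi(\gamma)\defeq -2\gamma F(\bm{s}(\gamma))$ for the critical-value phase, one computes (using $\gamma\nabla F(\bm{s})=-\bm{w}$) that $\Psi'(\gamma)=F(\bm{s}(\gamma))$ and $\Psi''(\gamma)=-\bm{w}^{T}H(\bm{s})^{-1}\bm{w}/\gamma^{3}$.  Under $\gamma=\gamma_0(1+\eta)$ the quadratic coefficient in $\eta$ therefore has size $|\gamma_0|^{2}|\Psi''|\asymp|\bm{w}|$ generically, which for $\eta\in\TT$ dominates the linear term you want to isolate (whose size is at most $|\bm{w}|$, and typically much smaller).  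Orthogonality~\eqref{Eq: OrthogonalityOfCharacters} cannot then force the linear coefficient to be $\ll 1$.  A related issue is your amplitude $\mathcal{G}(\gamma)$: since $|\bm{s}'(\gamma)|\asymp|\bm{w}|^{-1}$, over the range $|\delta\gamma|<|\gamma_0|\asymp|\bm{w}|$ the critical point moves by $O(1)$, so $H(\bm{s}(\gamma))$ and hence $\mathcal{G}$ vary on exactly the scale you integrate over.

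The paper's proof sidesteps all of this by using the scaling symmetry~\eqref{EQN:weight-function-symmetry}, which is where $L=0$ enters.  Writing $\gamma=at^{Z}(1+\gamma')$ with $\gamma'\in\TT$ and substituting $\bm{x}\mapsto(1+\gamma')^{-1/2}\bm{x}$ (legitimate since $w$ is scale-invariant) yields the exact identity
\[
J_{F,w}(\gamma,\bm{w})=\int_{\Omega'}\psi\bigl((1+\gamma')^{-1/2}\Phi(\bm{x})\bigr)\,d\bm{x},
\qquad \Phi(\bm{x})=at^{Z}F(\bm{x})+\bm{w}\cdot\bm{x},
\]
with $\Omega'$ \emph{independent of $\gamma'$}.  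One may now swap the order of integration and apply one-variable stationary phase (Lemma~\ref{Le:StatPhase}) to the inner $\gamma'$-integral: since $|f'(\gamma')|=1$ for $f(\gamma')=(1+\gamma')^{-1/2}$, this vanishes unless $|\Phi(\bm{x})|\le 1$.  The crucial feature is that here \emph{all} $\gamma'$-derivatives of the phase have the same size $|\Phi(\bm{x})|$, so stationary phase is conclusive; in your coordinates the second derivative is instead of a fixed large size $\asymp|\bm{w}|$ unrelated to the first.  Once $|\Phi(\bm{x})|\le 1$ is established, the conclusion follows from Lemma~\ref{Le: InFunctDual} and the one-sided divisibility~\eqref{Eq: FdividesDualFormGrad}, without ever needing $G(\bm{s})$ bounded away from zero or an exact Gauss-sum evaluation.
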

\begin{proof}
Note that if $|\bm{w}|\ll 1$, then $|F^*({\bm{w}})|\ll 1+|\bm{w}|^{\deg F^*-1}$ holds trivially, so that we may assume $|\bm{w}|\gg 1$ from now on. Similarly we can assume $1\ll |\bm{w}|\ll \widehat{\Gamma}$, since else $J^\Gamma_{F,w}(\bm{w})=0$ by Lemma \ref{Le: IntEstimateI}. Using Lemma \ref{Le: IntEstimateI} again, it follows that 
\[
J^\Gamma_{F,w}(\bm{w})=\int_{|\gamma|\asymp |\bm{w}|}
J_{F,w}(\gamma,\bm{w})
\dd\gamma. 
\]

Let $Z\in\ZZ_{>0}$ be such that $\widehat{Z}\asymp |\bm{w}|$.
We will now consider the contribution to $J^\Gamma_{F,w}(\bm{w})$ from those $\gamma$ with $|\gamma|=\widehat{Z}$.
To do so, write $\gamma = a t^Z(1+\gamma')$, where $a\in\FF_q^\times$ and $\gamma'\in\TT$.
Let $\bm{y} \defeq \bm{x}-\bm{x}_0$ and $G(\bm{y})\defeq F(\bm{y}+\bm{x}_0)$, as in the proof of Lemma \ref{Le: IntEstimateI}.
Then let
\begin{equation*}
\bm{x}'
\defeq (1+\gamma')^{-1/2} \bm{x}
= (1+\gamma')^{-1/2} (\bm{y}+\bm{x}_0).
\end{equation*}
Note that since $(1+\gamma')^{-1/2} - 1 \in \TT$ and $L=0$, property \eqref{EQN:weight-function-symmetry} of the weight function $w$ implies $w(\bm{x})=w(\bm{x}')$.
In particular, after the change of variables $\bm{x}\mapsto (1+\gamma')^{-1/2}\bm{x}$ by condition (ii) on $w$, we get
\begin{align*}
\int_{\TT^n}w(\bm{x}')&\psi(\gamma F(\bm{x}')+\bm{w}\cdot\bm{x}')\dd\bm{x}\\
&=\psi((1+\gamma')^{-1/2}\bm{x}_0\cdot\bm{w})\int_{\TT^n}\psi((1+\gamma')^{-1/2}(at^ZG(\bm{y})+\bm{y}\cdot\bm{w}))\dd\bm{y}\\
&= \psi((1+\gamma')^{-1/2}\bm{x}_0\cdot\bm{w})\int_{\Omega}\psi((1+\gamma')^{-1/2}(at^ZG(\bm{y})+\bm{y}\cdot\bm{w}))\dd\bm{y},        
\end{align*}
by Lemma \ref{Le:StatPhase}, where 
\[
\Omega \defeq \{\bm{y}\in\TT^n\colon |at^Z\nabla G(\bm{y})+\bm{w}|\ll \widehat{Z}^{1/2}\}
\]
and we used that $\gamma F(\bm{x}')=at^Z (1+\gamma')^{-1/2}F(\bm{y}+\bm{x}_0)$ since $F$ is homogeneous of degree $3$.
In summary, we have proven the identity $$J_{F,w}(\gamma,\bm{w}) = J_{F,w}((1+\gamma')^{-3/2}\gamma,(1+\gamma')^{-1/2}\bm{w}),$$ and applied stationary phase to the latter integral to get an integral over $\bm{y}\in \Omega$.

Upon expressing the integral over $\bm{y}\in \Omega$ in terms of $\bm{x} = \bm{y}+\bm{x}_0$, we find that
\[
J_{F,w}(\gamma,\bm{w})
= \int_{\TT^n}w(\bm{x}')\psi(\gamma F(\bm{x}')+\bm{w}\cdot\bm{x}')\dd\bm{x}'
= \int_{\Omega'}\psi((1+\gamma')^{-1/2}\Phi(\bm{x}))\dd\bm{x},
\]
where $\Phi(\bm{x})\defeq at^ZF(\bm{x})+\bm{w}\cdot \bm{x}$
and
\[
\Omega'\defeq \{\bm{x}\in\supp(w)\colon |a\nabla F(\bm{x})+t^{-Z}\bm{w}|\ll \widehat{Z}^{-1/2}\}.
\]
The crucial observation here is that $\Omega'$ is independent of $\gamma'$.
In particular, when considering the total contribution to $J^\Gamma_{F,w}(\bm{w})$ from the set $\set{\gamma = at^Z(1+\gamma'): \gamma'\in \TT}$, we may exchange the order of integration and see that it is given by 
\[    \int_{\Omega'}\int_{\TT}\psi((1+\gamma')^{-1/2}\Phi(\bm{x}))\dd\gamma' \dd{\bm{x}}.
\]
Here $\Omega'$, $\Phi$ depend on $Z$ and $a$, but all estimates below will be uniform over $Z$ and $a$. Let us now take the Taylor expansion 
\[
(1+\gamma')^{-1/2}=\sum_{k\geq 0}a_k \gamma'^{k},
\]
where $a_k=\binom{-1/2}{k}\in \FF_q$.
This is well defined, because $|\gamma'|<1$ and $\cha(\FF_q)\neq 2$.
Since $|\gamma'|<1$, the value of $\psi((1+\gamma')^{-1/2}\Phi(\bm{x}))$ only depends on $f(\gamma')=\sum_{k=0}^{k_0}a_k \gamma'^k$ if we choose $k_0$ sufficiently large. We may now apply Lemma \ref{Le:StatPhase} again with $n=1$ to infer that the inner integral is given by 
\[
\int_{\Theta} \psi(f(\gamma')\Phi(\bm{x}))\dd\gamma',
\]
where 
\[
\Theta = \{\gamma'\in\TT\colon |f'(\gamma')\Phi(\bm{x})|\leq \max\{1, |\Phi(\bm{x})|^{1/2}\}\}.
\]
It is easy to see that $|f'(\gamma')|=1$, so that $\Theta$ is non-empty only if $|\Phi(\bm{x})|\leq 1$, which we shall assume for the rest of the proof.

Since $\bm{x}\in \Omega'$, we have $\bm{x}\in\supp(w)$ and $|a\nabla F(\bm{x})+t^{-Z}\bm{w}|\ll \widehat{Z}^{-1/2}\ll |\bm{w}|^{-1/2}$. Since we assume $|\bm{w}|\gg 1$ for a sufficiently large implied constant, we may invoke Lemma \ref{Le: InFunctDual} to deduce the existence of $\bm{s}\in K_\infty^n$ such that $|\bm{s}-\bm{x}|\ll |a \nabla F(\bm{x})+t^{-Z}\bm{w}|\ll |\bm{w}|^{-1/2}$ and $\nabla F(\bm{s})=-a^{-1}t^{-Z}\bm{w}$. Using Taylor expansion at $\bm{s}$ it follows that 
\begin{align*}        
|aF(\bm{x})+t^{-Z}\bm{w}\cdot \bm{x}|&= |aF(\bm{s})+t^{-Z}\bm{w}\cdot\bm{s}|+O(|\bm{x}-\bm{s}|^2)\\
&=|aF(\bm{s})+t^{-Z}\bm{w}\cdot\bm{s}|+O(|\bm{w}|^{-1}).
\end{align*}
Moreover, as $a\nabla F(\bm{s})=-t^{-Z}\bm{w}$, we have $3a F(\bm{s})=-t^{-Z}\bm{w}\cdot\bm{s}$ and hence $${|a F(\bm{s})+t^{-Z}\bm{w}\cdot \bm{s}|=|F(\bm{s})|}.$$

By \eqref{Eq: FdividesDualFormGrad}, the ratio $F^*(\nabla F(\bm{x})) / F(\bm{x})$ is a homogeneous polynomial in $\OK[x_1,\dots, x_n]$ of degree $2\deg F^*-3$,
which immediately implies 
\[
|F^*(\nabla F(\bm{s}))|\ll |F(\bm{s})||\bm{s}|^{2\deg F^*-3}.
\]
In addition, as $|\bm{s}-\bm{x}|\ll |\bm{w}|^{-1/2}$, we have $|\bm{s}|\asymp 1$. 
Together with $-a^{-1}t^{-Z}\bm{w}=\nabla F(\bm{s})$, this implies 
$|F^*(t^{-Z}\bm{w})|=|F^*(\nabla F(\bm{s}))|\ll |F(\bm{s})|$
and hence
\[
|F^*(\bm{w})|\ll \widehat{Z}^{\deg F^*}(\widehat{Z}^{-1}|\Phi(\bm{x})|+|\bm{w}|^{-1})\ll |\bm{w}|^{\deg F^*-1},
\]
as desired.
\end{proof}

\section{Estimates for exponential sums}

As usual we assume that $K=\FF_q(t)$ with 
$\OK=\FF_q[t]$ and 
$\Char(\FF_q)>3$.
Throughout this section we
assume that
$$
F(\bm{x})=a_1x_1^3+\dots+a_nx_n^3,
$$
where $a_1,\dots,a_n\in \FF_q^\times$.
Let $r\in \Omon$ and let $\bm{c}\in\OK^n$. In this section we collect estimates for the exponential sums
$S_r(\bm{c})$ (defined in \eqref{Eq: Definition S_r(c)})
and the normalisation $S_r^\natural (\cc)=|r|^{-(n+1)/2}S_r(\cc)$.
As we will see, the quality of our estimates is governed by the behaviour of the dual form $F^*\in \FF_q[c_1,\dots, c_n]$,
whose formula is recorded in \eqref{EQN:6-variable-Fermat-dual-form} in the case $F=x_1^3+\dots+x_n^3$.

By \eqref{Eq: Sr(c)multiplicative}, it suffices to treat the case of prime power moduli when estimating $S_r(\bm{c})$. 
Define
\begin{equation}\label{eq:sq-cub}
    \sq(c)=\prod_{\varpi^2\mid c} \varpi^{v_\varpi(c)} \quad \text{ and }
\quad
\cub(c)=\prod_{\varpi^3\mid c} \varpi^{v_\varpi(c)}
\end{equation}
for the square-full and cube-full parts of any $c\in \OK$;
if $c=0$ we take $\sq(c)=\cub(c)=0$.
With this in mind 
we summarise the relevant facts in the following result. 

\begin{lemma}\label{lem:prime_power}
Let $\varpi^e\in \OK$ be a prime power. 
\begin{enumerate}
\item
If $e=1$ then $S_\varpi^\natural(\bm{c})\ll |\gcd(\varpi,\nabla F^*(\bm{c}))|^{1/2}\leq |\varpi|^{1/2}$.
\item
If $e=2$ then 
$S_{\varpi^2}^\natural(\bm{c})\ll |\varpi|$.
\item
 If $e\geq 3$ then 
 $$
 S_{\varpi^e}^\natural(\bm{c})\ll |\varpi^e|^{1/2} \prod_{1\leq i\leq n}
 |\gcd(\varpi^e,\sq(c_i))|^{1/4}.
 $$
\end{enumerate}
\end{lemma}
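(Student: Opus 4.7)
The plan is to exploit the diagonal structure of $F$ to reduce everything to one-variable cubic exponential sums, and then apply Weil's bound, Hensel lifting, and $\varpi$-adic stationary phase in the three cases $e=1$, $e=2$, $e\ge 3$, respectively. Since $F(\bm{x}) = \sum_i a_i x_i^3$, the definition \eqref{Eq: Definition S_r(c)} factors as
\[
S_{\varpi^e}(\bm{c}) = \sideset{}{'}\sum_{\abs{a}<\abs{\varpi^e}} \prod_{i=1}^n T(a a_i, c_i; \varpi^e),
\qquad
T(A, C; \varpi^e) \defeq \sum_{\abs{x}<\abs{\varpi^e}} \psi\!\left(\frac{A x^3 - C x}{\varpi^e}\right),
\]
where the primed sum forces $\gcd(a,\varpi)=1$. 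As $a_i \in \FF_q^\times$, the leading coefficient $aa_i$ is then always a $\varpi$-adic unit, and the lemma reduces to uniform bounds on $T(A, C; \varpi^e)$ for $\gcd(A,\varpi)=1$.

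For (2), expanding $x = x_0 + \varpi y$ reduces the inner sum over $y$ to an orthogonality relation that vanishes unless $3Ax_0^2 \equiv C \pmod{\varpi}$; since $\gcd(3A,\varpi)=1$ when $\Char(\FF_q)>3$, at most two values of $x_0$ contribute, giving $\abs{T(A, C; \varpi^2)} \le 2\abs{\varpi}$ and hence $\abs{S_{\varpi^2}^\natural(\bm{c})} \ll \abs{\varpi}$. For the weak form of (1), Weil's bound for one-variable cubic exponential sums gives $\abs{T(A,C;\varpi)} \le 2\abs{\varpi}^{1/2}$, whence $\abs{S_\varpi^\natural(\bm{c})} \ll \abs{\varpi}^{1/2}$. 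To sharpen this to $\abs{\gcd(\varpi,\nabla F^\ast(\bm{c}))}^{1/2}$, I would instead invoke the geometric identity \eqref{EQN:rewrite-S_c(p)-via-E_c}, which expresses $S_\varpi^\natural(\bm{c})$ as a difference of normalised point counts $E_{\bm{c}}^\natural(\OK/\varpi\OK)$ and $E_F^\natural(\OK/\varpi\OK)$. When $\varpi \nmid F^\ast(\bm{c})$, the hyperplane section $V_{\bm{c}}$ is smooth and Deligne's theorem gives $E_{\bm{c}}^\natural \ll 1$; when $\varpi \mid F^\ast(\bm{c})$ but $\varpi \nmid \nabla F^\ast(\bm{c})$, classical projective duality identifies $[\bm{c}]$ as a smooth point of $V^\ast$, so $V_{\bm{c}}$ mod $\varpi$ acquires only isolated ordinary double points and the Weil conjectures still force $E_{\bm{c}}^\natural \ll 1$; only when $\varpi \mid \nabla F^\ast(\bm{c})$ can more degenerate singularities appear, and even then Lang--Weil gives $E_{\bm{c}}^\natural \ll \abs{\varpi}^{1/2}$.

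For (3), I would carry out a $\varpi$-adic stationary phase analysis of $T(A, C; \varpi^e)$. Writing $s \defeq v_\varpi(C)$ (with $v_\varpi(0)=\infty$), the stationary points of $f(x)=Ax^3-Cx$ satisfy $3Ax^2 = C$, so they concentrate at $x \equiv \pm \varpi^{s/2} u$ when $s$ is even and $C/(3A)$ is a square mod a suitable power of $\varpi$; otherwise the substitution $x = \varpi^{\lfloor s/2\rfloor} x'$ reduces $T$ to a smaller-modulus cubic sum, and the boundary regime $s \ge e$ is handled by a direct estimate for the pure cube sum $\sum_x \psi(Ax^3/\varpi^e)$. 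Evaluating the quadratic Gaussian at each stationary point, noting that $f''(x)=6Ax$ has valuation $\lceil s/2\rceil$, yields a bound of the shape
\[
\abs{T(A, C; \varpi^e)} \ll \abs{\varpi^e}^{1/2}\, \abs{\gcd(\varpi^e, \sq(C))}^{1/4},
\]
in which the factor $\abs{\gcd(\varpi^e,\sq(C))}^{1/4} = \abs{\varpi}^{\min(e,s)/4}$ for $s\ge 2$ (and $1$ for $s\in\{0,1\}$) captures precisely the loss caused by positive valuation of $f''$. Multiplying across the $n$ factors and dividing by $\abs{\varpi^e}^{(n+1)/2}$ then produces (3).

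The hardest step is (3): getting the exponent $\tfrac14$ uniformly across both parities of $s$, and cleanly interfacing the stationary phase regime ($s < e$) with the pure cube-sum regime ($s \ge e$), requires a careful induction on $e$ via the rescaling $x \mapsto \varpi^{\lfloor s/2\rfloor} x'$. A secondary subtlety is the duality input to the sharpened form of (1): the equivalence between $\varpi \nmid \nabla F^\ast(\bm{c})$ and $V_{\bm{c}}$ having only ordinary nodes mod $\varpi$ is classical over $\CC$ but must be verified for base fields of characteristic $>3$.
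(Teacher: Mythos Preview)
Your proposal is correct and aligns with the paper's approach, though the paper's own proof is simply a string of citations: part~(1) is quoted from Hooley \cite{hooley1994nonary}*{Lemma~60}, and parts~(2)--(3) from \cite{glas2022question}*{Eq.~(4.7)}, with only the inequality $\{\varpi^e,c_i\}\le |\gcd(\varpi^e,\sq(c_i))|$ added to recast the latter in the stated form. Your sketch is essentially a reconstruction of those cited results: the factorisation into one-variable cubic sums and the subsequent Hensel/stationary-phase analysis is exactly how \cite{glas2022question}*{\S~4.2} proceeds, and your point-count argument for the sharpened~(1) is Hooley's method via \eqref{EQN:rewrite-S_c(p)-via-E_c}. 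One minor imprecision: in the case $\varpi\mid\nabla F^\ast(\bm{c})$, appealing to ``Lang--Weil'' is not quite enough to get $E^\natural_{\bm{c}}\ll\abs{\varpi}^{1/2}$; one needs the Deligne-type refinement that uses the $0$-dimensionality of the singular locus of $V_{\bm{c}}$ (as guaranteed by Zak's theorem, invoked elsewhere in the paper via \cite{hooley1991number}*{Katz's Appendix}). Apart from that, your self-contained route and the paper's cited route coincide in substance.
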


\begin{proof}
Part (1) is due to 
Hooley \cite{hooley1994nonary}*{Lemma 60}.
Part (2) is straightforward and is recorded 
in \cite{glas2022question}*{Eq.~(4.7)}.
To see part (3) we invoke 
\cite{glas2022question}*{Eq.~(4.7)} to deduce that 
$$
S_{\varpi^e}^\natural(\bm{c})\ll 
\frac{1}{|\varpi^e|^{(n+1)/2}} \cdot 
|\varpi^e| \prod_{1\leq i\leq n} |\varpi^e|^{1/2} \{\varpi^e,c_i\}^{1/4},
$$
where $\{\varpi^e,c_i\}=|\varpi|^{-1}$ if $\varpi\| c_i$, with 
$\{\varpi^e,c_i\}=|\gcd(\varpi^e,c_i)|$ otherwise. 
The statement of the lemma follows on noting that 
$\{\varpi^e,c_i\}\leq |\gcd(\varpi^e,\sq(c_i))|$.
\end{proof}

The following result  follows from combining Lemma 
\ref{lem:prime_power} with multiplicativity. 

\begin{lemma}\label{lem:lem8.2}
There exists a constant $A>0$ such that 
$$
|r|^{-1/2}|S_r^\natural(\cc)| \leq A^{\omega(r)} \prod_{1\leq i\leq n}
\abs{\gcd(\cub(r), \sq(c_i))}^{1/4}.
$$
\end{lemma}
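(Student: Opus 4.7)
The proof is essentially routine: factor $r$ into prime powers, apply the prime-power bounds of Lemma~\ref{lem:prime_power} case by case, and repackage. Since $S_r(\cc)$ is multiplicative in $r$ by~\eqref{Eq: Sr(c)multiplicative} and the normalisation $|r|^{-(n+1)/2}$ is obviously multiplicative, the function $S_r^\natural(\cc)$ is multiplicative in $r$. Writing $r=\prod_{\varpi^e\pdiv r}\varpi^e$, this gives
\[
|r|^{-1/2}|S_r^\natural(\cc)| = \prod_{\varpi^e\pdiv r} |\varpi^e|^{-1/2}|S_{\varpi^e}^\natural(\cc)|.
\]

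The plan is then to control each local factor $|\varpi^e|^{-1/2}|S_{\varpi^e}^\natural(\cc)|$. When $e=1$, Lemma~\ref{lem:prime_power}(1) yields a bound $O(1)$. When $e=2$, Lemma~\ref{lem:prime_power}(2) gives $|\varpi^2|^{-1/2}\cdot O(|\varpi|)=O(1)$. When $e\geq 3$, Lemma~\ref{lem:prime_power}(3) gives
\[
|\varpi^e|^{-1/2}|S_{\varpi^e}^\natural(\cc)|\ll \prod_{1\leq i\leq n}\abs{\gcd(\varpi^e,\sq(c_i))}^{1/4}.
\]
Absorbing the constants from each case into a single constant $A\geq 1$ contributes at most $A^{\omega(r)}$ to the final product.

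Finally, recalling from~\eqref{eq:sq-cub} that $\cub(r)=\prod_{\varpi^e\pdiv r,\, e\geq 3}\varpi^e$, and using the fact that the primes with $e\leq 2$ contribute trivially to $\gcd(\cub(r),\sq(c_i))$, we may regroup the product over the prime powers $\varpi^e\pdiv r$ with $e\geq 3$ and interchange it with the product over $i$ to get
\[
\prod_{\substack{\varpi^e\pdiv r\\ e\geq 3}}\prod_{1\leq i\leq n}\abs{\gcd(\varpi^e,\sq(c_i))}^{1/4}
= \prod_{1\leq i\leq n}\abs{\gcd(\cub(r),\sq(c_i))}^{1/4},
\]
which is the desired bound. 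There is no real obstacle here; the only subtlety to keep in mind is that the case split in Lemma~\ref{lem:prime_power} aligns exactly with the definition of $\cub(r)$, so that prime powers with $e\in\{1,2\}$ contribute only to the $A^{\omega(r)}$ factor while those with $e\geq 3$ produce the $\gcd$-terms.
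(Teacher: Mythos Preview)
Your proof is correct and follows exactly the approach the paper indicates: the paper's own proof consists only of the sentence ``follows from combining Lemma~\ref{lem:prime_power} with multiplicativity,'' and you have simply spelled out those details accurately. The case split on $e\in\{1,2\}$ versus $e\ge 3$ and the identification of the $e\ge 3$ part with $\cub(r)$ are handled correctly.
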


It will also be useful to record a crude  upper bound, which will be helpful when $\bm{c}=\bm{0}$.

\begin{lemma}\label{lem:lem8.2'}
There exists a constant $A>0$ such that 
$$
|r|^{-1/2}|S_r^\natural(\cc)| \leq A^{\omega(r)} |\cub(r)|^{n/6}. 
$$
\end{lemma}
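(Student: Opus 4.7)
The plan is to invoke multiplicativity \eqref{Eq: Sr(c)multiplicative} of $r\mapsto S_r(\bm{c})$ to reduce to a single prime power $r=\varpi^e$, and then to split into the cases $e\le 2$ versus $e\ge 3$. When $e\in\{1,2\}$ we have $\cub(\varpi^e)=1$, so that parts (1) and (2) of Lemma~\ref{lem:prime_power} directly deliver $|\varpi^e|^{-1/2}|S_{\varpi^e}^\natural(\bm{c})|\ll 1$, which is exactly what is needed in these cases.

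For $e\ge 3$ the target is $|S_{\varpi^e}(\bm{c})|\ll |\varpi^e|^{1+2n/3}$, which is equivalent to the claimed inequality once $|\cub(\varpi^e)|=|\varpi^e|$ is inserted. Note that Lemma~\ref{lem:prime_power}(3) is not strong enough here: the worst case $|\gcd(\varpi^e,\sq(c_i))|=|\varpi^e|$ only reproduces the weaker exponent $n/4$ (i.e.\ the bound of Lemma~\ref{lem:lem8.2}), which fails at $\bm{c}=\bm{0}$. Instead I exploit that $F=a_1 x_1^3+\dots+a_n x_n^3$ is diagonal to factor
\[
S_{\varpi^e}(\bm{c}) \;=\; \sideset{}{'}\sum_{|a|<|\varpi^e|}\, \prod_{i=1}^n T_i(a),
\qquad T_i(a) \defeq \sum_{|x|<|\varpi^e|}\psi\!\left(\tfrac{aa_i x^3-c_i x}{\varpi^e}\right),
\]
apply a uniform univariate bound $|T_i(a)|\ll|\varpi^e|^{2/3}$ to each factor, and sum over the $O(|\varpi^e|)$ admissible $a$ to obtain $|S_{\varpi^e}(\bm{c})|\ll|\varpi^e|\cdot|\varpi^e|^{2n/3}$, as desired.

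The core obstacle is this uniform cubic bound $|T_i(a)|\ll|\varpi^e|^{2/3}$, valid for $\gcd(a,\varpi)=1$ and all $c_i\in\OK$. It is the function field analogue of the classical Hua bound for complete cubic sums modulo a prime power, and crucially uses $\Char\FF_q>3$ to ensure that the leading coefficient $3aa_i$ of $f'(x)=3aa_ix^2-c_i$ is a unit at $\varpi$. I would prove it by iterating the $\varpi$-adic Taylor expansion $f(y+\varpi^k z)\equiv f(y)+f'(y)\varpi^k z\pmod{\varpi^{2k}}$, which at each level either collapses the inner $z$-sum via orthogonality or descends to a cubic sum at a lower $\varpi$-adic level; after finitely many steps one is reduced to a cubic character sum on the residue field $\OK/\varpi\OK$, controlled by $O(|\varpi|^{1/2})$ via the Weil bound. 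The extremal case $c_i=0$ with $3\mid e$ can be computed explicitly and saturates the exponent $2/3$, confirming its sharpness. Reassembly over the distinct prime divisors of $r$ through \eqref{Eq: Sr(c)multiplicative} then supplies the factor $A^{\omega(r)}$ and completes the proof.
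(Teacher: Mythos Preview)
Your proof is correct and follows essentially the same route as the paper: reduce by multiplicativity to prime powers, handle $e\le 2$ via Lemma~\ref{lem:prime_power}(1)--(2), and for $e\ge 3$ factor the diagonal sum into one-variable cubic sums and apply the Hua-type bound $|T_i(a)|\ll|\varpi^e|^{2/3}$. The only difference is cosmetic: the paper cites this one-dimensional Hua estimate as a black box from \cite{glas2022question}*{\S~4.2}, whereas you sketch its proof.
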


\begin{proof}
By multiplicativity, it suffices to show that 
$
S_{\varpi^e}^\natural(\cc) \ll |\varpi^e|^{1/2+\theta_e}$,
for any prime power $\varpi^e\in \mathcal{O}$,
where 
$$
\theta_e=
\begin{cases}
    0 & \text{ if $e\leq 2$,}\\
    n/6 & \text{ if $e\geq 3$.}
\end{cases}
$$
For $e\leq 2$ this follows from parts (1) and (2) of Lemma 
\ref{lem:prime_power}. For $e\geq 3$, we apply the $1$-dimensional Hua type estimate recorded in \cite{glas2022question}*{\S~4.2}.
\end{proof}

The following classical recursive structure will soon prove useful.

\begin{lemma}
\label{LEM:ad-hoc-reduce-to-primitive-c's}
If $\varpi\mid \bm{c}$ and $e\ge 4$,
then 
$$
S_{\varpi^e}(\bm{c}) = 
\begin{cases}
0 & \text{ if $\varpi^2\nmid \bm{c}$,}\\ 
\abs{\varpi}^{3+2n} S_{\varpi^{e-3}}(\bm{c}/\varpi^2)
&\text{ if $\varpi^2\mid \bm{c}$.}
\end{cases}
$$
\end{lemma}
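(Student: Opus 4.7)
The plan is to perform a standard $\varpi$-adic Taylor-expansion/descent, exploiting both the fact that $F$ has total degree $3$ and the hypothesis $e\ge 4$, and then to distinguish the two cases according to $v_\varpi(\bm{c})$.

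First, I would write $\bm{x} = \bm{u} + \varpi^{e-2}\bm{v}$ with $|\bm{u}| < |\varpi^{e-2}|$ and $|\bm{v}|<|\varpi^2|$. Since $F$ is a cubic form, for each coordinate
\[
(u_i+\varpi^{e-2}v_i)^3 = u_i^3 + 3u_i^2\varpi^{e-2}v_i + 3u_i\varpi^{2(e-2)}v_i^2 + \varpi^{3(e-2)}v_i^3,
\]
and the assumption $e\ge 4$ gives $2(e-2)\ge e$, so modulo $\varpi^e$ only the first two terms survive. Hence
\[
aF(\bm{x})-\bm{c}\cdot\bm{x} \equiv aF(\bm{u})-\bm{c}\cdot\bm{u} + \varpi^{e-2}(a\nabla F(\bm{u})-\bm{c})\cdot\bm{v} \pmod{\varpi^e}.
\]
Performing the sum over $\bm{v}$ by orthogonality of characters yields the factor $|\varpi|^{2n}$ together with the congruence condition
\[
a\nabla F(\bm{u}) \equiv \bm{c} \pmod{\varpi^2}.
\]

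In the case $\varpi^2\nmid\bm{c}$, fix an index $i$ with $v_\varpi(c_i)=1$. The $i$-th congruence $3aa_iu_i^2 \equiv c_i\pmod{\varpi^2}$ forces $v_\varpi(u_i^2-\beta)\ge 2$ with $\beta\defeq (3aa_i)^{-1}c_i$ of valuation exactly $1$; since $v_\varpi(u_i^2)$ is even and $v_\varpi(\beta)$ is odd, the ultrametric inequality makes this impossible, so the sum is empty and $S_{\varpi^e}(\bm{c})=0$. In the case $\varpi^2\mid\bm{c}$, write $\bm{c}=\varpi^2\bm{c}'$; the congruence becomes $\nabla F(\bm{u})\equiv 0\pmod{\varpi^2}$, forcing (since $\Char \FF_q > 3$ and $a_i\in\FF_q^\times$) each $u_i^2\equiv 0\pmod{\varpi^2}$, i.e. $\varpi\mid\bm{u}$. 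Substitute $\bm{u}=\varpi\bm{u}'$ with $|\bm{u}'|<|\varpi^{e-3}|$, obtaining $aF(\bm{u})-\bm{c}\cdot\bm{u} = \varpi^3(aF(\bm{u}')-\bm{c}'\cdot\bm{u}')$, so that
\[
\frac{aF(\bm{u})-\bm{c}\cdot\bm{u}}{\varpi^e} = \frac{aF(\bm{u}')-\bm{c}'\cdot\bm{u}'}{\varpi^{e-3}}.
\]

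Finally, the character in the last display depends on $a$ only modulo $\varpi^{e-3}$. Partitioning the primed sum over $|a|<|\varpi^e|$ by residue class modulo $\varpi^{e-3}$ produces $|\varpi|^3$ lifts for each invertible residue, giving a further factor of $|\varpi|^3$. The remaining double sum is exactly $S_{\varpi^{e-3}}(\bm{c}/\varpi^2)$, and combining the factors $|\varpi|^{2n}\cdot|\varpi|^3 = |\varpi|^{3+2n}$ yields the claimed identity. The main thing to watch is keeping the arithmetic of the modulus straight: the first descent reduces the effective $\bm{v}$-modulus from $\varpi^e$ to $\varpi^2$ and the subsequent $\varpi$-extraction from $\bm{u}$ must be paired with the fact that the $a$-sum collapses to an $a$-sum modulo $\varpi^{e-3}$, so that one genuinely recovers $S_{\varpi^{e-3}}$ and not some truncated variant.
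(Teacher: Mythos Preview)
Your proof is correct and is essentially the same argument that underlies the paper's citation: the paper reduces to one-dimensional sums and invokes Hooley's \cite{hooley1986Lfunctions}*{Eq.~(44)--(45)}, which are precisely the $\varpi$-adic Taylor descent you carry out here (write $x=u+\varpi^{e-2}v$, use $e\ge 4$ to linearise in $v$, detect the congruence $a\nabla F(u)\equiv\bm{c}\bmod{\varpi^2}$, and then substitute $u=\varpi u'$). You have simply spelled out the computation directly in $n$ variables rather than factoring into one-dimensional pieces first, which is harmless since the diagonal structure of $F$ is used in exactly the same way in both organisations.
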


\begin{proof}
After expressing $S_{\varpi^e}(\bm{c})$ in terms of one-dimensional sums,
this is immediate from the function field analogues of \cite{hooley1986Lfunctions}*{Eq.~(44)--(45)},
which hold since  $\cha(\FF_q)\ne 3$.
\end{proof}

Since a priori (depending on one's convention\footnote{Our definition of $F^\ast$ differs from the classical ``eliminant'' of \cite{hooley1988nonary}*{p.~37, footnote 2}.})
the dual form is only defined up to scaling, and we will soon need to understand its relation to discriminants from 
work of Bus\'e and Jouanolou
\cite{buse2014discriminant}*{Definition 4.6}, we record the following standard fact.

\begin{proposition}\label{PROP:sliced-discriminant}
Let $R=\FF_q[c_1,\dots,c_{n-1}]$.
Then $F^\ast(c_1,\dots,c_{n-1},1) \in R$
is divisible by the discriminant $\Delta\in R$
of the cubic form
$$F(y_1,\dots,y_{n-1},-c_1y_1-\dots-c_{n-1}y_{n-1})
\in R[y_1,\dots,y_{n-1}].$$
\end{proposition}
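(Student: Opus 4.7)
My plan is to reduce the divisibility claim to a set-theoretic statement about the dual variety $V^\ast \subset \PP^{n-1}$, and then to lift this via a multiplicity-one assertion for $\Delta$. First I would verify that for $\bm{c}_0 \in \overline{K}^{n-1}$, the vanishing $\Delta(\bm{c}_0) = 0$ is equivalent to $F^\ast(\bm{c}_0, 1) = 0$. By the Jacobian criterion, $\Delta(\bm{c}_0) = 0$ iff the cubic $G_{\bm{c}_0}(\bm{y}) \defeq F(\bm{y}, -\bm{c}_0 \cdot \bm{y})$ has a non-zero point $\bm{y}_0 \in \overline{K}^{n-1}$ with $\nabla G_{\bm{c}_0}(\bm{y}_0) = 0$; Euler's identity (valid since $\cha(\FF_q) \ne 3$) then forces $G_{\bm{c}_0}(\bm{y}_0) = 0$ automatically. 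Setting $\bm{x}_0 \defeq (\bm{y}_0, -\bm{c}_0 \cdot \bm{y}_0)$ and unpacking the chain rule, $\nabla G_{\bm{c}_0}(\bm{y}_0) = 0$ rewrites as $\nabla F(\bm{x}_0) = \lambda \cdot (\bm{c}_0, 1)$ with $\lambda = \partial F / \partial x_n(\bm{x}_0)$. Smoothness of $V = \{F = 0\}$ forces $\nabla F(\bm{x}_0) \ne 0$, hence $\lambda \ne 0$, so $H_{(\bm{c}_0, 1)}$ is exactly the tangent hyperplane to $V$ at $\bm{x}_0$; therefore $F^\ast(\bm{c}_0, 1) = 0$. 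The converse unwinds the same chain of implications.

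Next I would apply Hilbert's Nullstellensatz to conclude $\sqrt{(\Delta)} = \sqrt{(F^\ast(\cdot, 1))}$ after base change to $\overline{\FF_q}$. Since $V$ is smooth and irreducible, the Gauss map exhibits $V^\ast$ as an irreducible hypersurface in $\PP^{n-1}$, so $V^\ast \cap \{c_n = 1\}$ is cut out by a single irreducible $D \in \overline{\FF_q}[\bm{c}]$. Thus $\Delta = u \cdot D^a$ and $F^\ast(\bm{c}, 1) = v \cdot D^b$ for units $u, v \in \overline{\FF_q}^{\times}$ and integers $a, b \geq 1$, and the divisibility $\Delta \mid F^\ast(\cdot, 1)$ in $\overline{\FF_q}[\bm{c}]$ (and, by integral closure of $R$, in $R$) reduces to showing $a = 1$.

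The main obstacle is this multiplicity-one assertion. I would argue geometrically: at a sufficiently generic $\bm{c}_0 \in \{D = 0\}$, biduality ($V^{\ast\ast} = V$ for smooth non-degenerate $V$) implies that $H_{(\bm{c}_0, 1)}$ is tangent to $V$ at a single point, with contact of ordinary double point type. A standard local computation on the versal deformation of an ODP (model $\sum y_i^2 = t$) shows that the discriminant of any flat family realising such a degeneration vanishes to order exactly $1$ along the ODP locus, provided the family is transverse to $V^\ast$ at $\bm{c}_0$. Since the parametrising map $\bm{c} \mapsto (\bm{c}, 1)$ is an open immersion into the hyperplane parameter space $\PP^{n-1}$, transversality is automatic, giving $a = 1$ and hence $\Delta \mid F^\ast(\cdot, 1)$ in $R$.
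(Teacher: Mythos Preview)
Your set-theoretic equivalence is correct, and the reduction to showing $a=1$ via irreducibility of $V^\ast$ is sound. However, there is a genuine gap in the multiplicity-one step. You invoke biduality with the parenthetical justification ``$V^{\ast\ast}=V$ for smooth non-degenerate $V$'', but this is \emph{false} in positive characteristic in general: reflexivity can fail even for smooth hypersurfaces, and when it fails, the generic tangent hyperplane need not have a single ordinary-double-point tangency. What is actually needed is that the Gauss map $V\to V^\ast$ is separable, which by the Monge--Segre--Wallace criterion amounts to the Hessian of $F$ not vanishing identically on $V$. For the Fermat cubic this Hessian is (up to a unit) $\prod_i x_i$, which is nonzero on a dense open subset of $V$ when $\cha(\FF_q)>3$; so reflexivity does hold here and the generic tangency is indeed an ODP. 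But you must supply this verification---the blanket assertion does not carry it.

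The paper takes an entirely different, purely algebraic route that sidesteps these issues. It identifies $F^\ast(\bm{c})$, up to a unit in $\FF_q^\times$, with the discriminant $\disc(F,\bm{c})$ attached to the pair $(F,\bm{c}\cdot\bm{x})$ in the sense of Terakado (via \cite{wang2023dichotomous}*{Proposition~4.4}), and then uses a change-of-variables identity (\cite{wang2023dichotomous}*{Proposition~3.2}) to equate $\disc(F,\bm{c})$ with $\pm\Delta$. This gives the stronger conclusion $\Delta/F^\ast(\cdot,1)\in\FF_q^\times$ in two lines, with the only characteristic restriction entering through the cited Proposition~4.4. Your geometric argument is more self-contained and illuminates \emph{why} the two polynomials coincide, but the paper's approach is shorter and avoids the reflexivity subtlety altogether.
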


\begin{proof}
Here $\Delta$ is defined by a specialisation process explained in \cite{buse2014discriminant}*{paragraph after Eq.~(4.2.1)}.
Write $\bm{c}=(c_1,\dots,c_{n-1},1)$ and
let $\disc(F,\bm{c})$ be the discriminant associated in \cite{terakado2018determinant}*{\S~1.1} to the pair $(F(\bm{x}), \bm{c}\cdot \bm{x})$.

As  explained in \cite{wang2023dichotomous}*{Proposition~4.4}, for example, 
we have  $\disc(F,\bm{c}) / F^\ast(\bm{c}) \in \FF_q^\times$. (This is the only step where we use $\cha(\FF_q) > 3$.)
Also
\begin{equation*}
    \disc(F,\bm{c})
    = \pm \disc(F(y_1,\dots,y_{n-1},-c_1y_1-\dots-c_{n-1}y_{n-1})),
\end{equation*}
by \cite{wang2023dichotomous}*{Proposition~3.2}.
It therefore follows that $\Delta / F^\ast \in \FF_q^\times$, which suffices.
\end{proof}

The following result is the function field analogue of \cite{wang2023ratios}*{Lemma~9.1 (2),(3)}.

\begin{lemma}\label{lem:VW9.1}
Let $\varpi^e\in \OK$ be a prime power. 
\begin{enumerate}
\item
If $e=2$ and $v_\varpi(F^*(\cc))\leq 1$ then 
$S_{\varpi^2}^\natural(\bm{c})\ll 1$.
\item
If $e\geq 2+v_\varpi(F^*(\cc))$ then $S_{\varpi^e}(\bm{c})=0$.
\end{enumerate}
\end{lemma}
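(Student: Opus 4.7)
My plan for both parts is to carry out a Taylor-expansion analysis of $S_{\varpi^e}(\bm{c})$, in the spirit of \cite{wang2023ratios}*{Lemma~9.1}. First I would set $k=\lceil e/2\rceil$ and parametrise $\bm{x} = \bm{y} + \varpi^k \bm{z}$ with $\bm{y}$ and $\bm{z}$ ranging over standard representatives modulo $\varpi^k$ and modulo $\varpi^{e-k}$ respectively. Since $\cha(\FF_q) > 3$ and $2k \ge e$, the Taylor expansion truncates cleanly to $F(\bm{x}) \equiv F(\bm{y}) + \varpi^k \nabla F(\bm{y})\cdot \bm{z} \pmod{\varpi^e}$. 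Orthogonality over $\bm{z}$ would then collapse $S_{\varpi^e}(\bm{c})$ to a weighted sum over pairs $(a, \bm{y})$ satisfying $a\nabla F(\bm{y}) \equiv \bm{c}\pmod{\varpi^{e-k}}$, with phase $\psi((aF(\bm{y}) - \bm{c}\cdot \bm{y})/\varpi^e)$.

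For part (1) I would take $e = 2$, $k = 1$, and further decompose $a = a_0 + \varpi a_1$. The sum over $a_1$ should force the auxiliary constraint $F(\bm{y}) \equiv 0 \pmod \varpi$, which via Euler's identity (valid since $\cha(\FF_q) \ne 3$) is equivalent, under the gradient congruence, to $\bm{c}\cdot \bm{y} \equiv 0 \pmod \varpi$. The reduction of $\bm{y}$ modulo $\varpi$ then projects onto a singular point of $V_{\bm{c}} \bmod \varpi$. Proposition~\ref{PROP:sliced-discriminant} would allow me to control this singular locus by $F^\ast(\bm{c}) \bmod \varpi$; under the hypothesis $v_\varpi(F^\ast(\bm{c})) \le 1$ I expect the locus to consist of at most one point of $\PP^{n-1}$ over the residue field, equivalently $O(|\varpi|)$ points in the affine cone. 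Parametrising the cone by the scaling $\lambda$, the phase would become linear in $\lambda$ with $\varpi$-adically nontrivial coefficient (reflecting that $v_\varpi(F^\ast(\bm{c})) = 1$ corresponds to an isolated node rather than a more degenerate singularity), so the resulting additive character sum has magnitude $O(1)$, yielding $|S_{\varpi^2}^\natural(\bm{c})| \ll 1$.

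For part (2), I would set $v = v_\varpi(F^\ast(\bm{c}))$ and suppose $e \ge v + 2$. First, Lemma~\ref{LEM:ad-hoc-reduce-to-primitive-c's} reduces the task to the case $\gcd(\varpi, \bm{c}) = 1$. Using the divisibility $F\mid F^\ast\circ \nabla F$ from \eqref{Eq: FdividesDualFormGrad}, any pair $(a, \bm{y})$ contributing must satisfy $F^\ast(\bm{c}) \equiv a^{\deg F^\ast} F(\bm{y}) G(\bm{y}) \pmod{\varpi^{e-k}}$, where $F^\ast \circ \nabla F = F \cdot G$. An iterated Hensel-lift argument would then apply: successive refinements of $\bm{y}$ (enabled by the non-degenerate part of the Hessian of $F$) should show that the failure of lifting is precisely measured by $v$, so for $e - k > v$ every candidate pair eventually produces a trivial inner additive sum of the form $\sum_{\lambda}\psi(c\lambda/\varpi)$ with $c\not\equiv 0\pmod \varpi$, which vanishes. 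The hard part will be the bookkeeping in part (2): carefully aligning the Hensel precision with the $\varpi$-adic valuation of $F^\ast(\bm{c})$, especially when some coordinates of $\bm{y}$ are divisible by $\varpi$ (leading to degenerate Hessian entries). This follows the template of Hooley's classical treatment \cite{hooley1986Lfunctions}*{Eq.~(44)--(45)} over $\ZZ$, adapted to function fields.
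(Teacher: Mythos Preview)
Your Taylor-expansion route differs from the paper's, which instead rewrites $S_{\varpi^e}(\bm{c})$ via Hooley's identity as a constant times $\nu(\varpi^e,\bm{c}) - |\varpi|^{n-2}\nu(\varpi^{e-1},\bm{c})$, where $\nu(\varpi^j,\bm{c})$ counts primitive zeros modulo $\varpi^j$ of the eliminated form $G(\bm{y}') = F(c_n y_1,\dots,c_n y_{n-1}, -c_1 y_1-\dots-c_{n-1}y_{n-1})$, and then analyses Hensel lifting from $\nu(\varpi^{e-1})$ to $\nu(\varpi^e)$. Both routes ultimately hinge on the same algebraic input, and this is where your sketch has a gap. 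In part~(1) you assert that $v_\varpi(F^\ast(\bm{c})) \le 1$ forces at most one singular projective point and a $\varpi$-adically nontrivial phase coefficient in $\lambda$; neither follows from Proposition~\ref{PROP:sliced-discriminant}, which only says $F^\ast(\bm{c})$ equals $\disc G$ up to a unit---knowing $v_\varpi(\disc G)\le 1$ does not by itself bound the number of singular points or force nodality. The paper supplies the missing link as Lemma~\ref{LEM:tricky-grad-squared-divisibility}: by a Bus\'{e}--Jouanolou identity, $F^\ast(\bm{c})$ lies in the $\OK[\bm{u}'][1/u_a]$-ideal generated by $G$ and the pairwise products $(\partial_i G)(\partial_j G)$, so any singular $\bm{u}'$ of $G\bmod\varpi$ has $v_\varpi(G(\bm{u}')) \le v_\varpi(F^\ast(\bm{c})) \le 1$ and therefore contributes nothing to $\nu(\varpi^2,\bm{c})$. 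Combined with Zak's $O(1)$ bound (not ``at most one'') on singular projective points, this gives $S^\natural_{\varpi^2}(\bm{c}) \ll 1$ with no need for your $\lambda$-cancellation; the analogue of this lemma in your framework is precisely what would be needed to justify the phase nontriviality.

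For part~(2), the relation $F \mid F^\ast\circ\nabla F$ you invoke only bounds $v_\varpi(F(\bm{y}))$; it does not control the gradient of the eliminated form, which is what governs Hensel lifting. The paper again uses Lemma~\ref{LEM:tricky-grad-squared-divisibility}, now with $\beta = \lceil (e-1)/2 \rceil$: if some $\bm{u}'$ counted by $\nu(\varpi^{e-1},\bm{c})$ had $v_\varpi(\nabla G(\bm{u}')) > (e-2)/2$, then $v_\varpi(F^\ast(\bm{c})) \ge e-1$, contradicting the hypothesis. Hence the multivariate Hensel lemma \cite{browning2013norm}*{Lemma~3.3} applies uniformly, giving $\nu(\varpi^e,\bm{c}) = |\varpi|^{n-2}\nu(\varpi^{e-1},\bm{c})$ and $S_{\varpi^e}(\bm{c})=0$. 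Your ``iterated Hensel-lift'' outline does not explain how to extract any such gradient bound from $e \ge 2 + v_\varpi(F^\ast(\bm{c}))$.
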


\begin{proof}
Let $e>1$ and assume that $\varpi\nmid \cc$. 
We may appeal to an argument of Hooley 
\cite{hooley2014octonary}*{\S~6}, which carries over to the function field setting to give
$$
S_{\varpi^e}(\bm{c})=\frac{|\varpi|}{|\varpi|-1}\left\{|\varpi|^e\nu_0(\varpi^e,\cc)-
|\varpi|^{e-1}\nu_1(\varpi^e,\cc)\right\}.
$$
Here, for $i\in \{0,1\}$,  $\nu_i(\varpi^e,\cc)$ is the number of $\bm{y}\bmod{\varpi^e}$ such that 
$F(\bm{y})\equiv 0\bmod{\varpi^e}$, with 
$\varpi\nmid \bm{y}$ and 
$\cc.\bm{y}\equiv 0\bmod{\varpi^{e-i}}$.
Since $F$ is a non-singular cubic form with coefficients in $\FF_q$, it follows that 
$\varpi\nmid \nabla F(\bm{y})$ for any $\bm{y}$ such that $\varpi\mid F(\bm{y})$ but $\varpi\nmid \bm{y}$. 
Hence
$\nu_1(\varpi^e,\cc)=|\varpi|^{n-1}\nu_0(\varpi^{e-1},\cc)$, so that
$$
S_{\varpi^e}(\bm{c})=\frac{|\varpi|^{e}}{1-|\varpi|^{-1}}\left\{\nu_0(\varpi^e,\cc)-
|\varpi|^{n-2}\nu_0(\varpi^{e-1},\cc)\right\}.
$$
Since $\varpi\nmid \cc$ we can assume without loss of generality that $\varpi\nmid c_n$. Eliminating $y_n$, we are left with the expression
$$
S_{\varpi^e}(\bm{c})=\frac{|\varpi|^{e}}{1-|\varpi|^{-1}}\left\{\nu(\varpi^e,\cc)-
|\varpi|^{n-2}\nu(\varpi^{e-1},\cc)\right\},
$$
where 
$\nu(\varpi^e,\cc)$ is the number of $\bm{y}'=(y_1,\dots,y_{n-1})\bmod{\varpi^e}$ such that 
$G(\bm{y}')\equiv 0\bmod{\varpi^e}$ and 
$\varpi\nmid \bm{y}'$, where
$
G(\bm{y}')=F(c_ny_1,\dots,c_ny_{n-1},-c_1y_1-\dots-c_{n-1}y_{n-1}).
$
Part (2) for $\varpi\nmid \bm{c}$ now follows from
taking $m=e-1$, 
$\delta\in [0, \tfrac{m-1}{2}]$ and $\ell=0$
in the multivariate form of the Hensel-type lemma
\cite{browning2013norm}*{Lemma 3.3}, 
carried over to $\OK$.
Indeed, for any $\bm{u}'$ counted by $\nu(\varpi^{e-1},\cc)$, we must have
$$v_\varpi(\nabla G(\bm{u}')) \le (e-2)/2 = (m-1)/2,$$
or else $v_\varpi(F^\ast(\bm{c})) \ge e-1$ by the $\beta = \ceil{\frac{e-1}{2}}$ case of Lemma \ref{LEM:tricky-grad-squared-divisibility}.
Using Lemma \ref{LEM:ad-hoc-reduce-to-primitive-c's} and arguing by  induction on $v_\varpi(\bm{c})$, we can  also conclude that part (2) holds if $\varpi\mid \bm{c}$.

Turning to part (1), we take $e=2$ and note that $\varpi\nmid \cc$ if $v_\varpi(F^*(\cc))\leq 1$, since $\deg F^*>1$. But then it follows that 
$$
S_{\varpi^2}(\bm{c})=\frac{|\varpi|^{2}}{1-|\varpi|^{-1}}\left\{\nu(\varpi^2,\cc)-
|\varpi|^{n-2} \nu(\varpi,\cc)\right\}.
$$
Pick $\bm{u}'$ counted by $\nu(\varpi,\cc)$.
If $\varpi\nmid \nabla G(\bm{u}')$ then 
the contribution to the right hand side vanishes, by Hensel lifting.
On the other hand, if $\varpi\mid \nabla G(\bm{u}')$ then
 Lemma \ref{LEM:tricky-grad-squared-divisibility} implies that the contribution to $\nu(\varpi^2,\cc)$ vanishes.
Hence 
$$
S_{\varpi^2}^\natural(\bm{c})\ll \frac{|\varpi|^{2}}{
|\varpi^2|^{(n+1)/2}
}\cdot |\varpi|^{n-2} \#\{\bm{u}'\bmod{\varpi}: \varpi\mid \nabla G(\bm{u}'), ~\varpi\nmid \bm{u}'\}.
$$
It follows from a result of Zak \cite{hooley1991number}*{Katz's Appendix, Theorem 2} that the hypersurface $G=0$ in $\mathbb{P}^{n-2}$
has only $O(1)$ singular points, whence the number of $\bm{u}'$ is $O(|\varpi|)$.
It follows that 
$$
S_{\varpi^2}^\natural(\bm{c})\ll \frac{|\varpi|^{2}}{
|\varpi^2|^{(n+1)/2}
}\cdot |\varpi|^{n-1}\ll 1,
$$
which is what we wanted.
\end{proof}

\begin{lemma}
\label{LEM:tricky-grad-squared-divisibility}
Suppose $\varpi\nmid \bm{u}'$ and $\varpi^\beta \mid \nabla G(\bm{u}')$.
Then $v_\varpi(F^*(\cc)) \ge \min(v_\varpi(G(\bm{u}')), 2\beta)$.
\end{lemma}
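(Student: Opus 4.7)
The plan is to pass from the condition on $(\bm{u}',G)$ to one on a lift $(\bm{u},F)$ in $n$ variables, deploy a biduality-type identity for the diagonal cubic $F^\ast$, and Taylor-expand.

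\textbf{Lifting.} I will assume $\varpi\nmid c_n$ (if $\varpi\mid \bm{c}$, the claim is immediate from homogeneity since $v_\varpi(F^\ast(\bm{c}))\ge d^\ast$; otherwise permute variables), and $\beta\ge 1$ (else trivial). Set $\bm{u}\defeq (c_nu'_1,\ldots,c_nu'_{n-1},-\sum_{i<n}c_iu'_i)\in \OK_\varpi^n$. Then $F(\bm{u})=G(\bm{u}')$, $\bm{c}\cdot \bm{u}=0$ exactly, and $\bm{u}$ is $\varpi$-primitive. The chain rule rewrites $\varpi^\beta\mid \nabla G(\bm{u}')$ as
\begin{equation*}
\nabla F(\bm{u})\equiv \lambda\bm{c}\pmod{\varpi^\beta},\qquad \lambda\defeq \partial_n F(\bm{u})/c_n.
\end{equation*}
Since $\nabla F(\bm{u})=(3a_iu_i^2)_i$ is $\varpi$-primitive (using $\cha(\FF_q)\neq 3$), a valuation comparison forces $\lambda$ to be a unit in $\OK_\varpi$.

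\textbf{Biduality identity.} The algebraic heart of the argument is the polynomial identity
\begin{equation*}
\nabla F^\ast(\nabla F(\bm{x}))=\tfrac{1}{2}Q(\bm{x})\,\bm{x}+F(\bm{x})\,\bm{R}(\bm{x})\quad\text{in }\OK[\bm{x}]^n, \tag{$\dagger$}
\end{equation*}
where $Q\in \OK[\bm{x}]$ satisfies $F^\ast(\nabla F(\bm{x}))=F(\bm{x})Q(\bm{x})$ (from \eqref{Eq: FdividesDualFormGrad}) and $\bm{R}\in \OK[\bm{x}]^n$. To prove it, I differentiate the defining equation of $Q$: with $H(\bm{x})=6\diag(a_ix_i)$,
\begin{equation*}
6a_ix_i\cdot(\partial_{c_i}F^\ast)(\nabla F(\bm{x}))=3a_ix_i^2Q(\bm{x})+F(\bm{x})\partial_iQ(\bm{x}).
\end{equation*}
For the diagonal cubic, $Q$ is, up to a unit constant, $\prod_{\epsilon\neq (+,\ldots,+)}(\sum_j \epsilon_j a_j x_j^3)$, so $\partial_iQ$ is divisible by $x_i^2$ as a polynomial. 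Dividing by $3a_ix_i$ (valid since $\cha(\FF_q)>3$) gives $(\dagger)$.

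\textbf{Taylor expansion.} Set $\bm{c}''\defeq \nabla F(\bm{u})/\lambda$ and $\bm{\delta}\defeq \bm{c}''-\bm{c}$, so $v_\varpi(\bm{\delta})\ge \beta$. Combining Euler's identity $\bm{u}\cdot\nabla F(\bm{u})=3F(\bm{u})$ with $\bm{c}\cdot \bm{u}=0$ yields $\bm{\delta}\cdot \bm{u}=3F(\bm{u})/\lambda$. Dotting $(\dagger)$ at $\bm{x}=\bm{u}$ with $\bm{\delta}$, and rescaling via homogeneity of $\nabla F^\ast$, gives
\begin{equation*}
\bm{\delta}\cdot \nabla F^\ast(\bm{c}'')=\lambda^{-(d^\ast-1)}F(\bm{u})\Bigl[\tfrac{3Q(\bm{u})}{2\lambda}+\bm{\delta}\cdot\bm{R}(\bm{u})\Bigr],
\end{equation*}
so $v_\varpi(\bm{\delta}\cdot\nabla F^\ast(\bm{c}''))\ge A\defeq v_\varpi(G(\bm{u}'))$. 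Expanding $F^\ast(\bm{c}''-\bm{\delta})$ polynomially in $\bm{\delta}$ yields $F^\ast(\bm{c})=F^\ast(\bm{c}'')-\bm{\delta}\cdot\nabla F^\ast(\bm{c}'')+E$ with $v_\varpi(E)\ge 2\beta$. Since $v_\varpi(F^\ast(\bm{c}''))=v_\varpi(\lambda^{-d^\ast}F(\bm{u})Q(\bm{u}))\ge A$, we conclude $v_\varpi(F^\ast(\bm{c}))\ge \min(A,2\beta)$.

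The hard part will be establishing $(\dagger)$: it crucially uses the divisibility $x_i^2\mid \partial_iQ$ coming from the diagonal structure. Without this, the Taylor bound on $\bm{\delta}\cdot\nabla F^\ast(\bm{c}'')$ would weaken to $\varpi^\beta$ and the conclusion would degrade to $\min(A,\beta)$ rather than the required $\min(A,2\beta)$.
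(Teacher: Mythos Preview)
Your proof is correct and takes a genuinely different route from the paper's. The paper's argument is two lines: by Proposition~\ref{PROP:sliced-discriminant}, $F^\ast(\bm{c})$ coincides up to a unit with the discriminant of $G$, and a formula of Bus\'e--Jouanolou \cite{buse2014discriminant}*{Corollary~4.30} expresses this discriminant as an $\OK[\bm{u}'][1/u_a]$-linear combination of $G$ and the pairwise products $(\partial_i G)(\partial_j G)$, whence the valuation bound is immediate. Your approach instead lifts to $n$ variables, derives the biduality identity $(\dagger)$ by hand from the diagonal structure of $F$ (the key point being the divisibility $x_i^2\mid \partial_i Q$, which is what upgrades the na\"ive $\beta$ to $2\beta$), and Taylor-expands. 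This is longer and specific to diagonal cubics, but entirely self-contained and avoids the Bus\'e--Jouanolou black box; the paper's argument, by contrast, works verbatim for any smooth $F$. One minor quibble: your dismissal of the case $\varpi\mid\bm{c}$ via $v_\varpi(F^\ast(\bm{c}))\ge \deg F^\ast$ would not suffice if $2\beta>\deg F^\ast$, but this case is anyway vacuous here since $G$ is only defined under the standing assumption $\varpi\nmid c_n$.
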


\begin{proof}
Since $\varpi\nmid \bm{u}'$, we have $\varpi\nmid u_a$, say.
Then by Proposition \ref{PROP:sliced-discriminant} and \cite{buse2014discriminant}*{Corollary 4.30; see the beginning of \S~4 for the definition of $\tilde{f}$ there},
$F^\ast(\bm{c})$ is an $\OK[\bm{u}'][1/u_a]$-linear combination of
$G$ and the pairwise products $(\partial G/\partial u_i)(\partial G/\partial u_j)$.
Since $\varpi^{2\beta}$ divides each pairwise product,
the desired divisibility of $F^\ast(\bm{c})$ follows.
\end{proof}

In addition, we have the following estimate for averages of square-full moduli, which is a consequence of (6.8) and Lemma 4.4 in \cite{glas2022question}.
\begin{proposition}\label{Prop: SquarefullAverageOld}
    Let $Y, C \geq 1$. Then 
    \[
    \sum_{\substack{|\bm{c}|=\widehat{C}\\ c_1\cdots c_n\neq 0\\ F^*(\bm{c})\neq 0}}
    \sum_{\substack{r\in \RcB\\ |r|=\widehat{Y}}} |S_r(\bm{c})|
    \ll_\ve \widehat{C}^{n+\varepsilon}\widehat{Y}^{1+n/2+\varepsilon}.
    \]
\end{proposition}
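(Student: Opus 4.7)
The plan is to combine the pointwise estimate in Lemma~\ref{lem:lem8.2} with a Lang--Weil average over $\bm{c}$, following the strategy used in~\cite{glas2022question}. Since $|S_r(\bm{c})|=|r|^{(n+1)/2}|S_r^\natural(\bm{c})|$, Lemma~\ref{lem:lem8.2} gives, for every monic $r$ with $|r|=\widehat Y$,
\[
|S_r(\bm{c})|\ \leq\ A^{\omega(r)}\,\widehat{Y}^{\,1+n/2}\prod_{1\leq i\leq n}|\gcd(\cub(r),\sq(c_i))|^{1/4}.
\]
This already factors off the target $\widehat Y^{1+n/2}$, so it suffices to prove
\[
\Sigma\ \defeq \sum_{\substack{|\bm{c}|=\widehat C\\ c_1\cdots c_n\neq 0\\ F^*(\bm{c})\neq 0}}\;\sum_{\substack{r\in\RcB\\ |r|=\widehat Y}}A^{\omega(r)}\prod_{i=1}^n|\gcd(\cub(r),\sq(c_i))|^{1/4}\ \ll_\varepsilon\ \widehat C^{n+\varepsilon}\widehat Y^{\varepsilon}.
\]

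Next, I would swap the order of summation, so that $r$ is the outer variable and $\bm{c}$ the inner one. For each fixed $r$, the condition $r\in\RcB$ translates to $\rad(r)\mid F^*(\bm{c})$. Since $F^*$ is a non-zero polynomial over $\FF_q$, a prime-by-prime Lang--Weil count (as in Corollary~\ref{cor:HUA}) yields
\[
\#\{\bm{c}\in\OK^n:\,|\bm{c}|\leq \widehat C,\ \rad(r)\mid F^*(\bm{c})\}\ \ll_\varepsilon\ \widehat C^n\,|\rad(r)|^{-1+\varepsilon}.
\]
The gcd factor in $\Sigma$ can then be absorbed by a coordinate-wise divisor average of the shape $\sum_{|c|\leq\widehat C}|\gcd(\cub(r),\sq(c))|^{1/4}\ll_\varepsilon \widehat C\,|r|^\varepsilon$, obtained by splitting according to divisors of $\cub(r)$ and using Lemma~\ref{LEM:count-B-R_c-infty-divisors} to count the (sparse) square-full divisors. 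Together these give, for each $r$,
\[
\sum_{\bm{c}:\,|\bm{c}|\leq\widehat C,\;\rad(r)\mid F^*(\bm{c})}\,\prod_{i=1}^n|\gcd(\cub(r),\sq(c_i))|^{1/4}\ \ll_\varepsilon\ \widehat C^n\,|\rad(r)|^{-1+\varepsilon}|r|^\varepsilon.
\]

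The final step is the outer sum over $|r|=\widehat Y$. Using $A^{\omega(r)}\ll_\varepsilon |r|^\varepsilon$ together with the decomposition $r=uv^2w^3\cdots$ into square-free, square-full and cube-full parts, one has $\sum_{|r|=\widehat Y}A^{\omega(r)}|\rad(r)|^{-1+\varepsilon}\ll_\varepsilon \widehat Y^{\varepsilon}$. Combining this with the previous bound for the inner sum yields $\Sigma\ll_\varepsilon \widehat C^{n+\varepsilon}\widehat Y^\varepsilon$, as required. The main obstacle is the bookkeeping in the gcd-factor average: the cube-full part of $r$ can be large, but such $r$ are sparse, and the saving $|\rad(r)|^{-1}$ supplied by Lang--Weil is essential, since the crude bound $|S_r(\bm{c})|\ll \widehat Y^{n+1}$ alone is hopelessly weak.
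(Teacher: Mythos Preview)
Your approach has a genuine gap in the step where you combine the Lang--Weil density estimate with the coordinate-wise gcd average. The bound
\[
\#\{\bm{c}\in\OK^n:\,|\bm{c}|\leq \widehat C,\ \rad(r)\mid F^*(\bm{c})\}\ \ll_\varepsilon\ \widehat C^n\,|\rad(r)|^{-1+\varepsilon}
\]
only follows from Lang--Weil plus lifting when $|\rad(r)|\le\widehat C$; since here $\rad(r)\mid F^*(\bm c)\neq 0$ forces only $|\rad(r)|\ll\widehat C^{\deg F^*}$, this is not guaranteed. More seriously, even where the point count holds, you cannot simultaneously invoke it \emph{and} the free coordinate-wise average $\sum_{|c|\le\widehat C}|\gcd(\cub(r),\sq(c))|^{1/4}\ll\widehat C$: the latter requires summing $c_i$ over the full box, whereas the former restricts $\bm c$ to a sparse set on which the gcd weights could in principle be biased upwards. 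Your displayed ``combined'' bound is therefore unjustified as written.

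The fix is simpler than you suggest and does not use Lang--Weil at all. Keep $\bm c$ on the outside. For each fixed $\bm c$ with $F^*(\bm c)\neq 0$, Lemma~\ref{LEM:count-B-R_c-infty-divisors} gives $\#\{r\in\RcB:|r|=\widehat Y\}\ll_\varepsilon(\widehat Y\,|\bm c|)^\varepsilon$, so the inner $r$-sum has only $O_\varepsilon((\widehat Y\widehat C)^\varepsilon)$ terms. Bound each of these via Lemma~\ref{lem:lem8.2}, dropping the $\cub(r)$ inside the gcd to get $|S_r(\bm c)|\ll_\varepsilon\widehat Y^{1+n/2+\varepsilon}\prod_i|\sq(c_i)|^{1/4}$. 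Then
\[
\Sigma\ \ll_\varepsilon\ \widehat Y^{1+n/2+\varepsilon}\widehat C^\varepsilon\prod_{i=1}^n\sum_{0<|c_i|\le\widehat C}|\sq(c_i)|^{1/4}\ \ll_\varepsilon\ \widehat Y^{1+n/2+\varepsilon}\,\widehat C^{n+\varepsilon},
\]
using $\sum_{0<|c|\le\widehat C}|\sq(c)|^{1/4}\ll\widehat C$ (split $c$ into its square-free and square-full parts). This is essentially the argument behind the result in \cite{glas2022question} that the paper cites. In particular, your closing remark that ``the saving $|\rad(r)|^{-1}$ supplied by Lang--Weil is essential'' is incorrect: the sparsity of $\RcB$ for fixed $\bm c$ already does all the work.
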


We end by recalling \cite{glas2022question}*{Lemma 4.2} (a special case of \cite{browning2015rational}*{Lemma 8.5}),
which concerns square-root cancellation of $S_r(\bm{c})$ over ``good'' moduli $r$.
\begin{proposition}\label{Prop: GRHCancellation}
    Let $Y\geq 1$ and $\bm{c}\in \mcal{S}_1$.
    Then 
    \[
    \sum_{\substack{r\in \RcG\\ |r|=\widehat{Y}}} S_r(\bm{c})
    \ll_\ve |\bm{c}|^\varepsilon \widehat{Y}^{1+n/2+\varepsilon}.
    \]
\end{proposition}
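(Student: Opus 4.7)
The plan is to identify the partial sum as a Fourier-type coefficient of the Dirichlet series $\Phi^{\map G}(\bm{c},s) = \sum_{r \in \RcG} S_r^\natural(\bm{c})\,|r|^{-s}$ from \eqref{EQN:define-Phi}, and then to bound this series on the vertical line $\Re(s) = \tfrac12 + \eps$ by exploiting the factorisation $\Phi^{\map G} = \Phi^{\bm{c},1}\Phi^{\bm{c},2}\Phi^{\bm{c},3}$ of Definition~\ref{DEFN:factor-Phi^G-into-Phi_1-Phi_2-Phi_3}, together with the L-function estimates in Proposition~\ref{PROP:HW2-consequences}.

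Via the normalisation $S_r = |r|^{(n+1)/2}S_r^\natural$, the claim reduces to
$$\Bigl|\sum_{r \in \RcG,\,|r|=\widehat Y} S_r^\natural(\bm{c})\Bigr| \ll_\eps |\bm{c}|^\eps\,\widehat Y^{1/2+\eps}.$$
Each of the three factors $\Phi^{\bm{c},j}$ analytically continues to $\Re(s) > \tfrac12$: the first by GRH on the L-functions in its definition (Proposition~\ref{PROP:HW2-consequences}(4)), and the latter two by absolute convergence of their defining Euler products in this region (using Proposition~\ref{PROP:approx-Phi-past-1/2} for $\Phi^{\bm{c},3}$). Since $\Phi^{\map G}(\bm{c},s)$ is then holomorphic for $\Re(s) > \tfrac12$, the function-field Perron formula (Fourier inversion along vertical period $\mathsf{T}$), after shifting the contour from the region of absolute convergence, gives
\begin{equation*}
\sum_{r\in\RcG,\,|r|=\widehat Y} S_r^\natural(\bm{c}) \;=\; \frac{1}{\mathsf{T}}\int_{\RR/\mathsf{T}\ZZ} \Phi^{\map G}(\bm{c},\sigma+i\ts)\,\widehat Y^{\sigma+i\ts}\,\mathrm{d}\ts
\end{equation*}
for any $\sigma > \tfrac12$; with $\sigma = \tfrac12 + \eps$, the left-hand side is bounded by $\widehat Y^{1/2+\eps}\max_\ts |\Phi^{\map G}(\bm{c},\sigma+i\ts)|$.

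It remains to show $|\Phi^{\map G}(\bm{c},s)| \ll_\eps |\bm{c}|^\eps$ on the line $\sigma = \tfrac12+\eps$, which I would do factor by factor. The factor $\Phi^{\bm{c},1}$ is bounded by $O_\eps(|\bm{c}|^\eps)$ via three applications of Proposition~\ref{PROP:HW2-consequences}(4). The factor $\Phi^{\bm{c},2}(s) = L^{\map G}(2s,\bm{c},2)^{-1}$ is bounded by $O(1)$ since the Ramanujan bound of Proposition~\ref{PROP:HW2-consequences}(3) yields $|L_\varpi(2s,\bm{c},2)^{-1}| \leq \prod_j (1+|\varpi|^{-2\Re s})$ and the total product converges absolutely on $\Re(2s) > 1$. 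For $\Phi^{\bm{c},3}$, Proposition~\ref{PROP:approx-Phi-past-1/2} gives at good primes the local factor $1+O(|\varpi|^{-1/2-2s})+O(|\varpi|^{-3s})$ (absolutely convergent product for $\Re(s) > \tfrac14$), while at bad primes the coefficient bound $a_{\bm{c},3}(\varpi^l)\ll_\eps |\varpi|^{l\eps}$ gives via a geometric series the per-prime bound $1+O_\eps(|\varpi|^{-1/2-\eta})$ for some $\eta > 0$, and the overall bad-prime product is controlled by the elementary divisor bound $\omega(F^*(\bm{c}))\ll\log|\bm{c}|$ to yield $|\bm{c}|^{o(1)}$.

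Combining these estimates gives $\Phi^{\map G}(\bm{c},s) \ll_\eps |\bm{c}|^\eps$ on $\sigma = \tfrac12+\eps$, and the proposition follows after multiplying by $\widehat Y^{(n+1)/2}$. The main subtlety to handle carefully is the bad-prime contribution to $\Phi^{\bm{c},3}$: since there can be up to $\omega(F^*(\bm{c}))$ bad primes, a crude per-prime $O(1)$ bound would produce an unacceptable $|\bm{c}|^{O(1)}$ loss, so one must use the sharper $1 + o(1)$ bound from Proposition~\ref{PROP:approx-Phi-past-1/2} in conjunction with the standard divisor-count bound on $\omega(F^*(\bm{c}))$ to keep the loss at $|\bm{c}|^\eps$.
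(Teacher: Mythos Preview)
Your proof is correct. The paper itself does not give a proof but simply cites \cite{glas2022question}*{Lemma~4.2} (a special case of \cite{browning2015rational}*{Lemma~8.5}); the argument in those references is essentially the same contour-shift idea, but with a more direct comparison of $\Phi^{\map G}(\bm c,s)$ to $L(s,\bm c)^{-1}$ alone rather than the full three-way factorisation $\Phi^{\bm c,1}\Phi^{\bm c,2}\Phi^{\bm c,3}$ from Definition~\ref{DEFN:factor-Phi^G-into-Phi_1-Phi_2-Phi_3}. That finer decomposition is introduced in the paper for a different purpose (the averaging over $\bm c$ needed in Conjecture~\ref{CNJ:(R2'E')}), so invoking it here is overkill, though it does have the virtue of making your proof self-contained within the paper. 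A minimal version would observe directly that for $\varpi\nmid F^\ast(\bm c)$ one has $\Phi^{\map G}_\varpi(\bm c,s)=1+S^\natural_\varpi(\bm c)\,|\varpi|^{-s}$ by~\eqref{EQN:smooth-vanishing-S_c}, and then use~\eqref{EQN:rewrite-S_c(p)-via-E_c} to compare to $L_\varpi(s,\bm c)^{-1}$, showing that the ratio $\Phi^{\map G}/L(s,\bm c)^{-1}$ is given by an Euler product absolutely convergent for $\Re(s)>\tfrac13$; the bad-prime correction is handled exactly as you do.
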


\section{Moments of bad exponential sums}

Our primary goal in this section is to prove the following result.

\begin{theorem}\label{thm:B3}
Let $0\le R\le 3Z$.
There exist $\eta,\delta>0$ such that
if $2\le A\le 2+\delta$, then
$$
\sum_{
\substack{
\bm{c}\in \mathcal{S}_1\\ \norm{\bm{c}}\le \hat Z\\ c_1\cdots c_n\ne 0}}
\biggl\lvert\,
\sum_{
\substack{
r\in \RcB\\\abs{r}=\hat R}} |r|^{-1/2} |S^\natural_r(\bm{c})|
\biggr\rvert^A \ll \hat Z^n \hat R^{-\eta}.
$$
\end{theorem}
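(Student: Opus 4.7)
The plan is to exploit finer cancellation in the square-free part of the moduli and the rarity of large square-full moduli in $\RcB$, and then pass from $A=2$ to $A=2+\delta$ by Hölder interpolation. Each $r \in \RcB$ with $|r| = \hat R$ decomposes uniquely as $r = su$ with $s$ square-free, $u$ square-full, $\gcd(s,u)=1$, and both $s, u \in \RcB$; by multiplicativity, $|r|^{-1/2}|S_r^\natural(\bm c)|$ factors accordingly. For the square-free part, Lemma~\ref{lem:prime_power}(1) gives
\begin{equation*}
|s|^{-1/2}|S_s^\natural(\bm c)| \ll A_0^{\omega(s)}|s|^{-1/2}|\gcd(s, \nabla F^\ast(\bm c))|^{1/2}.
\end{equation*}
The factor $|s|^{-1/2}$ is the source of the desired power saving: for generic $\bm c$, ``$\varpi$ is a simple bad prime'' (i.e.\ $\varpi \mid F^\ast(\bm c)$ but $\varpi \nmid \nabla F^\ast(\bm c)$) has density $\sim |\varpi|^{-1}$ in $\bm c$-space, whereas ``$\varpi$ is a singular bad prime'' has density $\sim |\varpi|^{-2}$. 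For the square-full part, Lemma~\ref{lem:VW9.1}(2) forces $v_\varpi(u) \le 1 + v_\varpi(F^\ast(\bm c))$ whenever $S_u(\bm c) \ne 0$, restricting $u$ to a very thin set, and Lemma~\ref{lem:lem8.2'} provides a pointwise bound.

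The main work lies in the $A=2$ case. Expanding the square yields $\sum_{|r_1|=|r_2|=\hat R} \sum_{\bm c} a_{r_1}(\bm c)\, a_{r_2}(\bm c)$ with $a_r \defeq |r|^{-1/2}|S_r^\natural(\bm c)|$. Splitting each $r_i = s_i u_i$ as above, the dominant regime is $u_1 = u_2 = 1$ (both square-free). Here one combines the bound from Lemma~\ref{lem:prime_power}(1) with the counting estimate
\begin{equation*}
\#\{\bm c : |\bm c| \le \hat Z,\ \rad(s_1 s_2) \mid F^\ast(\bm c)\} \ll \hat Z^n / |\rad(s_1 s_2)|,
\end{equation*}
which follows from Lemma~\ref{LEM:fixed-poly-cube-free-point-count-bound} applied to the square-free modulus $\rad(s_1 s_2)$. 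The product of $|\varpi|^{-1/2}$-savings over simple bad primes, summed with appropriate Möbius / gcd bookkeeping, produces an overall $\hat Z^n \hat R^{-\eta_0}$ bound. The regimes involving non-trivial square-full $u_i$ are smaller, controlled by Proposition~\ref{Prop: SquarefullAverageOld} together with the sparsity coming from Lemma~\ref{lem:VW9.1}(2) and Lemma~\ref{LEM:N_c-small-divisor-moment-bound}.

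Finally, to pass from $A=2$ to $A=2+\delta$, apply Hölder's inequality in the form
\begin{equation*}
\biggl(\sum_r a_r\biggr)^{\!2+\delta} \le \biggl(\sum_r a_r\biggr)^{\!2}\biggl(\sum_r a_r\biggr)^{\!\delta},
\end{equation*}
and bound the last factor pointwise by $\hat R^{C\varepsilon}$, which follows from Lemmas~\ref{lem:lem8.2} and~\ref{LEM:count-B-R_c-infty-divisors}. Choosing $\delta>0$ small enough absorbs the $\hat R^{O(\delta \varepsilon)}$ loss into the $\hat R^{-\eta_0}$ saving from the $A=2$ case. The main obstacle will be quantifying the dichotomy between simple and singular bad primes: one must show that the contribution of the ``singular'' locus $\{F^\ast(\bm c) = 0 = \nabla F^\ast(\bm c)\}$ is truly negligible, which reduces to bounding the affine $\bm c$-count on this codimension-$\ge 2$ subvariety via Lemma~\ref{LEM:affine-dimension-growth}, and then carefully sieving the remaining $\bm c$ using Lemma~\ref{LEM:fixed-poly-cube-free-point-count-bound} so that the gain is not destroyed by the divisor function $A_0^{\omega(s_1 s_2)}$.
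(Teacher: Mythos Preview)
Your overall strategy---decompose $r$ into square-free and square-full parts, exploit the gradient bound from Lemma~\ref{lem:prime_power}(1) on the square-free part, and interpolate to $A=2+\delta$---is the same as the paper's. However, there are genuine gaps in the execution.

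\textbf{The reduction from $A=2$ to $A=2+\delta$.} Your claim that $(\sum_r a_r)^\delta \ll \hat R^{C\varepsilon}$ pointwise is incorrect. Lemma~\ref{lem:lem8.2} gives $a_r \ll A^{\omega(r)} \prod_i |\gcd(\cub(r),\sq(c_i))|^{1/4}$, and the product of $\sq(c_i)$ factors can be as large as a fixed power of $\hat Z$ when the $c_i$ have large square-full parts; Lemma~\ref{LEM:count-B-R_c-infty-divisors} only controls the number of terms, not their size. The paper instead uses the trivial bound $|S_r(\bm c)| \le |r|^{1+n}$ to get $\Sigma_A \le \hat R^{(1+n/2)\delta}\Sigma_2$; the exponent is $O(\delta)$, not $O(\varepsilon)$, but this is harmless once $\delta$ is taken small relative to the saving $\eta$ from the $A=2$ case.

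\textbf{The square-full part gives no saving as written.} Proposition~\ref{Prop: SquarefullAverageOld} yields only $\sum_{\bm c}\sum_{r\in\RcB,\,|r|=\hat Y}|r|^{-1/2}|S_r^\natural(\bm c)| \ll_\ve \hat C^{n+\ve}\hat Y^\ve$---no power saving in $\hat Y$---and Lemma~\ref{LEM:N_c-small-divisor-moment-bound} likewise only produces $\hat Z^n\hat R^\ve$. The paper does \emph{not} handle the square-full contribution via these tools. Instead it introduces a threshold parameter $B$, splits according to whether $T_r(\bm c)=\prod_{\varpi^e\|r}\max\{1,|S^\natural_{\varpi^e}(\bm c)|/|\varpi^e|^{1/2}\}$ exceeds $\hat B$, and in the large-$T_r$ regime uses Lemma~\ref{lem:lem8.2} to deduce that some $|\gcd(\cub(r),\sq(c_i))|$ is large. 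This forces a large square-full divisor $d \mid \gcd(c_i, F^\ast(\bm c)^2)$, and the resulting $\bm c$-count (Proposition~\ref{prop:B9}) supplies the saving.

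\textbf{The singular-prime dichotomy requires the Ekedahl sieve, not Lemma~\ref{LEM:affine-dimension-growth}.} The locus $\{F^\ast=0=\nabla F^\ast\}$ as a $K$-variety is irrelevant here; what matters is counting $\bm c$ with a \emph{large square-free} $d \mid \gcd(F^\ast(\bm c),\partial_{c_1}F^\ast(\bm c))$. Lemma~\ref{LEM:fixed-poly-cube-free-point-count-bound} alone fails once $|d|$ exceeds $\hat Z$, which happens since $R$ ranges up to $3Z$. The paper proves a function-field Ekedahl sieve (Theorem~\ref{THM:weird-eke}) to get $\#\{\bm c: |\bm c|\le\hat Z,\ d\mid\gcd(H_0(\bm c),H_1(\bm c))\}\ll_\ve \hat Y^\ve(\hat Z^n/\hat Y + \hat Z^{n-1})$ uniformly in $\hat Y=|d|$, and this is what drives both Lemma~\ref{lem:S0^1} (the square-free piece) and the Poonen-style argument in Lemma~\ref{lem:NQR} (large square-full divisors of $F^\ast(\bm c)$). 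Without this ingredient, your sieving step in the regime $|s_1 s_2| \gg \hat Z$ cannot be completed.
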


(As we will see in \S~\ref{SEC:main-generic-endgame}, the complementary locus $\set{\bm{c}\in \mcal{S}_1: c_1\cdots c_n=0}$ has essentially already been satisfactorily treated in \cite{glas2022question}.)

Let us denote by $\Sigma_A=\Sigma_A(Z,R)$ the sum on the left hand in the statement of this result.
The trivial bounds $\abs{S_r(\bm{c})} \le \abs{r}^{1+n}$ and $\#\set{r\in \RcB: \abs{r}=\hat R} \le \hat R$ imply
\begin{equation}\label{eq:burger1}
\Sigma_A \leq \hat R^{(1+n/2)\delta} \Sigma_2 \quad 
\textnormal{ for } 2\le A\le 2+\delta.
\end{equation}
To analyse $\Sigma_2$,
it will be convenient to define
$$
\mathcal{S}_2=\left\{
\bm{c}\in \mathcal{S}_1: c_1\cdots c_n\ne 0\right\}
$$
and 
$$
T_r(\cc)= \prod_{\varpi^e\| r} \max\left\{1, \frac{|S_{\varpi^e}^\natural (\cc)|}{|\varpi^e|^{1/2}}\right\}.
$$
Following the approach in
\cite{wang2023ratios}*{\S~9}, 
we shall use different arguments according to the size of 
$T_r(\cc)$.
Let $B\geq 0$ be a parameter to be chosen in due course and 
note that 
\begin{equation}\label{eq:burger2}
\Sigma_2 \leq 2\left(\Sigma^{(1)}+\Sigma^{(2)}\right),
\end{equation}
where
\begin{align}
\Sigma^{(1)}
&=\sum_{
\substack{
\bm{c}\in \mathcal{S}_2\\ \norm{\bm{c}}\le \hat Z}}
\biggl\lvert\,
\sum_{
\substack{
r\in \RcB,~\abs{r}=\hat R\\ 
T_r(\cc)<\hat B
}} |r|^{-1/2} |S^\natural_r(\bm{c})|
\biggr\rvert^2, 
\label{eq:SIG1}
\\
\Sigma^{(2)} 
&=\sum_{
\substack{
\bm{c}\in \mathcal{S}_2\\ \norm{\bm{c}}\le \hat Z}}
\biggl\lvert\,
\sum_{
\substack{
r\in \RcB,~\abs{r}=\hat R\\ 
T_r(\cc)\geq \hat B
}} |r|^{-1/2} |S^\natural_r(\bm{c})|
\biggr\rvert^2 .\label{eq:SIG2}
\end{align}

\subsection{Ekedahl sieve}\label{sec:EKE}

A version of the geometric sieve for arbitrary global fields has been worked out in forthcoming work of 
Bhargava, Shankar and Wang \cite{bhargava2023coregular}*{Theorem 16}.
Their result sieves out prime moduli,
but we need at least a partial version of the result for square-free moduli.
This is folklore but we prove it for the reader's convenience.
A similar idea is used in \cite{bhargava2021galois}*{\S~5, Case III}.

\begin{theorem}
\label{THM:weird-eke}
Let $T,Y>0$.
Fix two polynomials $H_0,H_1\in \OK[c_1,\dots,c_n]$ that are relatively prime in $K[c_1,\dots,c_n]$.
Then 
$$
\#\left\{
\bm{c}\in \OK^n: 
\begin{array}{l}
\norm{\bm{c}}\leq \hat T, ~\text{$\exists$ square-free $r\in \Omon$ such that}\\  
\text{$r\mid \gcd(H_0(\bm{c}), H_1(\bm{c}))$ and $|r|=\hat Y$} 
\end{array}
\right\}
\ll_\ve \hat Y^\ve \left(\frac{\hat T^n}{\hat Y} +\hat T^{n-1}\right).
$$
\end{theorem}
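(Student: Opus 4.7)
My plan is to prove this bound by combining two complementary arguments, depending on the relative size of $\hat Y$ and $\hat T$. As a preliminary step, I would apply Lemma~\ref{LEM:OK-linear-change-to-near-monic} to the pair $\{H_0, H_1\}$ to obtain an $\OK$-linear change of variables (which alters $\hat T$ by only a bounded multiplicative constant), after which $\deg_{c_1}(H_i) = \deg(H_i)$ and the leading $c_1$-coefficient of $H_i$ is a nonzero \emph{constant} $h_i \in \OK$ for $i = 0, 1$. I would then form the Sylvester resultant $R(c_2, \ldots, c_n) \defeq \operatorname{Res}_{c_1}(H_0, H_1) \in \OK[\bm{c}']$, which is nonzero because $H_0, H_1$ are coprime in $K[\bm{c}]$, and clear denominators in the standard identity $R = A_0 H_0 + B_0 H_1$ over $K[\bm{c}]$ to produce a nonzero $d \in \OK$ and $A, B \in \OK[\bm{c}]$ with $dR(\bm{c}') = A(\bm{c})H_0(\bm{c}) + B(\bm{c})H_1(\bm{c})$. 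In particular, whenever $r$ divides both $H_0(\bm{c})$ and $H_1(\bm{c})$, it also divides $dR(\bm{c}')$.

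In the small-modulus regime $\hat T \geq \hat Y$, I would use the naive divisor bound. Lang--Weil applied to the codimension-$\geq 2$ variety $V(H_0, H_1) \subset \mathbb{A}^n_K$ gives $N_r \defeq \#\{\bm{c} \bmod r : r \mid H_0(\bm{c}), H_1(\bm{c})\} \ll_\ve |r|^{n-2+\ve}$ for square-free $r$, with finitely many ``bad'' primes absorbed via the Chinese remainder theorem. Since each residue class has $\ll (\hat T/|r|)^n$ lifts in the box and there are $\leq \hat Y$ square-free $r\in \Omon$ with $|r| = \hat Y$, a union bound yields $\#S \ll_\ve \hat T^n \hat Y^{\ve - 1}$.

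In the large-modulus regime $\hat T < \hat Y$, I would exploit the resultant directly by splitting $\bm{c}'$ according to whether $R(\bm{c}') = 0$. The vanishing case is controlled by Lemma~\ref{LEM:affine-dimension-growth} applied to the hypersurface $\{R = 0\} \subset \mathbb{A}^{n-1}_K$, contributing $\ll \hat T^{n-1}$ after accounting for the $\leq \hat T + 1$ choices of $c_1$. In the non-vanishing case, any valid $r$ must divide the nonzero $dR(\bm{c}')$ of size $\ll \hat T^{\deg R}$, so there are at most $\tau(dR(\bm{c}')) \ll_\ve \hat T^{\ve}$ candidate moduli $r$. For each such $r$, the square-free bound \eqref{INEQ:univariate-zero-density-mod-square-free} applied to $H_0(c_1, \bm{c}') \in \OK[c_1]$ (whose leading coefficient $h_0$ is a fixed nonzero constant, so $|\gcd(r, h_0)| = O(1)$) yields $\ll_\ve \hat Y^\ve$ valid $c_1$ with $|c_1| \leq \hat T$, since lifts in a box strictly smaller than the modulus are essentially unique. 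Summing over $\bm{c}'$ with $|\bm{c}'| \leq \hat T$ gives $\ll_\ve \hat T^{n-1+\ve}\hat Y^\ve$, which in this regime is bounded by $\hat Y^{O(\ve)}\hat T^{n-1}$. Combining the two regimes and renaming $\ve$ yields $\#S \ll_\ve \hat Y^\ve(\hat T^n/\hat Y + \hat T^{n-1})$.

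The hard part will be the preliminary setup: we need the Bezout-type relation $dR \in (H_0, H_1)$ to hold integrally over $\OK$, \emph{and} the leading $c_1$-coefficient $h_0$ to be a nonzero \emph{constant} (so that $|\gcd(r, h_0)|$ is uniformly $O(1)$ independently of $r$). Both of these are essential for converting the geometric codimension-$2$ fact about $V(H_0, H_1)$ into a sharp sieve bound, and both follow from Lemma~\ref{LEM:OK-linear-change-to-near-monic} together with the classical theory of the Sylvester resultant; the remainder of the argument is elementary bookkeeping with $\ve$'s.
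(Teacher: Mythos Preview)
Your proof is correct and follows essentially the same approach as the paper: split into the two regimes $\hat Y\le \hat T$ (Lang--Weil plus the Chinese remainder theorem) and $\hat Y>\hat T$ (change variables via Lemma~\ref{LEM:OK-linear-change-to-near-monic}, form the $c_1$-resultant, split on whether it vanishes, and count $c_1$ via \eqref{INEQ:univariate-zero-density-mod-square-free}). One small simplification: the auxiliary constant $d\in\OK$ is unnecessary, since the Sylvester resultant identity already gives $R=AH_0+BH_1$ with $A,B\in\OK[\bm{c}]$ integrally (this is precisely the point the paper exploits), so $r\mid\gcd(H_0(\bm{c}),H_1(\bm{c}))$ implies $r\mid R(\bm{c}')$ directly.
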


\begin{proof}
\emph{Case 1: $Y\le T$.}
Applying the  Lang--Weil estimate for the quasi-projective variety $H_0=H_1=0$, the count on the left is seen to be 
$$
\ll \sum_{\substack{r\in \Omon\\ \abs{r}=\hat Y}} \mu_K^2(r) 
|r|^{n-2+\ve} \left(
\frac{\hat T}{\abs{r}}\right)^n
\ll_\ve \frac{\hat T^n}{\hat Y^{1-\ve}},
$$
as desired.

\emph{Case 2: $Y>T$.}
By Lemma \ref{LEM:OK-linear-change-to-near-monic},
we can make an $\OK$-linear change of variables in order to assume that $\deg_{c_1}(H_0) = \deg(H_0)$
and $\deg_{c_1}(H_1) = \deg(H_1)$.

Let $H_2\in \OK[c_2,\dots,c_n]$ be the resultant of $H_0$ and $H_1$ with respect to $c_1$. Then any square-free $r\in \Omon$ such that 
$r\mid \gcd(H_0(\bm{c}), H_1(\bm{c}))$ must also satisfy\footnote{In fact this implication is valid even without assuming $r$ square-free;
the key is that $H_2$ lies in the ideal $(H_0,H_1)$ of $\OK[\bm{c}]$.
However, square-free is important when using Lang--Weil.} 
$$
r\mid \gcd\left(H_0(\bm{c}), H_2(c_2,\dots,c_n)\right).
$$
If $c_2,\dots,c_n\in \OK$ are specified, 
with $|c_2|,\dots,|c_n|\ll \hat T$, 
then either $H_2(c_2,\dots,c_n) = 0$ or else there are only $O_\ve(\hat T^\ve)$ choices for $r\mid H_2(c_2,\dots,c_n)$, by the divisor function bound.
Note that if $r$ is also specified, then the number of choices for $c_1\in 
\OK$ with $|c_1|\ll \hat T$ and 
$r\mid H_0(\bm{c})$, is 
$$
\ll N(g;r) \left(1+\frac{\hat T}{|r|}\right),
$$
in the notation of \eqref{eq:Ngr},
where $g(u)=H_0(u,c_2,\dots,c_n)$. It follows from \eqref{INEQ:univariate-zero-density-mod-square-free}
that $N(g;r)\ll_\ve |r|^{\ve}$, uniformly over $c_2,\dots,c_n\in \OK$.

Putting everything together, and recalling that $Y>T$, 
we obtain the overall contribution
\begin{equation*}
\ll_\ve \#\set{\bm{c}\in \OK^n:\, \norm{\bm{c}}\le \hat T,\; H_2=0}
+ \hat T^{n-1}\hat Y^{\ve}.
\end{equation*}
Since $H_0$ and $H_1$ are coprime over $K$, it follows that $H_2$ is a non-zero polynomial, whence the first term 
is $O(\hat T^{n-1})$ by Lemma \ref{LEM:affine-dimension-growth}.
\end{proof}

\subsection{Treatment of \texorpdfstring{$\Sigma^{(1)}$}{Sigma(1)}}

The main task of this section is to prove the following result. 

\begin{proposition}\label{prop:B3-p1}
Let 
 $0\le R\le 3Z$ and let $B\geq 0$. 
Then 
$$
\Sigma^{(1)} \ll_\ve\hat B^2 \hat Z^{n+\ve} 
\left(
\frac{1}{\hat Z^{\eta_0}} +
\frac{1}{\hat R^{1/9\cdot 2^{n}}} \right),
$$
where
$\eta_0=\min\{1/p,1/3\cdot 2^{n+1}\}$ and  $p=\Char(\FF_q)$.
\end{proposition}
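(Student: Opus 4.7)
The pointwise inequality $|r|^{-1/2}|S_r^\natural(\bm{c})| \le T_r(\bm{c}) < \hat B$, valid in the support of $\Sigma^{(1)}$, immediately yields
$$
\Sigma^{(1)} \le \hat B^2 M, \qquad M := \sum_{\substack{\bm{c}\in \mathcal{S}_2\\ \norm{\bm{c}}\le \hat Z}} \#\{r\in \RcB: |r|=\hat R\}^2,
$$
so the task reduces to bounding $M \ll_\ve \hat Z^{n+\ve}(\hat Z^{-\eta_0}+\hat R^{-1/(9\cdot 2^n)})$. Expanding the square and swapping the order of summation,
$$
M = \sum_{\substack{r_1, r_2\in \Omon\\ |r_1|=|r_2|=\hat R}} N\bigl(\rad(r_1 r_2)\bigr), \quad N(s) := \#\{\bm{c}\in \mathcal{S}_2: \norm{\bm{c}}\le \hat Z,\; s\mid F^\ast(\bm{c})\}.
$$

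My plan is a dichotomy with a threshold $\hat Y$, to be optimised: split $M = M_{\le}+M_{>}$ according to whether $|\rad(r_1 r_2)|\le \hat Y$ or $>\hat Y$. For $M_{>}$, Lang--Weil applied to $F^\ast\equiv 0\bmod s$ via the Chinese remainder theorem gives the baseline bound $N(s) \ll_\ve \hat Z^n|s|^{-1+\ve}$ for $|s|\le \hat Z$; the range $|s|>\hat Z^{\deg F^\ast}$ forces $F^\ast(\bm{c})=0$, controlled by affine dimension growth (Lemma~\ref{LEM:affine-dimension-growth}); the intermediate range is handled analogously with a boundary term. To strengthen the $|s|^{-1+\ve}$ saving into a genuine $\hat Y^{-\eta_0}$ saving, I would invoke the Ekedahl sieve Theorem~\ref{THM:weird-eke} with $H_0=F^\ast$ and $H_1$ a suitable partial derivative $\partial_{c_i} F^\ast$ coprime to $F^\ast$ in $K[\bm{c}]$; coprimality in characteristic $p>3$ follows by direct inspection of the explicit factorisation~\eqref{EQN:6-variable-Fermat-dual-form}. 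The $1/p$ alternative in $\eta_0$ reflects the fact that, in small positive characteristic, the coprimality of $F^\ast$ with $\partial_{c_i}F^\ast$ may only be achievable after sidestepping inseparable directions, introducing a loss of size $p$. Combined with the divisor-type bound $\sigma_{\hat R}(s) \ll_\ve (\hat R|s|)^\ve$ of Lemma~\ref{LEM:count-B-R_c-infty-divisors} for counting pairs $(r_1,r_2)$ with fixed $\rad(r_1 r_2)$, one obtains a bound of shape $M_{>} \ll_\ve \hat Z^{n+\ve}\hat Y^{-\eta_0}$.

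For $M_{\le}$, the constraint $|\rad(r_1 r_2)|\le \hat Y$ forces each $r_i$ to be ``highly powerful'' in the sense that almost all of its size comes from high prime-power factors rather than distinct primes. A combinatorial count of $r\in \Omon$ with $|r|=\hat R$ and $|\rad(r)|\le \hat Y$, combined with the trivial bound $N(s)\le \hat Z^n$, yields an estimate of shape $M_{\le}\ll_\ve \hat Z^n\,\hat Y^{O(1)}\,\hat R^{-\alpha}$ for an explicit $\alpha>0$. Balancing the two halves with the choice $\hat Y\sim \min(\hat Z,\hat R^2)$ produces the dichotomy $\hat Z^{-\eta_0}$ (when $\hat R^2\ge \hat Z$) or $\hat R^{-1/(9\cdot 2^n)}$ (when $\hat R^2<\hat Z$), as claimed.

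The main obstacle is verifying the Ekedahl sieve's coprimality hypothesis for $F^\ast$ in characteristic $p > 3$, specifically choosing $H_1$ to avoid inseparable pitfalls that would obstruct the saving and force the $1/p$ alternative in $\eta_0$. A secondary obstacle is the combinatorial bookkeeping required to make the powerful-$r$ count sharp enough to extract the explicit exponent $1/(9\cdot 2^n)$ after the $\hat Y$-optimisation, which depends delicately on the interplay between $\deg F^\ast = 3\cdot 2^{n-2}$ and the allowed range of radicals.
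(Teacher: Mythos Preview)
Your very first step is too crude and the approach cannot succeed. By bounding $|r|^{-1/2}|S_r^\natural(\bm{c})|\le T_r(\bm{c})<\hat B$ pointwise, you reduce to the purely combinatorial quantity
\[
M=\sum_{\substack{\bm{c}\in\mathcal{S}_2\\\norm{\bm{c}}\le\hat Z}}\#\{r\in\RcB:|r|=\hat R\}^2,
\]
and then claim $M\ll_\ve\hat Z^{n+\ve}(\hat Z^{-\eta_0}+\hat R^{-1/(9\cdot 2^n)})$, which is $o(\hat Z^n)$ once $R\ge 1$ and $Z\to\infty$. But this is false: a positive proportion of $\bm{c}\in\OK^n$ have $F^\ast(\bm{c})$ divisible by some degree-$1$ prime $\varpi$ (by Lang--Weil modulo each such $\varpi$ and the Chinese remainder theorem), and for any such $\bm{c}$ the element $r=\varpi^R$ lies in $\RcB$ with $|r|=\hat R$. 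Hence $M\gg\hat Z^n$ whenever $R\ge 1$, contradicting the bound you need. The information contained in the exponential sum is essential and cannot be discarded at the outset.

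The paper's argument retains this information by factoring $r=r_1s_1s_2$ into a square-free piece $r_1$ and two square-full pieces $s_1,s_2$ distinguished by the size of $v_\varpi(F^\ast(\bm{c}))$. For the dominant square-free piece one uses Hooley's bound $S_\varpi^\natural(\bm{c})\ll|\gcd(\varpi,\nabla F^\ast(\bm{c}))|^{1/2}$, and only \emph{then} applies the Ekedahl sieve to the pair $(F^\ast,\partial F^\ast/\partial c_1)$, exploiting that $\gcd(r_1,\nabla F^\ast(\bm{c}))$ being large is rare. The piece $s_1$ is small by Lemma~\ref{lem:VW9.1}, while $s_2$ forces a large square-full divisor of $F^\ast(\bm{c})$; counting such $\bm{c}$ is where the exponent $1/p$ actually enters, via Poonen's characteristic-$p$ change of variables $c_i=\sum_j t^j y_{i,j}^p$ (to detect $\varpi^2\mid F^\ast(\bm{c})$ for large $\varpi$), not via any inseparability of $\partial F^\ast/\partial c_i$ as you speculate.
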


Recall the definition \eqref{eq:SIG1} of $\Sigma^{(1)}$.
Any $r\in \RcB$ admits a factorisation $r=r_1s_1s_2$, where $r_1,s_1,s_2$ 
are relatively coprime such that 
$r_1$
is square-free and  $s_1,s_2$ are both square-full, and satisfy
\begin{align*}
&\varpi^e\|s_1 \Rightarrow \max\{2,e-1\}>v_\varpi(F^*(\cc)),\\
&\varpi^e\|s_2 \Rightarrow \max\{2,e-1\}\leq v_\varpi(F^*(\cc)).
\end{align*}

It follows from parts (1)--(2)
 of  Lemma \ref{lem:VW9.1} that $S_{s_1}^\natural(\cc)\ll_\ve |s_1|^\ve$ for any $\ve>0$.
 On the other hand, from the definition of $s_2$ we have
 $$
 \prod_{\varpi^e\| s_2} \varpi^{\max\{2,e-1\}}\mid F^*(\cc).
 $$
 Moreover, it is clear that the left hand side is a square-full element of $\OK$ with absolute value
 $
 \prod_{\varpi^e\| s_2} |\varpi|^{\max\{2,e-1\}}\geq |s_2|^{2/3}.
 $
We note that    $\max\{|r_1|,|s_1|,|s_2|\}\geq |r|^{1/3}=\hat R^{1/3}.$
Let us temporarily write 
$$
U(\cc)=
\sum_{
\substack{
r=r_1s_1s_2\in \RcB,~\abs{r}=\hat R\\ 
T_r(\cc)<\hat B
}} |r|^{-1/2} |S^\natural_r(\bm{c})|,
$$
so that 
$$\Sigma^{(1)}=
\sum_{
\substack{
\bm{c}\in \mathcal{S}_2\\ \norm{\bm{c}}\le \hat Z}} |U(\cc)|^2.
$$
We will argue differently according to which of 
$|r_1|,|s_1|,|s_2|$ is the maximum in the factorisation $r=r_1s_1s_2$, exploiting the multiplicativity of the sum $S^\natural_r(\bm{c})$.

The contribution to $U(\cc)$ from the case 
$\max\{|r_1|,|s_1|,|s_2|\}=|r_1|$ is 
\begin{align*}
\leq \hat B \sum_{
\substack{
s_1,s_2\in \RcB \\ 
|s_1s_2|\leq \hat R}
} \sum_{\substack{r_1\in \RcB\\ 
\hat R^{1/3}\leq |r_1|\leq \hat R
}} 
|r_1|^{-1/2} |S^\natural_{r_1}(\bm{c})|,
\end{align*}
since 
$
|s_1s_2|^{-1/2} |S^\natural_{s_1s_2}(\bm{c})|\leq T_{s_1s_2}(\cc)\leq T_{r}(\cc)<\hat B,
$
by assumption.
It follows from Lemma~\ref{LEM:count-B-R_c-infty-divisors} that
the number of available $s_1,s_2$ is $O_\ve(\hat Z^\ve)$, on recalling that $F^*(\cc)\neq 0$ and $R\leq 3Z$. Hence the overall contribution to $\Sigma^{(1)}$ from this case is 
$$
\ll_\ve \hat B^2\hat Z^\ve \Sigma_0^{(1)},
$$
where
$$
\Sigma_0^{(1)}=
\sum_{
\substack{
\bm{c}\in \mathcal{S}_2\\ \norm{\bm{c}}\le \hat Z}} 
\bigg|
\sum_{
\substack{
r\in \RcB\\
\hat R^{1/3}\leq 
~\abs{r}\leq \hat R}} \mu^2(r) |r|^{-1/2} |S^\natural_r(\bm{c})|
\bigg|^2.
$$

Next, 
the contribution to $U(\cc)$ from the case 
$\max\{|r_1|,|s_1|,|s_2|\}=|s_1|$ is 
\begin{align*}
&\leq \hat B \sum_{
\substack{
r_1,s_2\in \RcB \\ 
|r_1s_2|\leq \hat R}
} \sum_{\substack{s_1\in \RcB\\ 
\hat R^{1/3}\leq |s_1|\leq \hat R
}} 
|s_1|^{-1/2} |S^\natural_{s_1}(\bm{c})|
\ll_\ve \frac{\hat B  \hat Z^\ve }{\hat R^{1/6}}.
\end{align*}
Finally, 
the contribution to $U(\cc)$ from the case 
$\max\{|r_1|,|s_1|,|s_2|\}=|s_2|$ is 
\begin{align*}
&\leq \hat B \sum_{
\substack{
r_1,s_1\in \RcB \\ 
|r_1s_1|\leq \hat R}
} \sum_{\substack{s_2\in \RcB\\ 
\hat R^{1/3}\leq |s_2|\leq \hat R\\
S^\natural_{s_2}(\bm{c})\neq 0
}} 1
\ll_\ve \hat B  \hat Z^\ve \mathbf{1}_\cc,
\end{align*}
where
$$
\mathbf{1}_\cc = 
\begin{cases}
1 & \text{ if there exists square-full $d\in \OK$ s.t.\ $d\mid F^*(\cc)$ and $|d|\geq \hat R^{2/9}$,}\\
0 &\text{ otherwise.}
\end{cases}
$$
Putting everything together, it now follows that 
$$
\Sigma^{(1)}\ll_\ve \hat B^2\hat Z^\ve \left(
\Sigma_0^{(1)}
+ \frac{\hat Z^n}{\hat R^{1/3}}
+\max_{M\geq \frac{2}{9}R} N(M,Z)\right),
$$
where
$$
N(M,Z)=\#\left\{
\bm{c}\in \OK^n: 
\begin{array}{l}
\norm{\bm{c}}\le \hat Z, ~c_1\dots c_n F^*(\cc)\neq 0\\
\text{$\exists$ square-full $d\in \OK$ s.t.\ $d\mid F^*(\cc)$ and $|d|= \hat M$}
\end{array}
\right\}
$$
The statement of Proposition \ref{prop:B3-p1} now follows from the conjunction of the following pair of results, both of which rely on the version of the geometric sieve for function fields from  \S~\ref{sec:EKE}.

\begin{lemma}\label{lem:S0^1}
We have 
$$
\Sigma_0^{(1)}\ll_\ve \frac{\hat Z^{n+\ve}}{\min\{\hat R^{1/6}, \hat Z\}}.
$$
\end{lemma}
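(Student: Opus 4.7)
The plan is to combine the square-root-cancellation bound of Lemma \ref{lem:prime_power}(1) with a geometric sieve. For square-free $r\in\RcB$, multiplicativity and Lemma \ref{lem:prime_power}(1) give
\[
|r|^{-1/2}|S_r^\natural(\cc)| \ll A^{\omega(r)} |r'|^{-1/2},
\]
where $r=dr'$ with $d\defeq\gcd(r,\nabla F^*(\cc))$ (so $\gcd(r',\nabla F^*(\cc))=1$). Since $r\in\RcB$ is square-free, it divides $F^*(\cc)$, whence $d\mid\gcd(F^*(\cc),\nabla F^*(\cc))$. The harmless factor $A^{\omega(r)}\ll_\ve \hat Z^\ve$ (using $R\le 3Z$ and $|F^*(\cc)|\ll\hat Z^{O(1)}$) will be freely absorbed below.

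I split the inner sum $U_1(\cc)$ of $\Sigma_0^{(1)}$ according to the size of $|r'|$. When $|r'|\ge\hat R^{1/6}$, each term is $\ll \hat Z^\ve\hat R^{-1/12}$, and the number of admissible $r$ is $\ll_\ve\hat Z^\ve$ by Lemma \ref{LEM:count-B-R_c-infty-divisors}, so this subsum contributes $\ll_\ve\hat Z^\ve\hat R^{-1/12}$. When $|r'|<\hat R^{1/6}$, the hypothesis $|r|\ge\hat R^{1/3}$ forces $|d|>\hat R^{1/6}$, so this range is non-empty only when $\cc$ admits a square-free divisor $d$ of $\gcd(F^*(\cc),\nabla F^*(\cc))$ with $|d|>\hat R^{1/6}$; for such $\cc$ the subsum is again $\ll_\ve\hat Z^\ve$. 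Writing $\mcal{T}$ for the set of such ``bad'' $\cc$, we thus obtain $U_1(\cc)\ll_\ve\hat Z^\ve(\hat R^{-1/12}+\bm{1}_{\cc\in\mcal{T}})$.

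Squaring and summing over $\cc$ with $|\cc|\le\hat Z$ yields
\[
\Sigma_0^{(1)} \ll_\ve \hat Z^{n+\ve}\hat R^{-1/6} + \hat Z^\ve\cdot\#\{\cc\in\mcal{T}:|\cc|\le\hat Z\}.
\]
To control the second term I dyadically decompose over $|d|=\hat y$ for $y>R/6$ and apply the Ekedahl sieve (Theorem \ref{THM:weird-eke}) with $H_0=F^*$ and $H_1=\partial F^*/\partial c_j$ for an appropriate $j$; each dyadic piece is bounded by $\ll_\ve\hat y^\ve(\hat Z^n/\hat y+\hat Z^{n-1})$, and summing a convergent geometric series (in the first term) together with the trivial bound $\hat y\le\hat Z^{O(1)}$ (in the second) gives $\#\{\cc\in\mcal{T}:|\cc|\le\hat Z\}\ll_\ve \hat Z^{n+\ve}/\hat R^{1/6}+\hat Z^{n-1+\ve}$. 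Combining the two cases and using $R\le 3Z$ produces the claimed bound $\Sigma_0^{(1)}\ll_\ve\hat Z^{n+\ve}/\min\{\hat R^{1/6},\hat Z\}$.

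The main technical point is verifying the coprimality hypothesis of Theorem \ref{THM:weird-eke}. Since $\cha(\FF_q)>3$ is coprime to $\deg F^*=3\cdot 2^{n-2}$, Euler's identity $\sum_j c_j\,\partial F^*/\partial c_j=(\deg F^*)F^*$ prevents all partials of $F^*$ from vanishing simultaneously; if $F^*$ is irreducible in $K[\cc]$ one simply chooses $j$ with $\partial F^*/\partial c_j\ne 0$, and otherwise one applies Theorem \ref{THM:weird-eke} separately to each irreducible factor of $F^*$ paired with a partial that does not vanish on it, then sums.
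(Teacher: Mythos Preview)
Your proof is correct and follows essentially the same route as the paper: both use Lemma~\ref{lem:prime_power}(1) to bound $|r|^{-1/2}|S_r^\natural(\cc)|$ by (a divisor-bounded constant times) $|\gcd(r,\nabla F^*(\cc))|^{1/2}/|r|^{1/2}$, split according to whether this gcd is large or small, and then control the ``large-gcd'' vectors $\cc$ via the Ekedahl sieve (Theorem~\ref{THM:weird-eke}) with $H_0=F^*$ and $H_1=\partial F^*/\partial c_j$. The only cosmetic differences are that the paper introduces an auxiliary parameter $Y$ (optimised at the end to $Y=R/6$) whereas you hard-wire the threshold $\hat R^{1/6}$ from the start, and that your hedging about irreducibility of $F^*$ is unnecessary since the dual form of a smooth hypersurface is absolutely irreducible (as the paper notes).
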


\begin{lemma}\label{lem:NQR}
We have 
$$
N(M,Z)\ll_\ve \hat Z^{n+\ve}\left(
\frac{1}{\hat Z^{1/p}} +
\frac{1}{\min\{\hat Z,\hat M\}^{1/3\cdot 2^{n+1}}} \right),
$$
where $p$ is the characteristic of $\FF_q$.
\end{lemma}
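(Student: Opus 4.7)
The first step is a combinatorial reduction. Every squarefull $d\in \Omon$ with $|d|=\hat M$ admits a unique factorisation $d=a^{2}b^{3}$ with $b\in \Omon$ squarefree and $|a|^{2}|b|^{3}=\hat M$; then $(ab)^{2}\mid d$, and an elementary AM--GM calculation gives $|ab|\ge \hat M^{1/3}$. Hence every $\cc$ counted by $N(M,Z)$ admits some $e\in \Omon$ with $|e|\ge \hat M^{1/3}$ and $e^{2}\mid F^{\ast}(\cc)$. The number of pairs $(a,b)$ giving rise to the same $e$ is controlled by the divisor function, which costs an $\hat M^{\eps}$ factor. Decomposing dyadically in $|e|$, the problem reduces to bounding, uniformly for $Y\ge M/3$,
\begin{equation*}
\tilde N(Y,Z)\defeq \#\{\cc\in \OK^{n}:\norm{\cc}\le \hat Z,\ c_{1}\cdots c_{n}F^{\ast}(\cc)\ne 0,\ \exists\,e\in \Omon,\ |e|=\hat Y,\ e^{2}\mid F^{\ast}(\cc)\}.
\end{equation*}

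By Lemma~\ref{LEM:OK-linear-change-to-near-monic} applied to $S=\{F^{\ast}\}$, I make an $\OK$-linear change of variables after which $\deg_{c_{1}} F^{\ast}=\deg F^{\ast}=d\defeq 3\cdot 2^{n-2}$; since $\cha(\FF_{q})>3$ and $F^{\ast}$ is visibly a polynomial in $c_{1}^{3},\dots,c_{n}^{3}$ via the pairing of factors in \eqref{EQN:6-variable-Fermat-dual-form}, it remains separable in $c_{1}$ after the change. The central step is a dichotomy on the mode of failure of squarefreeness of $F^{\ast}(\cc)$, relative to a threshold $\hat Y_{0}$ to be optimised. In the \emph{smooth sub-case}, some prime $\varpi$ with $|\varpi|\ge \hat Y_{0}$ satisfies $\varpi^{2}\mid F^{\ast}(\cc)$ and $\varpi\nmid \partial F^{\ast}/\partial c_{1}(\cc)$; Hensel's lemma in the $c_{1}$-direction then forces $c_{1}$ into a short residue class modulo $\varpi^{2}$, yielding $\#\{\cc:\varpi^{2}\mid F^{\ast}(\cc)\}\ll \hat Z^{n}/|\varpi|^{2}$, and summation over $\varpi$ using the function-field prime number theorem contributes $\ll_{\eps}\hat Z^{n+\eps}/\hat Y_{0}$. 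In the \emph{singular sub-case}, every such $\varpi$ also divides $\partial F^{\ast}/\partial c_{1}(\cc)$, so a squarefree divisor of $\gcd(F^{\ast},\partial F^{\ast}/\partial c_{1})(\cc)$ of size at least $\hat Y_{0}$ must be exhibited; Theorem~\ref{THM:weird-eke} with the coprime pair $(F^{\ast},\partial F^{\ast}/\partial c_{1})$ (coprime over $K$ by separability) then supplies the Ekedahl-style bound. Choosing $\hat Y_{0}\asymp \min(\hat Z,\hat M)^{1/(3\cdot 2^{n+1})}$ balances the two regimes and recovers the main term of the lemma.

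The $\hat Z^{-1/p}$ contribution arises from a genuinely positive-characteristic defect: when the $c_{1}$-direction Hensel step collapses because the relevant discriminant is a $p$-th power, one falls back on bounding the number of $c_{1}\in \OK$ with $|c_{1}|\le \hat Z$ and $c_{1}^{p}$ lying in a prescribed residue class, which is a set of size $\hat Z^{1/p}$ rather than $1$. Combining this with Lemma~\ref{LEM:fixed-poly-cube-free-point-count-bound} over $\FF_{q}$ and iterating the coordinate change to strip off the $p$-th-power structure produces exactly the claimed $\hat Z^{n-1+1/p}=\hat Z^{n}\cdot \hat Z^{-(1-1/p)}$ loss, from which the $\hat Z^{-1/p}$ term emerges after the $\hat Z^{n}$ normalisation.

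\textbf{Main obstacle.} The delicate technical point is converting the condition $e^{2}\mid F^{\ast}(\cc)$ with $|e|\ge \hat M^{1/3}$ into a squarefree-divisor condition of comparable size, as required by Theorem~\ref{THM:weird-eke}: if $e$ concentrates at a single high prime power, $\rad(e)$ is tiny and the Ekedahl sieve is ineffective. One must therefore iterate the smooth/singular dichotomy and invoke \eqref{INEQ:univariate-zero-density-mod-r} at prime-power moduli $\varpi^{2k}$, using Lemma~\ref{LEM:ad-hoc-reduce-to-primitive-c's} to reduce to primitive $\cc$ when $\varpi\mid \cc$. The balance between these losses and the Ekedahl saving is what ultimately fixes the awkward exponent $1/(3\cdot 2^{n+1})$.
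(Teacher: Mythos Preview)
Your plan has genuine gaps, and the paper proceeds differently. The smooth/singular dichotomy is incomplete: it splits on whether some prime $\varpi$ with $|\varpi|\ge\hat Y_0$ and $\varpi^2\mid F^\ast(\cc)$ avoids $\partial_{c_1}F^\ast(\cc)$, but neither branch covers the case where \emph{no} such large prime exists. If $e$ is $\hat Y_0$-smooth with $|e|\ge\hat M^{1/3}$, your singular sub-case vacuously holds yet exhibits no squarefree divisor of size $\ge\hat Y_0$, so Theorem~\ref{THM:weird-eke} does not apply. You acknowledge this in your ``main obstacle'' paragraph, but the fix you sketch (iterating the dichotomy and invoking \eqref{INEQ:univariate-zero-density-mod-r} at high prime powers) is not an argument; it is unclear how the losses accumulate to give the exponent $1/(3\cdot 2^{n+1})$. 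Separately, your treatment of the characteristic-$p$ term is hand-waving: saying the Hensel step ``collapses because the relevant discriminant is a $p$-th power'' does not produce the bound $\hat Z^{n-1/p}$.

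The paper avoids both problems by a different decomposition. A combinatorial argument with a parameter $\eta$ reduces $N(M,Z)$ to two quantities: $N_1(M_1,Z)$, counting $\cc$ admitting a squarefull $d\mid F^\ast(\cc)$ with $\hat M_1<|d|\le\hat Z$, and $N_2(M_2,Z)$, counting $\cc$ admitting a prime $\varpi$ with $\varpi^2\mid F^\ast(\cc)$ and $|\varpi|>\hat M_2$. The quantity $N_1$ is bounded directly by Rankin's trick together with the local density estimates of Corollary~\ref{cor:HUA} and Lemma~\ref{LEM:fixed-poly-cube-free-point-count-bound}; this is what substitutes for your missing friable case and yields the exponent $1/2^{n+1}$. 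The quantity $N_2$ is handled by Poonen's substitution $c_i=\sum_j t^j y_{i,j}^p$, which transforms $\varpi^2\mid F^\ast(\cc)$ into $\varpi\mid\gcd(G,\partial G/\partial t)$ for an auxiliary square-free polynomial $G$; Theorem~\ref{THM:weird-eke} then applies to the coprime pair $(G,\partial G/\partial t)$, and the rescaling $\hat Z_0\asymp\hat Z^{1/p}$ inherent in the substitution is precisely what produces the $\hat Z^{-1/p}$ term.
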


\begin{proof}[Proof of Lemma \ref{lem:S0^1}]
It follows from multiplicativity and Lemma \ref{lem:prime_power}(1) that 
$$
S_r^\natural(\cc)\ll_\ve \hat Z^\ve |\gcd(r,\nabla F^*(\cc))|^{1/2}.
$$ 
We pick a parameter $0\leq Y\leq R$ and 
break the set $\mathcal{S}_2$ into two sets
\begin{align*}
\mathcal{S}_{2,1}
&=
\left\{\cc\in \mathcal{S}_2:
\text{$\exists$ square-free $d\in \OK$ s.t.\ $d\mid \nabla F^*(\cc)$ and $|d|> \hat Y$}
\right\},\\
\mathcal{S}_{2,2}
&=
\left\{\cc\in \mathcal{S}_2:
\text{any  square-free $d\in \OK$ s.t.\ $d\mid \nabla F^*(\cc)$ satisfies  $|d|\leq  \hat Y$}
\right\}.
\end{align*}

To begin with, if $\cc\in \mathcal{S}_{2,1}$ then we take 
$S_r^\natural(\cc)\ll_\ve \hat Z^\ve |r|^{1/2}$ and we note that there are $O_\ve(\hat Z^\ve)$ choices of $r$, by Lemma  \ref{LEM:count-B-R_c-infty-divisors}. On the other hand, if 
$\cc\in \mathcal{S}_{2,2}$ then we may take 
$S_r^\natural(\cc)\ll_\ve \hat Z^\ve \hat Y^{1/2}$ and we also have  $O_\ve(\hat Z^\ve)$ choices of $r$ overall.
It follows that 
$$
\Sigma_0^{(1)}\ll_\ve \hat Z^\ve\left(
\#\mathcal{S}_{2,1} +
\frac{\hat Y\hat Z^{n}}{\hat R^{1/3}}\right).
$$
Since $3\cdot 2^{n-2} F^\ast = \bm{c}\cdot \nabla{F^\ast}$, we can apply the geometric sieve, in the form of Theorem~\ref{THM:weird-eke} (with $H_0 = F^\ast$ and $H_1 = \partial{F^\ast}/\partial{c_1}$, noting that $F^\ast$ is absolutely irreducible and $\deg{H_1}<\deg{H_0}$), in order to deduce the bound 
$$
\#\mathcal{S}_{2,1} \ll_\ve \hat Z^\ve\left( \frac{\hat Z^n}{\hat Y}+\hat Z^{n-1}\right).
$$
The statement of the lemma follows on combining these and taking  $Y=\frac{1}{6}R$.
\end{proof}

\begin{proof}[Proof of Lemma \ref{lem:NQR}]
We shall reduce the problem to estimating the quantities
$$
N_1(M_1,Z)=\#\left\{
\bm{c}\in \OK^n: 
\begin{array}{l}
\norm{\bm{c}}\le \hat Z, ~c_1\dots c_n F^*(\cc)\neq 0\\
\text{$\exists$ square-full $d\in \OK$ s.t.\ $d\mid F^*(\cc)$ and $\hat M_1<|d|\leq \hat Z$}
\end{array}
\right\}
$$
and 
$$
N_2(M_2,Z)=\#\left\{
\bm{c}\in \OK^n: 
\begin{array}{l}
\norm{\bm{c}}\le \hat Z, ~c_1\dots c_n F^*(\cc)\neq 0\\
\text{$\exists$ prime  $\varpi\in \OK$ s.t.\ $\varpi^2\mid F^*(\cc)$ and $|\varpi|>\hat M_2$}
\end{array}
\right\},
$$
for suitable $M_1,M_2\geq 0$.

Let $\bm{c}\in \OK^n$ be counted by $N(M,R)$, so that 
$\norm{\bm{c}}\le \hat Z$ and $c_1\dots c_n F^*(\cc)\neq 0$, and there exists a 
square-full $d\in \OK$ such that  $d\mid F^*(\cc)$ and $|d|=\hat M$.
Any such $d$ 
 admits a factorisation 
$$
d=\varpi_1^{e_1}\dots\varpi_m^{e_m},
$$
where $e_1,\dots,e_m\geq 2$. 
Let $\eta\in (0,1]$ be a parameter, to be chosen in due course.
Suppose first that $|\varpi_i^{e_i}|\leq \hat Z^\eta$ for all $1\leq i\leq m$. Then 
we claim that 
$\cc$ is counted by $N_1(M-\eta Z ,Z)$. 
To see this, if 
$M\leq Z$ then in fact 
$\cc$ is counted by $N_1(M,Z)$.  If $M>Z$ then we consider the square-full integer 
$d_1=d/\varpi_1^{e_1}$, with absolute value $|d_1|=\hat M/|\varpi_1^{e_1}|\geq \hat M/\hat Z^\eta$. 
If $\hat M/|\varpi_1^{e_1}|\leq \hat Z$ then 
$\cc$ is counted by $N_1(M-\eta Z ,Z)$. On the other hand, if 
 $\hat M/|\varpi_1^{e_1}|> \hat Z$ then we repeat argument for 
 $d_2=d_1/\varpi_2^{e_2}$. The claim follows on continuing in this way, since the process clearly terminates with a square-full divisor in the desired interval. 
 
 We now suppose, by symmetry,  that 
$|\varpi_1^{e_1}|> \hat Z^\eta$.  
If $|\varpi_1|\leq \hat Z^{\eta/2}$ then 
we
let  $e_1'\leq e_1$  be the largest integer such that $|\varpi_1^{e_1'}|\leq \hat Z^\eta$. It then follows that  
$|\varpi_1^{e_1'+1}|> \hat Z^\eta$, whence
$$
|\varpi_1^{e_1'}|> \frac{\hat Z^\eta}{|\varpi|} >\hat Z^{\eta/2}.
$$
Moreover, we must have $e_1'\geq 2$ since 
 $|\varpi_1|\leq \hat Z^{\eta/2}$. Thus, since $\eta\leq 1$, we see that $\varpi_1^{e_1'}$ is a square-full 
 divisor of $F^*(\cc)$ whose norm lies in the 
 interval $(\hat Z^{\eta/2},\hat Z]$.
 In this case, therefore, we deduce that 
$\cc$ is counted by $N_1(\frac{\eta}{2}Z ,Z)$.
Finally, we must attend to the case that 
$|\varpi_1^{e_1}|> \hat Z^\eta$ and 
 $|\varpi_1|> \hat Z^{\eta/2}$. But then it is immediate that 
$\cc$ is counted by $N_2(\frac{\eta}{2}Z ,Z)$. 

In summary, we may conclude that 
\begin{equation}\label{eq:N=N1+N2}
N(M,Z)\leq N_1(M-\eta Z, Z)+
N_1(\tfrac{\eta}{2}Z,Z)+N_2(\tfrac{\eta}{2}Z ,Z),
\end{equation}
for any $\eta\in (0,1]$.
We claim that 
\begin{equation}\label{eq:claimN1}
N_1(M_1 ,Z)\ll_\ve
\frac{\hat Z^{n+\ve}}{\hat M_1^{1/2^{n+1}}}
\end{equation}
and 
\begin{equation}\label{eq:claimN2}
N_2(M_2 ,Z)\ll_\ve \hat Z^\ve\left(\frac{\hat Z^{n}}{\hat M_2}+\hat Z^{n-1/p}\right),
\end{equation}
for any $M_1,M_2>0$,
where $p$ is the characteristic of $\FF_q$.
Applying these in \eqref{eq:N=N1+N2}, 
we deduce that 
$$
N(M,Z)\ll_\ve \hat Z^{n+\ve}\left(
\frac{1}{\hat Z^{1/p}} +
\frac{1}{\hat Z^{\eta/2^{n+2}}} +
\frac{\hat Z^{\eta/2^{n+1}}}{\hat M^{1/2^{n+1}}} \right).
$$
The statement of Lemma \ref{lem:NQR} easily follows on taking 
$$
\eta=\frac{2}{3}\frac{\min\{M,Z\}}{Z}.
$$

\subsubsection*{Treatment of \texorpdfstring{$N_1(M_1,Z)$}{N1(M1,Z)}}

We will only make $N_1(M_1,Z)$ bigger by summing over all possible square-full $d$, 
and then breaking into residue classes modulo $d$. This gives
$$
N_1(M_1,Z)\leq \sum_{\substack{
\text{$d\in \OK$ square-full}\\ \hat M_1<|d|\leq \hat Z}}
N(F^*;d) \left(\frac{\hat Z}{|d|}\right)^n,
$$
in the notation of \eqref{eq:Ngr}.  
Let $\delta=2^{-n}.$
Since $N(F^*;d)$ is multiplicative in $d$, 
it follows from Rankin's trick that 
$$
N_1(M_1,Z)\leq \frac{\hat Z^n}{\hat M_1^{\delta/2-\ve}}
\prod_{\varpi}\left(1+\sum_{e\geq 2} 
\frac{N(F^\ast;\varpi^e)}{|\varpi^e|^{n-\delta/2+\ve}}\right)
$$
When $e=2$, we have $N(F^\ast;\varpi^2) = O(|\varpi|^{2n-2})$ by Lemma \ref{LEM:fixed-poly-cube-free-point-count-bound}.
It follows that
$$
N(F^\ast;\varpi^e)\leq |\varpi|^{(e-2)n} N(F^\ast;\varpi^2)=O(|\varpi|^{en-2}).
$$
Once combined with Corollary \ref{cor:HUA}, we conclude that 
$$
N_1(M_1,Z)\leq \frac{\hat Z^n}{\hat M_1^{\delta/2-\ve}}
\prod_{\varpi}\left(1+
O\left(
\sum_{e\geq 2} 
\frac{|\varpi|^{en-\max\{2,\delta e\}}}{|\varpi^e|^{n-\delta/2+\ve}}\right)\right),
$$
since $F^*$ 
is an absolutely irreducible form of degree $3\times 2^{n-2}\leq 2^n$.
But clearly
\begin{align*}
\sum_{e\geq 2} 
\frac{|\varpi|^{en-\max\{2,\delta e\}}}{|\varpi^e|^{n-\delta/2+\ve}}
&\leq 
\sum_{2\leq e\leq 2^{n+1}} 
\frac{|\varpi|^{e(1/2^{n+1}-\ve)}}{|\varpi|^{2}}+
\sum_{e\geq  2^{n+1}} 
\frac{1}{|\varpi|^{e(1/2^{n+1}+\ve)}}
\ll \frac{1}{|\varpi|^{1+2^{n+1}\ve}},
\end{align*}
since $\delta=2^{-n}$. Thus the product over $\varpi$ converges absolutely and the bound \eqref{eq:claimN1} follows on noting that $\hat M_1^\ve\leq \hat Z^\ve$.

\subsubsection*{Treatment of \texorpdfstring{$N_2(M_2,Z)$}{N2(M2,Z)}}

We follow the argument in Poonen \cite{poonen2003squarefree}*{\S~7}, noting that $F^*$ is a square-free polynomial defined over $\FF_q$.
Poonen defines the polynomial
$G\in \OK[(y_{i,j})_{1\leq i\leq n, 0\leq j\leq p-1}]$ via
$$
G=F^*\left(
\sum_{0\leq j\leq p-1}t^j y_{1,j}^p, \dots, 
\sum_{0\leq j\leq p-1}t^j y_{n,j}^p
\right),
$$
where $p$ is the characteristic of $\FF_q$. It follows from $n$ applications of 
\cite{poonen2003squarefree}*{Lemma 7.2} that $G$ is square-free as an element of $K[y_{i,j}]$. 
Let $Z_j=(Z-j)/p$. Then, as $y_{i,j}$ runs over elements of $\OK$ with 
$|z_{i,j}|\leq \hat Z_j$, then so the $n$-tuples
$$
\left(
\sum_{0\leq j\leq p-1}t^j y_{1,j}^p, \dots, 
\sum_{0\leq j\leq p-1}t^j y_{n,j}^p
\right)
$$
run over elements  $\mathbf{z}\in \OK^n$ with $\norm{\mathbf{z}}\leq \hat Z$, with each element appearing exactly once.   
 Hence it follows that 
$$
N_2(M_2 ,Z)
=\#\left\{
\bm{y}\in \OK^{np}: 
\begin{array}{l}
|y_{i,j}|\le \hat Z_j \text{ for $1\leq i\leq n$ and $0\leq j\leq p-1$}\\
\text{$\exists$   $\varpi\in \OK$ s.t.\ $\varpi^2\mid G(\bm{y})$ and $|\varpi|>\hat M_2$}
\end{array}
\right\}.
$$
Poonen's crucial observation is that we have 
$\varpi\mid G(\bm{y})$ and $\varpi\mid G'(\bm{y})$
whenever  $\varpi^2\mid G(\bm{y})$, where $G'=\frac{\partial G}{\partial t}$.
Since $G$ and $G'$ are coprime as elements of $K[y_{i,j}]$, 
by \cite{poonen2003squarefree}*{Lemma 7.3}, 
we can therefore apply 
Theorem \ref{THM:weird-eke} to deduce that 
\begin{align*}
N_2(M_2 ,Z)
&\ll_\ve \hat Z_0^\ve\left(\frac{\hat Z_0^{pn}}{\hat M_2}+\hat Z_0^{pn-1}
\right)
\ll_\ve \hat Z^\ve\left(\frac{\hat Z^{n}}{\hat M_2}+\hat Z^{n-1/p}
\right),
\end{align*}
since $Z_j\leq Z_0\leq Z/p$ for all $0\leq j\leq p-1$.
This therefore establishes \eqref{eq:claimN2}.
\end{proof}

\subsection{Treatment of \texorpdfstring{$\Sigma^{(2)}$}{Sigma(2)}}

We now turn to the estimation of 
$\Sigma^{(2)}$, as defined in \eqref{eq:SIG2}, with the main goal being to prove the following bound.

\begin{proposition}\label{prop:B9}
Let 
 $0\le R\le 3Z$ and let $B\geq 0$. 
Then 
$$
\Sigma^{(2)} \ll_\ve  \hat Z^{n+\ve} 
\left(
\frac{1}{\hat Z^{1/2}} +
\frac{1}{\hat B^{2/n}} \right).
$$
\end{proposition}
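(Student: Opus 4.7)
The strategy is to convert the hypothesis $T_r(\bm{c})\ge \hat B$ into a divisibility condition on $\bm{c}$ via pigeonhole, and then exploit this condition using a Markov-type inequality.

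Write $X(\bm{c})$ for the inner sum of absolute values in \eqref{eq:SIG2}. Since $\bm{c}\in \mathcal{S}_1$, Lemma~\ref{LEM:count-B-R_c-infty-divisors} together with the hypothesis $R\le 3Z$ yields $\#\{r\in \RcB:|r|=\hat R\}\ll_\ve \hat Z^\ve$. Combined with the trivial pointwise bound $|r|^{-1/2}|S_r^\natural(\bm{c})|\le T_r(\bm{c})$ (which is immediate from the definition of $T_r$), Cauchy--Schwarz applied to the inner sum gives
\[
X(\bm{c})^2\ll_\ve \hat Z^\ve\sum_{\substack{r\in\RcB,\,|r|=\hat R\\ T_r(\bm{c})\ge \hat B}}T_r(\bm{c})^2.
\]
By Lemma~\ref{lem:prime_power}(1),(2), the local factor $T_{\varpi^e}(\bm{c})\ll 1$ for $e\in\{1,2\}$, so that $T_r$ is controlled by the cube-full part $v\coloneqq \cub(r)$; more precisely, Lemma~\ref{lem:lem8.2} yields $T_r(\bm{c})\le A^{\omega(r)}\prod_{i=1}^n |\gcd(v,\sq(c_i))|^{1/4}$. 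The hypothesis $T_r\ge \hat B$ therefore forces $\prod_{i=1}^n |\gcd(v,\sq(c_i))|\gg_\ve \hat B^4\hat R^{-\ve}$, and by pigeonhole on the $n$-fold product there is an index $j=j(\bm{c},r)$ with $|\sq(c_j)|\ge |\gcd(v,\sq(c_j))|\gg_\ve \hat B^{4/n}\hat R^{-\ve}$. Writing $\hat M\coloneqq \hat B^{4/n}\hat R^{-\ve}$ and applying a union bound over $j\in\{1,\dots,n\}$, the task reduces to estimating $\sum_{\bm{c}:\,\norm{\bm{c}}\le\hat Z,\,|\sq(c_j)|\ge \hat M}\sum_{r\in\RcB,\,|r|=\hat R}T_r(\bm{c})^2$ for each fixed $j$.

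After swapping the orders of summation, the inner $\bm{c}$-sum for fixed $r$ factors multiplicatively in the coordinates $c_i$ via the Lemma~\ref{lem:lem8.2} bound on $T_r^2$. Each single-variable sum $\sum_{c_i:\,|c_i|\le \hat Z}|\gcd(v,\sq(c_i))|^{1/2}$ is $\ll_\ve \hat Z |v|^\ve$ by a standard divisor argument (partitioning by the common square-full divisor). The extra constraint $|\sq(c_j)|\ge \hat M$ is encoded on the $c_j$-sum via Markov's inequality $\mathbf{1}_{|\sq(c_j)|\ge \hat M}\le (|\sq(c_j)|/\hat M)^s$ with $s=\tfrac{1}{2n}$, producing the saving $\hat M^{-1/(2n)}\ll_\ve \hat B^{-2/n}\hat R^{O(\ve)}$. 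Summing back over $r$, the constraint $\rad(r)\mid F^\ast(\bm{c})$ is handled via a Lang--Weil estimate (to save a factor $|\rad(r)|^{-1+\ve}$) together with Lemma~\ref{LEM:count-B-R_c-infty-divisors}, producing the main term $\hat Z^{n+\ve}\hat B^{-2/n}$.

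The complementary term $\hat Z^{n-1/2+\ve}$ governs the extreme regime $\hat M\gg \hat Z^{1-\ve}$, equivalent to $\hat B\ge \hat Z^{n/4}$. There the set $\{\bm{c}\in \mathcal{S}_2:|\sq(c_j)|\ge \hat M\}$ has cardinality only $O_\ve(\hat Z^{n-1/2+\ve})$, since the number of $c_j\in \OK$ of absolute value $\le\hat Z$ with $|\sq(c_j)|\ge \hat M$ is $\ll_\ve \hat Z\hat M^{-1/2+\ve}\ll \hat Z^{1/2+\ve}$; and $X(\bm{c})^2$ can be controlled on this set by a uniform bound derived from Lemma~\ref{lem:lem8.2'} combined with the $L^1$-type estimate of Proposition~\ref{Prop: SquarefullAverageOld}. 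The main technical obstacle will be the careful balancing of the Markov exponent $s$ against the divisor sums, and the correct treatment of the interplay between the divisibility $\rad(r)\mid F^\ast(\bm{c})$ and the square-full constraint on $c_j$: because $\rad(v)\subseteq \rad(r)$, the two conditions are correlated, and a naive decoupling would lose a factor of $\hat R$ that must be recovered through the Lang--Weil step.
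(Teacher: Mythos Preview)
Your outline has a genuine gap at the step ``the inner $\bm{c}$-sum for fixed $r$ factors multiplicatively in the coordinates $c_i$.'' After swapping, the inner sum runs over $\bm{c}$ with $\rad(r)\mid F^\ast(\bm{c})$, which is a non-separable constraint; if you drop it in order to factor, the outer sum over $r$ with $\abs{r}=\hat R$ now has $\asymp\hat R$ terms, and your proposed Lang--Weil step only recovers a factor $\abs{\rad(r)}^{-1+\ve}$, which is far smaller than $\abs{r}^{-1}$ for the cube-full $r$ that dominate (exactly the $r$ for which $T_r$ is large). You flag this as ``the main technical obstacle'' but give no mechanism to resolve it, and I do not see one along these lines. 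Two secondary issues: your Markov exponent is miscalculated, since with $s=\tfrac{1}{2n}$ one gets $\hat M^{-1/(2n)}=\hat B^{-2/n^2}$, not $\hat B^{-2/n}$ (you would need $s$ close to $\tfrac12$); and in the ``complementary regime'' you appeal to a uniform pointwise bound on $X(\bm{c})^2$ via Lemma~\ref{lem:lem8.2'} and Proposition~\ref{Prop: SquarefullAverageOld}, but the former only gives $X(\bm{c})^2\ll_\ve \hat Z^\ve\abs{\cub(r)}^{n/3}$, which can be as large as $\hat R^{n/3}$, and the latter is an $L^1$-estimate in $\bm{c}$, not a pointwise one.

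The paper's proof avoids these difficulties by \emph{not} swapping to sum over $r$ first. Instead it keeps $\bm{c}$ outside, uses $\#\{r\in\RcB:\abs{r}=\hat R\}\ll_\ve\hat Z^\ve$ to collapse the $r$-sum, and retains the concrete divisor $d=\gcd(\cub(r),\sq(c_n))$, which is square-full with $\abs{d}\gg_\ve\hat B^{4/n}\hat R^{-\ve}$. The crucial point (which your reduction to $\abs{\sq(c_j)}\ge\hat M$ discards) is that $d$ simultaneously divides $c_n$ and $F^\ast(\bm{c})^2$, the latter by Lemma~\ref{lem:VW9.1}(2). One then sums over $d$ and splits according to whether $G(\bm{c}')\coloneqq F^\ast(c_1,\dots,c_{n-1},0)^2$ vanishes: when $G(\bm{c}')\ne 0$ there are only $O_\ve(\hat Z^\ve)$ admissible $d$ for each $\bm{c}'$, yielding the $\hat B^{-2/n}$ term; when $G(\bm{c}')=0$ there are only $O(1)$ choices for $c_{n-1}$ given $c_1,\dots,c_{n-2}$, yielding the $\hat Z^{-1/2}$ term. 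Neither factorisation over coordinates nor a Markov inequality is needed.
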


\begin{proof}
Let $r$ and $\cc$ be counted by $\Sigma^{(2)}$. 
Then it follows from 
Lemma \ref{lem:lem8.2} that
\begin{align*}
\hat B\leq T_r(\cc)
\le \max_{s\mid r}
\frac{|S_s^\natural(\cc)|}{|s|^{1/2}} 
&\ll_\ve \max_{s\mid r}
|s|^\ve  \prod_{1\leq i\leq n}
\left|\gcd\left(\cub(s)
, \sq(c_i)\right)\right|^{1/4}\\
&\ll_\ve \hat R^\ve  \max_{1\leq i\leq n}
\left|\gcd\left(\cub(r)
, \sq(c_i)\right)\right|^{n/4},
\end{align*}
since $\cub(s)\mid \cub(r)$.
On relabelling $c_1,\dots,c_n$, we may assume that the maximum occurs at $i=n$ on the right hand side. According to 
part (2) of Lemma \ref{lem:VW9.1}, moreover, 
we will have 
$S^\natural_r(\bm{c})=0$ unless $e\leq 1+v_\varpi(F^*(\cc))$ 
for any prime power $\varpi^e\|r$.
If we let $d=\gcd(\cub(r)
, \sq(c_n))$, then $d$ is a square-full element of $\OK$ with 
$$
|d|\gg_\ve \frac{\hat B^{4/n}}{\hat R^\ve}.
$$
Moreover, $|d|\leq \hat Z$ since $d\mid c_n$ and $c_n\neq 0$. Finally, we note that 
$d\mid F^*(\cc)^2$ since $d\mid \cub(r)$ and $e\leq 2(e-1)$ for $e\geq 2$.

Let $D\in \RR$ be such that 
$\hat D=\hat B^{4/n}/\hat R^\ve$. 
Then  we have proved that 
$$
\Sigma^{(2)}
\ll_\ve \hat Z^\ve \sum_{\substack{d\in \OK \text{ square-full}\\
\hat D \ll_\ve |d|\leq \hat Z}
} |d|^{1/2} 
\sum_{
\substack{
\bm{c}\in \mathcal{S}_2\\ \norm{\bm{c}}\le \hat Z\\
d\mid \gcd(c_n, F^*(\cc)^2)
}} 
\left|\sq(c_1)\cdots \sq(c_{n-1})\right|^{1/2},
$$
since 
Lemma \ref{LEM:count-B-R_c-infty-divisors} implies that 
$\#\set{r\in \RcB: \abs{r}=\hat R}\ll_\ve \hat Z^\ve$.
In dealing with this sum we shall make frequent use of the bound
$$
\sum_{\substack{c\in \OK\\
|c|\leq \hat Z
}} |\sq(c)|^{1/2}\ll  \hat Z \log \hat Z,
$$
which readily follows on making the  factorisation $c=c_0\sq(c)$, where $c_0$ is the square-free part of $c$, 
 and noting that there are 
$O(\hat C^{1/2})$ square-full elements of $\OK$ with absolute value $\hat C$.

We proceed by sorting the vectors $\cc$ into two contributions.  Let 
$\cc'=(c_1,\dots,c_{n-1})$ and let 
$G(\cc')=
F^*(c_1,\dots,c_{n-1},0)^2$.
We first deal with the 
contribution from those $\cc$ for which $G(\cc')\neq 0$. 
In this case we invert the order of summation and observe that there are $O_\ve(\hat Z^\ve)$ choices of $d$ for which $d\mid G(\cc')$, by the standard estimate for the divisor function. 
Hence the contribution to $\Sigma^{(2)}$ from this case is 
\begin{align*}
&\ll_\ve 
\hat Z^\ve 
\hspace{-0.3cm}
\sum_{
\substack{|c_1|,\dots,|c_{n-1}|\le \hat Z\\
G(\cc')\neq 0}}
\hspace{-0.3cm}
\left|\sq(c_1)\cdots \sq(c_{n-1})\right|^{1/2}
\hspace{-0.2cm}
\sum_{
\substack{d\mid G(\cc')\\
\hat D\ll_\ve |d|\leq \hat Z}} |d|^{1/2}
\#\left\{c_n\in \OK: |c_n|\leq \hat Z, d\mid c_n\right\}\\
&\ll_\ve 
\frac{\hat Z^{1+2\ve}}{\hat D^{1/2}}
\sum_{
\substack{|c_1|,\dots,|c_{n-1}|\le \hat Z}}
\left|\sq(c_1)\cdots \sq(c_{n-1})\right|^{1/2}\\
&\ll_\ve 
 \frac{\hat Z^{n+3\ve}}{\hat D^{1/2}} .
\end{align*}
This is satisfactory for the proposition on observing that 
$\hat D\gg \hat B^{4/n}/\hat Z^\ve$. 

In order to deal with the remaining contribution from 
$\cc$ for which $G(\cc')=0$, we note that for fixed $c_1,\dots,c_{n-2}$, there are only finitely many choices of $c_{n-1}$ for which 
$G(\cc')=0$. This is due to the fact the coefficient of $c_{n-1}^{\deg G}$ appears with a non-zero coefficient, as observed by Heath-Brown \cite{heath1998circle}*{Eq.~(4.2)}.
Hence we obtain the overall contribution
\begin{align*}
&\ll_\ve 
\hat Z^\ve 
\sum_{\substack{d\in \OK \text{ square-full}\\
\hat D \ll_\ve |d|\leq \hat Z}
}
\sum_{
\substack{|c_{n}|\le \hat Z\\ d\mid c_n}}
|d|^{1/2}
\hat Z^{1/2}
\prod_{1\leq i\leq n-2}
\sum_{|c_i|\le \hat Z}
\left|\sq(c_i)\right|^{1/2}\\
&\ll_\ve 
\hat Z^{n-1/2+2\ve }
\sum_{\substack{d\in \OK \text{ square-full}\\
|d|\leq \hat Z}
}
\frac{1}{
|d|^{1/2}}\\
&\ll_\ve 
\hat Z^{n-1/2+3\ve },
\end{align*}
which thereby completes the proof.
\end{proof}

\subsection{Final touches}

We combine Propositions \ref{prop:B3-p1} and  \ref{prop:B9} 
in \eqref{eq:burger2} to deduce that 
\begin{align*}
\Sigma^{(2)} 
&\ll_\ve  \hat Z^{n+\ve} 
\left(
\frac{\hat B^2}{\hat Z^{\eta_0}} +
\frac{\hat B^2}{\hat R^{1/9\cdot 2^{n}}} 
+\frac{1}{\hat Z^{1/2}} +
\frac{1}{\hat B^{2/n}} \right)\\
&\ll_\ve  \hat Z^{n+\ve} 
\left(
\frac{\hat B^2}{\hat R^{\min\{\eta_0/3,1/9\cdot 2^{n}\}}}
+\frac{1}{\hat R^{1/6}} +
\frac{1}{\hat B^{2/n}} \right),
\end{align*}
since $Z\geq R/3$.  The middle term is dominated by the first term. Moreover, we 
 clearly have $\min\{\eta_0/3,1/9\cdot 2^{n}\}=\eta_0/3$.
 Balancing for  $B$, 
we deduce that 
\begin{equation}\label{eq:burger3}
\Sigma_2\ll_\ve \hat Z^{n+\ve}\hat R^{-\eta_1},
\end{equation}
where
$
\eta_1=
\eta_0/(3n+3).
$

We may now deduce the statement of Theorem \ref{thm:B3}, recalling the notation 
$\Sigma_A=\Sigma_A(Z,R)$ for any $2\leq A\leq 2+\delta$ and any 
 $0\le R\le 3Z$.
 We claim that 
$$
\Sigma_2(Z,R)\ll \hat Z^{n}\hat R^{-\eta_1/2},
$$
which will clearly suffice for the theorem, in the light of \eqref{eq:burger1}.
 If $Z\leq 2R$ then the statement follows from \eqref{eq:burger3}, on choosing 
 $\ve\leq \eta_1/4$.   Suppose next that 
 $Z>2R$ and open up the square to get
 $$
 \Sigma_2(Z,R)\ll \sum_{\substack{\abs{r_1}=\hat R\\ \abs{r_2}=\hat R}}
 \sum_{\substack{\bm{d} \bmod{r_1r_2}\\ \varpi\mid r_1r_2\Rightarrow \varpi\mid F^*(\bm{d})}}
|r_1r_2|^{-1/2}|S^\natural_{r_1}(\bm{d})S^\natural_{r_2}(\bm{d})|
 \#\{\cc : \cc\equiv\bm{d} \bmod{r_1r_2}\},
  $$
on noting that $S^\natural_{r_1}(\bm{c})S^\natural_{r_2}(\bm{c})$ only depends on the value of $\cc$ modulo $r_1r_2$. 
The inner cardinality is $O(\hat Z^n/\hat R^{2n})$, whence
 \begin{align*}
 \Sigma_2(Z,R)
 &\ll \frac{\hat Z^n}{\hat R^{2n}}\sum_{\substack{\abs{r_1}=\hat R\\ \abs{r_2}=\hat R}}
 \sum_{\substack{\bm{d} \bmod{r_1r_2}\\ \varpi\mid r_1r_2\Rightarrow \varpi\mid F^*(\bm{d})}}
|r_1r_2|^{-1/2}|S^\natural_{r_1}(\bm{d})S^\natural_{r_2}(\bm{d})|\\
 &\ll \frac{\hat Z^n}{\hat R^{2n}} \Sigma_2(n + 2R,R)
  \end{align*}
  by Lemma \ref{LEM:find-small-lift-in-Zariski-open}.
 The claim now follows on appealing to \eqref{eq:burger3}.

\begin{lemma}
\label{LEM:find-small-lift-in-Zariski-open}
Suppose $\norm{\bm{d}}\le r_1r_2$.
Then there exists $\bm{c}\in \mcal{S}_2$ with $\norm{\bm{c}}\le q^n\, \abs{r_1r_2}$ and $\bm{c}\equiv \bm{d}\bmod{r_1r_2}$.
\end{lemma}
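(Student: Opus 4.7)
The plan is to construct $\bm{c}$ as a well-chosen lift of $\bm{d}$ within its residue class modulo $r_1 r_2$, and to show by a Schwartz--Zippel style counting argument that the exceptional locus cutting out $\OK^n \setminus \mcal{S}_2$ cannot absorb every lift drawn from a small box.

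Concretely, I would range $\bm{e}$ over $\mcal{E} \defeq \set{e \in \OK : \deg e \le n}^n$, a finite set of cardinality $q^{n(n+1)}$, and set $\bm{c} \defeq \bm{d} + r_1 r_2 \bm{e}$. The congruence $\bm{c} \equiv \bm{d} \bmod{r_1 r_2}$ is automatic, and the ultrametric inequality combined with $\norm{\bm{d}} \le \abs{r_1 r_2}$ yields $\norm{\bm{c}} \le q^n \abs{r_1 r_2}$. It therefore suffices to exhibit some $\bm{e} \in \mcal{E}$ for which $c_1 \cdots c_n F^\ast(\bm{c}) \ne 0$.

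Next I would bound the bad $\bm{e}$ in two contributions. For each index $i$, the equation $c_i = 0$ determines $e_i$ up to at most one value in $\OK$, giving at most $n \cdot q^{(n+1)(n-1)}$ bad tuples. For the condition $F^\ast(\bm{c}) \ne 0$, I consider $g(\bm{e}) \defeq F^\ast(\bm{d} + r_1 r_2 \bm{e}) \in K[\bm{e}]$; its total degree equals $\deg F^\ast = 3 \cdot 2^{n-2}$, and it is non-zero because $F^\ast$ is non-zero and the substitution $\bm{e} \mapsto \bm{d} + r_1 r_2 \bm{e}$ is a $K$-linear bijection of $\mathbb{A}^n_K$ (here we use $r_1 r_2 \ne 0$). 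The Schwartz--Zippel lemma applied to $g$ over the sample set $\set{e \in \OK : \deg e \le n}$ of size $q^{n+1}$ then bounds the extra bad tuples by $3 \cdot 2^{n-2} \cdot q^{(n+1)(n-1)}$.

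Summing, the bad set has size at most $(n + 3 \cdot 2^{n-2})\, q^{(n+1)(n-1)}$, which is strictly less than $\abs{\mcal{E}} = q^{n(n+1)}$ provided $q^{n+1} > n + 3 \cdot 2^{n-2}$. Since the hypothesis $\cha(\FF_q) > 3$ forces $q \ge 5$, a direct check confirms this inequality for all $n \ge 2$, so a good $\bm{e}$ exists and supplies the required $\bm{c}$. The only semi-delicate point is verifying that $g$ is not identically zero, which is immediate from the non-vanishing of $F^\ast$ and the invertibility of the affine substitution; the counting has plenty of room to spare, so I do not anticipate any serious obstacle.
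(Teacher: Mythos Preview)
Your proof is correct and follows essentially the same approach as the paper: both consider lifts $\bm{c} = \bm{d} + r_1 r_2 \bm{e}$ with $\norm{\bm{e}} \le q^n$ and argue that the polynomial $H(\bm{c}) = c_1 \cdots c_n F^\ast(\bm{c})$, of degree $n + 3\cdot 2^{n-2}$, cannot vanish on the entire box because each coordinate ranges over a set of size $q^{n+1}$ exceeding $\deg H$. The only cosmetic difference is that the paper invokes Alon's combinatorial Nullstellensatz directly on $H$, whereas you split $H$ into its linear factors and $F^\ast$ and count roots via Schwartz--Zippel; the two tools are essentially equivalent here.
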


\begin{proof}
Let $H(\bm{c}) = c_1\cdots c_n F^\ast(\bm{c})$.
Let $S = \set{\bm{d} + r_1r_2\bm{k}: \norm{\bm{k}}\le q^n}$.
We want to find $\bm{c}\in S$ with $H(\bm{c})\ne 0$.
The set $S$ is a Cartesian product $\prod_{1\le j\le n} I_j$,
for some sets $I_1,\dots,I_n\belongs \OK$ of size $\abs{I_j} = q^{n+1}$.
Since $\deg{H} = n + 3\cdot 2^{n-2} < 2^n + 2^n \le \abs{I_j}$ for all $j$,
the desired $\bm{c}$ exists by the combinatorial nullstellensatz \cite{alon1999combinatorial}*{Theorem 1.2} in the field $K$.
\end{proof}

\section{Endgame outside dual variety}
\label{SEC:main-generic-endgame}

Let $w$ be as in \S~\ref{SEC:integral-stuff} 
with the properties specified in Hypothesis \ref{hyp}, but with parameter $L=0$.
Let 
\begin{equation}\label{eq:assam-tea}
E_1(P)=|P|^n\sum_{\substack{\bm{c}\in\OK^n\\F^*(\bm{c})\neq 0}}\sum_{\substack{r\in\Omon\\ |r|\leq \widehat{Q}}}|r|^{-n}S_r(\bm{c})I_r(\bm{c}),
\end{equation}
where we recall that $Q$ is chosen in such a way that $|P|^{3/2}\asymp \widehat{Q}$. By Lemma \ref{Le: IntVanish} we have $I_r(\bm{c})=0$ unless $|\bm{c}|\ll |r|\max\{1, |P|^3|r|^{-1}\widehat{Q}^{-1}\}|P|^{-1}\ll |P|^{1/2}$. In particular, the sum over $\bm{c}$ in the definition of $E_1(P)$ only runs over vectors $\bm{c}\in\OK^n$ with $|\bm{c}|\ll |P|^{1/2}$. Moreover, a simple change of variables shows that Lemma \ref{Le: IntEstimateI} implies 
\begin{equation}\label{Eq: IntEstimateEndgame}
    I_r(\bm{c})\ll |P|^{-3}\left(1+\frac{|P||\bm{c}|}{|r|}\right)^{1-n/2}.
\end{equation}
We also keep in mind the $L=0$ case of Lemma \ref{LEM:integral-scale-invariance}, which implies the following:
\begin{equation*}
    \textnormal{If $r_1,r_2\in \Omon$ with $\abs{r_1}=\abs{r_2}$, then $I_{r_1}(\bm{c}) = I_{r_2}(\bm{c})$.}
\end{equation*}
The primary  goal of this section is to establish the following result.

\begin{proposition}\label{Prop:nonDualContribution}
Assume Conjecture \ref{CNJ:(R2'E')} holds. 
If $F=x_1^3+\dots+x_6^3$, then
$
E_1(P)\ll |P|^3.
$
\end{proposition}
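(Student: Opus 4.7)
The plan is to exploit the scale invariance of $I_r(\bm{c})$ in order to group moduli $r$ by $|r|$, factor every $r = r_1 r_2$ into its good and bad parts relative to $F^\ast(\bm{c})$, and separate the two contributions by Cauchy--Schwarz in $\bm{c}$. Conjecture~\ref{CNJ:(R2'E')} will then handle the good part and Theorem~\ref{thm:B3} the bad part, while the integral estimate \eqref{Eq: IntEstimateEndgame} controls the remaining $|P|$ and $|\bm{c}|$ scales.

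First I would invoke Lemma~\ref{LEM:integral-scale-invariance} (permissible since the weight $w$ in Definition~\ref{def:our-weight} has $L=0$) to write $I_r(\bm{c}) = I_{\hat R}(\bm{c})$ whenever $|r| = \hat R$. Using $S_r(\bm{c}) = S_{r_1}(\bm{c}) S_{r_2}(\bm{c})$ and the identity $|r|^{-6} S_r = |r|^{-5/2} S_r^\natural$, one reorganises
\begin{equation*}
E_1(P) = |P|^6 \sum_{\bm{c}\in \mcal{S}_1} \sum_{R \le Q} \hat R^{-5/2} I_{\hat R}(\bm{c}) \sum_{R_1 + R_2 = R} a_{R_1}(\bm{c})\, b_{R_2}(\bm{c}),
\end{equation*}
where $a_{R_1}(\bm{c}) \defeq \sum_{r_1 \in \RcG,\,|r_1|=\hat R_1} S_{r_1}^\natural(\bm{c})$ and $b_{R_2}(\bm{c}) \defeq \sum_{r_2 \in \RcB,\,|r_2|=\hat R_2} S_{r_2}^\natural(\bm{c})$. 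The contribution of $\bm{c}$'s with $c_1 \cdots c_6 = 0$ is confined to a lower-dimensional subvariety and was already treated satisfactorily in \cite{glas2022question}, so I may restrict attention to $\bm{c} \in \mcal{S}_2$; by Lemma~\ref{Le: IntVanish} I may also restrict to $|\bm{c}| \ll |P|^{1/2}$.

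Next I would dyadically decompose $|\bm{c}| \le \hat C$ and apply Cauchy--Schwarz in $\bm{c}$ to decouple $a_{R_1}$ from $b_{R_2}$. For the good factor, the pointwise GRH bound $|a_{R_1}(\bm{c})| \ll_\eps \hat C^\eps \hat R_1^{1/2+\eps}$ from Proposition~\ref{Prop: GRHCancellation} interpolates against Conjecture~\ref{CNJ:(R2'E')} to give
\begin{equation*}
\sum_{\bm{c} \in \mcal{S}_2,\, |\bm{c}| \le \hat C} |a_{R_1}(\bm{c})|^2 \ll_\eps \hat C^{6+\eps} \hat R_1^{1+\eps}.
\end{equation*}
For the bad factor, the inequality $|b_{R_2}(\bm{c})| \le \hat R_2^{1/2} \sum_{r_2} |r_2|^{-1/2} |S_{r_2}^\natural(\bm{c})|$ together with the $A=2$ case of Theorem~\ref{thm:B3} yields
\begin{equation*}
\sum_{\bm{c} \in \mcal{S}_2,\, |\bm{c}| \le \hat C} |b_{R_2}(\bm{c})|^2 \ll \hat C^6 \hat R_2^{1-\eta}.
\end{equation*}

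Inserting both bounds into Cauchy--Schwarz and summing over $R_1 + R_2 = R$, the geometric series in $R_2$ converges thanks to the $\hat R_2^{-\eta/2}$ decay, producing the bound $\hat C^{6+\eps} \hat R^{1/2+\eps}$ for the sum over $\bm{c}$. Combined with \eqref{Eq: IntEstimateEndgame}, I split the remaining $(R,C)$-summation into two regimes: if $\hat C \le \hat R/|P|$ use $I_{\hat R} \ll |P|^{-3}$, and the geometric double sum over $C \le R - \log_q|P|$ and $R \le Q$ is dominated by its extreme point $\hat C = \hat R/|P|,\; \hat R = \hat Q \asymp |P|^{3/2}$, which contributes $|P|^3$; if $\hat C > \hat R/|P|$ use $I_{\hat R} \ll |P|^{-5} \hat R^2 \hat C^{-2}$, and the sum is then maximised at $\hat C \asymp |P|^{1/2}$, again yielding $|P|^3$. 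The main obstacle is precisely this endgame balancing: one needs the critical $\hat R^{1/2}$-saving from Conjecture~\ref{CNJ:(R2'E')} to match the trivial weight $\hat R^{-5/2}$, and the genuine power saving $\hat R_2^{-\eta}$ from Theorem~\ref{thm:B3} is what makes the $R_2$-series converge absolutely so that neither extreme of the $(R,C)$ ranges inflates the final bound.
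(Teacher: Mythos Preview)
Your approach has a genuine gap in the endgame balancing. After Cauchy--Schwarz and summing over $R_1+R_2=R$, you claim the bound $\hat C^{6+\eps}\hat R^{1/2+\eps}$ and assert that the extreme point $\hat C=\hat R/|P|$, $\hat R=\hat Q$ contributes $|P|^3$. But inserting these values into $|P|^3\,\hat R^{-2+\eps}(1+|P|\hat C/\hat R)^{-2}\hat C^{6+\eps}$ gives $|P|^{3+2\eps}$, not $|P|^3$. Even if one avoids the interpolation loss by using H\"older with exponents $p_1=2-\eps$ and $p_2=(2-\eps)/(1-\eps)$ directly, the resulting summand $|P|^3\hat R^{-2}\hat C^6(1+|P|\hat C/\hat R)^{-2}$ is \emph{constant} in $R$ throughout the range $\hat R\le |P|\hat C$, so summing over $R$ produces an unavoidable $\log|P|$ factor. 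Either way you only recover $|P|^{3+\eps}$, which is precisely the bound \eqref{eq:eps} already known from \cite{glas2022question}; the entire point of Proposition~\ref{Prop:nonDualContribution} is to remove this $\eps$.

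The missing ingredient is Lemma~\ref{LEM:new-vanishing-for-small-dual-form}, which shows that $I_r(\bm c)=0$ unless $|F^*(P\bm c/r)|\ll 1+(|P||\bm c|/|r|)^{\deg F^*-1}$. Via Lemma~\ref{Le: PolyInequSaving} this restricts $\bm c$ to a set of density $\ll(\hat Y/(|P|\hat C))^{1/\deg F^*}$, furnishing a genuine power saving in the critical regime $\hat C\asymp|P|^{1/2}$, $\hat Y_2$ small. The paper's proof splits into three cases---$\hat C<|P|^{1/2-\delta}$ (where the old estimates of \cite{glas2022question} suffice because $\hat C$ is small), and $\hat C\ge|P|^{1/2-\delta}$ with $\hat Y_2$ either small (where Lemma~\ref{LEM:new-vanishing-for-small-dual-form} supplies the saving) or large (where Theorem~\ref{thm:B3}'s $\hat Y_2^{-\eta}$ is itself a power saving in $|P|$). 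There is also a secondary issue: your $\ell^2$ bounds $\hat C^{6+\eps}\hat R_i^{1+\eps}$ ignore the constraint $R_i\le 3Z$ in Conjecture~\ref{CNJ:(R2'E')} and Theorem~\ref{thm:B3}; when $\hat R_i>\hat C^3$ one must take $Z\ge R_i/3$, which introduces extra $\hat R_i^{2}$-type terms that need the case analysis on $\hat C$ to control.
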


We begin by dealing with the contribution from $c_1\cdots c_n=0$, which already follows from \cite{glas2022question}. Let $\mathcal{I}\subset\{1,\dots, n\}$ with $\#\mathcal{I}=t$ and define 
\[
\mathcal{R}(C)\coloneqq \{\bm{c}\in\OK^n\colon |\bm{c}|=\widehat{C},
\text{ such that }
c_i=0 \text{ if }i\not\in \mathcal{I}\text{ and }c_i\neq 0 \text{ else}\}.
\]
Let us also fix $0\leq Y\leq Q$. We will now consider the contribution to $E_1(P)$ coming from vectors $\bm{c}\in \mathcal{R}(C)$ and $r\in \Omon$ with $|r|=\widehat{Y}$. Since the contribution from $\widehat{C}\gg |P|^{1/2}$ vanishes, it is clear that there are at most $O(\log^2 |P|)$ permissible choices for pairs $(C,Y)$. It follows from the last display on page 21 of \cite{glas2022question} that 
\[
\frac{|P|^n}{\widehat{Y}^n}\sum_{\substack{\bm{c}\in \mathcal{R}(C) \\ F^*(\bm{c})\neq 0}}\sum_{|r|=\widehat{Y}}S_r(\bm{c})I_r(\bm{c})\ll \frac{|P|^{n-3/2+\varepsilon}}{\widehat{Y}^{n/2}}\left(\frac{\widehat{Y}}{|P|\widehat{C}}\right)^{n/4}\widehat{Y}^{(n-t)/6}\widehat{C}^t\min\left\{1, \frac{\widehat{Y}}{|P|\widehat{C}}\right\}^{t/4}.
\]
Note that they work with a different weight function;
nevertheless, the estimate continues to hold in our setting as we have the stronger integral estimate \eqref{Eq: IntEstimateEndgame} at our disposal. Since we are assuming that  $F^*(\bm{c})\neq 0$ and $c_1\cdots c_n=0$, we must have $1\leq t \leq n-1$. One now easily sees that the expression above is maximal at $t=1$ or $t=n-1$. The contribution from $t=n-1$ is 
\[
\ll |P|^{n/2-5/4+\varepsilon}\widehat{Y}^{-1/12}\widehat{C}^{n/2-3/4}\ll |P|^{3n/4-13/8+\varepsilon},
\]
while $t=1$ gives 
\[
|P|^{3n/4-7/4+\varepsilon}\widehat{Y}^{(1-n)/12}\widehat{C}^{(3-n)/4}\ll |P|^{3n/4-7/4+\varepsilon},
\]
where we used that $n\geq 3$ and $\widehat{C}\ll |P|^{1/2}$. We have $3n/4-7/4<n-3$ and $3n/4-13/8<n-3$ for $n\geq 6$ and since there at most $O(\log^2 |P|)$ choices for $(Y,C)$, this shows that the contribution from $c_1\cdots c_n=0$ to $E_1(P)$ is $O(|P|^{n-3-\delta})$ for some $\delta>0$. 
This is satisfactory for Proposition \ref{Prop:nonDualContribution}.

Next, we factor $r=r_1r_2$, where $r_1\in \RcG$ and $r_2\in \RcB$. For $1\leq \widehat{C}\ll |P|^{1/2}$ and $0\leq Y_1+Y_2\leq Q$, we shall write $Y=Y_1+Y_2$ and consider the sum
\[
E_1(Y_1,Y_2, C)\coloneqq \sum_{\substack{|\bm{c}|=\widehat{C}}}\,
\left\lvert\sum_{\substack{r_1\in \RcG\\|r_1|=\widehat{Y}_1}}S^\natural_{r_1}(\bm{c})\right\rvert\,
\left\lvert\sum_{\substack{r_2\in \RcB\\ \abs{r_2}=\widehat{Y}_2}}S^\natural_{r_2}(\bm{c})\right\rvert, 
\]
where the sum over $\bm{c}$ runs over $\bm{c}\in \OK^n$ with $c_1\cdots c_n\neq 0$ and $F^*(\bm{c})\neq 0$. 
We will now provide an upper bound for $E_1(Y_1,Y_2,C)$ depending on the size of $C$ and the relative size of $Y_2$ compared to $\widehat{Q}/\widehat{Y}$. For the rest of this section, let $\delta>0$ be a fixed small real number.

\subsection{The case \texorpdfstring{$\widehat{C}<|P|^{1/2-\delta}$}{hatC<|P|1/2-delta}}

In this case it suffices to use the old estimates from \cite{glas2022question} and \cite{browning2015rational}. Applying Propositions \ref{Prop: GRHCancellation} and \ref{Prop: SquarefullAverageOld} in succession gives 
\[
E_1(Y_1,Y_2,C) \ll \widehat{Y}^{1/2+\varepsilon} \widehat{C}^{n+\varepsilon}.
\]
Combining this with \eqref{Eq: IntEstimateEndgame}, shows that the contribution to $E_1(P)$ with $\widehat{C}\leq |P|^{1/2-\delta}$ is at most 
\begin{align*}
    \sum_{Y_1,Y_2, C} |P|^{n-3} \widehat{Y}^{1-n/2+\varepsilon}\widehat{C}^{n+\varepsilon}\left(1+\frac{|P|\widehat{C}}{\widehat{Y}}\right)^{1-n/2}&\ll\sum_{Y_1,Y_2,C} \abs{P}^{n/2-2}\widehat{C}^{1+n/2+\varepsilon}\widehat{Y}^\varepsilon\\
    &\ll \abs{P}^{3n/4-3/2+\varepsilon - \delta(1+n/2+\varepsilon)/2},
\end{align*}
where we used that $\widehat{C}\ll |P|^{1/2-\delta}$, $\widehat{Y}\ll |P|^{3/2}$ and that the number of available $(Y_1,Y_2,C)$ is $O(\log |P|)=O(|P|^{\varepsilon})$. Thus the contribution to $E_1(P)$ is at most $O(|P|^{n-3})$ for $n\geq 6$, which is satisfactory. 

\subsection{The case \texorpdfstring{$\widehat{C}\geq |P|^{1/2-\delta}$}{hatC>=|P|1/2-delta}}

We shall divide the complentary case into two further subcases. First, let from now on $\varepsilon>0$ be a fixed, sufficiently small real number. We then define $\alpha = (1/2-\varepsilon)/\deg F^*$ and 
\begin{equation}\label{Eq: DefiMinAlpha}
    \widehat{W}=\min\left\{\frac{|P|^{3/2}}{\widehat{Y}},|P|^{(1-\varepsilon)/2}\right\}.
\end{equation}

\subsubsection*{The case \texorpdfstring{$\widehat{Y}_2^{n+\varepsilon}\leq \widehat{W}^{\alpha/2}$}{Y2 small}}

In this regime we require the full strength of Conjecture~\ref{CNJ:(R2'E')} and of Lemma \ref{LEM:new-vanishing-for-small-dual-form}. To make use of Lemma \ref{LEM:new-vanishing-for-small-dual-form}, we need the following auxiliary result. 
\begin{lemma}\label{Le: PolyInequSaving}
Let $G\in \OK[c_1,\dots, c_n]$ be a polynomial.
Let $C\geq 0$ and $\lambda\in \RR$.
Then 
\[
\#\{\bm{c}\in \OK^n \colon |\bm{c}|=\widehat{C}, |G(\bm{c})|\leq \widehat{C}^{\deg G}\hat \lambda \}\ll \widehat{C}^n(\hat \lambda +\widehat{C}^{-1})^{1/\deg G}.  
\]
\end{lemma}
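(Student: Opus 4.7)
The plan is to reduce to a one-variable count, to which \eqref{INEQ:integral-points-level-estimate} can then be applied. We may assume $G\ne 0$ and set $d\defeq \deg G\geq 1$, for otherwise the statement is trivial or ill-defined.

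First, I would apply Lemma~\ref{LEM:OK-linear-change-to-near-monic} (with $S=\set{G}$) to produce $\bm{a}=(a_2,\dots,a_n)\in \OK^{n-1}$ such that
\[
\tilde{G}(\bm{c})\defeq G(c_1,c_2+a_2c_1,\dots,c_n+a_nc_1)
\]
has $\deg_{c_1}\tilde{G}=\deg\tilde{G}=d$. Equivalently, viewed as a polynomial in $c_1$, $\tilde{G}$ has leading coefficient the nonzero constant $a\defeq h_G(1,a_2,\dots,a_n)\in \OK\setminus\set{0}$, where $h_G$ denotes the leading homogeneous part of $G$. The corresponding $\OK$-linear change of variables $(c_1,\dots,c_n)\mapsto (c_1,c_2-a_2c_1,\dots,c_n-a_nc_1)$ is a bijection on $\OK^n$ that distorts $\abs{\bm{c}}$ by a multiplicative factor depending only on $\bm{a}$. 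It therefore suffices to bound the cardinality of $\set{\bm{c}\in \OK^n:\abs{\bm{c}}\ll_G\hat C,\; \abs{\tilde{G}(\bm{c})}\leq \hat C^d\hat\lambda}$.

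Next, I would fix the $n-1$ coordinates $c_2,\dots,c_n\in \OK$ with $\abs{c_i}\ll_G\hat C$ (yielding $O_G(\hat C^{n-1})$ choices) and apply \eqref{INEQ:integral-points-level-estimate} at $v=\infty$ to the one-variable polynomial $c_1\mapsto \tilde{G}(c_1,c_2,\dots,c_n)$, whose leading coefficient has absolute value $\abs{a}\asymp_G 1$. Taking $\hat B\ll_G\hat C$ and replacing the ``$\hat\lambda$'' there by $\hat C^d\hat\lambda$ gives
\[
\#\set{c_1\in \OK:\abs{c_1}\ll_G\hat C,\; \abs{\tilde{G}(\bm{c})}\leq \hat C^d\hat\lambda}\ll_G 1+\hat C\hat\lambda^{1/d}.
\]
Summing over the $O_G(\hat C^{n-1})$ choices of $(c_2,\dots,c_n)$ produces the total bound $\hat C^{n-1}+\hat C^n\hat\lambda^{1/d}=\hat C^n(\hat C^{-1}+\hat\lambda^{1/d})$.

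To match the form claimed, I would conclude with the elementary inequality $x^{1/d}+y^{1/d}\leq 2(x+y)^{1/d}$ (valid for $x,y\geq 0$), together with the observation that $\hat C\geq 1$ (so $\hat C^{-1}\leq \hat C^{-1/d}$ for $d\geq 1$); these combine to give $\hat C^{-1}+\hat\lambda^{1/d}\ll (\hat\lambda+\hat C^{-1})^{1/d}$, which finishes the proof. I do not foresee any serious obstacle here: each ingredient (the change of variables, the archimedean level-set bound, and the final algebraic manipulation) is already in place. The only bookkeeping nuisance is verifying that the $\OK$-linear change of variables preserves $\abs{\bm{c}}$ up to an $O_G(1)$ factor, which is immediate from the ultrametric property.
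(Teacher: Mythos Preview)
Your proposal is correct and follows essentially the same approach as the paper's proof: reduce via Lemma~\ref{LEM:OK-linear-change-to-near-monic} to the case $\deg_{c_1}G=\deg G$, fix $c_2,\dots,c_n$, and apply \eqref{INEQ:integral-points-level-estimate} with $v=\infty$. The paper's proof is simply the two-line version of what you wrote; your additional bookkeeping (the effect of the change of variables on $\abs{\bm{c}}$ and the final inequality $\hat C^{-1}+\hat\lambda^{1/d}\ll(\hat\lambda+\hat C^{-1})^{1/d}$) is accurate and fills in details the paper leaves implicit.
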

\begin{proof}
By Lemma \ref{LEM:OK-linear-change-to-near-monic}, we may assume $\deg_{c_1}(G) = \deg{G}$.
The result then follows from \eqref{INEQ:integral-points-level-estimate} (with $v=\infty$), after fixing $c_2,\dots,c_n$.
\end{proof}
Recall from Lemma \ref{LEM:new-vanishing-for-small-dual-form} that $I_r(\bm{c})=0$ unless $|F^*(P\bm{c}/r)|\ll 1 + (|P||\bm{c}|/\widehat{Y})^{\deg F*-1}$. After setting 
\[
\mathcal{S}(C)=\{\bm{c}\in \OK^n\colon |\bm{c}|=\widehat{C}, 0<|F^*(P\bm{c}/r)|\ll 1 + (|P||\bm{c}|/\widehat{Y})^{\deg F*-1}, c_1\cdots c_n\neq 0\},
\]
it then follows from H\"older's inequality with $p_1=2-\eps_0$, $p_2=(2-\eps_0)/(1-2\eps_0)$ and $p_3=(2-\eps_0)/\eps_0$ that
\[
E_1(Y_1,Y_2, C)\leq \left(\sum_{\substack{|\bm{c}|=\widehat{C}} }\left|\sum_{|r_1|=\widehat{Y}_1}S_{r_1}^\natural(\bm{c})\right|^{p_1}\right)^{1/p_1}
\hspace{-0.4cm}
\left(\#\mathcal{S}(C)\right)^{1/p_2}\left(\sum_{|\bm{c}|=\widehat{C}}\left|\sum_{|r_2|=\widehat{Y}_2}S_{r_2}^\natural(\bm{c})\right|^{p_3} \right)^{1/p_3}.
\]
We can now apply the trivial estimate $|S_{r_2}^\natural(\bm{c})|\leq |r_2|^{(n+1)/2}$ 
and invoke Lemma \ref{LEM:N_c-small-divisor-moment-bound} to estimate the sum over $r_2$.
Combining this with Conjecture \ref{CNJ:(R2'E')} with $Z=\ceil{\max(C,\frac13Y_1)}$ and $R=Y_1$ to estimate the sum over $r_1$, we get 
\begin{align*}
E_1(Y_1,Y_2,C)&\ll
\widehat{Y}_1^{1/2}\left(\widehat{C}^n+\widehat{Y}_1^{n/3}\right)^{1/(2-\eps_0)}
\#\mathcal{S}(C)^{(1-2\eps_0)/(2-\eps_0)}
\hat C^{n\eps_0/(2-\eps_0)}\widehat{Y}_2^{(n+1)/2+\varepsilon} \\
&\le \widehat{Y}^{1/2}\widehat{Y}_2^{n/2+\varepsilon}
\left(\widehat{C}^n+\widehat{Y}_1^{n/3}\right)^{1/2+\varepsilon}
\#\mathcal{S}(C)^{1/2-\varepsilon},
\end{align*}
where $\varepsilon \defeq \frac12 - (1-2\eps_0)/(2-\eps_0) = \frac34\eps_0 + O(\eps_0^2)$.
Note that Lemma \ref{Le: PolyInequSaving} implies that 
\[
\frac{\#\mathcal{S}(C)}{\hat C^n}
\ll \left(\min\left\{1, \frac{\widehat{Y}}{|P|\widehat{C}}\right\}+\widehat{C}^{-1}\right)^{1/\deg F^*}
\ll \left(\frac{\widehat{Y}}{|P|\widehat{C}}+\widehat{C}^{-1}\right)^{1/\deg F^*}.
\]
Together with the last display and \eqref{Eq: IntEstimateEndgame} this shows that the contribution to $E_1(P)$ from those $(Y_1,Y_2, C)$ under consideration is 
\begin{align*}
    \ll~& \sum_{Y_1,Y_2, C}|P|^{n-3}\widehat{Y}^{1-n/2}\widehat{Y}_2^{n/2+\varepsilon}\widehat{C}^{n(1/2-\varepsilon)}\left( \frac{\widehat{Y}}{|P|\widehat{C}}+\widehat{C}^{-1}\right)^\alpha \left(\widehat{C}^n+\widehat{Y}_1^{n/3}\right)^{1/2+\varepsilon}
    \\
&\qquad \times    \left(1+\frac{|P|\widehat{C}}{\widehat{Y}}\right)^{1-n/2}\\
    \ll~& \sum_{Y_1,Y_2, C}|P|^{n/2-2}\widehat{Y}_2^{n/2+\varepsilon}\left(\left(\frac{\widehat{Y}}{|P|\widehat{C}}\right)^\alpha +\widehat{C}^{-\alpha}\right)\left(\widehat{C}^n+\widehat{Y}_1^{n/3}\right)^{1/2+\varepsilon}\widehat{C}^{1-n\varepsilon/2}.
\end{align*}
One can check that $-\alpha +n(1/2+\varepsilon)+1-n\varepsilon/2>0$ and $-\alpha +1-n\varepsilon /2>0$, so that upon recalling $\widehat{C}\ll |P|^{1/2}$, we get that the contribution is 
\begin{align*}
    &\ll \sum_{Y_1,Y_2}|P|^{n/2-3/2-n\varepsilon/2}\widehat{Y}_2^{n/2+\varepsilon}\left(\left(\frac{\widehat{Y}}{|P|^{3/2}}\right)^\alpha+|P|^{-\alpha/2}\right)\left(|P|^{n/2}+\widehat{Y}_1^{n/3}\right)^{1/2+\varepsilon}\\
    &\ll \sum_{Y_1,Y_2}|P|^{3n/4-3/2}\widehat{Y}_2^{n/2+\varepsilon}\left(\left(\frac{\widehat{Y}}{|P|^{3/2}}\right)^\alpha +|P|^{-\alpha/2}\right).
\end{align*}
We can now use that 
\[
\widehat{Y}_2^{n/2+\varepsilon}\leq \widehat{Y}_2^{-n/2}\widehat{W}^{\alpha/2},
\]
which together with the definition of $\widehat{W}$ in \eqref{Eq: DefiMinAlpha} implies that the contribution is 
\begin{align*}
    &\ll \sum_{Y_1,Y_2}|P|^{3n/4-3/2}\widehat{Y}_2^{-n/2}\widehat{W}^{\alpha/2}\left(\left(\frac{\widehat{Y}}{|P|^{3/2}}\right)^\alpha +|P|^{-\alpha/2}\right)\\
    &\ll |P|^{3n/4-3/2}\left(|P|^{-3\alpha/4}\sum_{Y_1,Y_2}\widehat{Y}_2^{-n/2}\widehat{Y}^{\alpha/2}
    +\sum_{Y_1,Y_2}\widehat{Y}_2^{-n/2}|P|^{-\alpha(1+\varepsilon)/4}\right)\\
    &\ll |P|^{3n/4-3/2}\left(1+|P|^{\varepsilon -\alpha(1+\varepsilon)/4}\right),
\end{align*}
where we used that $\sum_{Y_1,Y_2}\widehat{Y}_2^{-n/2}\widehat{Y}^{\alpha/2}\ll |P|^{3\alpha/4}$, because $\widehat{Y}\ll |P|^{3/2}$. Therefore, as $\varepsilon(1-\alpha/4)-\alpha<0$ for $\varepsilon$ sufficiently small, the contribution from the case under consideration is again $O(|P|^{n-3})$ for $n\geq 6$.

\subsubsection*{The case \texorpdfstring{$\widehat{Y}_2^{n+\varepsilon}>\widehat{W}^{\alpha/2}$}{Y2 large}}

Applying H\"older's inequality to $E_1(Y_1,Y_2,C)$ with $p_1=2-\varepsilon$ and $p_2=(2-\varepsilon)/(1-\varepsilon)$ gives 
\[
E_1(Y_1,Y_2,C)\leq \left(\sum_{\bm{c}}\left|\sum_{r_1}S_{r_1}^\natural(\bm{c})\right|^{p_1}\right)^{1/p_1}\left(\sum_{\bm{c}}\left|\sum_{r_2}S_{r_2}^\natural(\bm{c})\right|^{p_2}\right)^{1/p_2}.
\]
Next we use Conjecture \ref{CNJ:(R2'E')} with $Z=\ceil{\max(C,\frac13Y_1)}$ and $R=Y_1$ to estimate the first sum and Theorem \ref{thm:B3} with $Z=\ceil{\max(C,\frac13Y_2)}$ and $R=Y_2$ for the second, to obtain 
\begin{equation}\label{Eq: LastCaseEndgame}  
E_1(Y_1,Y_2,C)\ll \widehat{Y}^{1/2}\widehat{Y}_2^{-\eta/p_2}\left(\widehat{C}^{n/p_1}+\widehat{Y}_1^{n/3p_1}\right)\left(\widehat{C}^{n/p_2}+\widehat{Y}_2^{n/3p_2}\right)
\end{equation}
for some $\eta>0$. If we choose $\delta$ sufficiently small, we have $\widehat{C}>|P|^{1/2-\delta}\gg |P|^{1/4}\gg \widehat{Y}^{1/6}$. As $\widehat{Y}_1\widehat{Y}_2=\widehat{Y}$, this implies that $\widehat{Y}_1^{1/3}\ll \widehat{C}$ or $\widehat{Y}_2^{1/3}\ll \widehat{C}$. Therefore, the bound in \eqref{Eq: LastCaseEndgame} gives
\begin{align*}
E_1(Y_1,Y_2,C)&\ll \widehat{Y}^{1/2}\widehat{Y}_2^{-\eta/p_2}\widehat{C}^{n/p_2}\left(\widehat{C}^{n/p_1}+\widehat{Y}_1^{n/3p_1}+\widehat{C}^{n(1/p_1-1/p_2)}\widehat{Y}_2^{n/3p_2}\right).
\end{align*}
Moreover, if the third term in the brackets dominates then we must have $\widehat{C}^3\ll \widehat{Y}_2$. Thus, on noting that $1/p_1-1/p_2>0$, we obtain
\begin{align*}
    E_1(Y_1,Y_2,C)&\ll \widehat{Y}^{1/2}\widehat{C}^{n/p_2}\widehat{Y}_2^{-\eta/p_2}\left(\widehat{C}^{n/p_1}+\widehat{Y}_1^{n/3p_1}+\widehat{Y}_2^{n/3p_1}\right)\\
&\ll  \widehat{Y}^{1/2}\widehat{C}^{n/p_2}\widehat{Y}_2^{-\eta'}\left(\widehat{C}^{n/p_1}+\widehat{Y}^{n/3p_1}\right),
\end{align*}
where we trivially estimated $\widehat{Y}_1,\widehat{Y}_2\leq \widehat{Y}$. Moreover, we have set $\eta'=\eta/p_2$, which satisfies $\eta'\geq \eta/4$ for $\varepsilon$ small enough.

Observe that $n/p_2+1-n/2>0$, so that summing over $Y_1,Y_2,C$ and using the integral estimate \eqref{Eq: IntEstimateEndgame} shows that the contribution from this case to $E_1(P)$ is 
\begin{align*}
    &\ll \sum_{Y_1,Y_2, C}|P|^{n-3}\widehat{C}^{n/p_2}\widehat{Y}^{1-n/2}\widehat{Y}_2^{-\eta'}\left(1+\frac{|P|\widehat{C}}{\widehat{Y}}\right)^{1-n/2}\left(\widehat{C}^{n/p_1}+\widehat{Y}^{n/3p_1}\right)\\
    &\ll \sum_{Y_1,Y_2}|P|^{n/2-2}\widehat{Y}_2^{-\eta'}\sum_C \widehat{C}^{1+n/p_2-n/2}\left(\widehat{C}^{n/p_1}+\widehat{Y}^{n/3p_1}\right)\\
    &\ll \sum_{Y_1,Y_2}|P|^{n/2-2}\widehat{Y}_2^{-\eta'}|P|^{(1+n/p_2-n/2)/2}\left(|P|^{n/2p_1}+\widehat{Y}^{n/3p_1}\right)\\
    &\ll \sum_{Y_1,Y_2}|P|^{3n/4-3/2}\widehat{Y}_2^{-\eta'},
\end{align*}
where we used $1/p_1+1/p_2=1$ and that $\widehat{C}\ll |P|^{1/2}$, as well as the fact that  $\widehat{Y}\ll |P|^{3/2}$. Since $3n/4-3/2\leq n-3$ for $n\geq 6$, to complete our treatment of $E_1(P)$ it will suffice to show that $\sum_{Y_1,Y_2}\widehat{Y}_2^{-\eta'}\ll 1$.
To do so, recall that $\widehat{Y}_2^{n+\varepsilon} > \widehat{W}^{\alpha/2}$.
If we set $\beta = \eta'\alpha/(4(n+\varepsilon))$, then recalling the definition of $\widehat{W}$ in \eqref{Eq: DefiMinAlpha} this implies 
\begin{align*}
    \widehat{Y}_2^{-\eta'}&\leq \widehat{Y}_2^{-\eta'/2}\widehat{W}^{-\beta}\\
    &\ll_\ve \widehat{Y}_2^{-\eta'/2}\left(\left(\frac{\widehat{Y}}{|P|^{3/2}}\right)^\beta +|P|^{\beta(\ve-1)/2}\right).
\end{align*}
Therefore, as $\sum_{Y_1,Y_2} |P|^{\beta(\ve-1)/2} \le Q^2 |P|^{\beta(\ve-1)/2}\ll 1$, we get
\begin{align*}
    \sum_{0\leq Y_2\leq Q}\sum_{0\leq Y_1 \leq Q-Y_2}\widehat{Y}_2^{-\eta'}&\ll |P|^{-3\beta/2}\sum_{0\leq Y_2\leq Q}\widehat{Y}_2^{-\eta'/2+\beta}\sum_{0\leq Y_1 \leq Q-Y_2}\widehat{Y}_1^\beta \\
    &\ll \sum_{0\leq Y_2\leq Q}\widehat{Y}_2^{-\eta'/2}\\
    &\ll 1,
\end{align*}
as $\eta'>0$. This completes our
proof of  Proposition \ref{Prop:nonDualContribution}.

\section{Contribution from the centre}

In this section we estimate the contribution from $\bm{c}=\mathbf{0}$ in 
\eqref{Eq: DeltaMethod}. 
This takes the shape
\begin{equation}\label{eq:darjeeling-tea}
M(P)
=|P|^n\sum_{\substack{r\in \Omon\\ |r|\leq \widehat{Q}}}|r|^{-n}
S_r(\mathbf 0)I_r(\mathbf 0),
\end{equation}
where $S_r(\bm{0})$ is given by \eqref{Eq: Definition S_r(c)}, 
and  
$I_r(\bm{0})$ is given by  \eqref{Eq: Definition I_r(theta,c)} and \eqref{eq:2-1 to brighton}.
The following is the main goal of this section. 

\begin{proposition}\label{prop:MP}
Let $n\geq 5$. Then 
$
M(P)\ll |P|^{n-3},
$
for any $Q\geq 0$.
\end{proposition}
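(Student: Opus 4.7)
The strategy is to bound $I_r(\mathbf{0})$ and $S_r(\mathbf{0})$ separately, then sum the resulting series over $r\in\Omon$.

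For the archimedean piece, set $\bm{c}=\mathbf{0}$ in \eqref{Eq: Definition I_r(theta,c)} and change variables via $\gamma=\theta P^3$ to obtain
\[
I_r(\mathbf{0}) = |P|^{-3} J_{F,w}^{\Gamma'}(\mathbf{0}),
\]
with $\widehat{\Gamma'}=|P|^3/(|r|\widehat{Q})$. The $\bm{w}=\mathbf{0}$ case of Lemma~\ref{Le: IntEstimateI} (whose implied constant is uniform in $\Gamma$) gives $J_{F,w}^{\Gamma'}(\mathbf{0}) \ll 1$, whence $I_r(\mathbf{0}) \ll |P|^{-3}$ uniformly in $r$. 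For the exponential sum, Lemma~\ref{lem:lem8.2'} directly yields
\[
|S_r(\mathbf{0})| = |r|^{(n+1)/2}|S_r^{\natural}(\mathbf{0})| \le A^{\omega(r)} |r|^{(n+2)/2} |\cub(r)|^{n/6}
\]
for some constant $A>0$. Inserting both bounds into \eqref{eq:darjeeling-tea} gives
\[
M(P) \ll |P|^{n-3} \sum_{\substack{r\in\Omon\\ |r|\leq\widehat{Q}}} A^{\omega(r)} |r|^{-(n-2)/2} |\cub(r)|^{n/6}.
\]

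It remains to show this sum over $r$ is bounded uniformly in $Q$ when $n\geq 5$. Factoring $r=uv$ with $u$ cube-free, $v=\cub(r)$ cube-full, and $\gcd(u,v)=1$, and noting that $-(n-2)/2+n/6=-(n-3)/3$, the sum factors multiplicatively as
\[
\Big(\sum_{u\text{ cube-free}} A^{\omega(u)} |u|^{-(n-2)/2}\Big)\Big(\sum_{v\text{ cube-full}} A^{\omega(v)} |v|^{-(n-3)/3}\Big).
\]
For $n\geq 5$, the first factor equals the convergent Euler product $\prod_\varpi(1+A|\varpi|^{-(n-2)/2}+A|\varpi|^{-(n-2)})$, since $(n-2)/2>1$; the second equals $\prod_\varpi(1+A\sum_{e\geq 3}|\varpi|^{-e(n-3)/3})$, which converges because the leading local term has exponent $3\cdot(n-3)/3 = n-3 > 1$. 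Both bounds are independent of $Q$, so $M(P)\ll |P|^{n-3}$, completing the proof. No serious obstacle arises; the estimate is essentially immediate from Lemmas~\ref{Le: IntEstimateI} and~\ref{lem:lem8.2'}, the main (mild) point being to verify convergence of the two Euler products at the boundary case $n=5$.
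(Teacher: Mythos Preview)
Your proof is correct and follows essentially the same approach as the paper: bound $I_r(\mathbf{0})\ll|P|^{-3}$ via Lemma~\ref{Le: IntEstimateI}, bound $S_r^\natural(\mathbf{0})$ via Lemma~\ref{lem:lem8.2'}, and then sum over $r$ using the cube-free/cube-full factorisation. The only cosmetic difference is that the paper routes the final summation through Lemma~\ref{LEM:sum-S_0(n)-trivially} (stated with an extra divisor sum for later use) and a dyadic decomposition in $\hat R$, whereas you factor the series directly as an Euler product; both arguments are equivalent and verify convergence precisely at $n=5$.
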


To begin with, 
it follows from \eqref{Eq: IntEstimateEndgame} 
that 
$
I_r(\mathbf 0) \ll |P|^{-3},
$
if $n\geq 3$. Hence
\begin{equation}\label{eq:m-speak}
M(P)\ll |P|^{n-3} \sum_{r \in \Omon}
|r|^{-n}|S_r(\mathbf 0)|=
|P|^{n-3} \sum_{r \in \Omon}
|r|^{(1-n)/2}|S_r^\natural(\mathbf 0)|,
\end{equation}
if $n\geq 3$.
We shall require the following technical result, which 
will also be useful in the next section.

\begin{lemma}
\label{LEM:sum-S_0(n)-trivially}
If $R\ge 0$ and $\ve>0$, 
then
\begin{equation*}
\sum_{\substack{r\in \Omon\\  \abs{r}=\hat R}} \abs{r}^{1-n/2}
\sum_{d\mid r} |d|^{-1/2} \lvert{S^\natural_d(\bm{0})}\rvert
\ll_\ve \hat R^{(4-n)/3+\ve}.
\end{equation*}
\end{lemma}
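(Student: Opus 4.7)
The plan is to swap the order of summation. Since every $r$ in the outer sum satisfies $|r|=\hat R$, the factor $|r|^{1-n/2}$ is the constant $\hat R^{1-n/2}$; interchanging sums, each monic $d\in\Omon$ with $|d|\le\hat R$ is divisible by exactly $\hat R/|d|$ monic $r$ of norm $\hat R$. Hence the left-hand side equals
\[
\hat R^{2-n/2}\sum_{\substack{d\in\Omon\\|d|\le\hat R}}|d|^{-3/2}|S_d^\natural(\bm{0})|,
\]
and the exponent identity $2-n/2+(n-4)/6=(4-n)/3$ reduces the claim to proving
\[
\sum_{|d|\le\hat R}|d|^{-3/2}|S_d^\natural(\bm{0})|\ll_\varepsilon \hat R^{(n-4)/6+\varepsilon}.
\]

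Next I would apply Lemma~\ref{lem:lem8.2'} to bound the summand by $A^{\omega(d)}|d|^{-1}|\cub(d)|^{n/6}$. I would then factor $d=d_1d_2$ into coprime pieces with $d_1$ supported on prime powers of exponent at most $2$ and $d_2=\cub(d)$ cubefull, so that $\cub(d)=d_2$ and $\omega(d)=\omega(d_1)+\omega(d_2)$. The sum over $d_1$ is harmless: $\sum_{d_1}A^{\omega(d_1)}|d_1|^{-1-\delta}$ converges for any $\delta>0$, and in particular the partial sum up to $\hat R$ is $\ll \hat R^\varepsilon$ by a standard Rankin-type argument.

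The essential ingredient is the cubefull sum over $d_2$. Its generating function $\sum_{w\ \text{cubefull}}|w|^{-s}=\prod_\varpi\bigl(1+|\varpi|^{-3s}/(1-|\varpi|^{-s})\bigr)$ has abscissa of convergence $\tfrac13$, so a dyadic decomposition yields
\[
\sum_{\substack{w\ \text{cubefull}\\|w|\le X}}A^{\omega(w)}|w|^{n/6-1}\ll_\varepsilon X^{\max(0,\,(n-4)/6)+\varepsilon}.
\]
Applying this with $X=\hat R/|d_1|$ and summing over $d_1$ gives exactly $\hat R^{(n-4)/6+\varepsilon}$ when $n\ge 5$, since the residual Dirichlet series in $d_1$ then has exponent strictly below $-1$ and converges; for $n\le 4$ the bound is even easier because the cubefull contribution is already $O(\hat R^\varepsilon)$.

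The main obstacle is minor: it is the bookkeeping at the threshold $n=4$, where the cubefull generating function is borderline and a $\log$ loss must be absorbed into $\hat R^\varepsilon$. Everything else is routine function-field divisor-counting over $\Omon$ combined with Lemma~\ref{lem:lem8.2'}, which itself packages the one-dimensional Hua-type estimate from \S~4.2 of \cite{glas2022question}.
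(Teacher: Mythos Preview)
Your argument is correct for $n\ge 4$ and uses the same two ingredients as the paper (Lemma~\ref{lem:lem8.2'} and the cubefree/cubefull decomposition), organised slightly differently: you swap the order of summation and count the $\hat R/|d|$ multiples of each $d$, whereas the paper bounds the inner divisor sum by its maximal term $|\cub(r)|^{n/6}$ and then decomposes $r$ rather than $d$. Both routes arrive at the same cubefull partial sum and the same final exponent, so the difference is purely bookkeeping.

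Two small remarks. First, a slip of language: you wrote that $d$ ``is divisible by'' $\hat R/|d|$ monic $r$, but you mean $d$ \emph{divides} that many $r$. Second, your closing claim that the case $n\le 4$ is ``even easier'' is only correct at $n=4$: for $n\le 3$ one has $2-n/2>(4-n)/3$, so the factor $\hat R^{2-n/2}$ already exceeds the target bound and your argument (like the paper's, which explicitly invokes $n\ge 5$) does not establish the stated inequality there. Since the lemma is only applied with $n\ge 5$, this is harmless.
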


The statement of Proposition \ref{prop:MP} follows on combining  this result with \eqref{eq:m-speak}, to deduce that 
\begin{align*}
\sum_{r \in \Omon}
|r|^{(1-n)/2}|S_r^\natural(\mathbf 0)| 
&\leq \sum_{R\geq 0} 
\sum_{\substack{r\in \Omon\\  \abs{r}=\hat R}} \abs{r}^{1-n/2}
\sum_{d\mid r} d^{-1/2} \lvert{S^\natural_d(\bm{0})}\rvert\\
&\ll_\ve \sum_{R\geq 0}  
 \hat R^{(4-n)/3+\ve}\\
& \ll 1,
\end{align*}
if $n\geq 5$.

\begin{proof}[Proof of Lemma \ref{LEM:sum-S_0(n)-trivially}]

Recalling the notation for $\cub(d)$ introduced in \eqref{eq:sq-cub}, 
it follows from Lemma~\ref{lem:lem8.2'} that 
$|d|^{-1/2}S_{d}^\natural(\bm{0})\ll_\ve  |\cub(d)|^{n/6+\ve}$,
for any $\ve>0$. Hence
$$
\sum_{\substack{r\in \Omon\\  \abs{r}=\hat R}} \abs{r}^{1-n/2}
\sum_{d\mid r} d^{-1/2} \lvert{S^\natural_d(\bm{0})}\rvert
\ll_\ve 
\hat R^{1-n/2+\ve}
\sum_{\substack{r\in \Omon\\  \abs{r}=\hat R}} 
|\cub(r)|^{n/6},
$$
on taking the trivial estimate for the divisor function. 
Writing $r=r_1r_2$, where $r_1$ is cube-free and $r_2$ is cube-full, we see that 
\begin{align*}
\sum_{\substack{r\in \Omon\\  \abs{r}=\hat R}} 
|\cub(r)|^{n/6}
&\leq
\sum_{\substack{r_1\in \Omon\\  \text{$r_1$ cube-free}}}
\sum_{\substack{r_2\in \Omon\\  
|r_2|\leq \hat R/|r_1|\\
\text{$r_2$ cube-full}}} 
|r_2|^{n/6}\\
&\ll
\hat R^{1/3+n/6}
\sum_{r_1\in \Omon} \frac{1}{|r_1|^{1/3+n/6}}\\
&\ll
\hat R^{1/3+n/6},
\end{align*}
since $n\geq 5$. The statement of the lemma easily follows. 
\end{proof}

\section{Contribution from dual variety}

Let $w$ be as in \S~\ref{SEC:integral-stuff}, with the properties specified in Hypothesis 
\ref{hyp}, with parameter $L=0$.
The purpose of this section is to study the quantity
\begin{equation}\label{eq:rosehip-tea}
E_2(P)=|P|^n\sum_{\substack{\bm{c}\in\OK^n\setminus\{\mathbf{0}\}\\F^*(\bm{c})= 0}}\sum_{\substack{r\in\Omon\\ |r|\leq \widehat{Q}}}|r|^{-n}S_r(\bm{c})I_r(\bm{c}),
\end{equation}
where  $Q$ is chosen so that $|P|^{3/2}\asymp \widehat{Q}$ and 
$I_r(\bm{c})=0$ unless $|\bm{c}|\ll  |P|^{1/2}$, by Lemma \ref{Le: IntVanish}. 
We shall be interested in this quantity when $n=6$ and 
$F=x_1^3+\dots+x_6^3$. Let $V\subset \PP_K^5$ denote the cubic hypersurface defined by $F$.

Let $\Upsilon$ be the set of $3$-dimensional $K$-vector spaces $L\belongs K^6$ with $F\vert_L = 0$. (Note that the projectivization of $\Upsilon$ is the $K$-point set of the Fano variety $F_2(V)$ of planes on $V$.)
Since $F$ is non-singular, it is well-known that $\Upsilon$ is finite, as explained by Starr \cite{browning2006density}*{Appendix}.
The task of estimating $E_2(P)$  is divided into two steps. 
 First, 
in Proposition \ref{Prop: LinearSpaceBias} 
we will establish a bias in the exponential sum $S_r(\bm{c})$ whenever $\bm{c}\in L^\perp$, for some $L\in \Upsilon$. Note that the projectivization of $L^\perp$ corresponds to a plane 
contained in the dual variety of $V$. Planes contained in the dual variety of $V$ correspond to planes in $V$  via bi-duality. Using Poisson summation, we shall show that the  bias of the exponential sums 
gives a contribution that exactly matches the contribution from the points of bounded height on 
$\bigcup_{L\in \Upsilon} L$.
The second task will be to show that the vectors $\bm{c}\in \OK^6$ such that 
$F^*(\bm{c})= 0$, but which do not lie on 
$L^\perp$ for any $L\in \Upsilon$, are sparse and form a negligible contribution.

 The goal of this section  is summarised in the  following result.

\begin{proposition}\label{Prop:DualContribution}
Let $\eps>0$. 
If $F=x_1^3+\dots+x_6^3$, then
$$
E_2(P)
= \sum_{L\in \Upsilon} \sum_{\bm{x}\in L\cap \OK^6} w(\bm{x}/P)
+ O_\ve(\abs{P}^{3-1/4+\eps}).
$$
\end{proposition}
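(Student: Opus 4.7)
The plan is to decompose $E_2(P)$ according to whether $\bm{c}$ lies on the orthogonal $L^\perp$ of some plane $L\in \Upsilon$ of $V$, to recognise the plane part as the main term via Poisson duality on the lattice $L\cap \OK^6$, and to absorb everything else into the error. Concretely, I would set $\Lambda \defeq \bigcup_{L\in \Upsilon}(L^\perp\cap \OK^6)\setminus\{\bm{0}\}$ and split $E_2(P) = E_2^{\mathrm{pl}}(P) + E_2^{\mathrm{sp}}(P)$ according to whether $\bm{c}\in \Lambda$ or not; since $\Upsilon$ is finite and the $L^\perp$ are pairwise distinct $\PP^2$'s in $\PP^5$, the decomposition is essentially disjoint.

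For the plane part $E_2^{\mathrm{pl}}(P)$, fix $L\in \Upsilon$. Both $F$ and $\bm{c}\cdot\bm{x}$ vanish identically on $L$, so every residue $\bm{x}\in L\bmod r$ contributes $\phi(r)$ to $S_r(\bm{c})$, producing a plane bias of size $\phi(r)|r|^3$ on top of an oscillatory remainder $S_r^\sharp(\bm{c})$; isolating this bias cleanly is the role of Proposition \ref{Prop: LinearSpaceBias}. Substituting the bias term back into \eqref{eq:rosehip-tea}, orthogonality of $\psi(\bm{c}\cdot\bm{u}/r)$ over $\bm{c}\in L^\perp\cap \OK^6$ forces $\bm{u}\in L\bmod r$, and since $F$ already vanishes on $L$ the $a$-sum and $\theta$-integration are trivial. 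The resulting expression can then be recognised, via a dual application of Lemma \ref{lem.poisson summation} on the rank-$3$ sublattice $L\cap \OK^6$, as $\sum_{\bm{x}\in L\cap \OK^6} w(\bm{x}/P)$. Summing over $L\in \Upsilon$ produces exactly the main term in the proposition.

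The error from the remainder $S_r^\sharp(\bm{c})$ in $E_2^{\mathrm{pl}}(P)$, together with $E_2^{\mathrm{sp}}(P)$, is then bounded by an easier variant of the analysis of $E_1(P)$ carried out in \S\ref{SEC:main-generic-endgame}. For $E_2^{\mathrm{sp}}(P)$, the number of $\bm{c}$ with $F^*(\bm{c})=0$ and $|\bm{c}|\le |P|^{1/2}$ is controlled by affine dimension growth (Lemma \ref{LEM:affine-dimension-growth}) applied to the $4$-dimensional dual hypersurface, and on this sporadic locus $S_r(\bm{c})$ satisfies essentially square-root cancellation on average, because by Proposition \ref{Prop: LinearSpaceBias} the only bias on the dual variety is accounted for by the planes of $V$. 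A dyadic summation in $|\bm{c}|$ and $|r|$, combined with the integral estimate \eqref{Eq: IntEstimateEndgame}, then yields $E_2^{\mathrm{sp}}(P)\ll_\eps |P|^{3-1/4+\eps}$; the same dyadic device controls the $S_r^\sharp$ contribution to $E_2^{\mathrm{pl}}(P)$.

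The principal obstacle will be the clean separation of $S_r(\bm{c})$ into the plane bias and a genuinely square-root-cancelling remainder, uniformly across $L\in \Upsilon$ and across the sporadic locus. This is the geometric core of Proposition \ref{Prop: LinearSpaceBias}, achieved via the quadric bundle approach of Beauville noted in item (5) of the introduction. Once that decomposition is in hand, the reconstruction of the main term is a formal Poisson duality exercise and the remaining error bound is a modest adaptation of arguments already developed earlier in the paper.
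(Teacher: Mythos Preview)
Your overall architecture---split off $\bm{c}\in\bigcup_{L\in\Upsilon}L^\perp$, extract the plane contribution via Poisson duality, bound the rest---matches the paper's, but two steps as written would fail.

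\textbf{The sporadic count.} Applying Lemma~\ref{LEM:affine-dimension-growth} to the dual hypersurface gives only
\[
\#\{\bm{c}\in\OK^6: F^*(\bm{c})=0,\ |\bm{c}|\le\hat C\}\ll \hat C^{5},
\]
since the affine cone over $V^*$ has dimension $5$. Removing finitely many $\PP^2$'s does not lower the dimension, so your bound on $E_2^{\mathrm{sp}}(P)$ is off by a factor of roughly $|P|^{3/2}$. The paper instead proves (Lemma~\ref{Le: UpperBoundexceptionalc}) the much stronger, diagonal-specific statement
\[
\#\{\bm{c}\in\mcal{S}_0\setminus\textstyle\bigcup_{L}\Lambda^\perp:\ |\bm{c}|\le\hat C\}\ll_\eps \hat C^{2+\eps},
\]
by exploiting the explicit factorisation $F^*(\bm{c})=\prod(\pm c_1^{3/2}\pm\cdots)$ to parameterise integral zeros. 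This is the real reason the sporadic locus is negligible; no ``square-root cancellation on average'' is invoked (and Proposition~\ref{Prop: LinearSpaceBias} says nothing about $\bm{c}\notin L^\perp$). Once one has the $\hat C^{2+\eps}$ count, the sporadic contribution is disposed of via the general upper bound Lemma~\ref{LEM:sparse-bound} with $\delta=1-\eps$.

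\textbf{The plane extraction.} Your additive splitting $S_r(\bm{c})=\phi(r)|r|^3+S_r^\sharp(\bm{c})$ is correct in spirit but not how the paper proceeds, and you have not said how to bound $S_r^\sharp$. The paper works multiplicatively: writing $\Psi(s)=\sum_r\phi_K(r)|r|^{-1/2-s}$ and $S_{r,0}(\bm{c})$ for the $r$th coefficient of $\Phi(\bm{c},s)/\Psi(s)$, one has the convolution identity~\eqref{EQN:decompose-S_c-as-convolution}. The Poisson step (Lemma~\ref{Le: LinSpaceExtract}) only applies once the $\Psi$-modulus $r_1$ satisfies $|r_1|\gg |P|$, so a cutoff parameter $\hat W\asymp|P|^{1/2}$ is introduced and one handles $|r_1|<\hat Q/\hat W$ and $|r_1|\ge\hat Q/\hat W$ separately (Lemmas~\ref{LEM:handle-n_1<Y/P}--\ref{LEM:handle-n_1>=Y/P-for-extra-c's}). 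The exponent $3-\tfrac14$ comes precisely from this choice of $\hat W$; a direct Poisson on the full range would not give the error term claimed.
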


In fact, much of our argument carries over to general $F$ in $n\in \{4,6\}$ variables, but a restriction to diagonal $F$ is necessary when it comes to handling square-full moduli.  
 Throughout this section we shall therefore  assume that $F=x_1^3+\dots+x_6^3$, unless otherwise indicated.
For later convenience, we note that if $\cha(\FF_q) > 3$ there 
 is a unique $L\in \Upsilon$ up to a $\FF_q$-linear automorphism of $F$, given by 
\begin{equation}\label{Eq: TheLinSpace}
L_0: \quad x_1+x_4 = x_2+x_5 = x_3+x_6 = 0.
\end{equation}
A proof of this fact is given in \cite{wang2022thesis}*{Remark 6.3.8},  based on \cites{kontogeorgis2002automorphisms,shioda1988arithmetic}.

\subsection{Geometric preliminaries}

Throughout this subsection, 
$k$ denotes an arbitrary field with $\cha(k)\neq 2$ and 
we shall adopt the notation $\PP(E)$ for the projectivization of a $k$-vector space $E$. (In particular, $\PP^n$ denotes the projective space $\PP(k^{n+1})$.)
Suppose that $V\subset \PP^5$ is a smooth cubic fourfold containing a $k$-plane $M\subset \PP^5$,  corresponding to a three-dimensional vector space $L\subset k^6$.
Projection away from $M$, defined by the natural map $\PP(k^6)\to \PP(k^6/L)$, gives a morphism $V\setminus M \to \PP(k^6/L)$. Blowing up $V$ along $M$ resolves the indeterminacy locus and gives a fibration $\widetilde{V}\to \PP(k^6/L)$ whose fibres are projective quadric surfaces. Being the blow-up of a smooth variety along a smooth subvariety, $\widetilde{V}$ is also smooth. If we denote by $\mathcal{C}\subset \PP(k^6/L)$ the zero scheme
of the determinant of the matrices that define the quadratic forms in the fibers, it follows from Proposition~1.2 and Exemple~1.4.2 of Beauville~\cite{BeauvilleQuadricBundles} that $\mathcal{C}$ is a curve of degree 6 which is either smooth or has ordinary double points.
Let $L^\perp$ be the orthogonal complement of $L$ inside $k^6$ and note that $\PP(L^\perp)$ parameterises hyperplanes in $\PP(k^6)$ containing $M$.
If $\bm{c}\in \PP(L^\perp)$,
we shall write $H_{\bm{c}}$ for the corresponding hyperplane containing $M$ in $\PP(k^6)$ and $H_{\bm{c}}'$ for the image of $H_{\bm{c}}$ in $\PP(k^6/L)$, which defines a projective line.

If we denote by $V_{\bm{c}}=H_{\bm{c}}\cap V$, then we get again a morphism $V_{\bm{c}}\setminus M\to H_{\bm{c}}'$ by projecting away from $M$. Blowing up $V_{\bm{c}}$ gives a fibration $\widetilde{V}_{\bm{c}}\to H_{\bm{c}}'$ into quadric surfaces and the locus of degenerate quadric surfaces is given by $\mathcal{C}_{\bm{c}}=H_{\bm{c}}'\cap \mathcal{C}$. 

Note that after a suitable change of variables we may assume that $L$ is given by $x_1=x_2=x_3=0$, so that $V$ is defined by a cubic form of the shape 
$$
x_1Q_1(\bm{x},\bm{y})+x_2Q_2(\bm{x},\bm{y})+x_3Q_3(\bm{x},\bm{y}),
$$ 
for some quadratic forms $Q_1, Q_2, Q_3 \in k[x_1,x_2,x_3,y_1,y_2,y_3]$. Upon defining 
\[
W\coloneqq \{(\bm{s}, \bm{y})\in \PP^2 \times \PP^2\colon s_1Q_1(\bm{0},\bm{y})+s_2Q_2(\bm{0},\bm{y})+s_3Q_3(\bm{0},\bm{y})=0\},
\]
we obtain a conic bundle $W\to \PP^2$ by projecting to the first factor. A straightforward computation with the Jacobian criterion shows that smoothness of $V$ along $L$ implies smoothness of $W$. Hence, on appealing once again to Proposition~1.2 of Beauville~\cite{BeauvilleQuadricBundles}, it follows that $f(\bm{s})=\det(s_1M_1+s_2M_2+s_3M_3)$ is non-zero as a polynomial in $\bm{s}$, where $M_i$ is the $3\times 3$ matrix underlying $Q_i$. The equation $f=0$ thus defines a curve $\mathcal{D}\subset \PP^2$.
\begin{definition}\label{Def: c.good}
    Let $\bm{c}\in L^\perp \setminus\{\bm{0}\}$. We say that $\bm{c}$ is \emph{good} if 
    \begin{enumerate}[(i)]
        \item $\mathcal{C}_{\bm{c}}$ is smooth of dimension $0$, 
        \item $H_{\bm{c}}\subset \PP^5$ contains no plane contained in $V$ other than $M$,
        \item $H'_{\bm{c}}$ is not an irreducible component of $\mathcal{D}$,
    \end{enumerate}
    and we say that $\bm{c}$ is \emph{bad} otherwise.
    \end{definition}
    Note that the definition depends on the choice of $L$. The significance of this definition is that hyperplane sections coming from good $\bm{c}$'s enjoy good geometric properties and bad vectors $\bm{c}$ only occur rarely, as we will see in the next two results.

\begin{lemma}\label{Le:Goodc.are.rare}
    Suppose $\bm{c}$ is bad. Then $\bm{c}$ lies in a subvariety of $\PP(L^\perp)$, all of whose irreducible components have dimension at most one.
\end{lemma}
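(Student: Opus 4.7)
My plan is to observe that $\PP(L^\perp)$ is a $\PP^2$, so it suffices to show that for each of the three conditions (i), (ii), (iii) in Definition~\ref{Def: c.good}, the locus where it fails is a proper Zariski-closed subvariety of $\PP(L^\perp)$: each will then automatically have all components of dimension at most $1$, and so will their union (the full bad locus). Closedness in each case is standard (semicontinuity of fibre dimension and smoothness for (i), incidence of linear subspaces for (ii), reducedness of the discriminant curve for (iii)), so the only real content is properness. The main obstacle, to the extent that there is one, is (i), where we have to use the geometric structure of $\mcal{C}$ coming from Beauville's work.

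For (i), I would use the identification $\bm{c}\mapsto H'_{\bm{c}}$ between $\PP(L^\perp)$ and the dual plane $(\PP(k^6/L))^\vee$ of lines in $\PP^2$. Since $\mcal{C}\subset \PP^2$ is a curve of degree $6$ with at worst ordinary double points (by Beauville \cite{BeauvilleQuadricBundles}*{Proposition~1.2, Exemple~1.4.2}), the locus of lines $\ell$ for which $\ell\cap\mcal{C}$ fails to be smooth and zero-dimensional is contained in the union of: the (finitely many) lines appearing as irreducible components of $\mcal{C}$, the dual curve $\mcal{C}^\vee\subset(\PP^2)^\vee$ of tangent lines at smooth points, and the finitely many pencils of lines through a singular point of $\mcal{C}$. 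Each of these three pieces is a proper closed subvariety of $(\PP^2)^\vee$; in particular the dual variety $\mcal{C}^\vee$ is a proper subvariety since $\deg\mcal{C}\ge 2$.

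For (ii), I would invoke the finiteness of $\Upsilon$ noted at the start of the section (via Starr's appendix in \cite{browning2006density}). For each plane $M'\in F_2(V)$ with $M'\ne M$, the span $\langle M,M'\rangle\subset \PP^5$ has dimension at least $3$, so the set of hyperplanes of $\PP^5$ containing both $M$ and $M'$ corresponds to a linear subspace of $\PP(L^\perp)\cong \PP^2$ of dimension at most $1$. Taking the union over the finite set of such $M'$ gives a proper closed subvariety of dimension at most $1$.

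For (iii), the divisor $f=0$ defining $\mcal{D}\subset\PP(k^6/L)$ is a non-zero polynomial of degree $3$ by the argument recalled just before Definition~\ref{Def: c.good}, so $\mcal{D}$ has at most three irreducible components, of which at most finitely many are lines. Hence the set of $\bm{c}\in\PP(L^\perp)$ with $H'_{\bm{c}}$ an irreducible component of $\mcal{D}$ is finite. Combining the three cases, the bad locus is a proper Zariski-closed subset of the irreducible variety $\PP(L^\perp)\cong \PP^2$, so every irreducible component has dimension at most~$1$, as required.
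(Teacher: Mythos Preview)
Your proof is correct and follows essentially the same approach as the paper: decompose the bad locus according to the three conditions (i)--(iii), and for each show it is a proper closed subvariety of $\PP(L^\perp)\cong\PP^2$. Your treatment of each case matches the paper's (dual curve plus pencils through nodes plus line components for (i); finiteness of planes on $V$ and dimension of the span for (ii); bounded number of components of $\mcal{D}$ for (iii)), with only cosmetic differences in framing.
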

\begin{proof}
Since $\mathcal{C}$ is reduced and has at worst ordinary double points, a generic hyperplane section is smooth of dimension $0$. Indeed, as long as $H'_{\bm{c}}$ does not lie on the dual variety of $\mathcal{C}$ or contain one of the singular points of $\mathcal{C}$, the intersection $H'_{\bm{c}}\cap \mathcal{C}$ will be smooth. Moreover, it will be $0$-dimensional as long as $H'_{\bm{c}}$ is not a component of $\mathcal{C}$. Each case forces $\bm{c}$ to lie on a subvariety of dimension at most one. 
    If $H_{\bm{c}}$ contains two distinct planes contained in $V$, then it must also contain the linear space they span, which has dimension at least 3. Hyperplanes in $\PP(k^6)$ containing a fixed three dimensional linear space are parameterised by a $\PP^1$ and since there are only finitely many planes contained in $V$, this shows that $\bm{c}$ lies on a union of finitely many lines. Finally, the curve $\mathcal{D}$ has at most 3 irreducible components and so there are at most 3 possibilities for $\bm{c}$ such that $H_{\bm{c}}'$ is an irreducible component of $\mathcal{D}$.
\end{proof}

\begin{lemma}\label{Le: Goodcsaregood}
    Suppose that $\bm{c}$ is good. Then every fibre of the quadric surface bundle $\widetilde{V}_{\bm{c}}\to H_{\bm{c}}'$ has at worst isolated singularities.
\end{lemma}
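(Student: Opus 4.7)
The plan is to reduce the statement to a rank bound on the symmetric matrices defining the fibers. A quadric surface in $\PP^3$ has at worst isolated singularities precisely when its symmetric $4\times 4$ matrix has rank $\geq 3$: rank $4$ gives a smooth quadric, rank $3$ gives a quadric cone with a single node, while rank $\leq 2$ produces a (possibly non-reduced) reducible quadric that is singular along an entire line. So the goal is to rule out rank $\leq 2$ in every fiber of $\widetilde{V}_{\bm{c}}\to H_{\bm{c}}'$.

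The first step is to identify $\widetilde{V}_{\bm{c}}\to H_{\bm{c}}'$ with the restriction of the ambient quadric surface bundle $\widetilde{V}\to \PP(k^6/L)$ to the projective line $H_{\bm{c}}'$, which is immediate from the construction by projection from $M$; under this identification the rank-drop locus in $H_{\bm{c}}'$ is exactly $\mathcal{C}_{\bm{c}}=H_{\bm{c}}'\cap\mathcal{C}$. The second step is to invoke the structural result of Beauville \cite{BeauvilleQuadricBundles} already cited before Definition~\ref{Def: c.good}: the discriminant curve $\mathcal{C}$ is reduced of degree $6$ with at worst ordinary double points, and these nodes are precisely the points at which the rank drops further, from $3$ to $2$. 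Hence the rank $\leq 2$ locus of the whole bundle coincides with the (finite) singular locus $\mathcal{C}^{\mathrm{sing}}$ of $\mathcal{C}$.

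The final step exploits condition (i) of Definition~\ref{Def: c.good}: if $H_{\bm{c}}'$ met $\mathcal{C}$ at some $P\in \mathcal{C}^{\mathrm{sing}}$, then locally the intersection of a line with a nodal plane curve at the node is either non-reduced (when the line is transverse to both analytic branches) or honestly singular (when tangent to a branch), either of which contradicts the assumption that $\mathcal{C}_{\bm{c}}$ is smooth of dimension $0$. Thus $H_{\bm{c}}'$ avoids $\mathcal{C}^{\mathrm{sing}}$ altogether, so every fiber of $\widetilde{V}_{\bm{c}}\to H_{\bm{c}}'$ has rank $\geq 3$, and the lemma follows. The only delicate point I would write out carefully is this last local check, since it is what makes hypothesis (i) do real work beyond merely forbidding tangency at smooth points of $\mathcal{C}$; by contrast, conditions (ii) and (iii) of Definition~\ref{Def: c.good} are not needed here and will presumably be used elsewhere.
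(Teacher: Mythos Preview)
Your proof is correct and takes a genuinely different route from the paper's. The paper argues case by case on the rank: rank $2$ means the fiber splits as two distinct planes, at least one of which lies in $V\cap H_{\bm{c}}$ but differs from $M$, contradicting condition (ii); rank $1$ means a double plane, which is ruled out by a theorem of Zak (from Katz's appendix to \cite{hooley1991number}) bounding the dimension of the singular locus of a linear section of a smooth hypersurface. So the paper uses condition (ii) together with this external input from Zak, and does not invoke condition (i) at all in this lemma. Your approach instead uses only condition (i): a corank $\geq 2$ fiber forces the corresponding base point to lie in $\mathcal{C}^{\mathrm{sing}}$, and any line through such a point meets $\mathcal{C}$ with multiplicity $\geq 2$, so $\mathcal{C}_{\bm{c}}$ fails to be smooth of dimension $0$ there. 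One minor remark: you cite Beauville for the identification of $\mathcal{C}^{\mathrm{sing}}$ with the corank $\geq 2$ locus, but only the easy inclusion (corank $\geq 2 \Rightarrow$ singular point of $\mathcal{C}$) is needed, and that is just the observation that the gradient of $\det$ is the adjugate, which vanishes once the rank drops below $3$. Your route is more self-contained; the paper's makes the geometric meaning of the rank drop (planes in $V$) more explicit.
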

\begin{proof}
    For $\bm{s}\in H_{\bm{c}}'$, let $Q_{\bm{s}}$ be the fibre above $\bm{s}$ in $\widetilde{V}_{\bm{c}}$, which is a projective quadric surface and hence corresponds to some matrix $M_{\bm{s}}\in \mathrm{Mat}_{4\times 4}(k)$. The claim of the lemma is now equivalent to $M_{\bm{s}}$ having rank at least 3. Note that if $\rk M_{\bm{s}} = 1$, then $Q_{\bm{s}}$ contains a double plane. However, $Q_{\bm{s}}$ is also a fibre of the fibration $\widetilde{V}\to \PP(k^6/L)$ and is thus the residual intersection of a linear $3$-space containing $M$ with the cubic hypersurface $V$. Therefore, a result of Zak~\cite{hooley1991number}*{Katz's Appendix, Theorem 2} shows that the singular locus of $Q_{\bm{s}}$ has dimension at most 1 and so $Q_{\bm{s}}$ cannot be a double plane.
    Now assume that $\rk M_{\bm{s}}=2$. This implies that $Q_{\bm{s}}$ is a union of two distinct planes. However, this is impossible as at least one such plane would be distinct from $M$ and  contained in $H_{\bm{c}}$. This would contradict the assumption that $\bm{c}$ is good.
\end{proof}

\subsection{Exponential sums} 
In this section it is no harder to work with an arbitrary cubic form $F\in \OK[x_1,\dots, x_6]$ 
defining a smooth hypersurface  $\mathcal{V}\subset \PP^5_{\OK}$. Let $V\coloneqq \mathcal{V}\times \Spec \FF_q(t)$ and for a prime $\varpi$ of $\OK$ we let $\FF_\varpi = \OK/\varpi\OK$ and set $\mathcal{V}_\varpi \coloneqq \mathcal{V}\times \Spec \FF_\varpi$. Moreover, we fix a $K$-vector space $L\subset K^6$ such that $F_{|L}\equiv 0$ and denote by $L^\perp$ its orthogonal complement inside $K^6$. We then set $\Lambda = L\cap \OK^6$ and $\Lambda^\perp = L^\perp \cap \OK^6$. We shall  write $M$ for the subspace of $\PP^{5}$ corresponding to $L$. In addition, we let $L_\varpi = \Lambda/\varpi \Lambda$ and $L^\perp_\varpi = \Lambda^\perp/\varpi \Lambda^\perp$, which are both $\FF_\varpi$-vector spaces of dimension 3. 

Let $\bm{c}\in L^\perp\setminus\{\bm{0}\}$. Abusing the definition of the previous subsection slightly, we shall also say that $\bm{c}$ is good if the corresponding point in projective space is good. Let $g\in \OK[y_1,y_2,y_3]$ be a degree 6 form defining the curve $\mathcal{C}$ in $\PP(K^6/L)$. Note that if $\bm{c}\in L^\perp\setminus\{\bm{0}\}$ is good, then by definition $\mathcal{C}_{\bm{c}}\subset H_{\bm{c}}'$ is smooth. In particular, if we work with coordinates $(s:t)$ on $H_{\bm{c}}'$, then there exists a separable binary form $g_{\bm{c}}\in \OK[s,t]$ of degree 6 whose coefficients are degree 6 polynomials in $\bm{c}$ that define $\mathcal{C}_{\bm{c}}$. 
\begin{example}
    Suppose that $L=\{x_1=x_2=x_3=0\}$. Then any $\bm{c}\in L^\perp\setminus\{\bm{0}\}$ takes the shape $(c_1,c_2,c_3,0,0,0)$ and, without loss of generality, we may assume that $c_3\neq 0$ and $c_1,c_2,c_3\in\OK$. Rational points on $H_{\bm{c}}'$ are of the form $(c_3s:c_3t: -(c_1s+c_2t))$ with $(s:t)\in \PP^1(\QQ)$ and we may take $g_{\bm{c}}(s,t)=g(c_3s,c_3t, -(c_1s+c_2t))$.
\end{example}
Fix coordinates $z_1,z_2, z_3$ on $L^\perp$. It follows from Lemma~\ref{Le:Goodc.are.rare} that the collection of all bad $\bm{c}\in \PP(L^\perp)$ is contained in a hypersurface. In particular, there exists a non-zero form $B\in \OK[z_1,z_2,z_3]$ such that if $B(\bm{c})\neq 0$, then $\bm{c}$ is good. Moreover, as Lemma~\ref{Le:Goodc.are.rare} is independent of the base field, it follows that if $\varpi \nmid B(\bm{c})$, then the reduction of $\bm{c}$ modulo $\varpi$ is also good with respect to $L_\varpi$ and $\FF_\varpi$. 
Recall from part (1) of 
Lemma \ref{lem:prime_power} 
that $S_\varpi^\natural(\bm{c})\ll |\varpi|^{1/2}$ if $F^*(\bm{c})=0$. The following result refines this estimate and  establishes a bias in the exponential sums for primes. 

\begin{proposition}\label{Prop: LinearSpaceBias}
    Suppose that $\bm{c}$ is good and $\varpi \nmid B(\bm{c})$. Then 
$$
    S^\natural_\varpi(\bm{c})=|\varpi|^{1/2}+O(1).
$$
\end{proposition}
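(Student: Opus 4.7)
The plan is to connect the exponential sum to a point count via \eqref{EQN:rewrite-S_c(p)-via-E_c} and then exploit the geometry of the hyperplane section $V_{\bm{c}}$ containing the plane $M$. Since $V$ is smooth, Deligne's resolution of the Weil conjectures gives $E^\natural_F(\FF_\varpi) = O(1)$, so from \eqref{EQN:rewrite-S_c(p)-via-E_c} it suffices to prove $E^\natural_{\bm{c}}(\FF_\varpi) = |\varpi|^{1/2} + O(1)$, which is equivalent to the ``biased'' Lang--Weil estimate
$$\#\mathcal{V}_{\bm{c}}(\FF_\varpi) = |\varpi|^3 + 2|\varpi|^2 + O(|\varpi|^{3/2}).$$
The extra $|\varpi|^2$ over the naive expectation for a smooth cubic threefold is the signature of the plane $M\subset V_{\bm{c}}$.

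The first step is to decompose $V_{\bm{c}}(\FF_\varpi) = M(\FF_\varpi) \sqcup (V_{\bm{c}}\setminus M)(\FF_\varpi)$, with $\#M(\FF_\varpi) = |\varpi|^2+|\varpi|+1$, and to replace the complement by $\widetilde{V}_{\bm{c}}(\FF_\varpi)\setminus E(\FF_\varpi)$, where $\widetilde{V}_{\bm{c}}\to V_{\bm{c}}$ is the blow-up along $M$ (a birational map that is an isomorphism away from $M$) and $E$ denotes its exceptional divisor. Because $\bm{c}$ is good and $\varpi\nmid B(\bm{c})$, the reduction modulo $\varpi$ remains good, so the quadric surface fibration $\widetilde{V}_{\bm{c}}\to H'_{\bm{c}}\cong \PP^1_{\FF_\varpi}$ has every fibre of rank $\geq 3$ (by Lemma~\ref{Le: Goodcsaregood}) and degeneration locus cut out by a square-free binary form $g_{\bm{c}}$ of degree $6$ (from Definition~\ref{Def: c.good}(i)).

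The heart of the argument is then to count $\widetilde{V}_{\bm{c}}(\FF_\varpi)$ fibre by fibre. A smooth quadric surface in $\PP^3_{\FF_\varpi}$ has $(|\varpi|+1)^2$ or $|\varpi|^2+1$ points according to whether its $4\times 4$ discriminant is a square in $\FF_\varpi^\times$, while a rank-$3$ cone has $|\varpi|^2+|\varpi|+1$ points. Uniformly over the possible ranks,
$$\#Q_t(\FF_\varpi) = |\varpi|^2+|\varpi|+1 + |\varpi|\,\chi_\varpi(g_{\bm{c}}(t)),$$
with the convention $\chi_\varpi(0)=0$; here $\chi_\varpi$ is the quadratic character on $\FF_\varpi$, and the discriminant of $Q_t$ equals $g_{\bm{c}}(t)$ up to an $\FF_\varpi^\times$-square factor (which $\chi_\varpi$ does not see). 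Summing over $t\in \PP^1(\FF_\varpi)$ produces the main term $(|\varpi|+1)(|\varpi|^2+|\varpi|+1)$ plus $|\varpi|\sum_t \chi_\varpi(g_{\bm{c}}(t))$. Since $g_{\bm{c}}$ is square-free of degree $6$, the double cover $y^2=g_{\bm{c}}(t)$ is a smooth hyperelliptic curve of genus $2$, so Weil's bound yields $\sum_t \chi_\varpi(g_{\bm{c}}(t)) = O(\sqrt{|\varpi|})$, and hence $\#\widetilde{V}_{\bm{c}}(\FF_\varpi) = |\varpi|^3 + 2|\varpi|^2 + O(|\varpi|^{3/2})$.

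Finally, I would handle $\#E(\FF_\varpi)$ using that $E\to H'_{\bm{c}}$ is a conic bundle, arising by restricting the conic bundle $W\to \PP^2$ attached to $L$ to the line $H'_{\bm{c}}$. Goodness condition (iii) in Definition~\ref{Def: c.good} ensures that $H'_{\bm{c}}\cap \mathcal{D}$ is a finite set bounded in size by $\deg\mathcal{D} = 3$, so only $O(1)$ conic fibres degenerate; each degenerate conic contributes $O(|\varpi|)$ points and the remaining smooth conics each contribute $|\varpi|+1$, giving $\#E(\FF_\varpi) = (|\varpi|+1)^2 + O(|\varpi|) = |\varpi|^2 + O(|\varpi|)$. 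Combining the three contributions yields
$$\#\mathcal{V}_{\bm{c}}(\FF_\varpi) = \#\widetilde{V}_{\bm{c}}(\FF_\varpi) - \#E(\FF_\varpi) + \#M(\FF_\varpi) = |\varpi|^3 + 2|\varpi|^2 + O(|\varpi|^{3/2}),$$
as required. The trickiest step is pinning down the uniform fibrewise formula for $\#Q_t(\FF_\varpi)$ and identifying the quadric discriminant with (an $\FF_\varpi^\times$-square multiple of) $g_{\bm{c}}(t)$, after which Weil's bound for the genus-$2$ hyperelliptic character sum delivers the bias $|\varpi|^{1/2}$.
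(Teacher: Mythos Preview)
Your proposal is correct and follows essentially the same route as the paper: both reduce via \eqref{EQN:rewrite-S_c(p)-via-E_c} to the point count $\#\mathcal{V}_{\bm{c}}(\FF_\varpi)=|\varpi|^3+2|\varpi|^2+O(|\varpi|^{3/2})$, fibre the (blown-up) hyperplane section as a quadric surface bundle over $H'_{\bm{c}}\cong\PP^1$, use the quadratic character formula for smooth quadric fibres together with the Weil bound for $\sum_t\chi_\varpi(g_{\bm{c}}(t))$, and control the conic bundle $E$ (the paper's $C_{(\alpha:\beta)}$) via condition~(iii) of Definition~\ref{Def: c.good}. The only cosmetic differences are that the paper works in explicit coordinates rather than blow-up language, and treats the rank-$3$ fibres by Lang--Weil rather than via your uniform formula with $\chi_\varpi(0)=0$.
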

\begin{proof}
It follows from \eqref{EQN:rewrite-S_c(p)-via-E_c} that 
        \begin{equation}\label{eq:pure-veg}
    S_{\varpi}(\bm{c})=|\varpi|(|\varpi|\#\mathcal{V}_{\bm{c},\varpi}(\FF_\varpi)-\#\mathcal{V}_\varpi(\FF_\varpi)).
    \end{equation}
    After a suitable $\FF_\varpi$-linear change of variables, we may assume that $L_\varpi$ is given by $x_1=x_2=x_3=0$ and the cubic form $F$ takes the shape 
    $$
    F(\bm{x},\bm{y})=x_1Q_1(\bm{x},\bm{y})+x_2Q_2(\bm{x},\bm{y})+x_3Q_3(\bm{x},\bm{y}),$$ 
    for some quadratic forms $Q_1,Q_2,Q_3\in \FF_\varpi[x_1,x_2,x_3,y_1,y_2,y_3]$. In particular, we have $\bm{c}=(c_1,c_2,c_3,0,0,0)\in \FF_\varpi^6$. Moreover, as $g_{\bm{c}}\neq 0$, we must have $\bm{c}\neq 0$. Without loss of generality, assume that $c_3\neq 0$, so that $\mathcal{V}_{\bm{c},\varpi}$ is defined by the vanishing of the form $F_{\bm{c}}(s,t, \bm{y})=F(c_3s,c_3t, -(c_1s+c_2t),\bm{y})$. 
Then we may rewrite $F_{\bm{c}}$ as $sq_1(s,t,\bm{y})+tq_2(s,t,\bm{y})$, where
    \[
    q_i(s,t, \bm{y})= c_3Q_i(c_3s,c_3t, -(c_1s+c_2t),\bm{y})-c_iQ_3(c_3s,c_3t, -(c_1s+c_2t),\bm{y}).
    \]
     Any $(s,t)\in \FF_\varpi^2\setminus\{\bm{0}\}$ satisfies $\beta s -\alpha t=0$ for a unique $(\alpha:\beta)\in \PP^1(\FF_\varpi)$ and we may then write $(s:t:y_1:y_2:y_3)= (\alpha u:\beta u: y_1: y_2:y_3)$ for some $u\in\FF_\varpi\setminus\{0\}$. If $(\alpha u: \beta u: y_1: y_2: y_3)\not\in M$, then $u\neq 0$ and the equation $F_{\bm{c}}(s,t,\bm{y})=0$ becomes 
    \begin{equation}\label{Eq: QuadricBundle}
        \alpha q_1(\alpha u, \beta u, y_1,y_2,y_3)+\beta q_2(\alpha u, \beta u, y_1,y_2,y_3)=0. 
    \end{equation}
    This is the defining equation for the residual quadric surface that arises when intersecting $\mathcal{V}_{\bm{c},\varpi}$ with the hyperplane spanned by $L_\varpi$ and $(s:t)$. Upon defining the quadric surfaces 
    \[
    S_{(\alpha:\beta)}= \{ (u:y_1:y_2:y_3)\in \PP^3\colon \eqref{Eq: QuadricBundle}\text{ holds}\}
    \]
    and the conics
    \[
    C_{(\alpha:\beta)}=\{(y_1:y_2:y_3)\in \PP^2\colon \alpha q_1(0,0,y_1,y_2,y_3)+\beta q_2(0,0, y_1,y_2,y_3)=0\},
    \]
    we see that $C_{(\alpha,\beta)}$ is precisely $S_{(\alpha,\beta)}\cap L_\varpi$ and we may therefore rewrite 
    \begin{equation}\label{Eq: IncExcQuadrics}
    \#\mathcal{V}_{\bm{c},\varpi}(\FF_\varpi)= \# L_{\varpi}(\FF_\varpi)+ \sum_{(\alpha:\beta )\in \PP^1(\FF_\varpi)}\#S_{(\alpha:\beta)}(\FF_\varpi) -\sum_{(\alpha:\beta)\in\PP^1(\FF_\varpi)}\#C_{(\alpha:\beta)}(\FF_\varpi).
    \end{equation}

    Since $\varpi\nmid B(\bm{c})$, the reduction of $\bm{c}$ modulo $\varpi$ is good and hence the reduction of the binary form $g_{\bm{c}}$ modulo $\varpi$ is separable of degree 6. Moreover, by construction $S_{(\alpha:\beta)}$ will be singular precisely when $g_{\bm{c}}(\alpha,\beta)=0$, since $g_{\bm{c}}(\alpha,\beta)$ is just the determinant of the matrix underlying the quadratic form defining $S_{(\alpha:\beta)}$.
    Let $\chi\colon \FF_\varpi^\times\to \CC^\times$ be the unique character of order 2.
    It  follows from Schmidt \cite{schmidt1976}*{\S~IV.2} that    
$$
        \#S_{(\alpha:\beta)}(\FF_\varpi)= |\varpi|^2 +|\varpi|(1+\chi(g_{\bm{c}}(\alpha,\beta)))+1,
$$
provided $g_{\bm{c}}(\alpha,\beta)\neq 0$.
    Let $0\leq N_0 \leq 6$ be the number of $(\alpha:\beta)\in \PP^1(\FF_\varpi)$ such that $g_{\bm{c}}(\alpha,\beta)=0$. It then follows that
    \begin{align*}
        \sum_{\substack{(\alpha:\beta )\in \PP^1(\FF_\varpi)\\ g_{\bm{c}}(\alpha,\beta)\neq 0}}
        \hspace{-0.5cm}\#S_{(\alpha:\beta)}(\FF_\varpi) &= (|\varpi|^2+|\varpi|)(|\varpi|+1-N_0) +|\varpi|
                \hspace{-0.2cm}
        \sum_{ \substack{t\in \FF_\varpi \\ g_{\bm{c}}(t,1)\neq 0}}
                \hspace{-0.2cm}
         \chi(g_{\bm{c}}(t,1))+O(1)\\
        &= (|\varpi|^2+|\varpi|)(|\varpi|+1-N_0) + O(|\varpi|^{3/2}), 
    \end{align*}
    by the classical Weil bound for character sums, which is applicable since $g_{\bm{c}}(t,1)$ is square-free. 

    As the reduction of $\bm{c}$ modulo $\varpi$ is good, we know from Lemma~\ref{Le: Goodcsaregood} that the matrix underlying the quadratic form defining $S_{(\alpha:\beta)}$ has rank at least 3. In particular, $S_{(\alpha:\beta)}$ is irreducible and by Lang-Weil we get $\#S_{(\alpha:\beta)}(\FF_\varpi)=|\varpi|^2 +O(|\varpi|^{3/2})$. Thus 
$$
        \sum_{\substack{(\alpha:\beta )\in \PP^1(\FF_\varpi)\\ g_{\bm{c}}(\alpha,\beta)= 0}}\#S_{(\alpha:\beta)}(\FF_\varpi) = N_0 |\varpi|^2 +O(|\varpi|^{3/2}). 
$$

Finally, observe that since $\varpi\nmid B(\bm{c})$, property (iii) in Definition~\ref{Def: c.good} implies that there are at most 3 possible $(\alpha:\beta)\in \PP^1(\FF_\varpi)$ for which $C_{(\alpha:\beta)}$ is singular. If $C_{(\alpha:\beta)}$ is smooth, then it contains $|\varpi|+1$ $\FF_\varpi$-points, while if it is singular it is either a double line or a union of two lines, so that in either case contains $O(|\varpi|)$ $\FF_\varpi$-points. Therefore, 
$$
    \sum_{(\alpha:\beta)\in \PP^1(\FF_\varpi)}\#C_{(\alpha:\beta)}(\FF_\varpi)= |\varpi|^2 +O(|\varpi|). 
$$
Combining these estimates  with the fact that $\#L_\varpi(\FF_\varpi)=|\varpi|^2+|\varpi|+1$, it follows from \eqref{Eq: IncExcQuadrics} that 
\[
\#\mathcal{V}_{\bm{c},\varpi}(\FF_\varpi)= |\varpi|^3+2|\varpi|^2 +O(|\varpi|^{3/2}).
\]
Moreover, by Deligne's bounds (as recorded in Equation (3.12) and the paragraph thereafter of \cite{browning2015rational}), we have $\#\mathcal{V}(\FF_\varpi)= |\varpi|^4+|\varpi|^3 +O(|\varpi|^2)$. Thus 
\eqref{eq:pure-veg} yields
\begin{align*}
    S_{\varpi}(\bm{c})&=|\varpi|(|\varpi|\#\mathcal{V}_{\bm{c},\varpi}(\FF_\varpi)-\#\mathcal{V}_\varpi(\FF_\varpi))
    = |\varpi|^4+O(|\varpi|^{7/2}).
\end{align*}
On dividing both sides by $|\varpi|^{7/2}$, we are led to the statement of the proposition. 
\end{proof}

\newcommand{\LcG}{\mathcal{N}_{\bm{c}}^{\map{G}}} 
\newcommand{\LcB}{\mathcal{N}_{\bm{c}}^{\map{B}}} 

Motivated by the previous result, for good $\bm{c}\in L^\perp \setminus\{\bm{0}\}$ we define 
\[
\LcG\coloneqq \{r\in \OK^+\colon \gcd(r,B(\bm{c}))=1\}
\quad\text{and}\quad
\LcB\coloneqq \{r\in \OK^+\colon \rad(r)\mid B(\bm{c})\}
\]
and observe that  these sets depend implicitly on $L$. Moreover, it will be convenient to introduce the notation 
\begin{equation*}
\mcal{N}_{\ge}(l) \defeq \set{r\in \Omon: \varpi\mid r\Rightarrow v_\varpi(r)\ge l},
\end{equation*}
for any integer $l\geq 1$.
\subsection{A general upper bound} 
Having completed the treatment of individual exponential sums, we will now focus on our case of interest and assume that 
\[
F(\bm{x})=x_1^3+\dots +x_6^3
\]
for the rest of this section.
Recall the definition \eqref{EQN:define-S_0,S_1-for-Delta-vanishing-and-non-zero-loci} of $\mathcal{S}_0$. 
For each set $\mathcal{T}\belongs \mathcal{S}_0$, let
\begin{equation}
\label{EQN:sparse-absolute-contribution-f(S)-to-delta-method}
f(\mcal{T})\defeq \abs{P}^6 \sum_{\bm{c}\in \mcal{T}\setminus \set{\bm{0}}}
\sum_{
\substack{r\in \Omon\\ 
\abs{r}\le \widehat{Q}}} \abs{I_r(\bm{c})} \cdot \abs{r}^{-6}
\cdot \sum_{s\mid r} \abs{r/s}^{4} \abs{S_s(\bm{c})}.
\end{equation}
At several points later, it will be convenient to
discard $f(\mcal{T})$ for various choices of $\mcal{T}$. This is achieved through the following result (which is analogous to \cite{wang2023special}*{Lemma~5.2}).

\begin{lemma}
\label{LEM:sparse-bound}
Let $\mathcal{T}\belongs \mathcal{S}_0$ and $\delta\in \RR$.
Suppose $\set{\lambda\bm{c}: (\lambda, \bm{c})\in K^\times \times \mcal{T}} \cap \OK^n = \mcal{T}$ and $\#{\set{\bm{c}\in \mcal{T}: \norm{\bm{c}}\le \hat C}}=O(\hat C^{n-3-\delta})$ for all $C\in \RR_{\geq 0}$. 
Assume $\delta\leq \frac52$.
Then $$
f(\mcal{T})\ll_\ve \abs{P}^{3- \delta/2 + \eps},
$$
for any $\eps>0$.
\end{lemma}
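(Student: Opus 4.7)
Following \cite{wang2023special}*{Lemma 5.2} adapted to the function-field setting, my plan proceeds in three stages.

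\textbf{Pointwise bounds and support reduction.} By Lemma \ref{Le: IntVanish}, the integral $I_r(\bm{c})$ vanishes unless $|\bm{c}|\ll |P|^{1/2}$, so the $\bm{c}$-sum in $f(\mcal{T})$ is supported there. Lemma \ref{Le: IntEstimateI} provides $|I_r(\bm{c})|\ll |P|^{-3}(1+|P||\bm{c}|/|r|)^{-2}$ for $n=6$. The Hua-type estimate of Lemma \ref{lem:lem8.2'} gives $|S_s(\bm{c})|\le A^{\omega(s)}|s|^{4}|\cub(s)|$, and since $\cub(s)\mid \cub(r)$ for $s\mid r$, one obtains $\sum_{s\mid r}|r/s|^{4}|S_s(\bm{c})|\ll_{\eps} |r|^{4+\eps}|\cub(r)|$.

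\textbf{Summation over $r$.} Changing variables $r=st$ and summing first over $t$ with the bound above, a dyadic analysis in $|t|$ yields $\sum_{t:\,|st|\le \hat Q}|t|^{-2}|I_{st}(\bm{c})|\ll|P|^{-3}\min(|s|/(|P||\bm{c}|),1)$. Then, using the convergent estimate $\sum_{|s|=\hat S}|s|^{-2}|\cub(s)|\ll \hat S^{-2/3}$ (which follows from writing $s$ as the product of its cube-free and cube-full parts), a dyadic sum in $|s|=\hat S$ against the decaying weight $\min(\hat S/(|P||\bm{c}|),1)$ collapses the internal $r$-sum to a weight of the form $|P|^{-3}(|P||\bm{c}|)^{-\alpha+\eps}$ for an exponent $\alpha>0$ to be determined by this balancing.

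\textbf{Sparsity input via the cone structure.} The key to reaching the sharp exponent $3-\delta/2$ is to exploit that $\mathcal{T}$ is scaling invariant. I would decompose $\bm{c}=\lambda\bm{c}'$ with $\bm{c}'$ primitive in $\mathcal{T}$ and $\lambda\in \Omon$, and for each primitive $\bm{c}'$ rearrange the $r$-sum by extracting the component of $r$ divisible by powers of $\lambda$ (controlled via Lemma \ref{LEM:ad-hoc-reduce-to-primitive-c's}, which forces $\varpi^2\mid \bm{c}$ for non-vanishing of $S_{\varpi^e}(\bm{c})$ when $e\ge 4$ and $\varpi\mid \bm{c}$). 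Summing over $\lambda$ and then invoking the sparsity hypothesis $\#\{\bm{c}\in \mathcal{T}:|\bm{c}|\le\hat C\}\ll \hat C^{n-3-\delta}$ on the primitive vectors, a final dyadic sum in $\hat C\le |P|^{1/2}$ yields $|P|^{3-\delta/2+\eps}$. The hypothesis $\delta\le \frac{5}{2}$ ensures that this terminal dyadic sum is top-heavy at $\hat C\asymp |P|^{1/2}$, which is precisely what gives the claimed exponent.

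\textbf{Main obstacle.} The hardest step is executing the scaling reduction cleanly: one must track how $S_r(\lambda\bm{c}')$ decomposes when $\gcd(r,\lambda)>1$, making simultaneous use of multiplicativity of $S_r$, the bijection $S_r(\lambda\bm{c}')=S_r(\bm{c}')$ in the coprime case, and the $\lambda$-vanishing of Lemma \ref{LEM:ad-hoc-reduce-to-primitive-c's} in the non-coprime case. Controlling the cube-full contribution to the modulus, where the trivial estimate $|\cub(r)|\le |r|$ is tight, is the most technically involved point and drives the precise form of the balancing in the dyadic sums.
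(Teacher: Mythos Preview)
Your approach has a genuine gap: Stages~1--2 exactly as written yield only $f(\mathcal{T})\ll_\varepsilon|P|^{7/2-\delta/2+\varepsilon}$, off by $|P|^{1/2}$ from the target. Carrying out your Stage~2 computation, one arrives at
\[
f(\mathcal{T})\ll|P|^3\sum_{\substack{\bm{c}\in\mathcal{T}\setminus\{\bm 0\}\\|\bm{c}|\ll|P|^{1/2}}}(|P||\bm{c}|)^{-2/3+\varepsilon},
\]
and a dyadic sum in $|\bm{c}|=\hat C$ against the sparsity bound $\hat C^{3-\delta}$ gives $|P|^{7/3}\cdot|P|^{(7/3-\delta)/2}=|P|^{7/2-\delta/2}$ (for $\delta<7/3$; for larger $\delta$ one still lands above the target). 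The culprit is Lemma~\ref{lem:lem8.2'}: it is \emph{uniform in $\bm{c}$} (the paper itself flags it as the bound ``helpful when $\bm{c}=\bm{0}$''), so it discards precisely the $\bm{c}$-dependence that the cone structure of $\mathcal{T}$ is meant to exploit.

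You correctly sense in Stage~3 that the scaling $\bm{c}=\lambda\bm{c}'$ must recover this half-power inside the $r$-sum, but the mechanism you propose does not do it: Lemma~\ref{LEM:ad-hoc-reduce-to-primitive-c's} only forces vanishing at primes with $\varpi\|\bm{c}$ and $\varpi^4\mid r$, which is far too thin a set of moduli to account for a full factor of $|P|^{1/2}$; and your homogeneity identity $S_r(\lambda\bm{c}')=S_r(\bm{c}')$ for $\gcd(r,\lambda)=1$, while correct, offers no saving by itself since the crude bound was already $\bm{c}$-independent. What actually works (and what the paper does, following \cite{glas2022question}*{Lemma~6.1}) is to replace the Stage~1 input \emph{from the outset} by the sharper estimate
\[
\sum_{s\mid r}|r/s|^4|S_s(\bm{c})|\ll|r|^\varepsilon|r_1|^4|r_2|^{14/3}|r_3|^5\,|\gcd(r_2,\bm{c})|^{1/2}
\]
in the $r=r_1r_2r_3$ factorisation of \cite{glas2022question}. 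The factor $|\gcd(r_2,\bm{c})|^{1/2}$ is what couples the moduli sum to $\bm{c}$ and lets the scaling reduction gain the missing half-power; once this bound is in place, the remainder of the \cite{glas2022question} argument runs verbatim with $-\delta$ in place of their parameter $\eta$, and the hypothesis $\delta\le\tfrac52$ enters only to keep the terminal $\hat C$-sum top-heavy.
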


\begin{proof}
The result  \cite{glas2022question}*{Lemma 6.1}  essentially gives a version of this result with $\delta<0$,
and with $s=r$ instead of $\sum_{s\mid r}$. 
The same method extends to the present setting, with minor modifications that we now describe.

First, the integral estimate \cite{glas2022question}*{Eq.~(6.9)} still holds.
(In fact, we have seen that the stronger estimate \eqref{Eq: IntEstimateEndgame} is available.)
Second, taking $n=6$, we have 
\begin{equation*}
S_s(\bm{c})
\ll \abs{s}^\ve \abs{s_1}^{4} \abs{s_2}^{14/3} \abs{s_3}^{5} \abs{\gcd(s_2,\bm{c})}^{1/2},
\end{equation*}
in the notation of \cite{glas2022question}.
Therefore,
\begin{equation*}
\sum_{s\mid r} \abs{r/s}^{4} \abs{S_s(\bm{c})}
\ll \abs{r}^{\ve} \abs{r_1}^{4} \abs{r_2}^{14/3} \abs{r_3}^{5} \abs{\gcd(r_2,\bm{c})}^{1/2},
\end{equation*}
since $s_2\mid r_2$ and $s_3\mid r_3$.
This is of the exact same strength as the bound for the $s=r$ term given in 
\cite{glas2022question}*{Lemma~6.1}.
Since $-\frac52 + \delta \le 0$, the rest of the argument of \cite{glas2022question} goes through with $-\delta$ in place of  $\eta$ to give the desired upper bound for $f(\mcal{T})$.
\end{proof}

\subsection{Factorising exponential sums}

Given the bias established in Proposition~\ref{Prop: LinearSpaceBias}, we typically expect $S^\natural_r(\bm{c})$ to be of size $|r|^{1/2}$ for $\bm{c}\in \Lambda^\perp$, at least when $r$ is square-free. To keep track of the error, we will decompose $S_r^\natural(\bm{c})$ into smaller pieces. 

For $r\in \OK$, let $\phi_K(r)=\#(\OK/r\OK)^\times$ be the Euler totient function. Consider the Dirichlet series
\begin{equation*}
\Psi(s) \defeq
\sum_{r\in \Omon} \frac{\phi_K(r) \abs{r}^{-1/2}}{\abs{r}^s}
= \prod_\varpi \frac{1 - \abs{\varpi}^{-s-1/2}}{1 - \abs{\varpi}^{-s+1/2}}
= \frac{\zeta_K(s-\frac12)}{\zeta_K(s+\frac12)}.
\end{equation*}
In the notation of \eqref{EQN:define-Phi}, it follows from  Proposition~\ref{Prop: LinearSpaceBias} that  the local factor $\Phi_\varpi(\bm{c},s)$ should  resemble the local factor $\Psi_\varpi(s)$ of $\Psi(s)$.
Let $S_{r,0}(\bm{c})$ be the $r$th coefficient of the Euler product $\Phi(\bm{c},s)/\Psi(s)$; then
\begin{equation}
\label{EQN:define-multiplicative-error-Dirichlet-series}
\sum_{r\in \Omon} \abs{r}^{-s} S_{r,0}(\bm{c})
= \Phi(\bm{c},s)/\Psi(s)
= \zeta_K(s-\tfrac12)^{-1} \cdot \zeta_K(s+\tfrac12) \cdot \Phi(\bm{c},s).
\end{equation}
Since $\mu_K(r)$ is the $r$th coefficient of $\zeta_K(s)^{-1} = \prod_\varpi (1-\abs{\varpi}^{-s})$, it follows upon expanding products that
\begin{equation}
\label{EQN:expand-multiplicative-error-as-convolution}
S_{r,0}(\bm{c})
= \sum_{\substack{d_0,d_1,d_2\in \Omon\\ d_0d_1d_2=r}} \mu_K(d_0) \abs{d_0}^{1/2}
\cdot \abs{d_1}^{-1/2}\cdot S^\natural_{d_2}(\bm{c}).
\end{equation}
Moreover,
the Euler product factorisation $\Phi = (\Phi/\Psi) \cdot \Psi$ implies
\begin{equation}
\label{EQN:decompose-S_c-as-convolution}
S^\natural_r(\bm{c}) = \sum_{\substack{r_0,r_1\in \Omon\\  r_0r_1 = r}}
S_{r_0,0}(\bm{c}) \cdot \phi_K(r_1) \abs{r_1}^{-1/2}.
\end{equation}
We will need some basic properties of $S_{r,0}(\bm{c})$ as a function of $\bm{c}\in \Lambda^\perp$ and $r\in \Omon$.

\begin{lemma}
\label{PROP:S_c,0(n)-nice-algebraic-properties}
The quantity $S_{r,0}(\bm{c})$ is multiplicative in $r$ and only depends on $\bm{c}\bmod{r}$.
Also,
\begin{equation*}
\EE_{\bm{c}\in \Lambda^\perp/r\Lambda^\perp}[S_{r,0}(\bm{c})] = 
\begin{cases}
1 & \text{ if $r=1$,}\\
0 & \text{ otherwise.}
\end{cases}
\end{equation*}
\end{lemma}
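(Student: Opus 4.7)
My plan is to prove the three claims in order. Multiplicativity of $r\mapsto S_{r,0}(\bm{c})$ will be immediate from \eqref{EQN:define-multiplicative-error-Dirichlet-series}: the generating function $\Phi(\bm{c},s)/\Psi(s)$ is a ratio of two Euler products, hence an Euler product itself. For the mod-$r$ statement I will invoke the convolution identity \eqref{EQN:expand-multiplicative-error-as-convolution}, which writes $S_{r,0}(\bm{c})$ as a finite linear combination of quantities $S^{\natural}_{d_2}(\bm{c})$ with $d_2\mid r$, each depending on $\bm{c}$ only modulo $d_2$ (and hence modulo $r$) by inspection of \eqref{Eq: Definition S_r(c)}.

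The heart of the argument will be the vanishing of the average for $r>1$. I will first compute the atomic moment
\[
N(d) \defeq \EE_{\bm{c}\in\Lambda^{\perp}/d\Lambda^{\perp}}\bigl[S^{\natural}_d(\bm{c})\bigr],\quad d\in\Omon.
\]
Unfolding $S_d(\bm{c})$ via \eqref{Eq: Definition S_r(c)} and averaging the character $\psi(-\bm{c}\cdot\bm{x}/d)$ over $\Lambda^{\perp}/d\Lambda^{\perp}$, orthogonality on this free rank-$3$ module produces the indicator of the annihilator of $\Lambda^{\perp}$ inside $\OK^{6}/d\OK^{6}$. The key geometric input is that $\Lambda$ is a primitive sublattice of $\OK^{6}$: this is transparent for the standard plane $L_0$ of \eqref{Eq: TheLinSpace}, and follows for every $L\in\Upsilon$ by transporting through an $\FF_q$-linear automorphism of $F$ using the uniqueness statement after \eqref{Eq: TheLinSpace}. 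Consequently, the annihilator coincides with the image of $\Lambda/d\Lambda$ in $\OK^{6}/d\OK^{6}$. Since $F$ is homogeneous of degree $3$ and vanishes on $\Lambda$, the Taylor expansion $F(\bm{x}_0+d\bm{z})\equiv F(\bm{x}_0)\equiv 0\pmod d$ for $\bm{x}_0\in\Lambda$ shows $\psi(aF(\bm{x})/d)=1$ on this locus, and counting gives $N(d)=|d|^{-7/2}\,\phi_K(d)\,|d|^{3}=\phi_K(d)\,|d|^{-1/2}$.

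With $N(d)$ in hand, I will average \eqref{EQN:expand-multiplicative-error-as-convolution} over $\bm{c}\in\Lambda^{\perp}/r\Lambda^{\perp}$ and assemble the resulting Dirichlet series:
\[
\sum_{r\in\Omon}|r|^{-s}\,\EE_{\bm{c}\in\Lambda^{\perp}/r\Lambda^{\perp}}\bigl[S_{r,0}(\bm{c})\bigr] = \zeta_K(s-\tfrac12)^{-1}\cdot\zeta_K(s+\tfrac12)\cdot\Psi(s),
\]
the third factor coming from $\sum_d\phi_K(d)|d|^{-s-1/2}=\Psi(s)$. Since $\Psi(s)=\zeta_K(s-\tfrac12)/\zeta_K(s+\tfrac12)$ by definition, the right-hand side telescopes to $1$, and extracting the $|r|^{-s}$ coefficient yields $\EE[S_{r,0}]=\bm{1}_{r=1}$. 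The main technical obstacle will be the identification of the mod-$d$ annihilator of $\Lambda^{\perp}$ with $\Lambda/d\Lambda$, i.e.\ checking primitivity of $\Lambda$; everything else is a formal manipulation of Euler products.
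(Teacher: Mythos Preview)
Your proof is correct and follows essentially the same route as the paper: both first compute $\EE_{\bm{c}\in\Lambda^\perp/r\Lambda^\perp}[S^\natural_r(\bm{c})]=\phi_K(r)\,|r|^{-1/2}$ via orthogonality, and then invert the convolution relating $S^\natural_r$ and $S_{r,0}$ (you through \eqref{EQN:expand-multiplicative-error-as-convolution}, the paper through the equivalent identity \eqref{EQN:decompose-S_c-as-convolution}). One minor comment: you flag primitivity of $\Lambda$ as the ``main technical obstacle'' and prove it by transporting from $L_0$ via an $\FF_q$-automorphism, but this is automatic from the definition $\Lambda=L\cap\OK^6$: if $\bm{x}\in\OK^6$ and $m\bm{x}\in\Lambda$ for some nonzero $m\in\OK$, then $\bm{x}\in L$ (since $L$ is a $K$-subspace) and hence $\bm{x}\in\Lambda$. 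So $\Lambda$ is saturated without any appeal to the classification of planes on $V$, and the identification of the mod-$d$ annihilator of $\Lambda^\perp$ with the image of $\Lambda$ follows directly.
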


\begin{proof}
    Being the convolution of multiplicative functions, the multiplicativity is clear. Moreover, from \eqref{EQN:expand-multiplicative-error-as-convolution} it follows that $S_{r,0}(\bm{c})$ depends only on the list of values $\bm{c} \bmod d$ for divisors $d$ of $r$, which in turn implies that $S_{r,0}(\bm{c})$ only depends on $\bm{c}\bmod r$. Finally, by orthogonality of characters we have 
    \[
    \EE_{\bm{c}\in\Lambda^\perp/r\Lambda^\perp}[S^\natural_r(\bm{c})]=\phi_K(r)|r|^{-1/2},
    \]
    so that the last assertion follows from the formal identity
    \[
    \sum_{r\in\OK^+}|r|^{-s}\EE_{\bm{c}\in \Lambda^\perp/r\Lambda^\perp}[S^\natural_r(\bm{c})]= \Psi(s)\sum_{r\in\OK^+}|r|^{-s}\EE_{\bm{c}\in \Lambda^\perp/r\Lambda^\perp}[S_{r,0}(\bm{c})],
    \]
    which follows from 
\eqref{EQN:define-multiplicative-error-Dirichlet-series}.
\end{proof}

We now provide some bounds for $S_{r,0}(\bm{c})$ for $\bm{c}\in \Lambda^\perp\setminus \set{\bm{0}}$.

\begin{lemma}
\label{Le:UpperBound.S(r,0)}
    Let $r\in \OK^+$, $\bm{c}\in \Lambda^\perp\setminus \set{\bm{0}}$ and let $\ve>0$.
    Then 
$$
S_{r,0}(\bm{c})
\ll_\ve \abs{r}^{1/2+\eps}
    \max_{\substack{d\in \mcal{N}_{\ge}(3)\\ d\mid r}} \frac{\lvert{S^\natural_d(\bm{c})}\rvert}{\abs{d}^{1/2}}.
    $$
    Moreover, if $B(\bm{c})\neq 0$ and $r$ is a square-free element of $\LcG$, then
        $
S_{r,0}(\bm{c})\ll_\ve |r|^{\varepsilon}.
$
\end{lemma}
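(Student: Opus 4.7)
The plan is to prove both assertions by combining multiplicativity of $S_{r,0}(\bm{c})$ (established in Lemma \ref{PROP:S_c,0(n)-nice-algebraic-properties}) with the explicit convolution \eqref{EQN:expand-multiplicative-error-as-convolution}, reducing each claim to a prime-power bound in which the M\"obius weight $\mu_K(d_0)$ forces $d_0\in\set{1,\varpi}$. At a prime power $\varpi^e$ we therefore have
\[
S_{\varpi^e,0}(\bm{c}) = \sum_{e_1+e_2=e} |\varpi|^{-e_1/2} S^\natural_{\varpi^{e_2}}(\bm{c}) \; - \; |\varpi|^{1/2} \sum_{e_1+e_2=e-1} |\varpi|^{-e_1/2} S^\natural_{\varpi^{e_2}}(\bm{c}).
\]

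For the first assertion I would split each of these sums according to whether $e_2 \le 2$ or $e_2 \ge 3$. For $e_2 \le 2$, parts (1)--(2) of Lemma \ref{lem:prime_power} give $|S^\natural_{\varpi^{e_2}}(\bm{c})| \ll |\varpi|^{e_2/2}$, and the corresponding term is $\ll |\varpi|^{e_2 - e/2} \le |\varpi|^{e/2}$ as soon as $e\ge 2$. For $e_2\ge 3$, I would factor out $|\varpi|^{e_2/2}$ to expose the quotient $|S^\natural_{\varpi^{e_2}}(\bm{c})|/|\varpi|^{e_2/2}$, bounded by
\[
M_\varpi \defeq \max\Bigl\{1,\; \max_{3\le k\le e} \frac{|S^\natural_{\varpi^k}(\bm{c})|}{|\varpi|^{k/2}}\Bigr\},
\]
and observe that the remaining $|\varpi|^{e_2 - e/2}$ factor, summed over $e_2\in\{3,\dots,e\}$, is a geometric series bounded by an absolute constant. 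This yields $|S_{\varpi^e,0}(\bm{c})| \ll |\varpi|^{e/2} M_\varpi$ with an absolute implicit constant, and taking the product over prime powers $\varpi^e \pdiv r$ gives the claim, since the resulting $C^{\omega(r)}$ is absorbed into $|r|^\eps$.

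For the second assertion, square-freeness of $r$ reduces us to $e=1$ at every prime $\varpi \mid r$, so the convolution collapses to
\[
S_{\varpi,0}(\bm{c}) = S^\natural_\varpi(\bm{c}) + |\varpi|^{-1/2} - |\varpi|^{1/2}.
\]
The hypothesis $B(\bm{c})\ne 0$ means $\bm{c}$ is good (by the construction of $B$ recorded just before Proposition \ref{Prop: LinearSpaceBias}), while $r\in\RcG$ gives $\varpi\nmid B(\bm{c})$, so Proposition \ref{Prop: LinearSpaceBias} applies and delivers $S^\natural_\varpi(\bm{c}) = |\varpi|^{1/2}+O(1)$. The $|\varpi|^{1/2}$ main term cancels exactly with the $-|\varpi|^{1/2}$ in the convolution, leaving $|S_{\varpi,0}(\bm{c})| \ll 1$. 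Multiplying over the primes of $r$ then gives $|S_{r,0}(\bm{c})| \ll C^{\omega(r)} \ll_\eps |r|^\eps$.

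The only delicate point is the second assertion: without the precise cancellation provided by Proposition \ref{Prop: LinearSpaceBias}, the trivial bound $|S^\natural_\varpi(\bm{c})| \ll |\varpi|^{1/2}$ would only give $|S_{\varpi,0}(\bm{c})| \ll |\varpi|^{1/2}$ at each prime, collapsing the two claims to the single bound $|r|^{1/2+\eps}$. The entire purpose of normalising $\Phi(\bm{c},s)$ by $\Psi(s) = \zeta_K(s-\tfrac12)/\zeta_K(s+\tfrac12)$ is to subtract off this predicted main term for good $\bm{c}$, so the substantive content is verifying that the local shape of $1/\Psi_\varpi$ is precisely aligned with the bias in Proposition \ref{Prop: LinearSpaceBias}; the rest is bookkeeping.
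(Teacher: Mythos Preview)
Your argument is correct and follows essentially the same route as the paper: both use the convolution \eqref{EQN:expand-multiplicative-error-as-convolution}, Lemma~\ref{lem:prime_power} for the cube-free pieces, and Proposition~\ref{Prop: LinearSpaceBias} for the square-free cancellation. The paper applies the triangle inequality globally and then reduces the maximum over $d\mid r$ to cube-full $d$, whereas you localise to prime powers first via multiplicativity; these are the same idea in different packaging.

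Two small slips to clean up. First, your sentence ``the remaining $|\varpi|^{e_2 - e/2}$ factor, summed over $e_2\in\{3,\dots,e\}$, is a geometric series bounded by an absolute constant'' is not literally true: that sum is $\asymp |\varpi|^{e/2}$, not $O(1)$. What you need (and clearly intend) is that it is bounded by an absolute constant times $|\varpi|^{e/2}$, which is exactly the target size; equivalently, divide through by $|\varpi|^{e/2}$ first and then the geometric series $\sum_{e_2\le e}|\varpi|^{e_2-e}$ is $\le 2$. Second, in the second assertion you write ``$r\in\RcG$'' where you mean $r\in\LcG$; the relevant coprimality is to $B(\bm{c})$, not to $F^\ast(\bm{c})$.
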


\begin{proof}
Applying the  triangle inequality in \eqref{EQN:expand-multiplicative-error-as-convolution}, followed by  Lemma~\ref{lem:prime_power} to cube-free divisors of $r$, we obtain
\begin{equation*}
    \abs{S_{r,0}(\bm{c})}
    \ll_\ve \abs{r}^{1/2+\eps}
    \cdot \max_{d\mid r} \frac{\lvert{S^\natural_d(\bm{c})}\rvert}{\abs{d}^{1/2}}
    \ll_\eps \abs{r}^{1/2+2\eps}
    \max_{\substack{d\in \mcal{N}_{\ge}(3)\\  d\mid r}} \frac{\lvert{S^\natural_d(\bm{c})}\rvert}{\abs{d}^{1/2}},
\end{equation*}
which verifies the first part. 
If $\varpi\nmid B(\bm{c})$, then \eqref{EQN:expand-multiplicative-error-as-convolution},
 Proposition~\ref{Prop: LinearSpaceBias} 
 and  Lemma \ref{PROP:S_c,0(n)-nice-algebraic-properties} together
 imply
$
S_{\varpi,0}(\bm{c})
= -\abs{\varpi}^{1/2} + \abs{\varpi}^{-1/2} + S^\natural_\varpi(\bm{c})
\ll 1$.
The second assertion now follows from multiplicativity.
\end{proof}

We shall also require the following estimate for averages of $S_{r,0}(\bm{c})$.
\begin{lemma}
\label{LEM:bound-S_c,0-average-in-terms-of-S}
Let $\ve>0$. 
Let $\bm{c}\in \Lambda^\perp\setminus \set{\bm{0}}$ be such that $B(\bm{c})\neq 0$ and $R\ge 0$.
Then
\begin{equation*}
\sum_{\substack{r\in \Omon\\\abs{r}=\hat R}} \abs{S_{r,0}(\bm{c})}
\ll_\ve (\hat R \norm{\bm{c}})^\eps \cdot \hat R \sum_{\substack{d\in \mcal{N}_{\ge}(2)\\ 
\abs{d}\le \hat R}}
\frac{\lvert{S^\natural_d(\bm{c})}\rvert}{\abs{d}}.
\end{equation*}
\end{lemma}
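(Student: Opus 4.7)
The plan is to reduce the sum to one over a family of ``structured'' moduli $m$ and then to control that sum via the convolution formula \eqref{EQN:expand-multiplicative-error-as-convolution}. First, for each $r\in \Omon$ with $\abs{r}=\hat R$ I would write uniquely $r=em$, where $e = \prod_{\varpi \| r,\,\varpi \nmid B(\bm{c})}\varpi$ collects the primes appearing in $r$ with multiplicity one that are coprime to $B(\bm{c})$, and $m=r/e$. Then $\gcd(e,m)=1$, so multiplicativity (Lemma~\ref{PROP:S_c,0(n)-nice-algebraic-properties}) gives $S_{r,0}(\bm{c})=S_{e,0}(\bm{c})S_{m,0}(\bm{c})$. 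Since $e$ is a square-free element of $\LcG$, the second part of Lemma~\ref{Le:UpperBound.S(r,0)} yields $\abs{S_{e,0}(\bm{c})}\ll_\eps\abs{e}^\eps$, and grouping by $m$ while estimating the $\ll \hat R/\abs{m}$ valid choices of $e$ for each given $m$ yields
$$\sum_{|r|=\hat R}\abs{S_{r,0}(\bm{c})} \ll_\eps \hat R^{1+\eps} \sum_{m\;(\ast)} \frac{\abs{S_{m,0}(\bm{c})}}{\abs{m}},$$
where $(\ast)$ denotes the condition that every prime $\varpi\mid m$ satisfies $v_\varpi(m)\geq 2$ or $\varpi\mid B(\bm{c})$, and the sum is restricted to $\abs{m}\leq\hat R$.

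Next, I would apply the first part of Lemma~\ref{Le:UpperBound.S(r,0)} to $m$, bound the resulting maximum over $\mcal{N}_{\ge}(3)$-divisors by the corresponding sum, and swap the order of summation. This reduces matters to showing, for each fixed $d\in\mcal{N}_{\ge}(3)$ with $\abs{d}\leq\hat R$, that
$$\sum_{\substack{m\;(\ast),\;d\mid m\\ \abs{m}\leq \hat R}} \abs{m}^{-1/2+\eps} \ll_\eps (\hat R\norm{\bm{c}})^{O(\eps)}\,\abs{d}^{-1/2+\eps}.$$
Writing $m=du$, the condition $(\ast)$ translates to: each prime $\varpi\mid u$ with $\varpi\nmid d$ has either $v_\varpi(u)\geq 2$ or $\varpi\mid B(\bm{c})$. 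I would then decompose $u=u_1u_2u_3$ uniquely into pairwise coprime factors, where $u_1\mid d^\infty$, $u_2$ collects primes dividing $B(\bm{c})$ but coprime to $d$, and $u_3\in\mcal{N}_{\ge}(2)$ is coprime to $dB(\bm{c})$.

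Each factor $\sum_{u_i}\abs{u_i}^{-1/2+\eps}$ should then be bounded by $(\hat R\norm{\bm{c}})^{O(\eps)}$ via standard Rankin-trick arguments: for $u_1$ and $u_2$, using $\omega(d),\omega(B(\bm{c}))\ll_\eps(\hat R\norm{\bm{c}})^\eps$ to bound $\prod_{\varpi\mid dB(\bm{c})}(1-\abs{\varpi}^{-1/2})^{-1}$ by $(\hat R\norm{\bm{c}})^{O(\eps)}$, combined with $\abs{u_i}^\eps\leq\hat R^\eps$; and for $u_3$, using the fact that $\sum_{v\in\mcal{N}_{\ge}(2)}\abs{v}^{-s}$ converges absolutely for $\Re(s)>\tfrac12$, which yields $\sum_{v\in\mcal{N}_{\ge}(2),\,\abs{v}\leq Y}\abs{v}^{-1/2+\eps}\ll_\eps Y^\eps$. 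Combining everything with the inclusion $\mcal{N}_{\ge}(3)\subseteq\mcal{N}_{\ge}(2)$ and the estimate $\abs{d}^{-1+\eps}\leq\hat R^\eps/\abs{d}$ (valid for $\abs{d}\leq\hat R$) would then give
$$\sum_{m\;(\ast)}\frac{\abs{S_{m,0}(\bm{c})}}{\abs{m}} \ll_\eps (\hat R\norm{\bm{c}})^{O(\eps)} \sum_{\substack{d\in\mcal{N}_{\ge}(2)\\ \abs{d}\leq\hat R}}\frac{\abs{S_d^\natural(\bm{c})}}{\abs{d}},$$
and the stated bound would follow after rescaling $\eps$.

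The main obstacle I anticipate will be keeping the bookkeeping of condition $(\ast)$ straight at the prime level when decomposing $u=u_1u_2u_3$, and specifically making sure that the ``good square-full'' factor $u_3$ does not produce a spurious $\hat R^{1/2}$ which would destroy the linear growth $\hat R$ on the right-hand side. This is critical because $\sum_{v\in\mcal{N}_{\ge}(2)}\abs{v}^{-s}$ has its abscissa of convergence exactly at $\Re(s)=\tfrac12$, so the estimate $\sum_{v\in\mcal{N}_{\ge}(2),\,\abs{v}\leq Y}\abs{v}^{-1/2+\eps}\ll_\eps Y^\eps$ is tight up to the permissible $(\hat R\norm{\bm{c}})^\eps$ loss.
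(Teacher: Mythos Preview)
Your proposal is correct and follows essentially the same approach as the paper. The paper decomposes $r=r_1r_2r_3$ into pairwise coprime pieces (square-free in $\LcG$, square-free in $\LcB$, square-full) in one step and applies Lemma~\ref{Le:UpperBound.S(r,0)} to each factor, whereas you first split off $e=r_1$ and then perform the analogous three-way split on $u=m/d$; the underlying estimates (Lemma~\ref{Le:UpperBound.S(r,0)}, Lemma~\ref{LEM:count-B-R_c-infty-divisors} for the $\LcB$-part, and the $\hat R^\eps$ bound for the square-full count) are identical.
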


\begin{proof}
Any $r\in \Omon$ can be written uniquely as $r_1r_2r_3$, where $r_1,r_2,r_3\in \Omon$ are pairwise coprime with
$r_1\in \LcG$ square-free,
$r_2\in \LcB$ square-free and
$r_3$ square-full.
Upon writing $S_{r,0}(\bm{c}) = \prod_{1\le i\le 3} S_{r_i,0}(\bm{c})$, and applying Lemma~\ref{Le:UpperBound.S(r,0)} to each factor, we get after decomposing into $q$-adic intervals
\begin{equation*}
\sum_{
\substack{r\in \Omon\\  \abs{r}=\hat R}} \abs{S_{r,0}(\bm{c})}
\ll_\ve \sum_{R_1+R_2+R_3=R} \hat R^\eps
\hat R_1 \sum_{\substack{r_2\in \LcB \\ \abs{r_2}=\hat R_2}} \hat R_2^{1/2}
\sum_{\substack{r_3,d\in \mcal{N}_{\ge}(2) \\ \abs{r_3}=\hat R_3,\; d\mid r_3}} \abs{r_3/d}^{1/2} \abs{S^\natural_d(\bm{c})}.
\end{equation*}
Since $B(\bm{c})\neq 0$ and $|B(\bm{c})|\ll |\bm{c}|^{O(1)}$, 
 it follows from Lemma~\ref{LEM:count-B-R_c-infty-divisors} that the number of $r_2\in \LcB$ with $|r_2|=\widehat{R}_2$ is $O_\ve((|\bm{c}|\widehat{R}_2)^\varepsilon)$. In particular, we get after summing over the $r_i$ for each fixed $d$, that the right-hand side is
\begin{equation*}
\ll_\ve \sum_{R_1+R_2+R_3=R} (\hat R\norm{\bm{c}})^\eps
\hat R_1 \hat R_2^{1/2}
\sum_{\substack{d\in \mcal{N}_{\ge}(2)\\ \abs{d}\le \hat R_3}} (\hat R_3/\abs{d})^{1/2} \abs{d}^\eps
(\hat R_3/\abs{d})^{1/2} \abs{S^\natural_d(\bm{c})}.
\end{equation*}
Lemma~\ref{LEM:bound-S_c,0-average-in-terms-of-S} follows, since $\hat R_1 \hat R_2^{1/2} \hat R_3^{1/2} \hat R_3^{1/2} \le \hat R_1\hat R_2\hat R_3 = \hat R$.
\end{proof}

\subsection{Linear space extraction}

By a $\FF_q$-linear change of variables, when studying linear spaces $L\in \Upsilon$ we may concentrate on $L=L_0$, as given by \eqref{Eq: TheLinSpace}.
Elements of $L_0^\perp$ are of the shape $(c_1,c_2,c_3,c_1,c_2,c_3)$ for $\bm{c}=(c_1,c_2,c_3)\in \OK^3$.
Motivated by this, we define $\bm{c}^*=(c_1,c_2,c_3,c_1,c_2,c_3)$ for $\bm{c}\in \OK^3$.

Our next result will allow us to extract the main contribution from solutions on linear subspaces to our counting function.
Before stating it, we need some more notation. Given $\bm{x}\in \OK^6$, set 
\[
y_i=x_i+x_{i+3}\quad \text{and}\quad z_i = x_i-x_{i+3}
\]
for $i=1,2,3$. Moreover, we let $M\in \GL_6(\OK)$ be such that $\bm{x}=M(\bm{y},\bm{z})$. We then define the cubic form
\[
\widetilde{F}(\bm{y},\bm{z})= F(\bm{x})=\frac{1}{8}\sum_{i=1}^3\left( (y_i+z_i)^3+(y_i-z_i)^3\right)
\]
and the density 
\[
\sigma_{L_0,w}\coloneqq \int_{K_\infty^3}w(M(\bm{0},\bm{z}))\dd \bm{z}.
\]
Note that $M(\bm{0},\bm{z})=(z_1,z_2,z_3,-z_1,-z_2,-z_3)$, so that $\sigma_{L_0}$ measures the density of $K_\infty$-points on the lattice $\Lambda_0$ with respect to $w$.
\begin{lemma}\label{Le: LinSpaceExtract}
    Let $\bm{c}_0\in \OK^3$ and assume that $r=r_0r_1\in \Omon$. There exists a constant $C\in \NN$ such that if $|r_1|\geq \widehat{C}|P|$, then 
    \[
    \sum_{\bm{c}\in \OK^3}I_r(\theta, \bm{c}^*_0+r_0\bm{c}^*)=\sigma_{L_0,w}\left(\frac{|r_1|}{|P|}\right)^{3}.
    \]
    
\end{lemma}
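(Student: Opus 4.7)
The plan is to push the sum over $\bm{c}$ through the change of variables $\bm{x}=M(\bm{y},\bm{z})$, which by design makes both the cubic form and the linear phase compatible with the splitting $L_0\oplus L_0^\perp$. Since $M\in \GL_6(\OK)$ has entries in $\FF_q$, its Jacobian has absolute value $1$ in $K_\infty$, and one checks directly from the definitions that $F(\bm{x})=\widetilde{F}(\bm{y},\bm{z})$ and $\bm{d}^*\cdot\bm{x}=\bm{d}\cdot\bm{y}$ for every $\bm{d}\in \OK^3$. Using $r_0/r=1/r_1$, this reduces $I_r(\theta,\bm{c}_0^*+r_0\bm{c}^*)$ to an integral whose dependence on $\bm{c}\in \OK^3$ is confined to the phase factor $\psi(P\bm{c}\cdot\bm{y}/r_1)$.

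Introducing the partial integral
\[
h(\bm{y}) \defeq \int_{K_\infty^3} w(M(\bm{y},\bm{z}))\,\psi\!\left(\theta P^3\widetilde{F}(\bm{y},\bm{z})+\tfrac{P\bm{c}_0\cdot\bm{y}}{r}\right)d\bm{z},
\]
which is smooth and compactly supported in $\bm{y}$, the sum to be evaluated equals
\[
\sum_{\bm{c}\in \OK^3}\int_{K_\infty^3} h(\bm{y})\,\psi\!\left(\tfrac{P\bm{c}\cdot\bm{y}}{r_1}\right)d\bm{y}.
\]
Rescaling $\bm{u}=P\bm{y}/r_1$ and invoking Poisson summation (Lemma~\ref{lem.poisson summation} with $f\equiv 0$, applied to $\bm{u}\mapsto h(r_1\bm{u}/P)$) converts this into
\[
\left(\tfrac{|r_1|}{|P|}\right)^{3}\sum_{\bm{v}\in \OK^3} h(r_1\bm{v}/P).
\]

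Next I would analyse supports. Since the entries of $M$ and $M^{-1}$ lie in $\FF_q$, the change of variables preserves the ultrametric norm, and condition (ii) of Hypothesis~\ref{hyp} with $L=0$ (together with $|\bm{x}_0|\le 1$) forces $h$ to vanish outside $\{|\bm{y}|\leq 1\}$. Choosing $C\in \NN$ so that $\widehat{C}>1$ (for example $C=1$) then ensures that $|r_1\bm{v}/P|>1$ whenever $\bm{v}\neq\bm{0}$, so only the term $\bm{v}=\bm{0}$ survives. The final observation is the algebraic identity
\[
\widetilde{F}(\bm{0},\bm{z})=\tfrac{1}{8}\sum_{i=1}^3\bigl(z_i^3+(-z_i)^3\bigr)=0,
\]
which reflects the fact that $M(\bm{0},\bm{z})\in L_0$ and kills the oscillatory factor at $\bm{y}=\bm{0}$. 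Hence $h(\bm{0})=\int_{K_\infty^3} w(M(\bm{0},\bm{z}))\,d\bm{z}=\sigma_{L_0,w}$, which delivers the claimed identity.

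There is no serious obstacle here; the only care needed is in legitimising the Poisson step (via smoothness and compact support of $h$) and in bookkeeping the ultrametric geometry that isolates the single surviving term $\bm{v}=\bm{0}$.
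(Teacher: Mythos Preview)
Your proposal is correct and follows essentially the same route as the paper: change variables via $M$ so that the linear phase becomes $\bm{c}\cdot\bm{y}$, rescale by $P/r_1$, apply Poisson summation, and use the support of $w$ together with $\widetilde{F}(\bm{0},\bm{z})=0$ to isolate the $\bm{v}=\bm{0}$ term. The only cosmetic difference is that the paper keeps the $\bm{z}$-integral on the outside and applies Poisson for each fixed $\bm{z}$, whereas you integrate out $\bm{z}$ first into $h(\bm{y})$; both orderings give the same computation.
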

\begin{proof}
    Upon making the unimodular change of variables $\bm{x}\mapsto (\bm{y}, \bm{z})$ described above, we get that 
    \[
    I_r(\theta, \bm{c}_0^*+r_0\bm{c}^*)= \int_{K_\infty^3}\int_{K_\infty^3}w(M(\bm{y},\bm{z}))\psi\left(\theta P^3\widetilde{F}(\bm{y},\bm{z})+\frac{P\bm{c}\cdot \bm{y}}{r_1}+\frac{P\bm{c}_0\cdot \bm{y}}{r}\right)\dd\bm{y}\dd\bm{z}.
    \]
    Applying another change of variables given by $\bm{y}'=P\bm{y}/r_1$, we obtain 
    \[
    \sum_{\bm{c}\in \OK^3}I_r(\theta, \bm{c}_0^*+r_0\bm{c}^*)=\left(\frac{|r_1|}{|P|}\right)^3\int_{K_\infty^3}\sum_{\bm{c}\in \OK^3}\widehat{g}_{\bm{z}}(\bm{c}) \dd \bm{z},
    \]
    where $\widehat{g}_{\bm{z}}$ is the Fourier transform of the function 
    \[
    g_{\bm{z}}\colon K_\infty^3\to \CC, 
    \quad
    \bm{b}\mapsto w(M(r_1\bm{b}/P,\bm{z}))\psi\left(\theta P^3\widetilde{F}(r_1\bm{b}/P,\bm{z})+\frac{\bm{c}_0\cdot \bm{b}}{r_0}\right).
    \]
    In particular, using Poisson summation, in the form Lemma \ref{lem.poisson summation}, we deduce that
    \[
    \sum_{\bm{c}\in \OK^3}
    \hspace{-0.1cm}
    I_r(\theta, \bm{c}_0^*+r_0\bm{c}^*)=\frac{|r_1|^3}{|P|^3}\sum_{\bm{b}\in\OK^3}\psi\left(\frac{\bm{c}_0\cdot \bm{b}}{r_0}\right)\int_{K_\infty^3}
        \hspace{-0.2cm}
    w(M(r_1\bm{b}/P,\bm{z}))\psi(\theta P^3 \widetilde{F}(r_1\bm{b}/P,\bm{z}))\dd\bm{z}.
    \]
    Suppose that $\bm{b}\neq \bm{0}$. Then, since we may assume $|r_1|\geq \widehat{C}|P|$ for a sufficiently large constant $C$, we must have $w(M(r_1\bm{b}/P,\bm{z}))=0$. It follows that  only the term $\bm{b}=\bm{0}$ contributes. However, we have $\widetilde{F}(\bm{0},\bm{z})=0$ identically in $\bm{z}$, so that we arrive at the statement of the lemma.
\end{proof}

\subsection{Proof of Proposition~\ref{Prop:DualContribution}} We will now combine all the results we have obtained so far to complete the proof of Proposition~\ref{Prop:DualContribution}. 
For $\mathcal{E}\subset \OK^6$, we set
\[
S(\mathcal{E}) = |P|^6\sum_{\bm{c}\in\mathcal{E}\setminus\{\bm{0}\}}
\sum_{\substack{r\in \Omon \\ |r|\leq \widehat{Q}}}|r|^{-5/2}S^\natural_r(\bm{c})I_r(\bm{c}).
\]
Then  $S(\mathcal{S}_0)= E_2(P)$, so that 
Proposition~\ref{Prop:DualContribution} can be rephrased as 
\begin{equation*}
    S(\mathcal{S}_0) = \sum_{L\in \Upsilon}\sum_{\bm{x}\in L\cap \OK^6}w(\bm{x}/P)+O_\ve(|P|^{3-1/4+\varepsilon}).
\end{equation*}
Let us define 
\[
\mathcal{E}_1=\mathcal{S}_0\setminus \bigcup_{L\in \Upsilon}\Lambda^\perp \quad \text{and}\quad \mathcal{E}_2(L) =\{\bm{c}\in \Lambda^\perp\colon B(\bm{c})=0\text{ or }c_1\cdots c_6=0\}, 
\]
where $B\in \OK[x_1,x_2,x_3]$ is the form in the statement of Proposition~\ref{Prop: LinearSpaceBias}.

\begin{lemma}\label{Le: UpperBoundexceptionalc}
    For any $C\geq 1$ and $L\in \Upsilon$, we have
    \[
    \#\{\bm{c}\in \mathcal{E}_1\cup \mathcal{E}_2(L)\colon |\bm{c}|\leq \widehat{C}\} \ll_\ve \widehat{C}^{2+\varepsilon}.
    \]
\end{lemma}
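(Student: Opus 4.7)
The proof splits naturally into bounds for $\mathcal{E}_2(L)$ and for $\mathcal{E}_1$, the latter being by far the harder task.

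For $\mathcal{E}_2(L)$, my plan is to reduce, via an $\FF_q$-linear automorphism of $F$, to the case $L = L_0$ from \eqref{Eq: TheLinSpace}, so that $\Lambda^\perp = \{(c_1,c_2,c_3,c_1,c_2,c_3) : (c_1,c_2,c_3)\in\OK^3\}$ is identified with $\OK^3$. Then $\mathcal{E}_2(L_0)$ lies in the vanishing locus of the nonzero polynomial $c_1c_2c_3\cdot B(c_1,c_2,c_3)$ on $\A^3_K$, and Lemma~\ref{LEM:affine-dimension-growth} gives $\#\{\bm{c}\in\mathcal{E}_2(L):|\bm{c}|\le\widehat{C}\}\ll\widehat{C}^2$, well within the claim.

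For $\mathcal{E}_1$, the strategy is a Kummer-theoretic stratification. For $\bm{c}\in\OK^6\setminus\{0\}$ with $F^*(\bm{c})=0$, some factor $\sum_{i=1}^6\epsilon_ic_i^{3/2}$ of $F^*$ vanishes in $\overline{K}$. Define an equivalence relation on $\supp(\bm{c})=\{i:c_i\ne 0\}$ by $i\sim j \iff c_i/c_j\in(K^\times)^2$. Working in the Kummer extension $K(\sqrt{c_{i_0(S)}}:S)$ and using $K$-linear independence of the $\sqrt{c_{i_0(S)}}$ across distinct classes $S$, the vanishing relation must decouple into one equation per class: writing $c_i=r_i^2 c_{i_0(S)}$ for $i\in S$, one gets $\sum_{i\in S}\epsilon_ir_i^3=0$. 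Singleton classes force $r_i=0$, hence are impossible; size-$2$ classes force $c_i=\omega c_j$ for some cube root of unity $\omega\in K$; larger classes impose Fermat-type cubic conditions on $(r_i)_{i\in S}$. A partition of $\supp(\bm{c})$ into size-$2$ classes therefore corresponds exactly to the pairing-plus-cube-root data defining some $L\in\Upsilon$, that is, to $\bm{c}\in\Lambda^\perp$, which is excluded from $\mathcal{E}_1$.

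For each remaining partition type of $\supp(\bm{c})$, I would parameterize $c_i = d_S t_i^2$ with $d_S\in\OK$ square-free (one per class $S$) and $t_i\in\OK$ satisfying $\sum_{i\in S}\epsilon_it_i^3=0$. Per-class $\OK$-integer point counts on the corresponding Fermat hypersurfaces, at height $\le(\widehat C/|d_S|)^{1/2}$, can then be bounded as follows: for $|S|=6$ by the unconditional estimate~\eqref{eq:eps} on $V$ itself, giving $\ll(\widehat C/|d_S|)^{3/2+\varepsilon}$; for $|S|=4$ by the affine count off the $27$ lines of the Fermat cubic surface, noting that $\bm{t}$ on such a line forces $c_i=\omega c_j$ along a pair, collapsing the partition to size-$2$ sub-classes and putting $\bm{c}$ into some $\Lambda^\perp$ (hence already excluded); for $|S|=3$ by function-field analogues of Siegel's theorem or Néron-Tate on the genus-$1$ Fermat cubic curve; and for $|S|=2$ by the trivial count $\ll(\widehat C/|d_S|)^{1/2}$ from the forced equality $c_i=\omega c_j$. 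Multiplying across classes and summing over the square-free parameters $d_S$ (using convergence of $\sum_{d\text{ sqf}}|d|^{-s}$ for $\Re(s)>1$), every remaining partition type contributes $\ll\widehat C^{2+\varepsilon}$. Strata of $\mathcal{E}_1$ with some $c_i=0$ are handled inductively by the identity $F^*(\bm{c})|_{c_i=0}=(F^*_{n-1})^2$, reducing to the analogous problem in one fewer variable with the same Kummer stratification.

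The main obstacle will be the off-line bound on the Fermat cubic surface and the integer-point bound on the Fermat cubic curve over $\FF_q(t)$: these must be good enough (at most $B^{1+\varepsilon}$ affinely, up to logarithmic losses) to close the count at $\widehat{C}^{2+\varepsilon}$, and the identification of each "degenerate" sub-stratum (lines on the cubic surface, CM torsion on the cubic curve, etc.)\ with the pair-only partition type, hence with $\bigcup_{L\in\Upsilon}\Lambda^\perp$, is the delicate point that prevents the bound from degrading by a power of $\widehat{C}$.
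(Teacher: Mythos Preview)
Your approach is essentially the same as the paper's, which in turn defers the $\mathcal{E}_1$ count to \cite{glas2022question}*{Lemma~5.1}: both use the Kummer decomposition of $F^*(\bm{c})=0$ by the square-class partition of $\supp(\bm{c})$, and both identify the $(2,2,2)$ partition with $\bigcup_{L\in\Upsilon}\Lambda^\perp$. The paper simply cites the last display of that lemma's proof for the bound $O_\ve(\widehat{C}^{2+\ve})$ on every partition other than $(2,2,2)$, whereas you try to reproduce this case analysis from scratch.

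Your two flagged obstacles are both easier than you fear. For $|S|=3$ you do not need Siegel or N\'eron--Tate: the Fermat cubic curve is constant (defined over $\FF_q$), and for a constant elliptic curve $E/\FF_q$ one has $E(\FF_q(t))=E(\FF_q)$ since $\PP^1$ has trivial Jacobian, so there are $O(1)$ projective points and hence $\ll B$ affine points of height at most $B$. For $|S|=4$ you only need the off-line count to be $\ll B^{2+\ve}$, not $B^{1+\ve}$: summing $(\widehat{C}/|m_1|)^{1+\ve/2}$ over square-free $m_1$ already gives $\ll\widehat{C}^{1+\ve}$, which combined with the $2$-block factor $\ll\widehat{C}$ closes at $\widehat{C}^{2+\ve}$. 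And the bound $\ll B^{2+\ve}$ off the lines is elementary: fix $t_1,t_2$ (at most $B^2$ choices), factor $t_3^3+t_4^3=(t_3+t_4)(t_3^2-t_3t_4+t_4^2)=-(t_1^3+t_2^3)$, observe the right side is nonzero precisely off the pairing lines, and apply the divisor bound to get $\ll B^\ve$ choices for $(t_3,t_4)$.
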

\begin{proof}
   Since $B\in \OK[x_1,x_2,x_3]$ is a non-zero form, we get from Lemma~\ref{LEM:affine-dimension-growth} that the contribution from $\bm{c}\in \mathcal{E}_2(L)$ with $B(\bm{c})=0$ is $O(\widehat{C}^2)$.
    Moreover, any $\bm{c}\in \Lambda^\perp$ takes the shape $(c_1, c_2, c_3, c_1, c_2, c_3)$ for some $(c_1,c_2,c_3)\in \OK^3$.
    From this description it immediately follows that the contribution from $\bm{c}\in \Lambda^\perp$ with $c_1\cdots c_6=0$ is $O(\widehat{C}^2)$. 
    
    For $\bm{c}\in \mathcal{E}_1$, this is already implicit in the proof of Lemma~5.1 of \cite{glas2022question} and so we shall be brief.
    Let $m_{1},\dots, m_{k}\in \OK$ with $1\leq k \leq 6$ be square-free and either monic or with leading coefficient a primitive root of $\FF_q^\times$.
    Suppose that $\bm{c}\in \OK^6$ satisfies $F^*(\bm{c})=0$.
    We then partition $\{1,\dots, 6\}$ into sets $\mcal{I}(1),\dots, \mcal{I}(k)$, where $i\in \mcal{I}(j)$ if and only if $c_i^3=m_jd_i^2$ for some $d_i\in \OK$.
    It then follows from the last display in the proof of Lemma~5.1 of \cite{glas2022question} that the contribution from all $\bm{c}\in \OK^6$ ranging over all permissible $m_1,\dots, m_k$ is $O_\ve(\widehat{C}^{2+\varepsilon})$ unless $k=3$ and $\#\mathcal{I}(1)=\#\mathcal{I}(2)=\#\mathcal{I}(3)=2$. If the latter holds, then the display after (5.1) in the proof of Lemma~5.1 of \cite{glas2022question} gives $d_{i_1}=-d_{i_2}$ for $\mathcal{I}(j)=\{i_1,i_2\}$ and $j=1,2,3$, which in turn implies $c_{i_1}^3=c^3_{i_2}$ and hence $\bm{c}$ lies in $\Lambda^\perp$ for some $L\in \Upsilon$.
\end{proof}
Note that for any distinct $L_1,L_2\in \Upsilon$, we have 
\[
\{\bm{c}\in \Lambda_1^\perp\cap \Lambda_2^\perp\colon |\bm{c}|\leq \widehat{C}\} \ll \widehat{C}^2.
\]
In particular, applying Lemma~\ref{LEM:sparse-bound} 
twice, 
with $\delta=1-\ve$,  
in conjunction with Lemma~\ref{Le: UpperBoundexceptionalc}, we obtain 
\[
S(\mathcal{S}_0) = \sum_{L\in \Upsilon}\sum_{\bm{c}\in \Lambda^\perp \setminus \mathcal{E}_2(L)}\sum_{\substack{r\in \Omon \\ |r|\leq \widehat{Q}}}|r|^{-5/2}S_r^\natural(\bm{c})I_r(\bm{c}) +O_\ve(|P|^{3-1/2+\varepsilon}).
\]

Note that since any $L_1, L_2\in \Upsilon$ are isomorphic under a $\FF_q$-linear map,
we may concentrate on the contribution from $L=L_0$.
In particular, Proposition~\ref{Prop:DualContribution} will follow once we have established 
\begin{equation}
\label{EQN-GOAL:isolate-maximal-linear-subvariety}
\abs{P}^6 \sum_{\bm{c}\in \Lambda_0^\perp\setminus \mcal{E}_2}
\sum_{\substack{r\in \Omon\\ \abs{r}\le \widehat{Q}}}
\abs{r}^{-5/2} S^\natural_r(\bm{c}) I_r(\bm{c})
=\sigma_{L_0, w} \abs{P}^{3}+ O_\ve(\abs{P}^{3-1/4+\eps}),
\end{equation}
where $\mathcal{E}_2=\mathcal{E}_2(L_0)$. Indeed, the only step that needs explaining is the equality
\[
\sum_{\bm{c}\in \Lambda_0}w(\bm{x}/P)=\sigma_{L_0,w}|P|^{3} +O_\ve(|P|^{3-1/4+\varepsilon}).
\]
However, this counting result is standard (with a much better error term) and we omit the proof.

For a real number $U>0$ and a set $\mcal{T}\belongs \OK^6$, let
\begin{equation}\label{DEFN:Sigma_K,X,T}
\Sigma_{<U}(P, \mcal{T})
\defeq \abs{P}^6 \sum_{\bm{c}\in \mcal{T}}
\sum_{\substack{r\in \Omon \\ \abs{r}\le \widehat{Q}}}
\abs{r}^{-5/2}
I_r(\bm{c})
\sum_{\substack{r_0r_1 = r \\ \abs{r_1}<U}} S_{r_0,0}(\bm{c})
\cdot \phi_K(r_1) \abs{r_1}^{-1/2}.
\end{equation}
Similarly we define $\Sigma_{\ge U}(P, \mcal{T})$ by replacing $\abs{r_1}<U$ with $\abs{r_1}\ge U$.
For a parameter $W\geq 1$, to be chosen in due course, \eqref{EQN:decompose-S_c-as-convolution} allows us to rewrite the left-hand side of \eqref{EQN-GOAL:isolate-maximal-linear-subvariety} as
\begin{equation}
    \label{EXPR:decompose-LHS-of-goal-into-3-pieces}
    \Sigma_{<\widehat{Q}/\widehat{W}}(P, \Lambda_0^\perp\setminus \mcal{E}_2)
    + \Sigma_{\ge \widehat{Q}/\widehat{W}}(P, \Lambda_0^\perp)
    - \Sigma_{\ge \widehat{Q}/\widehat{W}}(P, \mcal{E}_2).
\end{equation}
We will complete the proof of Proposition~\ref{Prop:DualContribution} with the following three results. 
\begin{lemma}
\label{LEM:handle-n_1<Y/P}
For any real $W\geq 1$, we have
\begin{equation*}
\Sigma_{<\widehat{Q}/\widehat{W}}(P, \Lambda_0^\perp\setminus \mcal{E}_2)
\ll_\ve \abs{P}^{3+\eps} \widehat{W}^{-1/2}.
\end{equation*}
\end{lemma}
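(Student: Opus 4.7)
The plan is to combine the linear-space-extraction manipulation from the proof of Lemma~\ref{Le: LinSpaceExtract} with the mean-zero property of $S_{r_0,0}(\bm{c}^*)$ over residues modulo $r_0$ established in Lemma~\ref{PROP:S_c,0(n)-nice-algebraic-properties}. First I would replace $\Lambda_0^\perp\setminus \mcal{E}_2$ by $\Lambda_0^\perp$. Since $\#\set{\bm{c}\in \mcal{E}_2:\norm{\bm{c}}\le \widehat{C}}\ll_\ve \widehat{C}^{2+\ve}$ by Lemma~\ref{Le: UpperBoundexceptionalc}, and since Lemma~\ref{Le:UpperBound.S(r,0)} together with $\phi_K(r_1)|r_1|^{-1/2}\le |r_1|^{1/2}$ gives a pointwise majorisation
\begin{equation*}
\Bigl\lvert\sum_{\substack{r_0r_1=r\\ |r_1|<\widehat{Q}/\widehat{W}}}S_{r_0,0}(\bm{c})\phi_K(r_1)|r_1|^{-1/2}\Bigr\rvert\ll_\ve |r|^{-7/2+\ve}\sum_{s\mid r}|r/s|^{4}|S_s(\bm{c})|,
\end{equation*}
the contribution from $\mcal{E}_2$ is at most a constant multiple of $|P|^\ve f(\mcal{E}_2)$. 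By Lemma~\ref{LEM:sparse-bound} (with $\delta=1$) this is $O_\ve(|P|^{5/2+\ve})$, well within the target.

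Next I would fix a factorisation $r=r_0r_1$ with $|r|\le \widehat{Q}$ and $|r_1|<\widehat{Q}/\widehat{W}$. Writing $\bm{c}=\bm{d}+r_0\bm{c}''$ with $\bm{d}$ a representative modulo $r_0$ and $\bm{c}''\in \OK^3$, the $r_0$-periodicity of $S_{r_0,0}$ gives
\begin{equation*}
\sum_{\bm{c}\in \OK^3}S_{r_0,0}(\bm{c}^*)I_r(\theta,\bm{c}^*)=\sum_{\bm{d}\bmod r_0}S_{r_0,0}(\bm{d}^*)\sum_{\bm{c}''\in \OK^3}I_r(\theta,\bm{d}^*+r_0\bm{c}''^*).
\end{equation*}
Applying Poisson summation to $\bm{c}''\in \OK^3$ exactly as in the proof of Lemma~\ref{Le: LinSpaceExtract} (after the change of variables $\bm{y}'=P\bm{y}/r_1$), the inner sum over $\bm{c}''$ becomes
\begin{equation*}
\left(\frac{|r_1|}{|P|}\right)^{3}\sum_{\substack{\bm{b}\in \OK^3\\ |\bm{b}|\le |P|/|r_1|}}\psi\Bigl(-\frac{\bm{d}\cdot \bm{b}}{r_0}\Bigr)J(\bm{b},\theta,r_1),
\end{equation*}
where $J(\bm{b},\theta,r_1)=\int_{K_\infty^3} w(M(-r_1\bm{b}/P,\bm{z}))\psi(\theta P^3\widetilde{F}(-r_1\bm{b}/P,\bm{z}))\dd\bm{z}$, and the support constraint $|\bm{b}|\le |P|/|r_1|$ comes from the compactness of $\supp(w)$. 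The $\bm{b}=0$ term equals $\sigma_{L_0,w}$ independently of $\bm{d}$, since $\widetilde{F}(\bm{0},\bm{z})\equiv 0$; so after weighting by $S_{r_0,0}(\bm{d}^*)$ and summing over $\bm{d}\bmod r_0$, Lemma~\ref{PROP:S_c,0(n)-nice-algebraic-properties} kills this term unless $r_0=1$. The $r_0=1$ residual contributes
\begin{equation*}
|P|^{6}\sigma_{L_0,w}|P|^{-3}\widehat{Q}^{-1}\sum_{|r_1|<\widehat{Q}/\widehat{W}}\frac{\phi_K(r_1)}{|r_1|}\ll |P|^{3}/\widehat{W},
\end{equation*}
well within the target.

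The hard part will be bounding the dual contributions, with $\bm{b}\neq 0$ and arbitrary $r_0$. The idea is to estimate $|J(\bm{b},\theta,r_1)|\ll 1$ trivially (from the compactness of $\supp(w)$), interpret $\sum_{\bm{d}\bmod r_0}S_{r_0,0}(\bm{d}^*)\psi(-\bm{d}\cdot \bm{b}/r_0)$ as a Fourier coefficient of $S_{r_0,0}$ and control it via a Parseval-type bound using the pointwise estimates of Lemma~\ref{Le:UpperBound.S(r,0)}, integrate over the short arc $|\theta|<|r|^{-1}\widehat{Q}^{-1}$, and finally sum over $r_0,r_1,\bm{b}$. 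The $\widehat{W}^{-1/2}$ saving reflects an interpolation between the full $\bm{b}=0$ main contribution (bounded by $|P|^3/\widehat{W}$) and a trivial bound for the dual oscillatory terms, exploiting the restriction $|r_1|<\widehat{Q}/\widehat{W}$ which forces $|r_0|>\widehat{W}$ on the boundary $|r|\approx \widehat{Q}$. Arranging these ingredients carefully to yield exactly the square-root saving is the key technical challenge.
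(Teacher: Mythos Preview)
Your approach is genuinely different from the paper's, and the part you flag as ``the key technical challenge'' is in fact a real gap: you have not shown how the $\bm{b}\ne \bm{0}$ dual terms produce a $\widehat{W}^{-1/2}$ saving, and there is no obvious mechanism for this. After Poisson, the $\bm{b}\ne \bm{0}$ sum has $\asymp (|P|/|r_1|)^3$ terms; the trivial bound $|J|\ll 1$ discards all oscillation, and a Parseval-type bound on the Fourier coefficients $\sum_{\bm{d}\bmod r_0}S_{r_0,0}(\bm{d}^*)\psi(-\bm{d}\cdot\bm{b}/r_0)$ using only the first part of Lemma~\ref{Le:UpperBound.S(r,0)} does not recover the loss. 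The mean-zero property of Lemma~\ref{PROP:S_c,0(n)-nice-algebraic-properties} kills only $\bm{b}=\bm{0}$, which is the easy part; the $\bm{b}\ne\bm{0}$ terms carry essentially the same information as the original $\bm{c}$-sum and still require the \emph{bias} input from Proposition~\ref{Prop: LinearSpaceBias}, which you have not invoked here.

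The paper's proof avoids Poisson entirely and works directly with the $\bm{c}$-sum. The two key inputs are the integral estimate $|P|^3 I_r(\bm{c})\ll \|P\bm{c}/r\|\prod_i |Pc_i/r|^{-1/2}$ (valid since $c_1\cdots c_6\ne 0$ on $\Lambda_0^\perp\setminus\mcal{E}_2$), and Lemma~\ref{LEM:bound-S_c,0-average-in-terms-of-S}, which bounds $\sum_{|r_0|=\hat R_0}|S_{r_0,0}(\bm{c})|$ by $\hat R_0$ times a short sum over square-full divisors. This is where the bias from Proposition~\ref{Prop: LinearSpaceBias} enters (through the second part of Lemma~\ref{Le:UpperBound.S(r,0)}). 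After inserting Lemma~\ref{lem:lem8.2} for the square-full $d$ and summing over $\bm{c}$ using $\sum_{0<|c|\le\hat C}|\sq(c)|^{1/2}/|c|\ll_\ve\hat C^\ve$, one is left with $|P|^{3/2+\ve}\sum_{|r_1|<\hat Q/\hat W}(\hat Q/|r_1|)^{1/2}\ll |P|^{3/2+\ve}\hat Q(\hat W)^{-1/2}$, and the $\hat W^{-1/2}$ saving arises transparently from the truncated sum over $r_1$. Note in particular that the paper does \emph{not} add back $\mcal{E}_2$: the condition $c_1\cdots c_6\ne 0$ is used directly in the integral estimate.
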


\begin{proof}
Since $I_r(\bm{c})\ne 0$ only for $|\bm{c}|\ll \abs{P}^{1/2}$,
it follows that
$$
\Sigma_{<\widehat{Q}/\widehat{W}}(P, \Lambda_0^\perp\setminus \mcal{E}_2)\ll \abs{P}^6 \sum_{\substack{\bm{c}\in \Lambda_0^\perp\setminus \mcal{E}_2 \\ \norm{\bm{c}}\ll \abs{P}^{1/2}}}\,
\sum_{\substack{\abs{r_0r_1}\le \widehat{Q} \\ \abs{r_1}<\widehat{Q}/\widehat{W}}}
\abs{r_0r_1}^{-5/2}\abs{I_{r_0r_1}(\bm{c})}
\cdot \abs{r_1}^{1/2} \abs{S_{r_0,0}(\bm{c})}.
$$
We now examine an individual $\bm{c}\in \Lambda_0^\perp\setminus \mcal{E}_2$.
Since $c_1\cdots c_6\neq 0$, 
it follows from \eqref{Eq: IntEstimateEndgame} that 
\begin{equation}
\label{INEQ:Heath-Brown-integral-bound}
\abs{P}^3 I_{r}(\bm{c})
\ll \norm{P\bm{c}/r} \prod_{1\le i\le 6} \abs{Pc_i/r}^{-1/2}.
\end{equation}
Recall that any element of $\Lambda_0^\perp$ takes the shape $(c_1,c_2, c_{3},c_1,c_2, c_{3})$. Let us now consider the contribution from those $r_0$ with $|r_0|=\widehat{R}_0$. Upon inserting \eqref{INEQ:Heath-Brown-integral-bound} and then applying Lemma~\ref{LEM:bound-S_c,0-average-in-terms-of-S}, we find that the the contribution to the quantity
$\Sigma_{<\widehat{Q}/\widehat{W}}(P, \Lambda_0^\perp\setminus \mcal{E}_2)$
is
\begin{equation*}
\ll_\ve \sum_{\substack{\bm{c}\in \OK^{3} \cong \Lambda_0^\perp \\ 0<|c_i|\ll |P|^{1/2}}}
\sum_{\substack{\hat R_0\abs{r_1}\le \widehat{Q} \\ \abs{r_1}<\widehat{Q}/\widehat{W}}}
\frac{\abs{P}^{3+\eps} \abs{P}^{-2}\norm{\bm{c}}}{(\hat R_0\abs{r_1})^{1/2} \abs{c_1\cdots c_{3}}}
\cdot \abs{r_1}^{1/2}\cdot \hat R_0 \sum_{\substack{d\in \mcal{N}_{\ge}(2) \\ \abs{d}\le \hat R_0}} \frac{\lvert{S^\natural_d(\bm{c})}\rvert}{\abs{d}}.
\end{equation*}
We also have $S^\natural_{\bm{c}}(d) \ll_\ve \abs{d}^{1/2+\eps} \prod_{1\le i\le 3} \abs{\map{sq}(c_i)}^{1/2}$ by Lemma \ref{lem:lem8.2},
and $$
\sum_{\substack{d\in \mcal{N}_{\ge}(2)\\  \abs{d}\le \hat R_0}} \abs{d}^{-1/2} \ll_\ve \hat R_0^\eps,
$$
 so that the last display is
\begin{equation*}
\ll_\ve \sum_{\substack{\bm{c}\in\OK^{n/2} \\ 0<|c_i|\ll |P|^{1/2}}}\,
\sum_{\abs{r_1}<\widehat{Q}/\widehat{W}}
\abs{P}^{1+\eps} \norm{\bm{c}}
\cdot (\widehat{Q}/\abs{r_1})^{1/2}
\prod_{1\le i\le 3} \frac{\abs{\map{sq}(c_i)}^{1/2}}{\abs{c_i}}.
\end{equation*}
Since $\sum_{0<\abs{c}\le \hat C} \abs{\map{sq}(c)}^{1/2}/\abs{c}\ll_\ve \hat C^\eps$, this becomes
\begin{equation*}
\ll_\ve\sum_{\abs{r_1}<\widehat{Q}/\widehat{W}} \abs{P}^{1+\eps} \abs{P}^{1/2}
\cdot (\widehat{Q}/\abs{r_1})^{1/2}
\ll_\ve \abs{P}^{3/2+\eps} \widehat{Q}^{1/2} (\widehat{Q}/\widehat{W})^{1/2}.
\end{equation*}
Recalling that $\widehat{Q} \asymp \abs{P}^{3/2}$ and that there are $O_\ve(\widehat{Q}^\varepsilon)=O_\ve(|P|^\varepsilon)$ choices for $R_0$ completes the proof.
\end{proof}

In the light of Lemma~\ref{Le: LinSpaceExtract}, we make the choice 
\begin{equation}
\label{EQN:choice-of-P=P_X}
\widehat{W} = \widehat{C}^{-1}|P|^{1/2}. 
\end{equation}
We now turn to the range $\abs{r_1}\ge \widehat{Q}/\widehat{W}$ in \eqref{EXPR:decompose-LHS-of-goal-into-3-pieces}.

\begin{lemma}
We have
\begin{equation*}
\Sigma_{\ge \widehat{Q}/\widehat{W}}(P, \Lambda_0^\perp)
= \sigma_{L_0,w} \abs{P}^{3} (1 + O(\widehat{W}^{-1})).
\end{equation*}
\end{lemma}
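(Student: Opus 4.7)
The plan is to apply Lemma~\ref{Le: LinSpaceExtract} to evaluate the inner sum over $\bm c\in \Lambda_0^\perp$ exactly, and then use the orthogonality in Lemma~\ref{PROP:S_c,0(n)-nice-algebraic-properties} to collapse the sum to $r_0=1$, leaving an elementary sum over $r_1$.

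First I will interchange summation to place $\bm c$ innermost. Parametrising $\Lambda_0^\perp = \set{\bm{c}^*: \bm c\in \OK^3}$ and using the fact from Lemma~\ref{PROP:S_c,0(n)-nice-algebraic-properties} that $S_{r_0,0}(\bm c)$ depends only on $\bm c\bmod r_0$, one may group $\bm c$ into residue classes and obtain
\[
\Sigma_{\ge \widehat Q/\widehat W}(P, \Lambda_0^\perp) = |P|^6 \sum_{\substack{r_0,r_1\in\Omon\\ |r_0r_1|\le \widehat Q\\ |r_1|\ge \widehat Q/\widehat W}} |r_0r_1|^{-5/2}\phi_K(r_1)|r_1|^{-1/2}\, \Theta(r_0,r_1),
\]
where $\Theta(r_0,r_1)\defeq \sum_{\bm c_0\in \OK^3/r_0\OK^3} S_{r_0,0}(\bm c_0^*) \sum_{\bm c\in \OK^3} I_{r_0r_1}(\bm c_0^*+r_0\bm c^*)$.

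Next I evaluate $\Theta(r_0,r_1)$. For $|r_1|\ge \widehat Q/\widehat W$, the choice \eqref{EQN:choice-of-P=P_X} combined with $\widehat Q\asymp |P|^{3/2}$ ensures $|r_1|\gg \widehat C|P|$, so Lemma~\ref{Le: LinSpaceExtract} is applicable. Since the value $\sigma_{L_0,w}(|r_1|/|P|)^3$ furnished by that lemma is independent of $\theta$, integration over the ball $\set{|\theta|<|r_0r_1|^{-1}\widehat Q^{-1}}$ (of measure $|r_0r_1|^{-1}\widehat Q^{-1}$) yields
\[
\sum_{\bm c\in \OK^3} I_{r_0r_1}(\bm c_0^*+r_0\bm c^*) = \frac{\sigma_{L_0,w}|r_1|^2}{|r_0||P|^3 \widehat Q},
\]
independently of $\bm c_0$. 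The orthogonality relation $\sum_{\bm c_0\in \OK^3/r_0\OK^3} S_{r_0,0}(\bm c_0^*) = |r_0|^3 \bm 1_{r_0=1}$ from Lemma~\ref{PROP:S_c,0(n)-nice-algebraic-properties} then forces all terms with $r_0>1$ to vanish, and only $r_0=1$, $r=r_1$ survives, leaving
\[
\Sigma_{\ge \widehat Q/\widehat W}(P, \Lambda_0^\perp) = \frac{\sigma_{L_0,w}|P|^3}{\widehat Q}\sum_{\widehat Q/\widehat W\le |r_1|\le \widehat Q}\frac{\phi_K(r_1)}{|r_1|}.
\]

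It remains to show the arithmetic estimate $\sum_{\widehat Q/\widehat W\le |r|\le \widehat Q}\phi_K(r)/|r| = \widehat Q(1+O(\widehat W^{-1}))$. Using the identity $\phi_K(r)/|r| = \sum_{d|r}\mu_K(d)/|d|$, the elementary count $\#\{r\in\Omon: d\mid r,\,|r|\le\widehat N\} = (q\widehat N/|d|-1)/(q-1)$, and $\zeta_K(2)^{-1} = 1-q^{-1}$, one obtains $\sum_{|r|\le\widehat N}\phi_K(r)/|r| = \widehat N + O(\log \widehat N)$. Telescoping between $\widehat N=\widehat Q$ and $\widehat N=\widehat Q/\widehat W$ gives the desired estimate, as the logarithmic error is negligible compared to $\widehat Q\widehat W^{-1}\asymp|P|$. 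No step is expected to pose a serious difficulty; the argument is a Poisson-style extraction followed by orthogonality and a routine M\"obius computation.
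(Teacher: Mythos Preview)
Your proposal is correct and follows essentially the same route as the paper: split $\bm c$ into residue classes modulo $r_0$, apply Lemma~\ref{Le: LinSpaceExtract} to the inner sum, use the orthogonality of Lemma~\ref{PROP:S_c,0(n)-nice-algebraic-properties} to collapse to $r_0=1$, and finish with an elementary evaluation of $\sum_{\widehat Q/\widehat W\le |r|\le \widehat Q}\phi_K(r)/|r|$. The only difference is in this last step: the paper observes that the Farey dissection~\eqref{Eq: DirichletDissection} gives $\sum_{|r|\le\widehat Q}\phi_K(r)|r|^{-1}\widehat Q^{-1}=1$ exactly, whereas you compute it directly via M\"obius inversion (which, over $\FF_q[t]$, in fact also yields $\sum_{|r|\le\widehat N}\phi_K(r)/|r|=\widehat N$ with no error term once $\widehat N\ge q$, since $\sum_{|d|=q^j}\mu_K(d)=0$ for $j\ge 2$).
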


\begin{proof}
By the first part of Lemma~\ref{PROP:S_c,0(n)-nice-algebraic-properties},
$S_{r_0,0}(\bm{c})$ only depends on $\bm{c}\bmod{r_0}$.
If $\abs{r_1}\ge \widehat{Q}/\widehat{W} = \widehat{C}\abs{P}$, then after splitting $\bm{c}$ into residue classes, Lemma~\ref{Le: LinSpaceExtract}  implies that
\begin{equation*}
\sum_{\bm{c}\in \Lambda_0^\perp}
S_{r_0,0}(\bm{c})
\cdot I_r(\theta, \bm{c})
= \sigma_{L_0, w}\left(\frac{|r_1|}{|P|}\right)^{3}\sum_{\bm{b}\in \Lambda_0^\perp/r_0\Lambda_0^\perp}
S^\natural_{r_0,0}(\bm{b}).
\end{equation*}
By Lemma~\ref{PROP:S_c,0(n)-nice-algebraic-properties} and the equality $\card{\Lambda_0^\perp/r_0\Lambda_0^\perp} = |r_0|^{3}$, we conclude that
\begin{equation*}
\sum_{\bm{c}\in \Lambda_0^\perp}
S^\natural_{r_0,0}(\bm{c})
\cdot I_{r}(\theta, \bm{c})
= \sigma_{L_0,w}\left(\frac{|r|}{|P|}\right)^{3} \bm{1}_{r_0=1}.
\end{equation*}
We  have therefore established the identity
\[
\Sigma_{\geq \widehat{Q}/\widehat{W}}(P,\Lambda_0^\perp) = \sigma_{L_0,w}|P|^3
\sum_{\substack{r\in \Omon \\ \widehat{Q}/\widehat{W}\leq |r|\leq \widehat{Q}}}\phi_K(r)\int_{|\theta|<|r|^{-1}\widehat{Q}^{-1}}1\dd\theta.
\]
However, by Dirichlet's approximation theorem \eqref{Eq: DirichletDissection} we have 
\begin{align*}
  \sum_{\substack{r\in \Omon \\ \widehat{Q}/\widehat{W}\leq |r|\leq \widehat{Q}}}
  \hspace{-0.2cm}
    \phi_K(r)\int_{|\theta|<|r|^{-1}\widehat{Q}^{-1}}
    \hspace{-0.3cm}
    1\dd\theta & = \sum_{\substack{r\in \Omon \\ |r|\leq \widehat{Q}}}\sideset{}{'}\sum_{|a|<|r|}\int_{|\theta|<|r|^{-1}\widehat{Q}^{-1}}1\dd\theta - \sum_{\substack{r\in \Omon \\ |r|< \widehat{Q}/\widehat{W}}}\phi_K(r)|r|^{-1}\widehat{Q}^{-1}\\
  & = 1 + O(\widehat{W}^{-1}),
\end{align*}
which completes the proof.
\end{proof}

\begin{lemma}
\label{LEM:handle-n_1>=Y/P-for-extra-c's}
We have
$$
\Sigma_{\ge \widehat{Q}/\widehat{W}}(P, \mcal{E}_2)
\ll_\ve \abs{P}^{3-2/3+\eps}.
$$
\end{lemma}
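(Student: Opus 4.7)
The plan is to bound $\Sigma_{\ge \widehat{Q}/\widehat{W}}(P, \mcal{E}_2)$ by exploiting the sparsity of $\mcal{E}_2$ (Lemma~\ref{Le: UpperBoundexceptionalc}: $\#\{\bm{c}\in \mcal{E}_2:\norm{\bm{c}}\leq \hat C\}\ll_\ve \hat C^{2+\ve}$) in conjunction with the constraint $|r_1|\geq \widehat{Q}/\widehat{W}\asymp |P|$, which forces $|r_0|\leq \widehat{W}\asymp |P|^{1/2}$. Take absolute values and use $\phi_K(r_1)\leq |r_1|$ to obtain
\[
|\Sigma_{\ge \widehat{Q}/\widehat{W}}(P, \mcal{E}_2)|\leq |P|^6\sum_{\bm{c}\in \mcal{E}_2}\sum_{|r_0|\leq \widehat{W}}|S_{r_0,0}(\bm{c})||r_0|^{-5/2}\sum_{|r_1|\geq \widehat{Q}/\widehat{W}}|r_1|^{-2}|I_{r_0r_1}(\bm{c})|.
\]

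First, I would carry out the inner sum over $r_1$. By Lemma~\ref{LEM:integral-scale-invariance} (with $L=0$, valid since our weight function satisfies Hypothesis~\ref{hyp}(ii) with $L=0$), $I_{r_0r_1}(\bm{c})$ depends only on $|r_1|$ for fixed $r_0$ and $\bm{c}$, so the $r_1$-sum reduces to a sum over dyadic ranges $q^d$. Combining with the integral estimate \eqref{Eq: IntEstimateEndgame}, a geometric-series calculation yields
\[
\sum_{|r_1|\geq \widehat{Q}/\widehat{W}}|r_1|^{-2}|I_{r_0r_1}(\bm{c})|\ll_\ve |P|^{-3}\max(|P|, |P||\bm{c}|/|r_0|)^{-1}.
\]

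Next, I would split $\mcal{E}_2 = (\mcal{E}_2\setminus \mcal{E}_{2,1})\cup \mcal{E}_{2,1}$ to bound $|S_{r_0,0}(\bm{c})|$ appropriately. On $\mcal{E}_2\setminus\mcal{E}_{2,1}$ we have $B(\bm{c})\neq 0$, so Lemma~\ref{LEM:bound-S_c,0-average-in-terms-of-S} gives the average bound
$\sum_{|r_0|=\hat R_0}|S_{r_0,0}(\bm{c})|\ll_\ve (\hat R_0 \norm{\bm{c}})^\ve \hat R_0$
upon isolating the $d=1$ term in $\mcal{N}_{\ge}(2)$ and using Lemma~\ref{lem:lem8.2} to control the higher-level contributions. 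For $\mcal{E}_{2,1}$, which is cut out by a nontrivial polynomial equation inside $\Lambda_0^\perp\cong \OK^3$, we use the enhanced sparsity $\#\mcal{E}_{2,1}(\leq \hat C)\ll_\ve \hat C^{1+\ve}$ together with the pointwise bound $|S_{r_0,0}(\bm{c})|\ll_\ve |r_0|^{1/2+\ve}$ from Lemma~\ref{Le:UpperBound.S(r,0)}, which compensates for the absence of the average bound.

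Finally, I would dyadically decompose over $\norm{\bm{c}}\asymp \hat C\leq |P|^{1/2}$ and $|r_0|\asymp \hat R_0\leq \widehat{W}$, splitting into the two regimes $\hat R_0\geq \hat C$ (where $\max(|P|,|P||\bm{c}|/|r_0|) = |P|$) and $\hat R_0 < \hat C$ (where $\max = |P||\bm{c}|/|r_0|$), and collect the resulting estimates. The main obstacle lies in extracting the correct balance: naive application of Lemma~\ref{LEM:sparse-bound} with $\delta = 1 - \ve$ yields only $|P|^{3-1/2+\ve}$, which falls short of the target $|P|^{3-2/3+\ve}$. The missing $|P|^{1/6}$ is recovered by using the restriction $|r_0|\leq \widehat{W}$ (cutting off large cube-full divisors that dominate the $f(\mcal{E}_2)$ bound) together with the above averaging; verifying this balance precisely, especially in the borderline regime $\hat R_0 \asymp \hat C$, is the technical heart of the proof.
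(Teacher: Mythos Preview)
Your proposal has a genuine gap: you never isolate the term $\bm{c}=\bm{0}$, which lies in $\mcal{E}_2$ (indeed in your $\mcal{E}_{2,1}$, since $B$ is a form) and is the actual source of the exponent $3-2/3$. The pointwise bound you quote, $|S_{r_0,0}(\bm{c})|\ll_\ve|r_0|^{1/2+\ve}$, is not what Lemma~\ref{Le:UpperBound.S(r,0)} says: that lemma carries an extra factor $\max_{d\in\mcal{N}_{\ge}(3),\,d\mid r_0}|S_d^\natural(\bm{c})|/|d|^{1/2}$, which for $\bm{c}=\bm{0}$ and cube-full $d$ is of size $|d|$ by Lemma~\ref{lem:lem8.2'} with $n=6$. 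So your uniform pointwise bound is false precisely where it matters most. The paper handles $\bm{c}=\bm{0}$ by a completely different route, feeding the constraint $|r|\ge\widehat{Q}/\widehat{W}\gg|P|$ into Lemma~\ref{LEM:sum-S_0(n)-trivially} (together with $I_r(\bm{0})\ll|P|^{-3}$) to obtain $|P|^{3+\ve}(\widehat{Q}/\widehat{W})^{-2/3}\ll|P|^{3-2/3+\ve}$.

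For the nonzero $\bm{c}$'s, your perception that the ``naive'' bound $|P|^{3-1/2+\ve}$ from Lemma~\ref{LEM:sparse-bound} with $\delta=1-\ve$ falls short is arithmetically correct ($5/2>7/3$), but this is in fact an inconsistency between the lemma's stated exponent and its proof: the paper's own argument only establishes $|P|^{3-1/2+\ve}$ for $\bm{c}\ne\bm{0}$ and explicitly deems this ``satisfactory'', since the downstream application \eqref{EQN-GOAL:isolate-maximal-linear-subvariety} requires only $|P|^{3-1/4+\ve}$. So your elaborate plan to squeeze an extra $|P|^{1/6}$ out of the nonzero contribution is chasing a phantom. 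Incidentally, that plan also contains an error: the locus $\{B(\bm{c})=0\}$ is a hypersurface in $\Lambda_0^\perp\cong\OK^3$, hence has $\ll\hat C^2$ points, not $\hat C^{1+\ve}$.
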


\begin{proof}
The main subtlety here is that we must treat $\bm{c}\neq \bm{0}$ and $\bm{c}=\bm{0}$ separately.
First, given $r\in \Omon$,
we apply the triangle inequality and the trivial divisor estimate to \eqref{EQN:expand-multiplicative-error-as-convolution}, giving 
\begin{equation}
\label{INEQ:loose-end-reduce-S_c,0-to-S}
\sum_{\substack{r_0r_1 = r \\ |r_1|\ge \widehat{Q}/\widehat{W}}}\,
\abs{S_{r_0,0}(\bm{c})}\, \phi_K(r_1)|r_1|^{-1/2}
\ll_\ve \bm{1}_{|r|\geq \widehat{Q}/\widehat{W}}|r|^{1/2+\eps} \sum_{d\mid r} |d|^{-1/2}\abs{S^\natural_{d}(\bm{c})}.
\end{equation}
Inserting \eqref{INEQ:loose-end-reduce-S_c,0-to-S} into the definition \eqref{DEFN:Sigma_K,X,T} of $\Sigma_{\ge \widehat{Q}/\widehat{W}}(X, \mcal{E}_2\setminus \set{\bm{0}})$  and recalling the definition \eqref{EQN:sparse-absolute-contribution-f(S)-to-delta-method} of $f$, we get
\begin{equation*}
\Sigma_{\ge \widehat{Q}/\widehat{W}}(P, \mcal{E}_2\setminus \set{\bm{0}})
\ll_\ve |P|^6\sum_{\bm{c}\in \mcal{E}_2\setminus \set{\bm{0}}}
\sum_{\substack{r\in \Omon \\ |r|\leq \widehat{Q}}} |r|^{-2+\eps} \abs{I_{r}(\bm{c})}\sum_{d\mid r} |d|^{-1/2}\abs{S^\natural_{d}(\bm{c})}
\ll_\ve \widehat{Q}^\eps f(\mcal{E}_2).
\end{equation*}
On combining Lemma~\ref{LEM:sparse-bound} with 
the choice $\delta=1-\ve$ in
Lemma~\ref{Le: UpperBoundexceptionalc}, we deduce that $f(\mathcal{E}_2)\ll_\ve |P|^{3-1/2+\varepsilon}$, which is satisfactory since $\widehat{Q}= |P|^{3/2}$. 
Similarly, \eqref{INEQ:loose-end-reduce-S_c,0-to-S} gives
\begin{equation*}
\Sigma_{\ge \widehat{Q}/\widehat{W}}(P, \set{\bm{0}})
\ll_\ve |P|^6 \sum_{\substack{r\in \Omon \\ |r|\geq \widehat{Q}/\widehat{W}}}
|r|^{-2+\eps} \abs{I_r({\bm{0})}} \sum_{d\mid r} |d|^{-1/2}\abs{S^\natural_d({\bm{0}})},
\end{equation*}
which is $O_\ve(|P|^{3+\varepsilon} (\widehat{Q}/\widehat{W})^{-2/3+\varepsilon})$ by \eqref{Eq: IntEstimateEndgame} and Lemma~\ref{LEM:sum-S_0(n)-trivially} summed over $q$-adic ranges for $|r|$. Since $\hat Q/\hat W\gg |P|$, 
we conclude that 
$|P|^{3}(\widehat{Q}/\widehat{W})^{-2/3 }  \ll |P|^{3 -2/3  }$.
\end{proof}
Recalling our choice for $\widehat{W}$ in \eqref{EQN:choice-of-P=P_X}, we see that \eqref{EQN-GOAL:isolate-maximal-linear-subvariety} follows from inserting Lemmas~\ref{LEM:handle-n_1<Y/P}--\ref{LEM:handle-n_1>=Y/P-for-extra-c's} into \eqref{EXPR:decompose-LHS-of-goal-into-3-pieces}. This finally completes the  proof of Proposition~\ref{Prop:DualContribution}.

\section{Final remarks}\label{SEC:final-remarks}

\subsection{Proof of Theorem \ref{THM:pos-density}}

Let $F(\bm x)=x_1^3+\cdots +x_6^3$ 
and let $w_{\bm{x}_0}$ be the weight function defined in 
Definition \ref{def:our-weight}, associated to an appropriate $\bm{x}_0\in (\FF_q^\times)^6$. Then  we have $$N(P)=
\sum_{\substack{\bm{x}_0
\in (\FF_q^\times)^6\\ F(\bm{x}_0)=0}
}N_F(w_{\bm{x}_0},P),
$$
for any $P\in \OK$,
in the notation of 
\eqref{eq:mint-tea} and 
\eqref{eq:green-tea}.
Let  $L(s,\bm c)$ be the Hasse--Weil $L$-function 
introduced in \S~\ref{SEC:ratios-plus}, for $\bm{c}\in \OK^6$. 
Theorem \ref{THM:pos-density} relies on the Ratios Conjecture for these $L$-functions,
as $\bm{c}$ varies. 
We have seen several forms of the Ratios Conjecture during the course of the paper. The following result is a stronger version of Theorem~\ref{THM:pos-density}, and makes clear the various interdependencies. 

\begin{theorem}
\label{THM:first-goal}
Assume $\cha(\FF_q)>3$.
Then each of the following implies the next:
\begin{enumerate}
\item The Ratios Conjecture \ref{CNJ:(R2o)} (R2), for shifted second moments of $1/L(s,\bm{c})$.
\item Conjecture \ref{CNJ:(R2')} holds.
\item Conjecture \ref{CNJ:(R2'E)} holds.
\item Conjecture \ref{CNJ:(R2'E')} holds.
\item $N(P)\ll |P|^3$ for any $P\in \OK$.
\item If $S\belongs \OK$ has positive lower density, then so does $\set{x^3+y^3+z^3: x,y,z\in S}$.
\item The set $\set{k: x^3+y^3+z^3=k\textnormal{ is soluble in $\Omon$ with $\max\{\abs{x},\abs{y},\abs{z}\} = \abs{k}^{1/3}$}}$ has positive lower density.
\end{enumerate}
\end{theorem}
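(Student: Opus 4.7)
The theorem is a chain of seven implications; the first three, (1)$\Rightarrow$(2), (2)$\Rightarrow$(3), and (3)$\Rightarrow$(4), have already been established in the proofs of Conjectures \ref{CNJ:(R2')}, \ref{CNJ:(R2'E)}, and \ref{CNJ:(R2'E')}. The plan is therefore to supply (4)$\Rightarrow$(5), (5)$\Rightarrow$(6), and (6)$\Rightarrow$(7). The overall strategy is that (4), via the delta-method expansion and the work of earlier sections, feeds into the upper bound $N(P)\ll |P|^3$, which in turn powers a second-moment Cauchy--Schwarz argument producing positive-density statements for sums of three cubes.

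For (4)$\Rightarrow$(5), I would fix each $\bm{x}_0 \in (\FF_q^\times)^6$ with $F(\bm{x}_0)=0$, pick the weight $w_{\bm{x}_0}$ of Definition \ref{def:our-weight}, and apply the delta-method formula \eqref{Eq: DeltaMethod} to write
\[
N_F(w_{\bm{x}_0}, P) = M(P) + E_1(P) + E_2(P),
\]
according to whether the dual variable is $\bm{c}=\bm{0}$, lies in $\mathcal{S}_1$, or lies in $\mathcal{S}_0\setminus\{\bm{0}\}$. Proposition \ref{prop:MP} yields $M(P)\ll |P|^3$, Proposition \ref{Prop:nonDualContribution} (which invokes Conjecture \ref{CNJ:(R2'E')}, the hypothesis (4)) yields $E_1(P)\ll |P|^3$, and Proposition \ref{Prop:DualContribution} evaluates $E_2(P)=\sum_{L\in\Upsilon}\#\{\bm{x}\in L\cap\OK^6 : w_{\bm{x}_0}(\bm{x}/P)=1\}+O(|P|^{3-1/4+\eps})$, with each linear-space count being $O(|P|^{\dim L}) = O(|P|^3)$ since $\dim L = 3$ and $\Upsilon$ is finite. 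Summing over the finitely many valid $\bm{x}_0$ gives $N(P)=\sum_{\bm{x}_0} N_F(w_{\bm{x}_0}, P)\ll |P|^3$.

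For (5)$\Rightarrow$(6), I would use a leading-coefficient refinement of the second-moment method. Given $S\subseteq \OK$ of lower density $\delta>0$, a pigeonhole argument on the mass of $S$ in the shells $\{|x| = \hat d\}$ extracts, for each sufficiently large $N$, a degree $d$ with $3d\leq N$ and $N-3d$ bounded in terms of $\delta$, together with a leading coefficient $\alpha\in\FF_q^\times$, such that the set $S'_d$ of $x\in S$ of degree $d$ with leading coefficient $\alpha$ satisfies $|S'_d| \gg q^d$. Define $r(k)=\#\{(x,y,z)\in (S'_d)^3 : x^3+y^3+z^3 = k\}$; then $\sum_k r(k) = |S'_d|^3 \gg q^{3d}$. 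The identity $(-y)^3=-y^3$ (valid since $\cha(\FF_q)\ne 2$) rewrites the second moment as a count of $\bm{z}\in \OK^6$ with $|z_i|=q^d$, leading coefficients $\bm{y}_0 = (\alpha,\alpha,\alpha,-\alpha,-\alpha,-\alpha)$ (noting $F(\bm{y}_0) = 3\alpha^3 - 3\alpha^3 = 0$), and $z_1^3+\cdots+z_6^3 = 0$; since $\bm{y}_0 \in (\FF_q^\times)^6$ is admissible, this count is at most $N_F(w_{\bm{y}_0}, P)$ for $|P|=q^d$, and by (5) it is $\ll q^{3d}$. Cauchy--Schwarz then yields $\#\{k : r(k) \ge 1\} \gg q^{3d}\gg q^N$, giving $\{x^3+y^3+z^3 : x,y,z\in S\}$ positive lower density. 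I expect the leading-coefficient refinement to be the main conceptual point here; a naive Cauchy--Schwarz over the full box $\{|z_i|\le |P|\}$ would not suffice, because the affine Fermat variety is five-dimensional and its full box-count exceeds $|P|^3$.

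For (6)$\Rightarrow$(7), apply (6) to $S = \{x\in \Omon : 3\mid \deg x\}$, which has lower density at least $q^{-3}>0$ in $\OK$. For any $x,y,z\in S$, let $c = \max(\deg x,\deg y,\deg z)$ and let $j\in\{1,2,3\}$ count how many of $\deg x, \deg y, \deg z$ equal $c$. The coefficient of $t^{3c}$ in $x^3+y^3+z^3$ then equals $j$, which is non-zero in $\FF_q$ because $\cha(\FF_q)>3$. Hence $\deg(x^3+y^3+z^3)=3c$, so $\max\{|x|,|y|,|z|\} = \hat{c} = |x^3+y^3+z^3|^{1/3}$, and the representation is strict. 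Consequently, $\{x^3+y^3+z^3 : x,y,z\in S\}$ is contained in the set of (7), which therefore inherits positive lower density.
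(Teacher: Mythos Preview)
Your argument is correct and follows the paper's line closely. Two simplifications the paper makes that you could adopt: in (5)$\Rightarrow$(6) there is no need to pigeonhole on a leading coefficient---restricting to the degree shell $S_e=\{x\in S:|x|=q^e\}$ and bounding $\sum_k r_S(k)^2\le N(t^e)$ already suffices---and in (6)$\Rightarrow$(7) the paper simply takes $S=\Omon$, since your coefficient-of-$t^{3c}$ argument applies to arbitrary monic $x,y,z$ without the restriction $3\mid\deg x$. (Your aside that the full box count over $\{|z_i|\le|P|\}$ exceeds $|P|^3$ is not quite right; the real reason to restrict to a shell is just that (5) as stated bounds only $N(P)$.)
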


\begin{proof}
We have already seen during the course of our argument
that  (1)$\Rightarrow$(2), 
(2)$\Rightarrow$(3) and 
 (3)$\Rightarrow$(4). These implications  mostly lie in the world of $L$-functions. 
 
 The implication 
 (4)$\Rightarrow$(5) requires the full force of the function field circle method. 
 Combining 
\eqref{Eq: DeltaMethod} with the notation introduced in 
\eqref{eq:assam-tea}, \eqref{eq:darjeeling-tea} and \eqref{eq:rosehip-tea}, 
we see that
$$
N(P)\leq q^6\max_{\substack{\bm{x}_0
\in (\FF_q^\times)^6\\ F(\bm{x}_0)=0}} \left(M(P)+E_1(P)+E_2(P)\right).
$$
Assuming that 
  Conjecture \ref{CNJ:(R2'E')} holds, 
 we may combine 
Propositions \ref{Prop:nonDualContribution}, 
 \ref{prop:MP} and 
 \ref{Prop:DualContribution} in order to  conclude that 
 $N(P)\ll |P|^3$ for any $P\in \OK$.

 The implication (6)$\Rightarrow$(7) follows by taking $S=\Omon$ in (6).
 Indeed, in $\Omon$ we always have $\deg(x^3+y^3+z^3) = 3\max(\deg(x),\deg(y),\deg(z))$,  since $\cha(\FF_q) > 3$.
 
Finally, we deal with the implication (5)$\Rightarrow$(6).
For any positive integer $d$, let 
$$
U_S(d)= \#\left\{ k\in \OK: |k|\leq q^{3d},  ~\exists \text{ 
$(x,y,z)\in S^3$ such that  $x^3+y^3+z^3=k$}\right\}.
$$
We wish to prove that 
$$
\liminf_{d\to \infty}
\frac{\#U_S(d)}{q^{3d}} \gg_S 1,
$$
where the implied constant is allowed to depend on $S$ and $q$.
Since $S\belongs \OK$ has positive lower density, there exists $A\in \NN$ such that
$$
\liminf_{d\to \infty}
\frac{
\#\{x\in S: |x|< q^d\}}{q^d} > \frac{1}{q^A-1}.
$$
Let $S_e = \{x\in S: |x| = q^e\}$, for any $e\in \ZZ_{\geq 0}$. Then, for all sufficiently large integers $d$, we have 
$$
\sum_{e<d} \#S_e \geq  \frac{q^d}{ q^A - 1}.
$$
Since $\sum_{e < d-A} \#S_e \leq  q^{d-A}$, by trivially bounding $\#S_e$, we obtain
$$
\sum_{d-A \leq  e < d} \#S_e\geq  \frac{q^d}{ q^A - 1} - q^{d-A} = \frac{q^{d-A}}{ q^A - 1}.
$$
Thus, by the pigeonhole principle,  there  exists $e = e(d) \in [d-A, d-1]$ such that 
\begin{equation}\label{eq:chai-tea}
\#S_e \geq  \frac{1}{A} \cdot \frac{q^{d-A}}{q^A - 1}.
\end{equation}
Given $k\in \OK$, we denote by 
 $r_S(k)$ the number of $(x,y,z)\in S^3$ for which $x^3+y^3+z^3=k$
 and $|x|= |y|=|z|= q^e$. Then  it follows from Cauchy--Schwarz that 
$$
\left(\sum_{\substack{
|k|\leq  q^{3e}}} r_S(k)\right)^2\leq 
U_S(e)
\sum_{\substack{|k|\leq  q^{3e}}} r_S(k)^2 \leq U_S(e)N(t^e).
$$
Part (5) implies that  $N(t^e)=O(q^{3e})$. 
  Moreover, on the left  hand side we have 
$$
\sum_{\substack{
|k|\leq q^{3e}}} r_S(k)\geq 
\#S_e^3\gg_A  q^{3d},
$$
by \eqref{eq:chai-tea}.
It now follows that  $U_S(d)\geq U_S(e)\gg_A q^{3d}$, which 
 establishes part (6).
\end{proof}

When  $F=x_1^3+\dots+x_6^3$, 
through  careful use of H\"{o}lder's inequality, 
it would be possible to replace the upper bound in (5) by 
$N_F(w,P) \ll_w \abs{P}^3$ for any $w\in S(K^n_\infty)$.
However, since doing so would make  
the paper needlessly longer, we have decided to omit the details here. 

\subsection{Possible reductions to Ratios}
\label{SEC:detailed-ratio-reductions}

To facilitate a deeper discussion of the Ratios Conjecture, we make the following definition.

\begin{definition}
\label{DEFN:q-restricted-form-of-Ratios}
Let $\mathcal{F}_q$ be a \emph{geometric family} of $L$-functions $L(s,f)$ over $\FF_q(t)$,
with a \emph{gauge function} $D(f)$ approximating the conductor,
in the sense of \cite{sarnak2016families}*{pp.~534--535}.
Let $\mcal{Q} \belongs \ZZ$.
Assume $\mathcal{F}_q$ varies naturally with $q\in \mcal{Q}$, forming a family $\mathcal{F} = (\mathcal{F}_q)_{q\in \mcal{Q}}$.
We say that the \emph{$q$-restricted Ratios Conjecture} holds if for any ratio average over $D(f)\le q^Z$,
one has an asymptotic with power saving $O_q(q^{-\delta Z})$ in the strip
\begin{equation}
\label{INEQ:q-restricted-strip}
\tfrac12+q^{-\delta} \le \Re(s) \le \tfrac12+\delta
\end{equation}
where $\delta>0$ depends only on $\mathcal{F}$, $\mcal{Q}$, and the ratio in question.
\end{definition}

Note that one can also study the Ratios Conjecture in other aspects; see work of Bui--Florea--Keating \cite{bui2021ratios}, and work of Florea \cite{florea2021negative}.
We shall prove in 
Theorem~\ref{THM:second-goal} that 
the $q$-restricted Ratios Conjecture suffices in Theorem \ref{THM:pos-density}, 
if $q$ is large enough. 
The homological stability framework of \cites{bergstrom2023hyperelliptic,MPPRW} might eventually resolve
this particular form of the Ratios Conjecture for sufficiently large values of $q$.
Evidence for this appears in work of Wang
\cite{ratio2024forthcoming},
in the  simpler setting of quadratic Dirichlet $L$-functions.

\begin{theorem}[\cite{ratio2024forthcoming}]
The $q$-restricted Ratios Conjecture holds for quadratic Dirichlet $L$-functions over $\FF_q(t)$,
for odd $q$ large enough in terms of the ratio in question.
\end{theorem}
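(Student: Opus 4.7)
The plan is to recast the $q$-restricted Ratios Conjecture as a statement about stable $\ell$-adic cohomology of moduli of hyperelliptic curves (equivalently, configuration-type spaces of branched double covers of $\PP^1$), and then feed this into the homological stability machinery of \cite{bergstrom2023hyperelliptic} together with the twisted-coefficients framework of \cite{MPPRW}. Throughout, parametrise the family $\mathcal{F}_q$ by square-free monic $d\in \Omon$ of degree $D$, with gauge $D(\chi_d) = q^D$, so that an average over $D(\chi_d)\le q^Z$ is an average over square-free $d$ of bounded degree and the asymptotic target is an error $O_q(q^{-\delta Z})$.

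First I would expand the target ratio, say $\sum_{d} \prod_i L(s_i,\chi_d)^{\pm 1}$, via its Dirichlet series into a sum $\sum_d \sum_r \lambda_{\bm{s}}(r)\chi_d(r)\abs{r}^{-\ast}$, and then interchange summations and execute the sum over $d$ using quadratic reciprocity over $\FF_q(t)$; this rewrites the average as a point count on $\coprod_{g} \map{Conf}_g$-type spaces, or equivalently on Hurwitz-style moduli $\mcal{H}_g$ parametrising branched double covers, twisted by an $\ell$-adic local system $\mcal{L}_{\bm{s}}$ whose construction encodes the shifts $s_i$ as weights on the tautological symplectic local system $R^1\pi_\ast\QQ_\ell$ over $\mcal{H}_g$. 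The local system $\mcal{L}_{\bm{s}}$ should be built from Schur functors applied to $R^1\pi_\ast\QQ_\ell$, matched against the symbolic predictions of the Ratios Recipe \cite{conrey2008autocorrelation} for symplectic families.

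Next, I would invoke homological stability: \cite{bergstrom2023hyperelliptic} gives stability of the cohomology of $\mcal{H}_g$ with constant coefficients for $g$ large (and computes the stable limit), while \cite{MPPRW} provides the machinery to extend stability to twisted coefficients of the type $\mcal{L}_{\bm{s}}$. In the stable range $g\ge c(\mcal{L}_{\bm{s}})$, one identifies the cohomology with a universal answer depending only on the combinatorial type of the Schur functor. Combining this with Deligne's purity and the Grothendieck--Lefschetz trace formula converts the stable cohomology statement into the desired asymptotic $\sum_{\deg d\le Z} \ldots = (\textnormal{CFKRS main term}) + O_q(q^{(1-\delta)Z})$, with implicit constants independent of $q$ once $q$ is large. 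Verifying that the universal stable answer equals the Ratios Recipe prediction is a bookkeeping step that follows from matching characters of Schur functors with moments of characteristic polynomials of $\map{USp}(2g)$, as in \cite{conrey2008autocorrelation}.

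The main obstacles are twofold. First, the local system $\mcal{L}_{\bm{s}}$ depends analytically on $\bm{s}$, while \cite{MPPRW} is typically formulated for fixed (polynomial) representations; I would need to expand $L$-function factors into finite sums of terms indexed by partitions, uniformly for $\Re(s_i)$ in \eqref{INEQ:q-restricted-strip}, and control the tails as $\Re(s_i)\to \tfrac12^+$. This is precisely where the strip width $q^{-\delta}\le \Re(s)-\tfrac12 \le \delta$ enters: the inner boundary $q^{-\delta}$ controls the convergence of Dirichlet series expansions (each extra Schur factor costs $q^{-\Re(s)+1/2}$, so one needs at least $\delta\log q$ factors to achieve power saving $q^{-\delta Z}$), while the outer boundary $\delta$ keeps the analytic continuation in the region of absolute convergence of the shifted moment. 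Second, one must verify that the weight filtration on the unstable cohomology gives error terms of size $q^{(1/2-\eta)}$ per degree of instability, uniformly in the twist $\mcal{L}_{\bm{s}}$; this should follow from purity once the twist is of Tate-like weight. Carrying out both steps together, and matching the resulting explicit asymptotic to the Ratios Recipe prediction, would complete the proof for $q$ sufficiently large depending on the ratio in question.
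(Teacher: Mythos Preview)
This theorem is not proven in the present paper; it is quoted from the external reference \cite{ratio2024forthcoming} (Wang, \emph{Notes on zeta ratio stabilization}), as the bracketed citation in the theorem header indicates. The surrounding text in \S\ref{SEC:detailed-ratio-reductions} merely cites it as evidence that the homological stability framework of \cite{bergstrom2023hyperelliptic} and \cite{MPPRW} can be made to yield $q$-restricted Ratios results, and then uses the \emph{statement} (not a proof) to motivate Theorem~\ref{THM:second-goal}. There is therefore no proof in this paper to compare your proposal against.

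That said, your sketch is broadly consonant with what the paper suggests the argument in \cite{ratio2024forthcoming} should look like: rewrite the ratio average as a twisted point count on hyperelliptic moduli, apply the stability results of \cite{bergstrom2023hyperelliptic} with twisted coefficients via \cite{MPPRW}, and use Deligne purity plus Grothendieck--Lefschetz to convert stable cohomology into a power-saving asymptotic for large $q$. Your identification of the two main technical issues (handling the $\bm{s}$-dependence of the local system by truncation/expansion in Schur functors, and controlling unstable cohomology uniformly in the twist) is reasonable. However, your proposal remains a high-level outline rather than a proof: the matching of the stable answer to the CFKRS prediction, the precise construction of $\mcal{L}_{\bm{s}}$ for \emph{ratios} (involving inverse $L$-values, not just positive moments), and the uniformity of the stability range in the twist are all substantial steps that you have asserted rather than carried out. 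Since the paper itself defers all of this to \cite{ratio2024forthcoming}, a genuine comparison would require consulting that preprint.
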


The following result illustrates 
how a  homological stability proof might impact the work of this paper. 
The specific term $q^{-\delta}$ in \eqref{INEQ:q-restricted-strip} is unimportant; any term of the form $o_{q\to \infty}(1)$ would suffice.

\begin{theorem}
\label{THM:second-goal}
The $q$-restricted Ratios Conjecture for $L(s_1,\bm{c})^{-1} L(s_2,\bm{c})^{-1}$,
in the sense of Definition~\ref{DEFN:q-restricted-form-of-Ratios}, implies (1)--(6)  in 
Theorem \ref{THM:first-goal}
for all large enough $q$.
\end{theorem}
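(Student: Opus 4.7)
The plan is to trace through the chain of implications in Theorem~\ref{THM:first-goal} and note that only the first step---(1)$\Rightarrow$(2)---uses any form of the Ratios Conjecture; the implications (2)$\Rightarrow$(3)$\Rightarrow$(4)$\Rightarrow$(5)$\Rightarrow$(6) have already been established unconditionally in the paper. It therefore suffices to derive Conjecture~\ref{CNJ:(R2')} from the $q$-restricted Ratios Conjecture for $q$ large enough, and then invoke the chain verbatim.

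I would revisit the derivation of Conjecture~\ref{CNJ:(R2')} from Conjecture~\ref{CNJ:(R2o)} given in the excerpt. That argument begins by writing the left-hand side of \eqref{INEQ:R2'-goal} as $\Sigma_0$ with $\sigma_0 = \beta + \sigma(Z) = \beta + \tfrac{1}{2} + \tfrac{1}{Z}$, exploiting the fact that the expression is independent of $\sigma_0>\tfrac{1}{2}$ by GRH. In the $q$-restricted setting I would instead choose $\sigma_0 = \tfrac{1}{2} + \beta$ with $\beta = \delta_0/6$, where $\delta_0>0$ is the constant supplied by Definition~\ref{DEFN:q-restricted-form-of-Ratios} for the ratio $L(s_1,\bm c)^{-1}L(s_2,\bm c)^{-1}$. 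For $q$ large enough that $q^{-\delta_0} \le \delta_0/6$, this shift lies in the $q$-restricted strip $[\tfrac{1}{2}+q^{-\delta_0},\tfrac{1}{2}+\delta_0]$ uniformly in $Z$, so the $q$-restricted Ratios Conjecture supplies an asymptotic with power saving $O_q(\hat Z^{-\delta_0})$.

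With this constant shift, the analogue of $\Sigma_1$ becomes $\hat R^{1/2+\beta}$, so the contribution from the power-saving error to $\Sigma_0$ is $O(\hat Z^{6-\delta_0}\hat R^{1+2\beta})$. The constraint $R\le 3Z$ gives $\hat R^{2\beta}\le \hat Z^{6\beta}$, so this error is $O(\hat Z^{6-\delta_0+6\beta}\hat R)=O(\hat Z^6\hat R)$ by the choice $6\beta=\delta_0$. The residue extraction at $s_2=1-s_1$ and the further shift to $\Re(s_2)=\tfrac{1}{2}-\delta$ are identical to the original proof, depending only on GRH and Proposition~\ref{PROP:HW2-consequences}. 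This yields Conjecture~\ref{CNJ:(R2')}, whence (3)--(6) follow.

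The main task is the parameter calibration just described: $\beta$ must be at least $q^{-\delta_0}$ to fall inside the $q$-restricted strip and at most $\delta_0/6$ to absorb the power-saving error, and both conditions are simultaneously satisfiable precisely for $q$ large. Aside from this bookkeeping there are no essential obstacles, since the $q$-restricted hypothesis is \emph{stronger} than what the original proof uses in the one place where the Ratios Conjecture is invoked: a uniform asymptotic across an entire vertical strip, rather than one specific $Z$-dependent line.
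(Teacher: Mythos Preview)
Your argument correctly establishes (2)--(6), but the theorem asks for (1)--(6), and you never verify (1), i.e.\ Conjecture~\ref{CNJ:(R2o)} itself. Your opening sentence says ``it suffices to derive Conjecture~\ref{CNJ:(R2')}'', which is only enough for (2)--(6). The paper's proof instead verifies (1) directly: it takes $\beta = 2q^{-\delta}$, so that for $q$ and $Z$ large the $Z$-dependent shift $\beta + \sigma(Z) = \tfrac12 + 2q^{-\delta} + \tfrac1Z$ lands in the strip $[\tfrac12+q^{-\delta},\tfrac12+\delta]$ and the saving $\hat Z^{-\delta}$ from the $q$-restricted hypothesis beats the required $\hat Z^{-6\beta}$; for bounded $Z$ the conjecture is disposed of trivially by GRH since there are $O_q(1)$ summands each of size $O_q(1)$. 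Once (1) holds, the paper simply cites the existing chain.

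Your route to (2) via a constant shift $\sigma_0 = \tfrac12 + \delta_0/6$ is a legitimate shortcut that exploits the extra uniformity in the $q$-restricted hypothesis (an asymptotic across a whole strip rather than on one $Z$-dependent line), and your balancing $6\beta = \delta_0$ is exactly right. To close the gap you can either (a) add one sentence noting that (1) follows by the paper's choice $\beta = 2q^{-\delta}$ together with the trivial GRH bound for small $Z$, or (b) observe that your own $\beta = \delta_0/6$ also works for (1) once $Z \ge 6/(5\delta_0)$, with small $Z$ again handled trivially. Either way, the missing piece is precisely the small-$Z$ / bounded-$Z$ case of (1), which your constant-shift argument for (2) sidesteps but does not supply.
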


\begin{proof}
Assume the $q$-restricted Ratios Conjecture, for  $L(s_1,\bm{c})^{-1} L(s_2,\bm{c})^{-1}$.
Let $\beta \defeq 2 q^{-\delta}$.
If $q$ and $Z$ are sufficiently large in terms of $\delta$, then
$q^{-\delta} \le \beta + \frac1Z \le \delta$ and $6\beta < \delta$,
so Conjecture~\ref{CNJ:(R2o)} holds with constant $\beta = 2 q^{-\delta}$.
On the other hand, if $Z \ll 1$, then $L(s_1,\bm{c})^{-1} L(s_2,\bm{c})^{-1} \ll_q 1$ for $\norm{\bm{c}} \le q^Z$ by GRH, so Conjecture~\ref{CNJ:(R2o)} holds trivially.
This proves (1) in Theorem \ref{THM:first-goal},
and therefore (1)--(6) too,
for $q\gg 1$.
\end{proof}

\subsection{Future extensions}

First, as mentioned in the introduction,
we plan to prove a density $1$ relative of Theorem~\ref{THM:pos-density} in future work.
Second,
most of the proof of Theorem~\ref{THM:first-goal} works when $\cha(\FF_q)=2$,
but the proof of the following main ingredients would need to be modified:
Lemmas \ref{Le: IntEstimateI} and 
\ref{LEM:new-vanishing-for-small-dual-form},
\cite{hooley1994nonary}*{Lemma 60},
Lemma \ref{lem:VW9.1},
and 
Proposition~\ref{Prop:DualContribution}.
We leave open the challenge of extending Theorem~\ref{THM:first-goal} to $\cha(\FF_q)=2$,
but some general progress towards
Proposition \ref{Prop: LinearSpaceBias} (which is a  key ingredient 
in Proposition \ref{Prop:DualContribution})
and \cite{hooley1994nonary}*{Lemma 60}
can be found in \cite{wang2023dichotomous}*{Theorem 1.2}.

We close by discussing the task of rigorously verifying Conjecture \ref{CNJ:(R2o)} (R2).
Thanks to a uniform homological stability result for braid groups established in \cite{MPPRW},
the Moments Conjecture for quadratic Dirichlet $L$-functions for large fixed odd $q$
is now proven, even with a power saving \cite{bergstrom2023hyperelliptic}.
The next questions would be
(a) whether these techniques can establish the $q$-restricted Ratios Conjecture, say, for negative moments;
and (b) whether they work for more general geometric families in the sense of \cite{sarnak2016families}.

The note \cite{ratio2024forthcoming} affirmatively answers (a).
On the other hand, question (b) is much more technical, so we limit ourselves to a few brief remarks.
The $L$-function of a quadratic field $K(\sqrt{d}) \cong K[y]/(y^2-d)$ of discriminant $d\in \Omon$ can be interpreted in terms of
the corresponding hyperelliptic curve $y^2 = d(t)$ over $\FF_q$.
In higher dimensions the situation is more complicated,
but at least when $F^\ast(\bm{c})\in \OK$ is nearly square-free,
one may use \cite{ratio2024forthcoming}*{Proposition~A.1} to interpret the $L$-function of a smooth cubic threefold $$x_1^3+\dots+x_6^3 = c_1(t)x_1+\dots+c_6(t)x_6 = 0$$ over $K$ in terms of
the corresponding \emph{fibred fourfold} over $\FF_q$.

In \cite{bergstrom2023hyperelliptic}, braid groups arise as the \emph{fundamental groups} of moduli spaces
\begin{equation*}
\map{Conf}_e(\mathbb{A}^1)(\CC) =
\set{d\in \CC[t]\textnormal{ monic}:
\deg(d) < e,\;
\disc_t(d) \ne 0}
\end{equation*}
of hyperelliptic curves.
Moreover, the moduli spaces in question are \emph{aspherical}, allowing for a purely group-theoretic reformulation of the relevant homology groups arising from the Grothendieck--Lefschetz trace formula.
As a first step towards (b), one might try to compute the (likely very rich) fundamental groups of the moduli spaces
\begin{equation*}
X_e \defeq
\set{\cc\in \CC[t]^6:
\deg(c_i) < e,\;
\deg_t(F^\ast(\cc)) = (e-1) \deg{F^\ast},\;
\disc_t(F^\ast(\cc)) \ne 0},
\end{equation*}
to bound their Betti numbers,
and to determine whether $X_e$ is aspherical or not.

An interesting, yet simpler higher-dimensional task for (b)
would be to study $L$-function statistics in
quadratic or cubic twist families of elliptic curves over $\FF_q(t)$.
In this case, the base moduli spaces from \cite{bergstrom2023hyperelliptic} would most likely not need to be changed much, with the main changes occurring for the relevant \emph{local systems} and \emph{gluing maps}.

\end{document}